\documentclass[reqno,oneside,11pt]{amsart}

\usepackage[dvipsnames,svgnames,x11names,hyperref]{xcolor}

\usepackage[cmtip]{xy}
\xyoption{ps}
\xyoption{color}\UseCrayolaColors
\xyoption{dvips}
\xyoption{all}
\usepackage{lscape} 
\usepackage{enumerate}
\usepackage{txfonts} 
\usepackage{ctable}

\theoremstyle{plain}
\newtheorem{maintheorem}{Theorem} 
\newtheorem{theorem}{Theorem}[section]
\newtheorem{corollary}[theorem]{Corollary}
\newtheorem{proposition}[theorem]{Proposition}
\newtheorem{lemma}[theorem]{Lemma}
\newtheorem{minoration}[theorem]{Lower bound}
\newtheorem*{IH}{Induction Hypothesis}

\theoremstyle{definition}
\newtheorem{mainquestion}{Question} 
\newtheorem{definition}[theorem]{Definition}
\newtheorem{question}[theorem]{Question}
\newtheorem{example}[theorem]{Example}
\newtheorem{remark}[theorem]{Remark}

\newtheorem{caselist}[theorem]{List of Cases}


\newcommand{\mygraph}[1]{\xybox{\xygraph{#1}}} 

\newcommand{\C}{\mathbb{C}}
\newcommand{\R}{\mathbb{R}}

\newcommand{\Z}{\mathbb{Z}}
\newcommand{\N}{\mathbb{N}}
\newcommand{\A}{{\mathbb{A}}}
\newcommand{\p}{\mathbb{P}}
\renewcommand{\H}{\mathbb{H}}

\renewcommand{\a}{{\mathcal{A}}}
\newcommand{\T}{\mathcal{T}}
\renewcommand{\P}{\mathcal{P}}
\renewcommand{\LL}{\mathcal{L}}
\newcommand{\Sq}{\mathcal{S}}



\DeclareMathOperator{\PSL}{PSL}
\DeclareMathOperator{\GL}{GL}
\DeclareMathOperator{\SL}{SL}
\DeclareMathOperator{\OO}{O}
\DeclareMathOperator{\SO}{SO}

\DeclareMathOperator{\V}{V}

\newcommand{\Comp}{\mathcal{C}}

\renewcommand{\phi}{\varphi}
\newcommand{\id}{\text{\rm id}}
\newcommand{\RightArrowIsomorphism}{\xrightarrow{\,\smash{\raisebox{-0.65ex}{\ensuremath{\scriptstyle\sim}}}\,}}

\DeclareMathOperator{\Ker}{Ker}
\DeclareMathOperator{\Aut}{Aut}
\DeclareMathOperator{\Bir}{Bir}
\DeclareMathOperator{\Stab}{Stab}
\DeclareMathOperator{\SStab}{SStab}

\DeclareMathOperator{\degmax}{degmax}
\DeclareMathOperator{\degsum}{degsum}

\DeclareMathOperator{\Min}{Min}
\DeclareMathOperator{\CAT}{CAT}
\DeclareMathOperator{\Tame}{Tame}
\DeclareMathOperator{\STame}{STame}
\DeclareMathOperator{\Isom}{Isom}
\DeclareMathOperator{\Vect}{Vect}
\DeclareMathOperator{\Spec}{Spec}
\DeclareMathOperator{\ged}{gdeg}

\DeclareMathOperator{\Jac}{Jac}
\DeclareMathOperator{\jj}{j}
\DeclareMathOperator{\Nor}{N}
\DeclareMathOperator{\ev}{ev}

\newcommand{\TSL}{\Tame(\SL_2)}
\newcommand{\STSL}{\STame(\SL_2)}
\newcommand{\TQ}{\Tame_q(\C^4)}
\newcommand{\STQ}{\STame_q(\C^4)}
\newcommand{\Autq}{\Aut_q(\C^4)}
\newcommand{\Et}{E^{12}}
\newcommand{\Eb}{E_{34}}
\newcommand{\Er}{E^2_4}
\newcommand{\El}{E^1_3}
\newcommand{\wght}{{\textbf{\rm w}}}
\renewcommand{\hom}[1]{{#1}^\wght}

\newcommand{\Rgen}{\textit{R}_{\text{\rm gen}}}

\def\polhk#1{\setbox0=\hbox{#1}{\ooalign{\hidewidth \lower1.5ex\hbox{`}\hidewidth\crcr\unhbox0}}}

\newcommand{\smat}[4]{\left( \begin{smallmatrix} #1\,&#2\\ #3\,&#4 \end{smallmatrix} \right)}
\newcommand{\mat}[4]{\left( \begin{matrix} #1\,&#2\\ #3\,&#4 \end{matrix} \right)}
\newcommand{\sbmat}[4]{\left[\begin{smallmatrix} #1\,&#2\\ #3\,&#4 \end{smallmatrix} \right]}
\newcommand{\bmat}[4]{\left[\begin{matrix} #1\,&#2\\ #3\,&#4 \end{matrix} \right]}
\newcommand{\blob}{\rule[.2ex]{0.8ex}{0.8ex}}

\newcommand*\scolvec[3][]{\left(\begin{smallmatrix}\ifx\relax#1\relax\else#1\\\fi#2\\#3\end{smallmatrix} \right)}
\newcommand*\colvec[3][]{\left(\begin{matrix}\ifx\relax#1\relax\else#1\\\fi#2\\#3\end{matrix} \right)}

\newcolumntype{L}{>{$}l<{$}}
\newcolumntype{C}{>{$}c<{$}}

\usepackage[pdfauthor={C. Bisi \& J.-P. Furter \& S. Lamy}, colorlinks, linktocpage, citecolor = Navy, linkcolor = Navy]{hyperref}
\usepackage[all]{hypcap}        

\title[The tame automorphism group of an affine quadric threefold]{The tame automorphism group of an affine quadric threefold acting on a square complex}
\author{Cinzia Bisi, Jean-Philippe Furter and St\'ephane Lamy}
\thanks{This research was partially supported by ANR Grant ``BirPol''  ANR-11-JS01-004-01, by the Research Network Program GDRE-GRIFGA and by GNSAGA-INDAM, Rome, Italy.}
\address{Dipartimento Matematica ed Informatica, Universit\'a di Ferrara, Via Machiavelli n.35, 44121 Ferrara, Italy}
\email{bsicnz@unife.it}
\address{Laboratoire MIA, Universit\'e de La Rochelle, Avenue Michel Cr\'epeau,
 17000 La Rochelle, France}
\email{jpfurter@univ-lr.fr}
\address{Institut de Math\'ematiques de Toulouse, Universit\'e Paul Sabatier, 118 route de Narbonne, 31062 Toulouse Cedex 9, France}
\email{slamy@math.univ-toulouse.fr}

\begin{document}

\begin{abstract}
We study the group $\TSL$ of tame automorphisms of a smooth affine 3-dimensional quadric, which we can view as the underlying variety  of $\SL_2(\C)$.
We construct a square complex on which the group admits a natural cocompact action, and we prove that the complex is $\CAT(0)$ and hyperbolic.
We propose two applications of this construction: 
We show that any finite subgroup in $\TSL$ is linearizable, and that $\TSL$ satisfies the Tits alternative.
\end{abstract}

\maketitle

\setcounter{tocdepth}{2}
\begin{footnotesize}\tableofcontents\end{footnotesize}

\section*{Introduction}

The structure of transformation groups of rational surfaces is  quite well understood. By contrast, the higher dimensional case is still essentially a \textit{terra incognita}. This paper is an attempt to explore some aspects of transformation groups of rational 3-folds.

The ultimate goal would be to understand the structure of the whole Cremona group $\Bir(\p^3)$. Since this seems quite a formidable task, it is natural to break down the study by looking at some natural subgroups of $\Bir (\p^3)$, with the hope that this gives an idea of the properties to expect in general. We now list a few of these subgroups, in order to give a feeling about where our modest subgroup $\TSL$ fits into the bigger picture. A first natural subgroup is the  monomial group $\GL_3( \Z)$, where a matrix $(a_{ij})$ is identified to a birational map of $\C^3$ by taking 
$$(x,y,z) \dasharrow (x^{a_{11}}y^{a_{12}}z^{a_{13}},x^{a_{21}}y^{a_{22}}z^{a_{23}}, x^{a_{31}}y^{a_{32}}z^{a_{33}}).$$
Another natural subgroup is the group of polynomial automorphisms of $\C^3$. 
These two examples seem at first glance quite different in nature, nevertheless it turns out that both are contained in the subgroup $\Bir_0(\p^3)$ of birational transformations of genus 0, which are characterized by the fact that they admit a resolution by blowing-up points and rational curves (see \cite{Fr, Lgenus}). 
On the other hand, it is known (see \cite{Pan}) that given a smooth curve $C$ of arbitrary genus, there exists an element $f$ of $\Bir(\p^3)$ with the property that any resolution of $f$ must involve the blow-up of a curve isomorphic to $C$. 
So we must be aware that even if a full understanding of the group $\Aut(\C^3)$ still seems far out of reach, this group $\Aut(\C^3)$ is such a small subgroup of $\Bir(\p^3)$ that it might turn out not to be a good representative of the wealth of properties of the whole group $\Bir(\p^3)$. 
\begin{figure}[ht]
$$
\xymatrix@R=1pc{
& & \GL_3(\Z) \ar@{}[dl]_(0.4){\rotatebox{45}{\footnotesize $\supset$}}\\
\Bir (\p^3) &  \Bir_0 (\p^3) \ar@{}[l]|{\supset} & \Aut(\C^3) \ar@{}[l]|{\supset} & \Tame(\C^3) \ar@{}[l]|{\supset}\\
& & \Aut(\SL_2) \ar@{}[ul]^(0.4){\rotatebox{-45}{\footnotesize $\supset$}} &  \TSL \ar@{}[l]|{\supset}
}
$$
\caption{A few subgroups of $\Bir(\p^3)$.}\label{fig:birp3}
\end{figure}

The group $\Aut(\C^3)$ is just a special instance of the following construction: given a rational affine 3-fold $V$, the group $\Aut(V)$ can be identified with a subgroup of $\Bir(\p^3)$. 
Apart from $V = \C^3$, another interesting example is when $V\subseteq \C^4$ is an affine quadric 3-fold, say $V$ is the underlying variety of $\SL_2$.
In this case the group $\Aut(V)$ still seems quite redoubtably difficult to study.
We are lead to make a further restriction and to consider only the smaller group of \emph{tame} automorphisms, either in the context of $V =\C^3$ or $ \SL_2$.

The definition of the tame subgroup for $\Aut(\C^n)$ is classical. Let us recall it in dimension 3. 
The tame subgroup $\Tame(\C^3)$ is the subgroup of $\Aut(\C^3)$ generated by the affine group $A_3 = \GL_3 \ltimes \C^3$ and by elementary automorphisms of the form $(x,y,z) \mapsto (x+P(y,z),y,z)$. 
A natural analogue in the case of an affine quadric 3-fold was given recently in \cite{LV}. This is the group $\Tame(\SL_2)$, which will be the main object of our study in this paper.

When we consider the 2-dimensional analogues of the groups in Figure \ref{fig:birp3}, we obtain in particular the Cremona group $\Bir(\p^2)$, the monomial group $\GL_2(\Z)$ and the group of polynomial automorphisms $\Aut(\C^2)$. 
A remarkable feature of these groups is that they all admit natural actions on some sort of hyperbolic spaces. 
For instance the group $\SL_2(\Z)$ acts on the hyperbolic half-plane $\H^2$, since $\PSL_2(\Z) \subseteq \PSL_2(\R) \simeq \Isom_+(\H^2)$. 
But $\SL_2(\Z)$ also acts on the Bass-Serre tree associated with the structure of amalgamated product $\SL_2(\Z) \simeq \Z/4 *_{\Z/2} \Z/6$. 
A tree, or the hyperbolic plane $\H^2$, are both archetypal examples of spaces which are hyperbolic in the sense of Gromov.
The group $\Aut(\C^2)$ also admits a structure of amalgamated product. 
This is the classical theorem of Jung and van der Kulk, which states that $\Aut(\C^2) = A_2 *_{A_2 \cap E_2} E_2$, where $A_2$ and $E_2$ are respectively the subgroups of affine and triangular automorphisms.
So $\Aut(\C^2)$ also admits an action on a Bass-Serre tree, and we know since the work of Danilov and Gizatullin \cite{DanGiz} that the same is true for many other affine rational surfaces.
Finally, it was recently realized that the whole group $\Bir(\p^2)$ also acts on a hyperbolic space, via a completely different construction: By simultaneously considering all possible blow-ups over $\p^2$, it is possible to produce an infinite dimensional analogue of $\H^2$ on which the Cremona group acts by isometries (see \cite{Can, CL}). 
 
With these facts in mind, given a 3-dimensional transformation group it is natural to look for an action of this group on some spaces with non-positive curvature, in a sense to be made precise.
Considering the case of monomial maps, we have a natural action of $\SL_3(\Z)$  
on the symmetric space $\SL_3(\R)/\SO_3(\R)$, see \cite[II.10]{BH}. 
The later space is a basic example of a $\CAT(0)$ symmetric space.
Recall that a $\CAT(0)$ space is a geodesic metric space where all triangles are thinner than their comparison triangles in the Euclidean plane.
We take this as a hint that $\Bir(\p^3)$ or some of its subgroups should act on spaces of non-positive curvature.
At the moment it is not clear how to imitate the construction by inductive limits of blow-ups to obtain a space say with the $\CAT(0)$ property, so we try to generalize instead the more combinatorial approach of the action on a Bass-Serre tree.
The group $\Tame(\C^3)$ does not possess an obvious structure of amalgamated product over two of its subgroups.
We should mention here that it was recently observed in \cite{wright} that the group $\Tame(\C^3)$ can be described as the amalgamation of three of its subgroups along pairwise intersections; in fact a similar structure exists for the Cremona group $\Bir(\p^2)$ as was noted again by Wright a long time ago (see \cite{W}).
Such an algebraic structure yields an action of $\Tame(\C^3)$ on a natural simply connected 2-dimensional simplicial complex.
However it is still not clear if this complex has good geometric properties, and so it is not immediate to answer the following:

\begin{mainquestion}\label{que:hyp}
Is there a natural action of $\Tame(\C^3)$ on some hyperbolic and/or $\CAT(0)$  space?
\end{mainquestion}

Of course, this question is rather vague.
In our mind an action on some hyperbolic space would qualify as a ``good answer'' to Question \ref{que:hyp} if it allows to answer the following questions, which we consider to be basic tests about our understanding of the group:

\begin{mainquestion} \label{que:linearizable}
Is any finite subgroup in $\Tame(\C^3)$ linearizable?
\end{mainquestion}

\begin{mainquestion} \label{que:tits}
Does $\Tame(\C^3)$ satisfy the Tits alternative?
\end{mainquestion}

To put this into a historical context, let us review briefly the similar questions in dimension 2. 
The fact that any finite subgroup in $\Aut(\C^2)$ is linearizable is classical (see for instance \cite{Kambayashi, Furushima}).
The Tits alternative for $\Aut(\C^2)$ 
and $\Bir(\p^2)$ were proved respectively in \cite{Ltits} and \cite{Can}, and the proofs involve the actions on the hyperbolic spaces previously mentioned.

Now we come to the group $\TSL$. We define it as the restriction to $\SL_2$ of the subgroup $\TQ$ of $\Aut(\C^4)$ generated by $\OO_4$ and $\Er$, where  $\OO_4$ is the complex orthogonal group associated with the quadratic form given by the determinant $q=x_1x_4-x_2x_3$, and $$\Er= \left\lbrace \mat{x_1}{x_2}{x_3}{x_4} \mapsto \mat{x_1}{x_2 + x_1P(x_1,x_3)}{x_3}{x_4+x_3P(x_1,x_3)}; \;P \in \C[x_1,x_3] \right\rbrace .$$
One possible generalization of simplicial trees are $\CAT(0)$ cube complexes (see \cite{Wi}).
We  briefly explain how we construct a square complex on which this group acts cocompactly (but certainly not properly!).
Each element of $\TSL$ can be written as $f=\smat{f_1}{f_2}{f_3}{f_4}$.
Modulo some identifications that we will make precise in Section \ref{sec:complexes}, we associate vertices to each component $f_i$, to each row or column $(f_1,f_2)$, $(f_3,f_4)$, $(f_1,f_3)$, $(f_2,f_4)$ and to the whole automorphism $f$. On the other hand, (undirected) edges correspond to inclusion of a component inside a row or column, or of a row or column inside an automorphism.
This yields a graph, on which we glue squares to fill each loop of four edges (see Figure \ref{fig:bigsquare}), to finally obtain a square complex $\Comp$.  
   
In this paper we answer analogues of Questions \ref{que:hyp} to \ref{que:tits} in the context of the group $\TSL$.
The main ingredient in our proofs is a natural action by isometries on the complex $\Comp$, which admits good geometric properties:

\begin{maintheorem} \label{thm:mainHyp}
The square complex $\Comp$ is $\CAT(0)$ and hyperbolic.
\end{maintheorem}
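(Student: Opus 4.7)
The plan is to split Theorem~A into two independent pieces: the $\CAT(0)$ property, which I would prove by Gromov's link criterion together with simple connectedness, and Gromov hyperbolicity, which I would deduce from the Flat Plane Theorem by ruling out isometrically embedded Euclidean planes in $\Comp$.

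For the $\CAT(0)$ property, recall that a simply connected square complex is $\CAT(0)$ if and only if the link of every vertex is a simplicial graph with no $3$-cycle (see \cite{BH}). The complex $\Comp$ has three types of vertices: components $f_i$, rows or columns, and full automorphisms $f$, and the edges always join vertices of different types. I would describe each link explicitly. At an automorphism vertex $f$, the link vertices correspond to the rows, columns, and components of $f$, and its edges encode the incidences component~$\subset$~row/column; by construction this is a bipartite graph and hence has no triangle. At a row (or column) vertex, the link has automorphism vertices on one side and component vertices on the other, so it is again bipartite. The same holds at a component vertex, where the link is bipartite between rows/columns and automorphisms. The genuine subtlety is not the absence of triangles but the verification that the link is a \emph{simple} graph — equivalently, that two distinct $2$-cells sharing a vertex cannot share both incident edges; this is where the normalization conventions from Section~\ref{sec:complexes} have to be used carefully.

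Simple connectedness is the most delicate part. Since $\TSL$ acts cocompactly on $\Comp$ with well-understood cell stabilizers (essentially subgroups of $\OO_4$ and $\Er$), by the standard machinery (\cite[II.12]{BH}) one obtains a presentation of $\TSL$ whose relators are exactly the boundaries of $2$-cells of $\Comp$ together with the relators of the stabilizers. Conversely, to conclude that $\pi_1(\Comp)$ is trivial, I would check that every relation among the generators in $\OO_4 \cup \Er$ can be filled by squares of $\Comp$. The natural way to achieve this is to mimic the normal form / reduction arguments developed for $\TSL$ in \cite{LV}: any combinatorial loop in $\Comp$ can be progressively shortened by elementary moves, each of which bounds a square in $\Comp$ by construction. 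This reduction is where the bulk of the work lies and is the main obstacle, as it essentially requires building a full-fledged combinatorial calculus on $\Comp$.

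Once $\Comp$ is known to be $\CAT(0)$, hyperbolicity reduces to the absence of an isometrically embedded flat plane, since a $2$-dimensional $\CAT(0)$ cube complex is Gromov hyperbolic if and only if it contains no flat. I would argue by contradiction: an embedded flat $F \subset \Comp$ would force the link at every interior vertex of $F$ to contain an embedded $4$-cycle realizing total angle $2\pi$. Using the explicit bipartite description of the links given above, such a $4$-cycle forces very rigid constraints on the sequence of elementary and orthogonal moves encountered along $F$; tracing these constraints through the combinatorics of $\TSL$ leads to an algebraic incompatibility — typically, a pair of commuting elementary transformations that cannot both be nontrivial in $\TSL$. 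The bulk of the geometric content of Theorem~A therefore lies in classifying the potential flat $4$-cycles at each type of vertex and ruling each of them out one by one.
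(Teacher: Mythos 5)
Your deduction of hyperbolicity from the Flat Plane Theorem does not apply to $\Comp$. That theorem requires the $\CAT(0)$ space to be proper (locally compact) and cocompact; $\Comp$ is cocompact under $\TSL$ but is very far from locally compact: the squares containing $[\id]$ are parametrized by $\p^1\times\p^1$, the links of vertices are infinite graphs, and vertex stabilizers such as $\OO_4$ and $\Er\rtimes\GL_2$ are infinite, so the action is not proper. In this setting the absence of an isometrically embedded flat plane does not imply hyperbolicity, because one cannot run the limiting argument that promotes arbitrarily large flat disks to a flat plane; one must exclude large flat pieces uniformly. This is exactly what the paper does: Lemma \ref{lem:4x4} classifies $4\times 4$ grids (the Jacobian condition at each corner forces the four polynomials $N,S,E,W$ to depend on one variable each), Proposition \ref{pro:nogrid} then rules out any $6\times 6$ grid, and hyperbolicity is deduced by a criterion that needs no properness: edge-path intervals of the $1$-skeleton embed into $\Z^2$ (\cite{AOS}), so the uniform bound on grids yields the thin-bigon criterion of \cite{Wi, Papa}. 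Your last step would have to be replaced by an argument of this kind.

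On the $\CAT(0)$ half you follow the same plan as the paper (Gromov link condition plus simple connectedness), but two points need repair. First, the links are not as you describe: a type-3 vertex is adjacent only to type-2 vertices (components sit diagonally across squares), and likewise the link of a type-1 vertex contains only type-2 vertices, so the bipartition ``by type'' is unavailable there. The paper instead shows that $\LL(v_3)$ is complete bipartite for the horizontal/vertical splitting of totally isotropic planes (Proposition \ref{pro:linktype3}), that every loop in $\LL(x_1)$ has even length because its vertices alternate between the two $\STSL$-orbits of type-2 vertices (Lemma \ref{lem:link1loop_are_even}), and that there are no double edges (Lemma \ref{lem:uniquesquare}). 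Second, and more seriously, the simple connectedness step, which you yourself flag as the main obstacle, is where the real content lies, and your sketch contains no mechanism to carry it out: the paper's proof (Proposition \ref{pro:1connected}) is an induction on the maximal degree of the type-3 vertices along the loop, and its engine is Lemma \ref{lem:12inLV}, a Shestakov--Umirbaev/Kuroda-type statement proved via the parachute estimates of the Annex, asserting that when the maximal vertex $[f]$ admits the two transverse elementary moves coming from its neighbours on the loop, one of four degree conditions holds, each permitting an explicit local homotopy across a $2\times 2$ grid that strictly decreases the degree data. Without an analogue of this lemma, the claim that any combinatorial loop can be progressively shortened by elementary moves bounding squares is not justified.
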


As a sample of possible applications of such a construction we obtain:
 
\begin{maintheorem} \label{thm:mainLin}
Any finite subgroup in $\Tame(\SL_2)$ is linearizable, that is, conjugate to a subgroup of the orthogonal group $\OO_4$.
\end{maintheorem}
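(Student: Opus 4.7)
The plan is to apply Theorem A together with the Bruhat--Tits fixed point theorem for finite isometric actions on complete $\CAT(0)$ spaces. Since $\Comp$ is $\CAT(0)$ by Theorem A and complete (as a piecewise Euclidean cube complex with finitely many shapes of cells), any finite subgroup $G \subseteq \TSL$ fixes some point of $\Comp$. That point belongs to a cell $C$ of $\Comp$, which is therefore setwise stabilized by $G$; hence $G$ is contained in the $\TSL$-stabilizer of one of finitely many orbit representatives of cells. Theorem B then reduces to showing that for each such orbit representative $C$, every finite subgroup of $\Stab_{\TSL}(C)$ is conjugate in $\TSL$ to a subgroup of $\OO_4$.

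Orbit representatives can be taken to be built from the identity element $\id \in \SL_2$: the ``full automorphism" vertex, the ``row" and ``column" vertices, the ``component" vertices, and the edges and squares joining them. The stabilizer of the full automorphism vertex is precisely $\OO_4 \cap \TSL$, so Theorem B is automatic there. For the remaining cell types the stabilizer is strictly larger: it contains one-parameter families of elementary automorphisms preserving the chosen row, column, or entry of the identity matrix. The bulk of the proof is the explicit description of these stabilizers together with a reduction of their finite subgroups into $\OO_4$.

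I expect each non-orthogonal stabilizer $H = \Stab_{\TSL}(C)$ to admit a Levi-type decomposition $H = U \rtimes L$ with $U$ a (pro-)unipotent group generated by the relevant elementaries and $L \subseteq \OO_4$ reductive. Since a unipotent complex algebraic group is torsion-free, any finite subgroup $G \subseteq H$ injects into the quotient $L$, and the vanishing of $H^1(G,U)$ in characteristic zero then yields an element $u \in U$ conjugating $G$ into $L \subseteq \OO_4$. The technical heart of the proof will be the explicit identification of $U$ and $L$ for each cell orbit, relying on the detailed combinatorial description of $\Comp$ given in Section \ref{sec:complexes}; this is the step I expect to be the main obstacle, since the naive candidate for $U$ will typically not be a group, and the correct filtration has to be read off from how the elementaries act on the various vertex types.
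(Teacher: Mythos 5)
Your opening moves match the paper: Theorem \ref{thm:mainHyp} plus the Bruhat--Tits fixed point argument gives a fixed point, and one reduces (up to conjugacy) to the stabilizers of the cells of the standard square, i.e.\ to $\Stab([\id])=\OO_4$, $\Stab([x_1,x_3])$ and $\Stab([x_1])$. For the type 2 vertex your plan also works and is essentially the paper's: $\Stab([x_1,x_3])=\Er\rtimes\GL_2$ (Lemma \ref{lem:stab x1x3}), with $\Er$ a vector group and $\GL_2\subseteq\OO_4$, and your ``vanishing of $H^1$'' step is exactly the averaging argument of Lemma \ref{lem:abstract linearization}.

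The genuine gap is the type 1 vertex. The group $\Stab([x_1])$ does \emph{not} admit a Levi-type decomposition $U\rtimes L$ with $U$ torsion-free and $L\subseteq\OO_4$: by Corollary \ref{cor:linktype1} its elements with $f_1=x_1$ realize a large group of automorphisms of $\A^2_{\C[x_1]}$ generated by the swap $\tau$ and the maps $(ax_2+x_1P(x_1,x_3),a^{-1}x_3)$, and this group is built (via Nagao's theorem $\GL_2(\C[x_1])=\GL_2(\C)*_{B(\C)}B(\C[x_1])$) as a nontrivial amalgamated product, with torsion elements such as $\tau$ sitting in infinitely many non-conjugate-looking positions; there is no normal torsion-free complement to a reductive part, any more than there is for $\Aut(\C^2)$. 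So the reduction of a finite subgroup of $\Stab([x_1])$ to a linear group cannot be done by a single semidirect splitting plus cohomology vanishing: the missing idea is the Bass--Serre input. The paper proves $\Stab([x_1])=K_1*_{K_1\cap H_2}H_2$ (Proposition \ref{pro:second amalgamated structure of Stabx1}, resting on the tree constructed from $\LL(x_1)$ in \S\ref{sec:link type 1} and on Nagao's theorem), invokes Serre's fixed point theorem to conjugate the finite group into $K_1$ or $H_2$, and only then applies the averaging Lemma \ref{lem:abstract linearization} (for $K_1$, viewed inside $\Aut(\C^3)$) or the triangular diagonalization of Corollary \ref{cor:linearization in triangular} (for $H_2$). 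Your proposal flags the identification of $U$ and $L$ as ``the main obstacle'', but as stated the obstacle is not a filtration issue: for this stabilizer the asserted decomposition does not exist, and the amalgam structure is the substitute that makes the proof go through.
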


\begin{maintheorem} \label{thm:mainTits}
The group $\Tame(\SL_2)$ satisfies the Tits alternative, that is, for any subgroup $G \subseteq \Tame(\SL_2)$ we have:
\begin{enumerate}
\item either $G$ contains a solvable subgroup of finite index;
\item or $G$ contains a free subgroup of rank 2.
\end{enumerate}
\end{maintheorem}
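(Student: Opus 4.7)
The plan is to deduce Theorem C from the action of $\TSL$ on the square complex $\Comp$, exploiting the $\CAT(0)$-hyperbolicity provided by Theorem A. Let $G \subseteq \TSL$ be an arbitrary subgroup acting on $\Comp$. The standard classification of isometry groups of a Gromov-hyperbolic $\CAT(0)$ space partitions $G$ into four mutually exclusive types according to its limit set in the visual boundary $\partial \Comp$:
\begin{enumerate}
\item $G$ has a bounded orbit in $\Comp$ (elliptic);
\item $G$ has exactly one fixed point in $\partial \Comp$ (parabolic);
\item $G$ preserves a pair of points in $\partial \Comp$ and contains a loxodromic isometry (lineal);
\item $G$ contains two loxodromic elements with disjoint pairs of endpoints on $\partial \Comp$ (general type).
\end{enumerate}

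Case (iv) is handled by the classical ping-pong argument on the Gromov boundary: taking sufficiently high powers of two independent loxodromic elements produces a non-abelian free subgroup of rank 2 in $G$. Case (iii) forces $G$ to be virtually infinite cyclic, hence virtually solvable.

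For case (i), the Bruhat--Tits--Cartan fixed-point theorem, valid in any complete $\CAT(0)$ space, guarantees a point of $\Comp$ fixed by $G$; since $\TSL$ acts cellularly on $\Comp$, a finite-index subgroup of $G$ fixes a vertex. The crucial observation is that every vertex stabilizer in $\Comp$ is a linear algebraic group: modulo the identifications of Section~\ref{sec:complexes}, the stabilizer of the vertex $[\id]$ associated with a full automorphism is (a conjugate of) the orthogonal group $\OO_4$, and the stabilizers of the vertices associated with a row, a column or a single component are extensions of a linear subgroup of $\OO_4$ by an abelian unipotent group of elementary transformations parametrized by polynomials in at most two variables. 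Each such stabilizer sits inside some $\GL_n(\C)$, and the classical Tits alternative for linear groups then yields the alternative for $G$.

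The main obstacle is the parabolic case (ii). Such a $G$ contains no loxodromic element, so one must show by a more refined argument that $G$ nevertheless satisfies the alternative. The natural route is to combine two ingredients: first, a direct analysis of horospherical stabilizers of boundary points in $\Comp$, showing, via the cocompactness of the action and the geometry of the squares along a geodesic ray to the parabolic fixed point, that every finitely generated subgroup $G_0 \subseteq G$ has bounded orbits and therefore falls under case~(i); second, a linearity argument extending the Tits alternative from the finitely generated subgroups $G_0$ to $G$ itself, using the uniform bound on the dimension of the vertex stabilizers to bound the derived length of a potential virtually solvable replacement and hence to control directed unions. Completing this parabolic analysis, i.e.\ verifying that horospherical stabilizers are genuinely small (linear, or at worst directed unions of linear groups of bounded dimension), is where the technical heart of the argument lies.
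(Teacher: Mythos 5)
Your framework (classify $G$ by its limit set on the hyperbolic $\CAT(0)$ complex, ping-pong in the general-type case) is in the spirit of the paper, which instead invokes the Ballmann--{\'S}wi{\polhk{a}}tkowski theorem (Theorem \ref{thm:Ballmann Swiatkowski}, specialized in Corollary \ref{cor:ballman}); but two of your cases rest on claims that are false for this action, and the case you admit is open is exactly where the paper's real argument lies. First, the elliptic case: it is not true that vertex stabilizers are linear algebraic groups. The stabilizer of a type 1 vertex $[x_1]$ surjects onto $\C^*$ with kernel a subgroup of $\Aut_{\C(x_1)}\C(x_1)[x_2,x_3]$ (Corollary \ref{cor:linktype1}); this is an infinite-dimensional group of plane automorphisms over the field $\C(x_1)$, not an extension of a linear group by an abelian unipotent group, and the classical Tits alternative for linear groups does not apply to it. The paper handles it by the Tits alternative for $\Aut_K K[x_2,x_3]$ in characteristic zero (\cite{Ltits}) together with stability of the alternative under extensions (Lemma \ref{lem:Tits alternative is stable by extension}); even for the type 2 stabilizer $\Er \rtimes \GL_2$ the unipotent part is infinite-dimensional, so the argument is extension-stability, not linearity. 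Second, the lineal case: since the action is far from proper, a subgroup fixing a pair of boundary points and containing a loxodromic need \emph{not} be virtually infinite cyclic; the kernel of the translation homomorphism can be a large group of elliptic elements (this is precisely case (2) of Corollary \ref{cor:ballman}, where the kernel is elliptic or parabolic), so this case must be reduced to the elliptic/parabolic analysis via Lemma \ref{lem:Tits alternative is stable by extension} rather than dismissed.

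For the parabolic case, your proposed route does not close the gap: the ``uniform bound on the dimension of the vertex stabilizers'' you want to use does not exist (see above), and reducing to finitely generated subgroups only gives the alternative for those subgroups, not virtual solvability of $G$ with uniform index and derived length, which is what a directed-union argument needs. The paper's actual argument (Proposition \ref{pro:parabolic subgroups}) replaces the sequence converging to the fixed boundary point by an edge-path of vertices $y_i$, writes the parabolic group as the increasing union of the subgroups fixing all but finitely many $y_i$, and observes that the pointwise stabilizer of three consecutive vertices is conjugate to one of three explicit groups: the square stabilizer $S$ of Lemma \ref{lem:action on squares} or the group $\widetilde{\Er}$, both with trivial second derived subgroup, or $\GL_2$, which is handled by the Tits alternative for $\SL_2$ plus a Zariski-closure argument giving solvability of index at most $3$ uniformly in $n$. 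It is this uniform solvability of the filtration steps that makes the union virtually solvable; without it (or some substitute), your parabolic case, and hence the theorem, remains unproved.
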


The paper is organized as follows.
In Section \ref{sec:tameSL2} we gather some definitions and facts about the groups $\TSL$ and $\OO_4$.
The square complex is constructed in Section \ref{sec:complexes}, and some of its basic properties are established. 
Then in Section \ref{sec:complexes_bis} we study its geometry: links of vertices, non-positive curvature, simple connectedness, hyperbolicity. In particular, we obtain a proof of Theorem \ref{thm:mainHyp}.
In studying the geometry of the square complex one realizes that some simplicial trees naturally appear as substructures, for instances in the link of some vertices, or as hyperplanes of the complex. 
At the algebraic level this translates into the existence of many amalgamated product structures for subgroups of $\TSL$.
In Section \ref{sec:structures} we study in details some of these products, which are reminiscent of Russian nesting dolls (see Figure \ref{fig:nesting}). 
Groups acting on $\CAT(0)$ spaces satisfy nice properties: for instance any such finite group admits a fixed point. 
In Section \ref{sec:theorems} we exploit such geometric properties to give a proof of Theorems \ref{thm:mainLin} and \ref{thm:mainTits}.
In Section \ref{sec:complements} we give some examples of elliptic, parabolic and loxodromic subgroups, which appear in the proof of the Tits alternative. We also briefly discuss the case of $\Tame(\C^3)$, comment on the recent related work \cite{wright}, and propose some open questions.  
Finally we gather in an annex some reworked results from \cite{LV} about the theory of elementary reductions on the groups $\TSL$ and $\TQ$. 

\section{Preliminaries} \label{sec:tameSL2}

We identify $\mathbb{C}^4$ with the space of $ 2 \times 2$ complex matrices.
So a polynomial automorphism $f$ of $\C^4$ is denoted by
$$ f \colon \mat{x_1}{x_2}{x_3}{x_4} \mapsto  \mat{f_1}{f_2}{f_3}{f_4},$$
where $f_i \in \C[x_1,x_2,x_3,x_4]$ for $1 \le i \le 4$, 
or simply by $f = \smat{f_1}{f_2}{f_3}{f_4}$.
We choose to work with the smooth affine quadric given by the equation $q=1$, where  $q=x_1x_4-x_2x_3$ is the determinant:
$$ \SL_2= \left\lbrace  \mat{x_1}{x_2}{x_3}{x_4};\; x_1x_4-x_2x_3=1 \right\rbrace.$$  
We insist that we use this point of view for notational convenience, but we are interested only in the underlying variety of $\SL_2$.
In particular $\Aut(\SL_2)$ is the group of automorphism of $\SL_2$ as an affine variety, and not as an algebraic group.

We denote by $\Autq$ the subgroup of $\Aut(\C^4)$ of automorphisms preserving the quadratic form $q$:
\[\Autq = \{ f \in \Aut (\C^4); \; q \circ f =q \}.\]
We will often denote an element $f \in \Autq$ in an abbreviated form such as $f = \smat{f_1}{f_2}{f_3}{\dots}$: Here the dots should be replaced by the unique polynomial $f_4$ such that $f_1f_4 - f_2f_3 = x_1x_4-x_2x_3$.
We call $\TQ$ the subgroup of $\Autq$ generated by $\OO_4$ and $\Er$, where $\OO_4 = \Autq \cap \GL_4$ is the complex orthogonal group associated with $q$, and $\Er$ is the group defined as
\[ \Er = \left\lbrace \mat{x_1}{x_2 + x_1P(x_1,x_3)}{x_3}{x_4+x_3P(x_1,x_3)}; \;P \in \C[x_1,x_3] \right\rbrace.\]
We denote by $\rho\colon \Autq \to \Aut(\SL_2)$ the natural restriction map, and we define the \textbf{tame group} of $\SL_2$, denoted by $\TSL$, to be the image of $\TQ$ by $\rho$.
We also define $\STQ$ as the subgroup of index 2 in $\TQ$ of automorphisms with linear part in $\SO_4$, and the \textbf{special tame group} $\STSL = \rho(\STQ)$.

\begin{remark}
The morphism $\rho$ is clearly injective in restriction to $\OO_4$ and to $\Er$: This justifies that we will consider $\OO_4$ and $\Er$ as subgroups of $\TSL$.
On the other hand it is less clear if $\rho$ induces an isomorphism between $\TQ$ and $\TSL$: It turns out to be true, but we shall need quite a lot of machinery before being in position to prove it (see Proposition \ref{pro:TQ=TSL}).  
Nevertheless by abuse of notation if $f = \smat{f_1}{f_2}{f_3}{f_4}$ is an element of $\TQ$ we will also consider $f$ as an element of $\TSL$, the morphism $\rho$ being implicit. 
See also Section \ref{sec:rho} for other questions around the restriction morphism $\rho$.
\end{remark}

The Klein four-group $\V_4$  will be considered as the following subgroup of  $\OO_4$:
\[\V_4 = \left\lbrace \id,\; \mat{x_4}{x_2}{x_3}{x_1},\; \mat{x_1}{x_3}{x_2}{x_4},\; \mat{x_4}{x_3}{x_2}{x_1} \right\rbrace .\]
In particular $\V_4$ contains the transpose automorphism $\tau = \smat{x_1}{x_3}{x_2}{x_4}$.

\subsection{Tame\texorpdfstring{$\mathbf{(\text{SL}_2)}$}{(SL2)}}

We now review some results which are essentially contained in \cite{LV}.
However, we adopt some slightly different notations and definitions.
For the convenience of the reader, we give self-contained proofs of all needed results in an annex.

We define a degree function on $\C[x_1,x_2,x_3,x_4]$ with value in $\N^4 \cup \{-\infty\}$ by taking
\begin{align*}
\deg_{\C^4} x_1 &= (2,1,1,0) &\deg_{\C^4} x_2 = (1,2,0,1) \\
\deg_{\C^4} x_3 &= (1,0,2,1) &\deg_{\C^4} x_4 = (0,1,1,2) 
\end{align*}
and by convention $\deg_{\C^4} 0 = -\infty$.
We use the graded lexicographic order on $\N^4$ to compare degrees.
We obtain a degree function on the algebra $\C[\SL_2] = \C[x_1,x_2,x_3,x_4]/(q-1)$ by setting
$$\deg p = \min\{ \deg_{\C^4} r;\; r \equiv p \mod (q-1) \}.$$
We define two notions of degree for an automorphism $f = \smat{f_1}{f_2}{f_3}{f_4} \in \TSL$:
\begin{align*}
\degsum f &= \sum_{1 \, \leq \, i \, \leq \, 4} \deg f_i;\\
\degmax f &= \max_{1 \, \leq \, i \, \leq \, 4} \deg f_i.
\end{align*}

\begin{lemma} \label{lem:degmax}
Let $f$ be an element in $\TSL$.
\begin{enumerate}
\item For any $u \in \OO_4$, we have $\degmax f =\degmax  u \circ f$.
\item We have $f \in \OO_4$ if and only if $\degmax f = (2,1,1,0)$.
\end{enumerate}
\end{lemma}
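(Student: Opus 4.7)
The plan is to unwind the definitions and exploit the shape of the weights $\deg_{\C^4} x_i$.

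For (1), I would use a simple valuation-theoretic argument. Any $u \in \OO_4 \subset \GL_4$ is linear, so writing $u = (a_{ij})$, each component of $u \circ f$ is $\sum_j a_{ij} f_j$. Subadditivity of the degree on $\C[\SL_2]$ yields $\deg (u \circ f)_i \leq \max_j \deg f_j = \degmax f$, hence $\degmax(u \circ f) \leq \degmax f$. Applying the same inequality to $u^{-1} \in \OO_4$ and $u \circ f$ gives the reverse bound.

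For (2), the preliminary observation is that $(2,2,2,2)$ has coordinate sum $8$ and so is strictly greater than $(2,1,1,0)$ (sum $4$) in graded lex; since $q$ is $\N^4$-homogeneous of degree $(2,2,2,2)$, any nonzero element of $(q-1)$ has $\C^4$-degree at least $(2,2,2,2)$. Combined with a quick census of monomials---a polynomial of $\C^4$-degree $\leq (2,1,1,0)$ must be affine in $x_1,\ldots,x_4$, since any quadratic monomial has coordinate-sum $8$---this shows that any $p \in \C[\SL_2]$ with $\deg p \leq (2,1,1,0)$ admits a unique affine representative in $\C[\C^4]$.

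The forward direction of (2) is then immediate: for $f \in \OO_4$, each $f_i$ is linear in $x_1,\ldots,x_4$, so $\deg f_i \leq (2,1,1,0)$, and invertibility of the matrix of $f$ forces some $f_i$ to carry a nonzero $x_1$-coefficient, giving $\deg f_i = (2,1,1,0)$. For the converse, $\degmax f = (2,1,1,0)$ makes $f$ the restriction of an affine map $\tilde f(M) = AM + C$. The bijection $f \colon \SL_2 \to \SL_2$, together with the linear independence of $1, x_1, x_2, x_3, x_4$ in $\C[\SL_2]$ (so that $\SL_2$ is not contained in any hyperplane of $\C^4$), will force $A$ to be invertible. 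Then the identity $q(AM+C) - 1 = \lambda(q(M) - 1)$ in $\C[\C^4]$ (both sides have total degree $\leq 2$ in $M$ and agree on $\SL_2$) splits into homogeneous-in-$M$ components: the linear part $B(AM, C) = 0$ forces $C = 0$ by non-degeneracy of the polarization $B$ and invertibility of $A$, after which comparing constants gives $\lambda = 1$, so $A \in \OO_4$. The main subtlety I anticipate is this rigidity step pinning down $C = 0$ and $\lambda = 1$; part (1) and the forward direction of (2) are essentially formalities once the observations about monomial degrees and the ideal $(q-1)$ are in place.
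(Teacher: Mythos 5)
Your proof is correct and takes essentially the same route as the paper: part (1) is the identical two-sided inequality using $u^{-1}$, and part (2) rests on the same key observation that the chosen weights make $\degmax f = (2,1,1,0)$ equivalent to all components having ordinary degree $1$. The paper leaves the converse's rigidity step (showing the affine extension has $A$ invertible, $C=0$, $\lambda=1$, hence lies in $\OO_4$) implicit, and you fill it in correctly.
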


\begin{proof}
\begin{enumerate}
\item Clearly $\degmax  u \circ f \leq \degmax f$, and similarly we get $ \degmax f = \degmax u^{-1} \circ (u \circ f) \leq \degmax u \circ f$.
\item This follows from the fact that if $P \in \C[x_1,x_2,x_3,x_4]$ with $\deg_{\C^4} P = (i,j,k,l)$, then the ordinary degree of $P$ is the average $\frac14 (i+j+k+l)$. \qedhere
\end{enumerate}
\end{proof}

The degree $\degsum$ was the one used in \cite{LV}, with a different choice of weights with value in $\N^3$.
Because of the nice properties in Lemma \ref{lem:degmax} we prefer to use $\degmax$, together with the above choice of weights.
The choice to use a degree function with value in $\N^4$ is mainly for aesthetic reasons, on the other hand the property that the ordinary degree is recovered by taking mean was the main impulse to change the initial choice.
From now on we will never use $\degsum$, and we simply denote $\deg = \degmax$.
 
An \textbf{elementary automorphism} (resp. a \textbf{generalized elementary automorphism}) is an element $e \in \TSL$ of the form
\[e = u \mat{x_1}{x_2 + x_1P(x_1,x_3)}{x_3}{x_4+x_3P(x_1,x_3)} u^{-1}\]
where $P \in \C[x_1,x_3]$, $u \in \V_4$ (resp. $u \in \OO_4$). 
Note that any  elementary automorphism belongs to (at least) one of the four subgroups
$\Et$, $\Eb$, $\El$, $\Er$ of $\TSL$ respectively defined as the set of elements of the form
\begin{multline*}
\mat{x_1+x_3Q(x_3,x_4)}{x_2+x_4Q(x_3,x_4)}{x_3}{x_4}, \; \mat{x_1}{x_2}{x_3+x_1Q(x_1,x_2)}{x_4+x_2Q(x_1,x_2)}, \\
\mat{x_1+x_2Q(x_2,x_4)}{x_2}{x_3+x_4Q(x_2,x_4)}{x_4}, \; \mat{x_1}{x_2+x_1Q(x_1,x_3)}{x_3}{x_4+x_3Q(x_1,x_3)}  ,
\end{multline*}
where $Q$ is any polynomial in two indeterminates.

We say that $f \in \TSL$ admits an \textbf{elementary reduction} if there exists an elementary automorphism $e$ such that $\deg e\circ f < \deg f$.
In \cite{LV}, the definition of an elementary automorphism is slightly different. 
However all these changes -- new weights, new degree, new elementary reduction -- do not affect the formulation of the main theorem; in fact it simplifies the proof:

\begin{theorem}[see Theorem \ref{thm:main}]\label{th:main theorem of LV}
Any non-linear element of $\TSL$ admits an elementary reduction.
\end{theorem}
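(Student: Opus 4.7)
The plan is to follow the general strategy of Lamy--Vénéreau, but adapted to the slightly modified weights and degree $\deg = \degmax$. I would first observe that by Lemma \ref{lem:degmax}, postcomposition with an element of $\OO_4$ does not alter the degree, so one can freely normalize an expression $f = e_n \circ \cdots \circ e_1$ of $f$ as a product of generators in $\OO_4 \cup \Er$: absorb each linear factor into the adjacent $\Er$-factor to rewrite $f$ as an alternating composition of elements of $\OO_4$ and of the four generalized families $\Et, \Eb, \El, \Er$ (using conjugation by $\V_4$).

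Next I would induct on the length $n$ of such an alternating expression. For $n \le 1$ a direct inspection shows $f$ is either linear (contradicting non-linearity) or itself elementary, so trivially reducible. For the inductive step, I would analyze the top-degree components of the four entries $f_1,f_2,f_3,f_4$, using two structural inputs:
\begin{enumerate}
\item the quadratic identity $f_1 f_4 - f_2 f_3 \equiv 1 \pmod{q-1}$, which forces non-trivial cancellation between the leading terms of $f_1 f_4$ and $f_2 f_3$;
\item the explicit degree shifts produced by composing on the left with an element of one of $\Et, \Eb, \El, \Er$, which can be tabulated from the $\N^4$-weights assigned to $x_1,x_2,x_3,x_4$.
\end{enumerate}
The combination of these two inputs constrains the possible shapes of the leading tuple $(\deg f_1, \deg f_2, \deg f_3, \deg f_4)$ to a small list of patterns, and for each pattern one exhibits explicitly an elementary automorphism $e$ (conjugate by a suitable $u \in \V_4$ of a generator of $\Er$, supported on the ``dominant'' row or column) whose top degree exactly cancels that of $f$, yielding $\deg(e \circ f) < \deg f$.

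The main obstacle will be the combinatorial bookkeeping in the case analysis: one must verify, uniformly in the four possible positions of the dominant pair, that the polynomial $P$ needed to produce the cancellation actually lives in the required ring $\C[x_i,x_j]$ and produces an elementary automorphism in the correct $\V_4$-conjugate of $\Er$. Here the new $\N^4$-valued weights play a crucial simplifying role, because they are tailored so that the four rows/columns of a matrix in $\SL_2$ carry symmetric weight data, and the ordinary degree is recovered as the mean of the four coordinates (Lemma \ref{lem:degmax}(2)). This symmetry lets one reduce the four potential reduction directions to a single prototypical case by the $\V_4$-action, cutting the case analysis roughly by a factor of four compared to \cite{LV}.

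Finally, I would verify that the inductive step preserves the hypothesis of being in $\TSL$ (which is automatic since $e \in \TSL$) and that the strict decrease $\deg(e \circ f) < \deg f$ in the graded lexicographic order on $\N^4 \cup \{-\infty\}$ gives a well-founded induction, so the procedure terminates at a linear element — thereby establishing the theorem, and as a byproduct recovering the generation statement that $\TSL$ is generated by $\OO_4$ together with the elementary automorphisms.
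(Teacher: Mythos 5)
Your outer structure (write $f$ as a word in $\OO_4$ and the elementary families, then induct on the word) matches the paper's use of its Lemma \ref{lem: easy decomposition of elements of G}, but the inner step hides a genuine gap. You assert that the quadratic identity $f_1f_4-f_2f_3=1$ plus a table of degree shifts "constrains the possible shapes of the leading tuple to a small list of patterns", each admitting an explicit cancelling elementary automorphism. That assertion \emph{is} the theorem, and it cannot be extracted from those two inputs alone: the relation only yields $\deg f_1+\deg f_4=\deg f_2+\deg f_3$ (cancellation of leading parts), which is far too weak to pin down reducible patterns. The paper's proof requires a genuinely analytic ingredient, the parachute-type estimate (Lower bound \ref{mino:TSL}, proved via pseudo-Jacobians $\jj_k$ and the parachute $\nabla(f_1,f_2)$): if $R(f_1,f_2)\notin\C[f_2]$ and $\hom{f_1}\notin\C[\hom{f_2}]$ then $\deg(f_2R(f_1,f_2))>\deg f_1$. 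This is what powers the key dichotomy (Lemma \ref{lem:minoration does not apply to both}) ruling out the bad interaction of two competing reductions, and the residual degenerate cases (i)--(iv) of Lemma \ref{lem:conditions} are then handled one by one. Nothing in your sketch plays this role, and without it the "small list of patterns" claim is unsubstantiated — the whole difficulty of Shestakov--Umirbaev/Kuroda-type theorems is precisely that naive leading-term analysis of this kind fails.

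There is also a structural problem with your induction. The statement "admits an elementary reduction" is not stable under left composition with a generator: knowing that $f'=e_{n-1}\circ\cdots\circ e_1$ has one elementary reduction tells you essentially nothing about $e_n\circ f'$. The paper fixes this by strengthening the inductive statement to membership in the set $\a$ of automorphisms admitting a \emph{full} sequence of elementary reductions down to $\OO_4$, and by inducting on the degree $d=\deg f$ (well-ordered in $\N^4$), not on word length: Proposition \ref{pro:main} shows $\a$ is stable under composition with an elementary automorphism, assuming the claim for all elements of $\a$ of smaller degree. You would need both the strengthened statement and the degree induction, in addition to the lower-bound machinery, for your plan to go through; the $\V_4$-symmetry of the new weights does simplify the bookkeeping (as in the paper's List of Cases \ref{list:3cases}), but it does not substitute for these ingredients.
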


Since the graded lexicographic order of $\N^4$ is a well-ordering, Theorem \ref{th:main theorem of LV} implies that any element $f$ of $\TSL$ admits a finite sequence of elementary reductions
\[ f \to e_1 \circ f \to e_2 \circ e_1 \circ f \to \cdots \to e_n  \circ \cdots \circ e_1 \circ f\]
such that the last automorphism is an element of $\OO_4$.

An important technical ingredient of the proof is the following lemma, which tells that under an elementary reduction, the degree of both affected components decreases strictly.

\begin{lemma}[see Lemma \ref{lem:degree of each component drops}] \label{lem:both degree drop}
Let $f = \smat{f_1}{f_2}{f_3}{f_4} \in \TSL$.
If $e \in \El$ and 
$$e\circ f =  \mat{f'_1}{f_2}{f'_3}{f_4},$$ 
then
$$\deg e \circ f  \sphericalangle \deg f \Longleftrightarrow \deg f'_1 \sphericalangle \deg f_1  \Longleftrightarrow   \deg f'_3 \sphericalangle \deg f_3$$
for any relation $\sphericalangle$ among $<$, $>$, $\le$, $\ge$ and $=$.
\end{lemma}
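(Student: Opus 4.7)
The idea is to exploit the algebraic identity $f_1 f_4 - f_2 f_3 = 1$ satisfied by every element of $\TSL$, both for $f$ and for $e \circ f$. This yields a degree-balance constraint that couples the degrees of the four components, and ultimately forces $\deg f_1$ and $\deg f_3$ to evolve in sync under an elementary reduction of type $\El$.

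First, I would write $e$ in the standard form $e = \smat{x_1 + x_2 Q(x_2, x_4)}{x_2}{x_3 + x_4 Q(x_2, x_4)}{x_4}$ for some $Q \in \C[y,z]$, and set $g := Q(f_2, f_4) \in \C[\SL_2]$. A direct substitution yields
\[f_1' = f_1 + f_2\, g, \qquad f_3' = f_3 + f_4\, g,\]
and one immediately verifies $f_1' f_4 - f_2 f_3' = f_1 f_4 - f_2 f_3 = 1$, confirming $e \circ f \in \TSL$. Structurally, $e$ adds the same multiplier $g$ to the pair $(f_1, f_3)$ with coefficients $(f_2, f_4)$, and this is the underlying reason why $\deg f_1$ and $\deg f_3$ must change in parallel.

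The key ingredient is the degree-balance identity, valid for any $h \in \TSL$:
\[\deg h_1 + \deg h_4 = \deg h_2 + \deg h_3.\]
Since $\C[\SL_2]$ is an integral domain, the identity $h_1 h_4 - h_2 h_3 = 1$ forces the (positive-degree) products $h_1 h_4$ and $h_2 h_3$ to have the same weighted degree. It then suffices to establish that $\deg(ab) = \deg a + \deg b$ for these products: the weights $(2,1,1,0)$, $(1,2,0,1)$, $(1,0,2,1)$, $(0,1,1,2)$ have been chosen precisely so that for components of tame automorphisms the top-degree parts in $\C[\C^4]$ do not collapse modulo $q-1$. Applying this balance both to $f$ and to $e \circ f$ and subtracting, legitimately in the totally ordered group $\Z^4$, gives
\[\deg f_1 - \deg f_1' = \deg f_3 - \deg f_3',\]
from which $\deg f_1' \sphericalangle \deg f_1 \iff \deg f_3' \sphericalangle \deg f_3$ follows for every relation $\sphericalangle$ among $<, >, \le, \ge, =$.

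For the remaining equivalence with $\deg e \circ f \sphericalangle \deg f$, observe that $f_2$ and $f_4$ are untouched, hence
\[\deg e \circ f = \max(\deg f_1', \deg f_2, \deg f_3', \deg f_4).\]
Rewriting the balance as $\deg f_1 - \deg f_2 = \deg f_3 - \deg f_4$ pins down the ``dominant column'' of $f$; a short case analysis on the sign (in graded lex) of this common difference then shows that any change in $(\deg f_1, \deg f_3)$ is precisely reflected in $\deg e \circ f$. The main obstacle is the multiplicativity of $\deg$ on $\C[\SL_2]$, i.e.\ ensuring that no cancellation of leading terms modulo $q-1$ occurs for products of components of elements of $\TSL$; once granted, the remaining argument is essentially formal.
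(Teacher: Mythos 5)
Your skeleton is the same as the paper's: the balance identity $\deg f_1+\deg f_4=\deg f_2+\deg f_3$ coming from $f_1f_4-f_2f_3=1$, the resulting common difference $\deg f_1-\deg f_1'=\deg f_3-\deg f_3'$, and a max/dominant-column analysis. The second equivalence (between the two component relations) is correctly obtained this way. The gap is in the first equivalence, precisely the step you describe as ``a short case analysis on the sign of $\deg f_1-\deg f_2$'' that is ``essentially formal''. It is not formal. When $\deg f_1\le\deg f_2$ (equivalently $\deg f_3\le\deg f_4$), the untouched components $f_2,f_4$ already realize $\deg f$, so $\deg e\circ f\ge\deg f$ automatically, and nothing in the balance identity prevents $\deg f_1'$ and $\deg f_3'$ from dropping anyway, which would destroy the equivalence. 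To exclude this you must use that $Q$ is non-constant: then $Q(f_2,f_4)$ is non-constant (as $f_2,f_4$ are algebraically independent), so $\deg\bigl(f_2\,Q(f_2,f_4)\bigr)=\deg f_2+\deg Q(f_2,f_4)>\deg f_2$, and a cancellation $\deg f_1'<\deg f_1$ forces $\deg f_1=\deg f_2+\deg Q(f_2,f_4)>\deg f_2$ and likewise $\deg f_3>\deg f_4$; this is exactly what makes the first column dominant and lets a component drop propagate to $\deg e\circ f$. The paper's proof does precisely this after stipulating that the polynomial is non-constant. Your sketch never invokes non-constancy, and without it the claim ``any change in $(\deg f_1,\deg f_3)$ is precisely reflected in $\deg e\circ f$'' is genuinely false: take $f=\smat{x_1}{x_1+x_2}{x_3}{x_3+x_4}\in\OO_4$ and $e=\smat{x_1-x_2}{x_2}{x_3-x_4}{x_4}\in\El$ (constant $Q=-1$); then $\deg f_1'=\deg x_2<\deg x_1=\deg f_1$ while $\deg e\circ f=\deg f=(2,1,1,0)$. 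So a complete proof must make the non-constancy of $Q$ enter somewhere, and that is the missing step.

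Two smaller remarks. The additivity $\deg(ab)=\deg a+\deg b$, which you flag as the main obstacle and then grant yourself, does hold on all of $\C[\SL_2]$, but not for the reason you give: it follows from the irreducibility of $q$ together with the characterization, recalled in \S\ref{sec:rho} from \cite[\S 2.5]{LV}, that the minimum over representatives is attained exactly when $q$ does not divide the leading form; the product of two minimal representatives is then again minimal. With that in hand your derivation of the balance and of the common difference is fine (including for linear $f$). Finally, note the paper only proves the case $\sphericalangle={<}$ in both directions and then gets ${>}$ by applying the statement to $e^{-1}$ and $e\circ f$, which is lighter than carrying out the full case analysis you allude to.
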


A useful immediate corollary is:

\begin{corollary} \label{cor:linktype1}
Let $f = \smat{f_1}{f_2}{f_3}{f_4} \in \STSL$ be an automorphism such that $f_1 = x_1$.
Then $f$ is a composition of elementary automorphisms preserving $x_1$. 
In particular, $f_2$ and $f_3$ do not depend on $x_4$ and we can view $(f_2,f_3)$ as defining an element of the subgroup of $\Aut_{\C[x_1]}\C[x_1][x_2,x_3]$ generated by $(x_3,x_2)$ and automorphisms of the form $(ax_2 + x_1P(x_1,x_3), a^{-1}x_3)$. In particular, if $f_3=x_3$, there exists some polynomial $P$ such that $f_2=x_2 + x_1 P(x_1,x_3)$.
\end{corollary}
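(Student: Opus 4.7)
The plan is to argue by strong induction on $\deg f$, combining Theorem \ref{th:main theorem of LV} with Lemma \ref{lem:both degree drop} (and its symmetric analogues for the three other elementary subgroups) and the constraint $f_1=x_1$. The key claim is: when $f\notin\OO_4$, any elementary reduction of $f$ is necessarily by an element of $\Eb\cup\Er$, that is, by an elementary automorphism preserving $x_1$. Granted the claim, the automorphism $e\circ f$ still satisfies $(e\circ f)_1=x_1$ and has strictly smaller degree, so the inductive hypothesis applies.

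To prove the claim, assume for contradiction that $e\in\El$ reduces $f$ (the $e\in\Et$ case is entirely symmetric). By Lemma \ref{lem:both degree drop}, $\deg f_1' < \deg f_1 = (2,1,1,0)$; since a polynomial in $\C[\SL_2]$ of degree strictly below $(2,1,1,0)$ admits a lift of the form $c_0+c_2 x_2+c_3 x_3+c_4 x_4$, we get such a lift for $f_1'$. Applying $e^{-1}=(x_1-x_2 Q,\,x_2,\,x_3-x_4 Q,\,x_4)$ we obtain
\[ f_1 \;=\; e^{-1}(f_1') \;\equiv\; c_0+c_2 x_2 + c_3 x_3 + c_4 x_4 - c_3 x_4\,Q(x_2,x_4) \pmod{q-1}, \]
a polynomial in $x_2,x_3,x_4$ only. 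This would express $x_1=f_1$ as a polynomial in $x_2,x_3,x_4$ modulo $q-1 = x_1 x_4 - x_2 x_3 - 1$; but writing $x_1 - g(x_2,x_3,x_4) = h\cdot(x_1 x_4-x_2 x_3-1)$ and comparing coefficients of $x_1$ forces $h\cdot x_4=1$, which has no polynomial solution. Contradiction.

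For the base case $f\in\OO_4$, we have $f\in\SO_4\cap\Stab(x_1)$. Under the classical identification $\SO_4\cong(\SL_2\times\SL_2)/\{\pm(\id,\id)\}$ acting on $2\times 2$ matrices by $M\mapsto AMB^{-1}$, this stabilizer is the product of two Borel subgroups, generated by the diagonal scaling $(x_1,a x_2,a^{-1}x_3,x_4)$ together with $\Eb\cap\SO_4$ and $\Er\cap\SO_4$, each of which preserves $x_1$. Combining with the inductive step completes the decomposition of $f$ into elementary automorphisms preserving $x_1$.

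The ``in particular'' statements then follow by restriction. The generators listed all have second and third components independent of $x_4$, so the same holds for $f_2$ and $f_3$. Viewing $(f_2,f_3)$ as an element of $\Aut_{\C[x_1]}\C[x_1][x_2,x_3]$, the scaling and $\Er$-contributions are directly of the form $(ax_2+x_1 P(x_1,x_3),a^{-1}x_3)$, while the $\Eb$-contributions $(x_2,\,x_3+x_1 Q(x_1,x_2))$ are realized as conjugates of such shears by the swap $(x_3,x_2)$; hence $(f_2,f_3)$ lies in the announced subgroup $G$. Finally, $G$ admits an amalgamated product structure over its pure-scaling subgroup, so the stabilizer of $x_3$ in $G$ lies entirely inside the shear factor, and among shears only those with $a=1$ fix $x_3$, yielding $f_2 = x_2 + x_1 P(x_1,x_3)$ for some $P$. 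The main obstacle is confirming the symmetric version of Lemma \ref{lem:both degree drop} for $e\in\Et$, which I expect is immediate by conjugation with an element of $\V_4$.
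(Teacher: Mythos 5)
Your overall plan --- induction on $\deg f$, using Theorem \ref{th:main theorem of LV} together with Lemma \ref{lem:both degree drop} to show that any elementary reduction of an automorphism with $f_1=x_1$ must come from $\Eb\cup\Er$ --- is exactly the route that makes the corollary ``immediate'' in the paper, and reducing the $\Et$ case to the $\El$ case by conjugating with $\tau\in\V_4$ is indeed harmless. The genuine gap is in your proof of the key claim. From $e\in\El$ you correctly get $\deg f_1'<\deg f_1=(2,1,1,0)$ and a lift $c_0+c_2x_2+c_3x_3+c_4x_4$ of $f_1'$, but the next step is a miscomputation: writing $f_1=e^{-1}(f_1')$ and substituting $x_3\mapsto x_3-x_4Q$ computes $f_1'\circ e^{-1}$, which is the first component of the conjugate $e\circ f\circ e^{-1}$, not of $f$. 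With the paper's composition convention, $f=e^{-1}\circ(e\circ f)$ gives $f_1=f_1'-f_2\,Q(f_2,f_4)$, where $f_2,f_4$ are components of $f$ and in general involve $x_1$. So you have not shown that $x_1$ is congruent modulo $(q-1)$ to a polynomial in $x_2,x_3,x_4$ (your argument that this would be impossible is fine, but its hypothesis is never established), and the contradiction collapses.

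The claim itself is true, and the correct ``immediate'' argument is a degree count rather than an elimination of $x_1$: a reduction forces $Q$ to be non-constant (if $Q$ is constant then $e\in\OO_4$ and $\deg e\circ f=\deg f$ by Lemma \ref{lem:degmax}); since $f_2,f_4$ are algebraically independent and non-constant, $Q(f_2,f_4)$ is non-constant, and since degrees are additive (the associated graded ring is $\C[x_1,\dots,x_4]/(q)$, a domain) one gets $\deg\bigl(f_2Q(f_2,f_4)\bigr)\ge 2\deg x_4=(0,2,2,4)>(2,1,1,0)=\deg x_1$, whence $\deg f_1'=\deg\bigl(x_1+f_2Q(f_2,f_4)\bigr)>\deg f_1$, contradicting Lemma \ref{lem:both degree drop}. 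Two smaller points: the diagonal scalings needed in your linear base case are not elementary automorphisms in the sense of this paper (this looseness is already in the statement, whose generators include the scalar $a$, so it is minor); and your final step invokes an unproved amalgam assertion (``the stabilizer of $x_3$ lies in the shear factor''), whereas a direct argument is both simpler and available at this stage: from $x_1f_4-f_2x_3=q=x_1x_4-x_2x_3$ one gets $f_2=x_2+x_1P$ and $f_4=x_4+x_3P$, and since $f_2$ does not depend on $x_4$ and $(f_2,x_3)$ is an automorphism over $\C[x_1]$, so that $\partial f_2/\partial x_2$ is a nonzero constant, one concludes $P\in\C[x_1,x_3]$.
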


\begin{remark} \label{rem:justification of Er}
We obtain the following justification for the definition of the group $\Er$: 
Any automorphism $f=\smat{f_1}{f_2}{f_3}{f_4}$ in $\TSL$ such that $f_1=x_1$ and $f_3=x_3$ belongs to $\Er$.
\end{remark}

\begin{lemma}[see Lemma \ref{lem:minoration does not apply to both}] \label{lem:12inLV}
Let $f \in \TSL$, and assume there exist two elementary automorphisms  
\begin{align*}
e &=\mat{x_1 + x_3Q(x_3,x_4)}{x_2+x_4Q(x_3,x_4)}{x_3}{x_4} \in \Et, \\
e' &= \mat{x_1 + x_2P(x_2,x_4)}{x_2}{x_3+x_4P(x_2,x_4)}{x_4} \in \El,
\end{align*}
such that $\deg e \circ f \le \deg f$ and $\deg e' \circ f < \deg f$.\\
Then we are in one of the following cases:
\begin{enumerate}
\item $Q = Q(x_4) \in \C[x_4]$;
\item $P = P(x_4) \in \C[x_4]$;
\item There exists $R(x_4)\in \C[x_4]$ such that $\deg (f_2 + f_4R(f_4) ) < \deg f_2$;
\item There exists $R(x_4)\in \C[x_4]$ such that $\deg ( f_3 + f_4R(f_4) ) < \deg f_3$.
\end{enumerate}
\end{lemma}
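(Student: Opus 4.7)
The plan is to argue by contrapositive: assume that cases (1) and (2) both fail, so $Q$ genuinely involves $x_3$ and $P$ genuinely involves $x_2$, and try to derive either (3) or (4). First I would apply Lemma \ref{lem:both degree drop} to each of the two hypothesised reductions, transposing the statement to the two other elementary subgroups $\Et$ and $\El$ (the annex version should give the analogous equivalences). The assumption $\deg e'\circ f < \deg f$ with $e' \in \El$ then yields the two simultaneous strict drops
\[
\deg\bigl(f_1 + f_2\, P(f_2,f_4)\bigr) < \deg f_1 \quad\text{and}\quad \deg\bigl(f_3 + f_4\, P(f_2,f_4)\bigr) < \deg f_3,
\]
while $\deg e\circ f \le \deg f$ with $e \in \Et$ yields the two weak drops
\[
\deg\bigl(f_1 + f_3\, Q(f_3,f_4)\bigr) \le \deg f_1 \quad\text{and}\quad \deg\bigl(f_2 + f_4\, Q(f_3,f_4)\bigr) \le \deg f_2.
\]

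Next I would pass to leading parts with respect to the graded lexicographic refinement of $\deg=\degmax$, writing $\overline g$ for the homogeneous leading component of $g$. The first strict drop forces $\overline{f_4\, P(f_2,f_4)}=-\overline{f_3}$, and in particular the top-weighted monomial of $P$ relevant here is forced to contain the factor we are trying to identify. Since case (2) is assumed to fail, $P$ has at least one monomial involving $x_2$; I would isolate the contribution $f_2^{a}\, f_4^{b}$ of highest weight in $P(f_2,f_4)$, note that it cannot be killed by any $x_4$-only tail, and track which monomial of $f_4\cdot P(f_2,f_4)$ realises the cancellation of $\overline{f_3}$. Symmetrically, since case (1) fails, the relation coming from $\Et$ shows that the leading monomial of $f_2 + f_4\, Q(f_3,f_4)$ involves a genuine $f_3$-contribution of $Q$.

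Combining these two pieces of information is the heart of the argument: on the one hand the leading part of $f_3$ is a multiple of $f_4$ times something built out of $f_2, f_4$ (and genuinely involving $f_2$), and on the other hand the leading part of $f_2$ is controlled modulo $f_4\,\C[f_4]$ by a polynomial genuinely involving $f_3$. Substituting one into the other, all $f_2$- or $f_3$-contributions must eventually collapse because the graded structure does not allow mixed cross-monomials to survive. What remains is a polynomial in $f_4$ alone, and this is exactly the $R(x_4)$ needed: either $\deg(f_2 + f_4\, R(f_4)) < \deg f_2$ (case 3) or $\deg(f_3 + f_4\, R(f_4)) < \deg f_3$ (case 4), depending on which side the remaining cancellation lives.

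The main obstacle will be the combinatorial bookkeeping of leading monomials under the weighted degree with values in $\N^4$: one must be careful that ``leading part'' is well defined for the compositions $P(f_2,f_4)$ and $Q(f_3,f_4)$, and that the lexicographic refinement is strong enough to deduce genuine monomial identities (not merely degree inequalities) from the degree drops. A secondary difficulty is to verify that the top-weight monomial of $P$ (resp.\ $Q$) involving $x_2$ (resp.\ $x_3$) really contributes to the cancellation, rather than being drowned by some accidental collapse in $f_2, f_3, f_4$; this should follow from the strict-vs-weak asymmetry between the two hypotheses, which is precisely why one of (3), (4) (and not necessarily both) appears as an alternative.
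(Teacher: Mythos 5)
Your reductions of the two hypotheses via Lemma \ref{lem:both degree drop} are correct, and they give you exactly two pieces of information: $\deg (f_4P(f_2,f_4)) = \deg f_3$ (from the strict drop for $e'$) and $\deg (f_4Q(f_3,f_4)) \le \deg f_2$ (from the weak drop for $e$). The gap is in what you call the heart of the argument. You claim that, once $P$ genuinely involves $x_2$, the top-weight monomial $f_2^af_4^b$ of $P(f_2,f_4)$ ``cannot be killed'' and that mixed cross-monomials ``must collapse'' by pure graded bookkeeping. This is false in general: distinct monomials $f_2^if_4^j$ of the same generic weight can cancel whenever $\hom{f_2}$ and $\hom{f_4}$ are algebraically dependent, and the relation can be of the resonant form $\hom{f_2}^{s_1}=\lambda\,\hom{f_4}^{s_2}$ with $s_1,s_2\ge 2$ coprime, which is perfectly compatible with the negations of cases (2) and (3) (those only exclude $s_1=1$). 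In that situation the leading part of $P(f_2,f_4)$ is not determined monomial by monomial, and bounding how far $\deg P(f_2,f_4)$ can fall below its generic degree is precisely the nontrivial content of Lower bound \ref{mino:TSL}--\ref{mino:TQ}, which the paper proves with the pseudo-Jacobian and parachute machinery, using crucially that $f_2,f_4$ are components of an automorphism. Your substitution argument never invokes that input, so it cannot rule out the resonant collapse and the proof does not close.

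For comparison, the paper's route (Lemma \ref{lem:minoration does not apply to both}) is the mirror image of yours and is very short once the Lower bound is available: negating all four cases says exactly that the hypotheses of the Lower bound hold for both $P(f_2,f_4)$ (in the pair $(f_2,f_4)$) and $Q(f_3,f_4)$ (in the pair $(f_3,f_4)$), whence $\deg f_2 < \deg(f_4P(f_2,f_4)) = \deg f_3 < \deg(f_4Q(f_3,f_4)) \le \deg f_2$, a contradiction. If you want to keep your contrapositive structure, you must at some point either quote that Lower bound or reprove its parachute estimate; the leading-term bookkeeping alone will not substitute for it.
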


\subsection{Orthogonal group} \label{sec:O4}

\subsubsection{Definitions} \label{sec:definitions about O4}

Recall that we denote by $\OO_4$ the orthogonal group of $\C^4$ associated with the quadratic form $q = x_1x_4 - x_2x_3$.
We have $\OO_4= \langle \SO_4, \tau \rangle$, where $\tau = \smat{x_1}{x_3}{x_2}{x_4}$ denotes the involution given by the transposition.
The $2\mathbin{:}1$ morphism of groups  
\begin{align*}
\SL_2 \times \SL_2 &\to  \SO_4 \\
(A,B)\quad &\mapsto  A \cdot \mat{x_1}{x_2}{x_3}{x_4} \cdot B^t
\end{align*}
is the universal cover of $\SO_4$. 
Here the product $A \cdot \smat{x_1}{x_2}{x_3}{x_4} \cdot B^t$ actually denotes the usual product of matrices. 
However, if $f= \smat{f_1}{f_2}{f_3}{f_4}$ and $g= \smat{g_1}{g_2}{g_3}{g_4}$ are elements of $\OO_4$, their composition is
$$f \circ g  =  \mat{f_1 \circ g}{f_2 \circ g}{f_3 \circ g}{f_4 \circ g} \in \OO_4$$
which must not be confused with the product of the $2 \times 2$ matrices 
$\smat{f_1}{f_2}{f_3}{f_4}$ and $\smat{g_1}{g_2}{g_3}{g_4}$!
(see also Remark \ref{rem:remark on notation} below).

\subsubsection{Dual quadratic form} \label{sec:dual}

We now study the totally isotropic spaces of a quadratic form on the dual of $\C^4$ in order to understand the geometry of the group $\OO_4$. 

In this section we set $V =\C^4$ and we denote by $V^*$ the dual of $V$. 
We denote respectively by $e_1,e_2,e_3,e_4$ and $x_1,x_2,x_3,x_4$ the canonical basis of $V$ and the dual basis of $V^*$. 
Since $q(x)= x_1x_4-x_2x_3$ is a non degenerate quadratic form on $V$, there is a non degenerate quadratic form $q^*$ on $V^*$ corresponding to $q.$ Moreover, any endomorphism $f$ of $V$ belongs to the orthogonal group $\OO (V,q)$ if and only if its transpose $f^t$ belongs to the orthogonal group  $\OO (V^*,q^*)$. 
In other words, we have $q \circ f =q$ if and only if $q^* \circ f^t = q^*$.
Since the matrix of $q$ in the canonical basis is
$A= \frac{1}{2}
\left(
\begin{smallmatrix}
0 &  0 &  0 & 1\\
0 &  0 & -1 \rule{1mm}{0mm} & 0 \\
0 & -1 \rule{1mm}{0mm}  & 0  & 0 \\
1 &  0 & 0  & 0
\end{smallmatrix}
\right)$, then, the matrix of $q^*$ in the dual basis is $A^{-1}=2
\left(
\begin{smallmatrix}
0 &  0 &  0 & 1\\
0 &  0 & -1\rule{1mm}{0mm} & 0 \\
0 & -1\rule{1mm}{0mm} & 0  & 0 \\
1 &  0 & 0  & 0
\end{smallmatrix}
\right)
$.
We denote by $\langle \cdot \,, \, \cdot \rangle$ the bilinear pairing $V^* \times V^* \to \C$ associated with $\frac{1}{4}q^*$ (so that its matrix in the dual basis is $\frac{1}{4}A^{-1} = A$).

\begin{remark} \label{rem:remark on notation}
In this paper, each element of $\OO_4$ is denoted in a rather unusual way as a $2 \times 2$ matrix of the form  $f= \smat{f_1}{f_2}{f_3}{f_4}$, where each $f_i =\sum_jf_{i,j}x_j$, $f_{i,j} \in \C$, is an element of $V^*$. 
The corresponding more familiar $4 \times 4$ matrix is $M:=(f_{i,j})_{1 \, \leq \, i,j \, \leq \, 4} \in M_4(\C)$ and it satisfies the usual equality $M^tAM=A$.
\end{remark}

\begin{lemma}\label{lem:description of the orthogonal group}
Consider $f= \smat{f_1}{f_2}{f_3}{f_4}$, where the elements $f_k$ belong to $V^*$. Then, the following assertions are equivalent:
\begin{enumerate}
\item  $f \in \OO_4$;
\item $\langle f_i, f_j \rangle =  \langle x_i, x_j \rangle$ for all $i,j\in \{1,2,3,4 \} $.
\end{enumerate}
\end{lemma}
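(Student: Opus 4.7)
The plan is to exploit the equivalence already noted in the paragraph above the lemma, namely $f \in \OO(V,q) \Leftrightarrow f^t \in \OO(V^*,q^*)$, and to reinterpret condition (2) as the statement that $f^t$ preserves the bilinear pairing $\langle \cdot\,,\,\cdot \rangle$ on a basis.

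First I would identify the action of $f^t$ on the dual basis. Since $f$ is the polynomial automorphism $\smat{x_1}{x_2}{x_3}{x_4} \mapsto \smat{f_1}{f_2}{f_3}{f_4}$, its pullback on coordinates coincides with the transpose $f^t \colon V^* \to V^*$, and by construction $f^t(x_i) = f_i$ for each $i$. Next, since $\langle \cdot\,,\,\cdot \rangle$ is associated with $\tfrac14 q^*$, which differs from $q^*$ only by the non-zero scalar $\tfrac14$, one has $\OO(V^*, q^*) = \OO(V^*, \tfrac14 q^*)$. Therefore $f^t$ preserves $q^*$ if and only if it preserves the bilinear form $\langle \cdot\,,\,\cdot \rangle$, and by bilinearity and symmetry this latter condition is equivalent to preserving the pairing on the elements of a basis, i.e.\ $\langle f^t(x_i), f^t(x_j) \rangle = \langle x_i, x_j \rangle$ for all $i,j$. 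Combined with $f^t(x_i) = f_i$, this is exactly condition (2), closing the equivalence.

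The argument is really just bookkeeping with the standard principle ``an endomorphism is orthogonal iff it preserves the polar form on a basis,'' transported to the dual. The one mildly delicate point to keep track of is the scalar $\tfrac14$ together with the relation $A^{-1}=4A$ noted in the text, which is what ensures that the pairing $\langle \cdot\,,\,\cdot \rangle$ (of matrix $A$ in the dual basis) is indeed the polarization of a scalar multiple of $q^*$ (of matrix $A^{-1}$). Beyond this routine verification there is no real obstacle.
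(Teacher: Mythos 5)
Your proposal is correct and follows essentially the same route as the paper: identify $f^t(x_i)=f_i$, invoke the equivalence $f\in\OO(V,q)\Leftrightarrow f^t\in\OO(V^*,q^*)$, and reduce preservation of the polar form $\langle\cdot\,,\,\cdot\rangle$ to checking it on the dual basis. Your explicit remark that the scalar $\tfrac14$ is harmless is the only (minor) point the paper leaves implicit by working directly with $\OO(V^*,\tfrac14 q^*)$.
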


\begin{proof}
Observe first that $f^t(x_i) = f_i(x_1, \dots, x_4)$ for $i = 1, \dots, 4$.
Then, we have seen that $f \in \OO_4$ if and only if $f^t$ belongs to the orthogonal group $\OO (V^*, \frac{1}{4}q^*)$, i.e. if and only if for any $x,y \in V^*$, we have $\langle f^t(x), f^t(y) \rangle  =  \langle x,y \rangle$. 
This last equality is satisfied for all $x,y \in V^*$ if and only if it is satisfied for any $x,y \in \{x_1,x_2,x_3,x_4 \}$.
\end{proof}

Recall that a subspace $W \subseteq V^*$ is \textbf{totally isotropic} (with respect to $q^*$) if for all $x,y \in W$, $\langle x, y\rangle = 0$.
 
\begin{lemma}
Let $f_1,f_2$ be linearly independent elements of $V^*$. The following assertions are equivalent:
\begin{enumerate}
\item $\Vect(f_1,f_2)$ is totally isotropic ;
\item There exist $f_3,f_4 \in V^*$ such that  $\smat{f_1}{f_2}{f_3}{f_4} \in \OO_4$.
\end{enumerate}
\end{lemma}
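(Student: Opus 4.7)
The direction $(2) \Rightarrow (1)$ is an immediate consequence of the preceding lemma. Indeed, if $\smat{f_1}{f_2}{f_3}{f_4}\in \OO_4$, then $\langle f_i,f_j\rangle = \langle x_i,x_j\rangle$ for all $i,j$. Reading off the Gram matrix of the canonical basis (whose entries come from the matrix $A$ defined in Section~\ref{sec:dual}), one sees that $\langle x_1,x_1\rangle = \langle x_2,x_2\rangle = \langle x_1,x_2\rangle = 0$, so $\Vect(f_1,f_2)$ is totally isotropic.

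The plan for the nontrivial direction $(1) \Rightarrow (2)$ is to invoke Witt's extension theorem applied to the non-degenerate quadratic space $(V^*,q^*)$. The same computation as above shows that $\Vect(x_1,x_2)$ is itself a totally isotropic 2-plane, so the linear map
\[
\sigma \colon \Vect(x_1,x_2) \longrightarrow \Vect(f_1,f_2), \qquad x_1\mapsto f_1,\ x_2\mapsto f_2,
\]
is an isomorphism between two totally isotropic subspaces, hence automatically an isometry (both bilinear forms vanish identically). Witt's theorem then extends $\sigma$ to a global isometry $\tilde\sigma\in \OO(V^*,q^*)$. Setting $f_3:=\tilde\sigma(x_3)$ and $f_4:=\tilde\sigma(x_4)$, we obtain $\langle f_i,f_j\rangle = \langle x_i,x_j\rangle$ for all $i,j\in\{1,2,3,4\}$, and the preceding lemma gives $\smat{f_1}{f_2}{f_3}{f_4}\in \OO_4$.

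If one wishes to avoid quoting Witt's theorem, the extension can be done directly in two steps. First construct $f_3$: since $f_1,f_2$ are linearly independent and $q^*$ is non-degenerate, the map $v\mapsto (\langle f_1,v\rangle, \langle f_2,v\rangle)$ is surjective, so there is an $f_3'$ with $\langle f_1,f_3'\rangle=0$ and $\langle f_2,f_3'\rangle=-1/2$; then replace $f_3'$ by $f_3:=f_3'+\langle f_3',f_3'\rangle f_2$, which leaves these pairings unchanged (as $\langle f_2,f_2\rangle=0$) and makes $\langle f_3,f_3\rangle=0$. Next construct $f_4$: by non-degeneracy there exists $f_4'$ realizing the prescribed pairings $(1/2,0,0)$ against $(f_1,f_2,f_3)$; the orthogonal of $\Vect(f_1,f_2,f_3)$ is 1-dimensional and contains $f_1$ (since $\langle f_1,f_3\rangle=0$), so one corrects $f_4:=f_4'+\alpha f_1$ with $\alpha=-\langle f_4',f_4'\rangle$ to ensure $\langle f_4,f_4\rangle=0$ without affecting the other pairings.

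There is no real obstacle here: the statement is a concrete instance of the transitivity of the orthogonal group on the variety of maximal isotropic subspaces, and the only subtlety is keeping the bookkeeping of which pairings must vanish and which must equal $\pm 1/2$ straight. The explicit version is essentially a re-proof of the relevant special case of Witt's theorem, so whichever route one takes the real content is the same.
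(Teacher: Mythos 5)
Your proof is correct and follows essentially the same route as the paper: the implication $(2)\Rightarrow(1)$ via Lemma \ref{lem:description of the orthogonal group}, and $(1)\Rightarrow(2)$ by extending $x_1\mapsto f_1$, $x_2\mapsto f_2$ to a global isometry of $(V^*,q^*)$ via Witt's theorem and applying the same lemma again. Your optional hands-on construction of $f_3,f_4$ is also correct, but it is just a re-proof of the special case of Witt's theorem the paper quotes, so it adds no essentially new content.
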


\begin{proof}
If  $\smat{f_1}{f_2}{f_3}{f_4} \in \OO_4$, then by Lemma \ref{lem:description of the orthogonal group} for any $i,j \in \{ 1,2 \}$ we have $\langle f_i, f_j \rangle = \langle x_i, x_j \rangle = 0$.

Conversely, if $\langle f_i, f_j \rangle = \langle x_i, x_j \rangle =  0$ for any $i,j \in \{ 1,2 \}$, by Witt's Theorem (see e.g. \cite[p. 58]{S}) we can extend the map $x_1 \mapsto f_1$, $x_2 \mapsto f_2$ as an isometry $V^* \to V^*$. 
Then denoting by $f_3,f_4$ the images of $x_3, x_4$, we have $\langle f_i, f_j \rangle = \langle x_i, x_j \rangle$ for all $i,j \in \{1,2,3,4 \}$. We conclude by  Lemma \ref{lem:description of the orthogonal group}.
\end{proof}

If $\smat{f_1}{f_2}{f_3}{f_4} \in \OO_4$, the planes $\Vect(f_1,f_2)$,  $\Vect(f_3,f_4)$, $\Vect(f_1,f_3)$ and  $\Vect(f_2,f_4)$ are totally isotropic. 
Moreover the following decompositions hold: 
$$V^*= \Vect(f_1,f_2) \oplus \Vect(f_3,f_4) \quad\text{ and }\quad V^*= \Vect(f_1,f_3) \oplus \Vect(f_2,f_4).$$
We have the following reciprocal result.

\begin{lemma}\label{lem:uniqueauto}
Let $W$ and $W'$ be two totally isotropic planes of $V^*$ such that $V^*= W \oplus W'$. Then for any  basis $(f_1,f_2)$ of $W$, there exists a unique basis $(f_3,f_4)$ of $W'$ such that  $\smat{f_1}{f_2}{f_3}{f_4} \in \OO_4$.
\end{lemma}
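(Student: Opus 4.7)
The plan is to exploit the fact that, since $V^{\ast}$ is 4-dimensional with a non-degenerate form and $W,W'$ are totally isotropic planes, each of them coincides with its own orthogonal: $W=W^{\perp}$ and $W'=(W')^{\perp}$. Together with the decomposition $V^{\ast}=W\oplus W'$ this will force the restriction of $\langle\cdot,\cdot\rangle$ to a pairing $W\times W' \to \C$ to be non-degenerate, which is the key ingredient.

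First I would verify this non-degeneracy: if $w\in W$ satisfies $\langle w,w'\rangle = 0$ for all $w'\in W'$, then also $\langle w,w''\rangle=0$ for all $w''\in W$ since $W$ is totally isotropic, hence $w\in (V^{\ast})^{\perp}=0$; by symmetry the pairing is non-degenerate on both sides. Consequently, the linear map $W' \to W^{\ast}$, $w'\mapsto \langle\cdot,w'\rangle|_{W}$, is an isomorphism.

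Next, using Lemma~\ref{lem:description of the orthogonal group}, membership of $\smat{f_1}{f_2}{f_3}{f_4}$ in $\OO_4$ is equivalent to $\langle f_i,f_j\rangle=\langle x_i,x_j\rangle$ for all $i,j$. Among these sixteen conditions, those involving only indices in $\{1,2\}$ hold because $W$ is totally isotropic, and those involving only indices in $\{3,4\}$ will hold as soon as we pick $f_3,f_4$ inside the totally isotropic plane $W'$. The remaining four conditions are
\[
\langle f_1,f_3\rangle = 0,\quad \langle f_2,f_4\rangle = 0,\quad \langle f_1,f_4\rangle = \tfrac12,\quad \langle f_2,f_3\rangle = -\tfrac12.
\]
By the isomorphism $W'\RightArrowIsomorphism W^{\ast}$ above, there exist unique $f_3,f_4\in W'$ realizing respectively the linear forms $(f_1\mapsto 0,\ f_2\mapsto -\tfrac12)$ and $(f_1\mapsto \tfrac12,\ f_2\mapsto 0)$ on $W$. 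This simultaneously gives existence and uniqueness of $(f_3,f_4)$.

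Finally I would check that this pair $(f_3,f_4)$ is actually a basis of $W'$: any relation $af_3+bf_4=0$ gives, by pairing with $f_1$ and $f_2$, that $b=0$ and $a=0$ respectively. Since $\dim W'=2$, $(f_3,f_4)$ is a basis, completing the proof. I do not anticipate a serious obstacle: the only slightly delicate point is the non-degeneracy of the induced pairing $W\times W'\to \C$, and this is immediate from $W=W^{\perp}$ together with $W\cap W'=0$.
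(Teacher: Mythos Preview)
Your proof is correct. It takes a genuinely different route from the paper's own argument, and it is worth noting the contrast.

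The paper proceeds in two separate steps. For existence, it invokes Witt's theorem to normalize $(f_1,f_2)=(x_1,x_2)$, then writes an arbitrary basis of $W'$ in coordinates and solves the resulting linear constraints explicitly. For uniqueness, it uses an entirely different idea: from $f_1f_4-f_2f_3=f_1\tilde f_4-f_2\tilde f_3$ it factors and appeals to coprimality of $f_1,f_2$ to force $\tilde f_3-f_3,\tilde f_4-f_4\in W\cap W'=0$.

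Your argument, by contrast, is coordinate-free and handles existence and uniqueness in one stroke. The key observation that the pairing $W\times W'\to\C$ is non-degenerate (because $W=W^{\perp}$ and $V^*=W\oplus W'$) gives the isomorphism $W'\RightArrowIsomorphism W^*$, after which the four cross-conditions from Lemma~\ref{lem:description of the orthogonal group} determine $f_3,f_4$ uniquely. This is cleaner and more conceptual: it makes transparent that the result is pure linear algebra of hyperbolic pairs, with no need for Witt's theorem or explicit computations. The paper's approach, on the other hand, has the minor advantage of exhibiting the solution concretely as $f_3=x_3+a_1x_1$, $f_4=x_4+a_1x_2$, which ties in visually with the elementary automorphisms appearing elsewhere.
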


\begin{proof}
\noindent \underline{Existence.}
By Witt's Theorem, we may assume that $f_1=x_1$ and $f_2=x_2$.
Let $f_3,f_4$ be a basis of $W'$. 
If we express them in the basis $x_1,x_2,x_3,x_4$, we get 
$f_3= a_1 x_1 + a_2 x_2 + a_3 x_3 + a_4 x_4$ and $f_4= b_1 x_1 + b_2 x_2 + b_3 x_3 + b_4 x_4$.
Since $x_1,x_2,f_3,f_4$ is a basis of $V^*$, we get $\det \smat{a_3}{a_4}{b_3}{b_4} \neq 0$.
Therefore, up to replacing $f_3$ and $f_4$ by some linear combinations, we may assume that $ \smat{a_3}{a_4}{b_3}{b_4}= \smat{1}{0}{0}{1}$, i.e. $f_3=  a_1 x_1 + a_2 x_2 + x_3$ and $f_4 = b_1 x_1 + b_2 x_2 + x_4$.

Since $\langle f_3,f_3 \rangle =  - a_2$ and $\langle f_4,f_4 \rangle =  b_1$, we get $a_2=b_1 = 0$. 
Finally,  $\langle f_3,f_4 \rangle = \frac{1}{2} (a_1- b_2)$, so that $a_1 = b_2$, $f_3 =x_3 + a_1 x_1$ and  $f_4 = x_4+ a_1 x_2$. 

Now it is clear that $\smat{x_1}{x_2}{x_3 + a_1 x_1}{x_4+a_1x_2} \in \OO_4$.

\noindent \underline{Unicity.} Let $(f_3,f_4)$ and $(\tilde{f}_3,\tilde{f}_4)$ be two bases of $W'$ such that  $\smat{f_1}{f_2}{f_3}{f_4}$  and $\smat{f_1}{f_2}{\tilde{f}_3}{\tilde{f}_4}$ belong to $\OO_4$. 
From $f_1f_4-f_2f_3= f_1\tilde{f}_4 -f_2 \tilde{f}_3$, we get $f_2( \tilde{f}_3 -f_3) =f_1(\tilde{f}_4 -f_4)$. Since $f_1$ and $f_2$ are coprime, we get the existence of a polynomial $\lambda$ such that $ \tilde{f}_3 -f_3 = \lambda f_1$ and $\tilde{f}_4 -f_4 = \lambda f_2$. 
Since the $f_i$ and $\tilde f_j$ are linear forms, we see that $\lambda$ is a constant.
This proves that $ \tilde{f}_3 -f_3$ and $ \tilde{f}_4 -f_4$ are elements in $W \cap W' =\{ 0 \}$, and we obtain $(f_3,f_4)=(\tilde{f}_3,\tilde{f}_4)$.
\end{proof}

\begin{lemma}\label{lem:isotropicsecantplanes}
For any nonzero isotropic vector $f_1$ of $V^*$, there exist exactly two  totally isotropic planes of $V^*$ containing $f_1$.  Furthermore, they are of the form $\Vect(f_1,f_2)$ and $\Vect(f_1,f_3)$, where $\smat{f_1}{f_2}{f_3}{\dots}$ is an element of $\OO_4$.
\end{lemma}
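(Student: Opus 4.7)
The plan is to reduce to an explicit case via Witt's theorem and then compute directly.

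First I would note that, by the previous lemma (and its proof via extending $x_1 \mapsto f_1$ to an isometry using Witt's theorem), given any nonzero isotropic vector $f_1 \in V^*$ there exists an element $g = \smat{f_1}{f_2}{f_3}{f_4} \in \OO_4$. By the discussion preceding Lemma \ref{lem:uniqueauto}, both planes $\Vect(f_1,f_2)$ and $\Vect(f_1,f_3)$ are then totally isotropic and contain $f_1$, and they are distinct (they share only $f_1$, since $V^* = \Vect(f_1,f_2) \oplus \Vect(f_3,f_4)$ is a direct sum). This establishes the existence part.

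For uniqueness, I would transport the problem back to the standard vector $x_1$. Since $g^t \in \OO(V^*, \tfrac14 q^*)$ with $g^t(x_1) = f_1$, the map $W \mapsto g^t(W)$ is a bijection between totally isotropic planes of $V^*$ containing $x_1$ and totally isotropic planes containing $f_1$. Hence it suffices to count the totally isotropic planes containing $x_1$.

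A totally isotropic plane containing $x_1$ has the form $\Vect(x_1, h)$ for some $h = a_1 x_1 + a_2 x_2 + a_3 x_3 + a_4 x_4$ not proportional to $x_1$. Using that the matrix of the pairing in the dual basis is $A$, direct computation gives $\langle x_1, h\rangle = \tfrac12 a_4$ and $\langle h, h\rangle = -a_2 a_3$. The condition that $\Vect(x_1,h)$ be totally isotropic thus forces $a_4 = 0$ and $a_2 a_3 = 0$, so either $a_3 = 0$ (and then $\Vect(x_1,h) = \Vect(x_1,x_2)$) or $a_2 = 0$ (and then $\Vect(x_1,h) = \Vect(x_1,x_3)$). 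This gives exactly two totally isotropic planes through $x_1$, namely $\Vect(x_1,x_2)$ and $\Vect(x_1,x_3)$.

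Applying $g^t$, the two totally isotropic planes through $f_1$ are exactly $g^t(\Vect(x_1,x_2)) = \Vect(f_1,f_2)$ and $g^t(\Vect(x_1,x_3)) = \Vect(f_1,f_3)$, where $\smat{f_1}{f_2}{f_3}{\dots} = g \in \OO_4$, which is exactly the claimed form. The only mildly delicate point is being careful about the transposition convention (as highlighted in Remark \ref{rem:remark on notation}): the isometry of $V^*$ arising from Witt's theorem must be identified with the transpose of an element of $\OO_4$ so that its images of $x_1, x_2, x_3$ become precisely the first row and first column of that element — but this is purely bookkeeping, not a real obstacle.
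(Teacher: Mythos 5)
Your proof is correct and is essentially the paper's argument: both reduce to $f_1=x_1$ via Witt's theorem (the paper simply says ``we may assume $f_1=x_1$'', while you transport explicitly through $g^t$ for an element $g=\smat{f_1}{f_2}{f_3}{f_4}\in\OO_4$) and then compute that the only totally isotropic planes through $x_1$ are $\Vect(x_1,x_2)$ and $\Vect(x_1,x_3)$, your condition $\langle x_1,h\rangle=\tfrac12 a_4=0$ being exactly the paper's observation that such a plane lies in $x_1^{\perp}$. One cosmetic slip: in general $\langle h,h\rangle=a_1a_4-a_2a_3$ rather than $-a_2a_3$, but since $a_4=0$ is forced first this does not affect the conclusion.
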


\begin{proof}
By Witt's theorem, we may assume that $f_1=x_1$. 
Any totally isotropic subspace $W$ in $V^*$ containing $x_1$ is included into $x_1^{\perp}= \Vect(x_1,x_2,x_3)$. 
Therefore, there exist $a_2, a_3 \in \C$ such that
$W= \Vect(x_1, a_2 x_2 + a_3 x_3)$. 
Finally, since $q^*(a_2 x_2 + a_3 x_3) = - 4 a_2 a_3 = 0$ (recall that $q^*(u) = 4 \langle u,u \rangle$ for any $u \in V^*$), $W$ is equal to $\Vect(x_1, x_2)$ or  $\Vect(x_1, x_3)$.
\end{proof}

\begin{lemma}\label{lem:pair of isotropic planes}
Let $W$ and $W'$ be two totally isotropic planes of $V^*$.
Then there exists $f \in \OO_4$ such that $f(W) = \Vect(x_3,x_4)$ and $f(W')$ is one of the following three possibilities:
\begin{enumerate}
\item $f(W') = \Vect(x_3,x_4)$;
\item $f(W') = \Vect(x_1,x_2)$;
\item $f(W') = \Vect(x_2,x_4)$.
\end{enumerate}
\end{lemma}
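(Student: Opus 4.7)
My plan is to first normalize $W$ to the standard totally isotropic plane $\Vect(x_3,x_4)$ and then analyze $W'$ according to $\dim(W \cap W')$. The normalization is essentially free: Witt's theorem (as already used in Lemma~\ref{lem:uniqueauto}) makes $\OO_4$ act transitively on totally isotropic planes of $V^*$, so I can pick $g \in \OO_4$ with $g(W) = \Vect(x_3,x_4)$ and replace $W'$ by $g(W')$. After this reduction I only need to find some $h \in \Stab_{\OO_4}(\Vect(x_3,x_4))$ sending $W'$ to one of the three model planes.

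Since $\dim W = \dim W' = 2$, I get a trichotomy on $\dim(W \cap W')$. The case of dimension $2$ is immediate: $W' = W$, which is case (1).

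For dimension $1$, I would set $L := W \cap W'$, an isotropic line inside $W$. Using the covering $\SL_2 \times \SL_2 \to \SO_4$ of Section~\ref{sec:definitions about O4} with $A = I$, the second factor stabilizes $\Vect(x_3,x_4)$ as a set and acts on it transitively on nonzero vectors, so I may arrange $L = \Vect(x_4)$. Then $W'$ is a totally isotropic plane containing the isotropic vector $x_4$ and distinct from $\Vect(x_3,x_4)$, so Lemma~\ref{lem:isotropicsecantplanes} applied to $x_4$ forces $W' = \Vect(x_2, x_4)$, giving case (3).

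For dimension $0$, I have $V^* = W' \oplus W$, and I would invoke Lemma~\ref{lem:uniqueauto} with the two planes swapped: starting from any basis $(y_1,y_2)$ of $W'$, there is a unique basis $(z_3,z_4)$ of $W$ such that $h_0 := \smat{y_1}{y_2}{z_3}{z_4}$ lies in $\OO_4$. Via the transpose convention of Remark~\ref{rem:remark on notation}, $h_0$ induces an isometry of $V^*$ sending the standard pair $(\Vect(x_1,x_2), \Vect(x_3,x_4))$ onto $(W', W)$, so its inverse is the desired isometry, giving case (2). The most delicate point I anticipate is justifying that the list (1)--(3) is exhaustive---that is, why the dimension $0$ case lands in $\Vect(x_1,x_2)$ and not, say, in $\Vect(x_1,x_3)$. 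The virtue of invoking Lemma~\ref{lem:uniqueauto} in that case is precisely that it produces the required isometry directly, without needing a separate discussion of the two families of maximal isotropic planes in this hyperbolic quadric.
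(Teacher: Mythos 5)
Your proof is correct and follows essentially the same route as the paper's: normalize $W$ to $\Vect(x_3,x_4)$ by Witt's theorem, then split according to $\dim(W\cap W')$, using Lemma~\ref{lem:uniqueauto} in the transverse case and Lemma~\ref{lem:isotropicsecantplanes} in the one-dimensional case. Your only deviation is a slightly more careful handling of the line case (moving $W\cap W'$ to $\Vect(x_4)$ inside the stabilizer of $\Vect(x_3,x_4)$ via the second $\SL_2$ factor), where the paper simply invokes Witt's theorem a second time.
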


\begin{proof}
By Witt's theorem there exists  $f \in \OO_4$ such that $f(W) = \Vect(x_3,x_4)$.
If $W' = W$ we are in Case (1), and if $W \cap W' = \{0\}$ then we can apply Lemma \ref{lem:uniqueauto} to get Case (2).
Assume now that $W \cap W'$ is a line.
Again by Witt's theorem we can assume that $W \cap W' = \Vect(x_4)$, and then we conclude by Lemma \ref{lem:isotropicsecantplanes} that we are in Case (3).
\end{proof}

We can reinterpret the last two lemmas in geometric terms.

\begin{remark} \label{rem:geometry of isotropic cone}
The isotropic cone of $q^*$ is given by $a_1a_4 - a_2a_3 = 0$, where $f = a_1x_1 + \dots + a_4x_4 \in V^*$.
In particular this is a cone over a smooth quadric surface $S$ in $\p ( V^*) \simeq \p^3$.
Totally isotropic planes correspond to cones over a line in $S$, but $S$ is isomorphic to $\p^1 \times \p^1$, and lines in $S$ correspond to horizontal or vertical ruling.
From this point of view Lemma \ref{lem:isotropicsecantplanes} is just the obvious geometric fact that any point in $S$ belongs to exactly two lines, one vertical and the other horizontal. 
Similarly, Lemma \ref{lem:pair of isotropic planes} is the fact that $\OO_4$ acts transitively on pairs of disjoint lines, and on pairs of intersecting lines.
\end{remark}

\begin{corollary}\label{cor:pair of elementary automorphisms}
Let $e,e'$ be two generalized elementary automorphisms. Then, up to conjugation by an element of $\OO_4$, we may assume that $e'\in\El$ and that $e$ belongs to either $\El$, $\Er$ or $\Et$. 
\end{corollary}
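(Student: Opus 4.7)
The plan is to translate the corollary into the language of totally isotropic planes of $V^*$ and invoke the classification given by Lemma \ref{lem:pair of isotropic planes}. To every non-identity generalized elementary $e$, I associate the subspace $W(e) \subseteq V^*$ of linear forms fixed by the pullback of $e$. A direct computation on a general combination $\sum \alpha_i x_i$ shows that for $e_0 \in \Er$ with non-zero $P$, one has $W(e_0) = \Vect(x_1, x_3)$, which is totally isotropic. More generally, conjugation by $u \in \OO_4$ transports the fixed plane via $(u^*)^{-1}$, so the dictionary $e \mapsto W(e)$ is $\OO_4$-equivariant; since $\OO_4$ acts transitively on totally isotropic planes (Lemma \ref{lem:pair of isotropic planes} with $W = W'$), this matches the four basic subgroups with the four coordinate planes:
$\Er \leftrightarrow \Vect(x_1, x_3)$, $\El \leftrightarrow \Vect(x_2, x_4)$, $\Et \leftrightarrow \Vect(x_3, x_4)$, $\Eb \leftrightarrow \Vect(x_1, x_2)$.

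Now let $W = W(e)$ and $W' = W(e')$. Applying Lemma \ref{lem:pair of isotropic planes} with the roles of the two planes exchanged, there exists $f \in \OO_4$ whose induced action on $V^*$ sends $W'$ to $\Vect(x_3, x_4)$ and sends $W$ to one of $\Vect(x_3, x_4)$, $\Vect(x_1, x_2)$, or $\Vect(x_2, x_4)$, according as $W = W'$, $W \cap W' = \{0\}$, or $\dim(W \cap W') = 1$. To land on $\El$ rather than $\Et$ for $e'$, I post-compose with the transposition $\tau = \smat{x_1}{x_3}{x_2}{x_4} \in \V_4 \subset \OO_4$, whose pullback swaps $x_2 \leftrightarrow x_3$ and therefore interchanges the two rulings of the isotropic quadric: $\Vect(x_3, x_4) \leftrightarrow \Vect(x_2, x_4)$ and $\Vect(x_1, x_2) \leftrightarrow \Vect(x_1, x_3)$. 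After conjugating $e$ and $e'$ by $\tau f \in \OO_4$, the associated planes become $(\Vect(x_2, x_4), \Vect(x_2, x_4))$, $(\Vect(x_1, x_3), \Vect(x_2, x_4))$, or $(\Vect(x_3, x_4), \Vect(x_2, x_4))$, so $e' \in \El$ and $e$ lies in one of $\El$, $\Er$, $\Et$, as required.

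The only technical point is to set up the dictionary $e \mapsto W(e)$ cleanly, in particular verifying that the linear forms fixed by a non-identity element of $\Er$ are exactly $\Vect(x_1, x_3)$ (a short calculation) and checking that the equivariance $W(ueu^{-1}) = (u^*)^{-1}(W(e))$ indeed matches the four subgroups with the four coordinate planes. Once this dictionary is in place, the corollary is essentially a reformulation of Lemma \ref{lem:pair of isotropic planes}, with a small additional conjugation by $\tau$ needed to move from the ruling of $\Vect(x_3, x_4)$ (which corresponds to $\Et$) to the ruling of $\Vect(x_2, x_4)$ (which corresponds to $\El$).
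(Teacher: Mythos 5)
Your proposal is correct and follows essentially the same route as the paper: its proof likewise attaches to each generalized elementary automorphism the totally isotropic plane of $V^*$ that it fixes pointwise (with $\Vect(x_3,x_4)$ fixed if and only if the automorphism lies in $\Et$, and similarly for the other coordinate planes) and then invokes Lemma \ref{lem:pair of isotropic planes}. The differences are only presentational: you spell out the exactness of the fixed plane, the $\OO_4$-equivariance of the dictionary, and the final conjugation by $\tau$ that converts the ruling $\Vect(x_3,x_4)$/$\Et$ into $\Vect(x_2,x_4)$/$\El$, points the paper leaves implicit (the converse direction of the dictionary, which your last step uses, is exactly the paper's ``observe furthermore'' and can also be justified via Remark \ref{rem:justification of Er} and its $\V_4$-conjugates).
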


\begin{proof}
Each generalized  elementary automorphism $e$ fixes pointwise (at least) a totally isotropic plane of $V^*$ (note that $e$ acts naturally on $\C[x_1,x_2,x_3,x_4]$). Observe furthermore that the plane $\Vect(x_3,x_4)$ is fixed if and only if $e$ belongs to $\Et$. Therefore, the result follows from Lemma \ref{lem:pair of isotropic planes}.
\end{proof}

In the next definition, the quadric $S$ is identified to $\p^1 \times \p^1$ via the isomorphism $\p^1 \times \p^1 \to S$ sending $((\alpha :\beta),(\gamma : \delta))$ to $ \C \, (\alpha \gamma  x_1+ \beta \gamma  x_2 + \alpha \delta x_3 + \beta \delta x_4)$.

\begin{definition}
A totally isotropic plane of $V^*$ is said to be horizontal (resp. vertical), if it corresponds to a horizontal (resp. vertical) line of $\p^1 \times \p^1$.
\end{definition}
 
The map sending $(a:b) \in \p^1$ to $\Vect(ax_1 +b x_3, ax_2 +b x_4)$ (resp.  $\Vect(ax_1 +b x_2, ax_3 +b x_4)$)  is a parametrization of the horizontal (resp. vertical) totally isotropic planes of $V^*$. Let $f$ be any element of $\OO_4$ and let $\Vect (u,v)$ be any totally isotropic plane of $V^*$. The group $\OO_4$ acts on the set of totally isotropic planes via the following formula
\[ f . \Vect (u,v) = \Vect (u \circ f^{-1}, v \circ f^{-1}).\]

\begin{lemma}\label{lem:action of O_4 with respect to horizontality}
Any element of $\SO_4$ sends a horizontal totally isotropic plane to a horizontal totally isotropic  plane, and a vertical totally isotropic plane to a vertical totally isotropic plane. Any element of $\OO_4 \smallsetminus \SO_4$ exchanges the horizontal and the vertical totally isotropic planes.
\end{lemma}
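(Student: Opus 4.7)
The plan is to split using the decomposition $\OO_4 = \SO_4 \sqcup \SO_4 \tau$ with $\tau = \smat{x_1}{x_3}{x_2}{x_4} \in \OO_4 \smallsetminus \SO_4$ (as a linear map on $V = M_2(\C)$, $\tau$ fixes $e_{11},e_{22}$ and swaps $e_{12},e_{21}$, so $\det \tau = -1$). I would treat the $\SO_4$ part through the double cover $\SL_2 \times \SL_2 \to \SO_4$ from Section \ref{sec:definitions about O4}, and handle the $\tau$ part by a direct computation from $f . \Vect(u,v) = \Vect(u \circ f^{-1}, v \circ f^{-1})$.

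For the $\SO_4$ half, let $(A,B) \in \SL_2 \times \SL_2$ cover $f = f_{A,B}$, so $f(M) = AMB^t$ and $f^{-1}(M) = A^{-1} M (B^{-1})^t$. The key observation is that the two generators $ax_1 + bx_3$ and $ax_2 + bx_4$ of the horizontal plane $W_{(a:b)}$ are exactly the two entries of the row vector $(a,b) \cdot M$, where $M = \smat{x_1}{x_2}{x_3}{x_4}$. Consequently the two forms $u \circ f^{-1}$ obtained from these are the entries of $(a,b) A^{-1} \cdot M \cdot (B^{-1})^t$. Setting $(a',b') = (a,b) A^{-1}$, right multiplication by the invertible matrix $(B^{-1})^t$ is an isomorphism of the two-dimensional span of the entries of $(a',b') \cdot M$ onto the span of our two forms; this common span is $\Vect(a'x_1 + b'x_3,\, a'x_2 + b'x_4) = W_{(a':b')}$, still horizontal. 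The parallel computation using columns---noting that $ax_1 + bx_2$ and $ax_3 + bx_4$ are the entries of $M \cdot \scolvec{a}{b}$---shows that $f_{A,B}$ sends vertical planes to vertical planes.

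For $\tau$: since $\tau$ is an involution and $x_1 \circ \tau = x_1$, $x_2 \circ \tau = x_3$, $x_3 \circ \tau = x_2$, $x_4 \circ \tau = x_4$, direct substitution gives
\[\tau . \Vect(ax_1 + bx_3,\, ax_2 + bx_4) = \Vect(ax_1 + bx_2,\, ax_3 + bx_4),\]
so $\tau$ sends the horizontal plane indexed by $(a:b)$ to the vertical plane indexed by the same $(a:b)$; the identical calculation shows $\tau$ sends vertical planes back to horizontal ones. Finally, any $g \in \OO_4 \smallsetminus \SO_4$ writes as $g = g_0 \tau$ with $g_0 \in \SO_4$; by the previous paragraph $g_0$ preserves each family of planes while $\tau$ swaps them, hence $g$ swaps them, completing the proof.

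The argument is elementary linear algebra; I do not expect a serious obstacle. The only bookkeeping to watch is the right-action convention $u \circ f^{-1}$ (which introduces the inverses $A^{-1}, (B^{-1})^t$) together with the identification of horizontal planes with row multiplication by $(a,b)$ and vertical planes with column multiplication by $\scolvec{a}{b}$.
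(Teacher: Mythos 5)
Your proof is correct, but it takes a genuinely different route from the paper's. The paper's argument is essentially topological: the totally isotropic planes form the disjoint union of two copies of $\p^1$ (the two rulings), so the connected group $\SO_4$ must preserve each copy, and it then suffices to check on a single example that $\tau$ sends the horizontal plane $\Vect(x_1,x_2)$ to the vertical plane $\Vect(x_1,x_3)$; the coset decomposition finishes the claim for $\OO_4 \smallsetminus \SO_4$. You instead compute everything explicitly through the double cover $\SL_2 \times \SL_2 \to \SO_4$: writing $M = \smat{x_1}{x_2}{x_3}{x_4}$, the generators of a horizontal (resp.\ vertical) plane are the entries of $(a,b)\cdot M$ (resp.\ $M \cdot \scolvec{a}{b}$), so that $f_{A,B}$ sends the horizontal plane with parameter $(a:b)$ to the one with parameter $(a,b)A^{-1}$, and similarly on columns via $(B^{-1})^t$, while your treatment of $\tau$ is a computation on every plane rather than one example. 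The paper's proof buys brevity at the cost of invoking connectedness of $\SO_4$ and the (implicit) continuity of its action on the set of rulings; your proof buys a completely elementary, coordinate-level argument that moreover exhibits the induced action on each ruling explicitly (a Möbius action of $A$ on the horizontal parameter and of $B$ on the vertical one), which is finer information than the lemma requires. The points you flag as needing care --- the inverses introduced by the convention $u \circ f^{-1}$, the invertibility of the matrices acting on the spans, and the surjectivity of $\SL_2 \times \SL_2 \to \SO_4$ --- are exactly the right ones, and you handle them correctly.
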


\begin{proof}
The set of totally isotropic planes of $V^*$ is parametrized by the disjoint union of two copies of $\p^1$. The group $\SO_4$ being connected, it must preserve each  $\p^1$. The element $\tau$ of $\OO_4 \smallsetminus \SO_4$ exchanges the horizontal totally isotropic plane $\Vect (x_1,x_2)$ and the vertical totally isotropic  plane $\Vect (x_1,x_3)$. The result follows.
\end{proof}

\begin{remark}
Let $\Delta := \{ (x,x), \; x \in \p^1 \}$ denote the diagonal of $\p^1\times \p^1$. Identify the set of horizontal totally isotropic planes to $\p^1$.
Remark that the map 
\begin{align*}
\SO _4 &\to (\p^1 \times \p^1) \setminus \Delta \\
f=\mat{f_1}{f_2}{f_3}{f_4} &\mapsto ( \Vect(f_1,f_2), \Vect(f_3,f_4) )
\end{align*}
is a fiber bundle, whose fiber is isomorphic to $\GL _2$. 
Indeed, by Lemma \ref{lem:uniqueauto}, any element  $g=\smat{g_1}{g_2}{g_3}{g_4}$ of $\SO _4$  satisfying $\Vect(g_1,g_2) =\Vect(f_1,f_2)$ and $\Vect(g_3,g_4)=\Vect(f_3,f_4)$ is uniquely determined by the basis $(g_1,g_2)$ of $\Vect(f_1,f_2)$.

In the same way, we obtain a fiber bundle  
\begin{align*}
\OO_4 \smallsetminus \SO_4 &\to (\p^1 \times \p^1) \setminus \Delta,\\
\mat{f_1}{f_2}{f_3}{f_4} &\mapsto ( \Vect(f_1,f_3), \Vect(f_2,f_4) ).
\end{align*}

\end{remark}

\section{Square Complex} \label{sec:complexes}

We now define  a square complex $\Comp$, which will be our main tool in the study of $\TSL$, and we state some of its basic properties.

\subsection{Definitions} \label{sec:definition of complex}

A function $f_1 \in \C[\SL_2] = \C[x_1,x_2,x_3,x_4]/(q-1)$ is said to be a {\bf component} if it can be completed to an element  $f = \smat{f_1}{f_2}{f_3}{f_4}$ of $\TSL$.
The vertices of our 2-dimensional complex are defined in terms of orbits of tuples of components, as we now explain.
For any element $f = \smat{f_1}{f_2}{f_3}{f_4}$ of $\TSL$, we define the three vertices $[f_1], [f_1,f_2]$ and $\sbmat{f_1}{f_2}{f_3}{f_4}$ as the following sets:
\begin{itemize}
\item $[f_1]:= \C^* \cdot f_1 = \{ af_1 ;\; a \in \C^* \}$; 
\item $[f_1,f_2]:= \GL_2 \cdot (f_1,f_2) = \left\lbrace  (af_1 + bf_2 ,\, cf_1 + df_2) ;\; \mat{a}{b}{c}{d} \in \GL_2 \right\rbrace$;
\item $\bmat{f_1}{f_2}{f_3}{f_4} = \OO_4 \cdot f.$
\end{itemize}
Each bracket $[f_1]$ (resp. $[f_1,f_2]$, resp. $\sbmat{f_1}{f_2}{f_3}{f_4}$) denotes an orbit under the left action of the group $\C^*$ (resp. $\GL_2$, resp. $\OO_4$).
Vertices of the form  $[f_1]$ (resp. $[f_1,f_2]$, resp. $\sbmat{f_1}{f_2}{f_3}{f_4}$) are said to be of {\bf type 1} (resp. {\bf 2}, resp. {\bf 3}). Remark that our notation distinguishes between:
\begin{itemize}
\item $\mat{f_1}{f_2}{f_3}{f_4}$ which denotes an element of $\TSL$;
\item $\bmat{f_1}{f_2}{f_3}{f_4}$ which denotes a vertex of type 3.
\end{itemize}
The set of the \textbf{vertices} of the complex $\Comp$ is the disjoint union of the three types of vertices that we have just defined.

We now define the edges of $\Comp$, which reflect the inclusion of a component inside a row or column, or of a row or column inside an automorphism.
Precisely the set of the \textbf{edges} is the disjoint union of the following two types of edges:
\begin{itemize}
\item Edges that link a vertex $[f_1]$ of type 1 with a vertex $[f_1,f_2]$ of type 2; 
\item Edges that link a vertex $[f_1,f_2]$ of type 2 with a vertex  $\sbmat{f_1}{f_2}{f_3}{f_4}$ of type 3.
\end{itemize}

The set of the \textbf{squares} of $\Comp$  consists in filling the loop of four edges associated with the classes 
$[f_1]$, $[f_1,f_2]$, $[f_1,f_3]$ and $\sbmat{f_1}{f_2}{f_3}{f_4}$ for any $f = \smat{f_1}{f_2}{f_3}{f_4} \in \TSL$ (see Figure \ref{fig:square}).
The square associated with the classes $[x_1]$, $[x_1,x_2]$, $[x_1,x_3]$ and $\sbmat{x_1}{x_2}{x_3}{x_4}$ will be called the \textbf{standard square}. 

Observe that to an automorphism $f = \smat{f_1}{f_2}{f_3}{f_4}$ we can associate (by applying the above definitions to $\sigma \circ f$ with $\sigma$ in the Klein group $\V_4$):
\begin{itemize}
\item Four vertices of type 1: $[f_1]$, $[f_2]$, $[f_3]$ and $[f_4]$;
\item Four vertices of type 2: $[f_1,f_2]$, $[f_1,f_3]$, $[f_2,f_4]$ and $[f_3,f_4]$;
\item One vertex of type 3: $[f]$.
\item Twelve edges and four squares (see Figure \ref{fig:bigsquare}).
\end{itemize}  
We call such a figure the \textbf{big square} associated with $f$.
For any integers $m,n \ge 1$, we call $\mathbf{m \times n}$ \textbf{grid} any subcomplex of $\Comp$ isometric to a rectangle of $\R^2$ of size $m \times n$.
So a big square is a particular type of $2 \times 2$ grid.

We adopt the following convention for the pictures (see for instance Figures \ref{fig:square}, \ref{fig:bigsquare} and \ref{fig:afewsquares}):
Vertices of type 1 are depicted with a $\circ$,
vertices of type 2  are depicted with a $\bullet$,
vertices of type 3 are depicted with a $\blob$. 
\begin{figure}[ht]
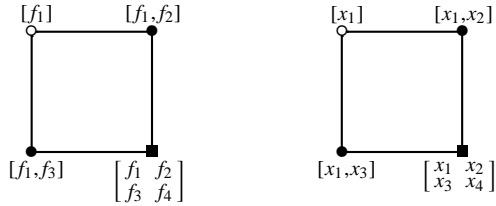

$$
\mygraph{
!{<0cm,0cm>;<1.6cm,0cm>:<0cm,1.6cm>::}
!~-{@{-}@[|(2)]}
!{(-1,1)}*{\circ}="f1"
!{(-0.957,1)}="f1right"
!{(-1,0.965)}="f1down"
!{(0,1)}*-{\bullet}="f1f2"
!{(-1,0)}*-{\bullet}="f1f3"
!{(0,0)}*-{\blob}="f1f2f3f4"
"f1right"-^<{[f_1]}^>{[f_1,f_2]}"f1f2"
"f1f3"-"f1down"
"f1f2f3f4"-"f1f2" "f1f2f3f4"-^<(.05){\sbmat{f_1}{f_2}{f_3}{f_4}}^>(.95){[f_1,f_3]}"f1f3"
}
\qquad \qquad
\mygraph{
!{<0cm,0cm>;<1.6cm,0cm>:<0cm,1.6cm>::}
!~-{@{-}@[|(2)]}
!{(-1,1)}*{\circ}="f1"
!{(-0.957,1)}="f1right"
!{(-1,0.965)}="f1down"
!{(0,1)}*-{\bullet}="f1f2"
!{(-1,0)}*-{\bullet}="f1f3"
!{(0,0)}*-{\blob}="f1f2f3f4"
"f1right"-^<{[x_1]}^>{[x_1,x_2]}"f1f2"
"f1f3"-"f1down"
"f1f2f3f4"-"f1f2" "f1f2f3f4"-^<(.05){\sbmat{x_1}{x_2}{x_3}{x_4}}^>(.95){[x_1,x_3]}"f1f3"
}
$$
\caption{Generic square \& standard square.} \label{fig:square}
\end{figure}

\begin{figure}[ht]
$$\mygraph{
!{<0cm,0cm>;<1.7cm,0cm>:<0cm,1.7cm>::}
!~-{@{-}@[|(2)]}
!{(-1,1)}*{\circ}="f1"
!{(-0.958,1)}="f1right"
!{(-1,0.967)}="f1down"
!{(0,1)}*-{\bullet}="f1f2"
!{(-1,0)}*-{\bullet}="f1f3"
!{(1,1)}*{\circ}="f2"
!{(0.958,1)}="f2left"
!{(1,0.968)}="f2down"
!{(1,0)}*-{\bullet}="f2f4"
!{(1,-1)}*{\circ}="f4"
!{(0.958,-1)}="f4left"
!{(1,-0.947)}="f4up"
!{(0,-1)}*-{\bullet}="f3f4"
!{(-1,-1)}*{\circ}="f3"
!{(-0.962,-1)}="f3right"
!{(-1,-0.947)}="f3up"
!{(0,0)}*-{\blob}="f1f2f3f4"
"f1right"-^<{[f_1]}^>{[f_1,f_2]}"f1f2"-^>{[f_2]}"f2left"
"f2down"-"f2f4"-^<{[f_2,f_4]}"f4up"
"f4left"-^<{[f_4]}"f3f4"-^<{[f_3,f_4]}^>{[f_3]}"f3right"
"f3up"-^>(.95){[f_1,f_3]}"f1f3"
"f1f3"-"f1down"
"f1f2f3f4"-"f1f2" "f1f2f3f4"-"f2f4" "f1f2f3f4"-"f3f4"
 "f1f2f3f4"-^<(.47){[f] = \sbmat{f_1}{f_2}{f_3}{f_4}}"f1f3"
}
\quad 
\mygraph{
!{<0cm,0cm>;<1.7cm,0cm>:<0cm,1.7cm>::}
!~-{@{-}@[|(2)]}
!{(-1,1)}*{\circ}="f1"
!{(-0.958,1)}="f1right"
!{(-1,0.967)}="f1down"
!{(0,1)}*-{\bullet}="f1f2"
!{(-1,0)}*-{\bullet}="f1f3"
!{(1,1)}*{\circ}="f2"
!{(0.958,1)}="f2left"
!{(1,0.968)}="f2down"
!{(1,0)}*-{\bullet}="f2f4"
!{(1,-1)}*{\circ}="f4"
!{(0.958,-1)}="f4left"
!{(1,-0.947)}="f4up"
!{(0,-1)}*-{\bullet}="f3f4"
!{(-1,-1)}*{\circ}="f3"
!{(-0.958,-1)}="f3right"
!{(-1,-0.948)}="f3up"
!{(0,0)}*-{\blob}="f1f2f3f4"
"f1right"-^<{[x_1]}^>{[x_1,x_2]}"f1f2"-^>{[x_2]}"f2left"
"f2down"-"f2f4"-^<{[x_2,x_4]}"f4up"
"f4left"-^<{[x_4]}"f3f4"-^<{[x_3,x_4]}^>{[x_3]}"f3right"
"f3up"-^>(.95){[x_1,x_3]}"f1f3"
"f1f3"-"f1down"
"f1f2f3f4"-"f1f2" "f1f2f3f4"-"f2f4" "f1f2f3f4"-"f3f4"
 "f1f2f3f4"-^<(.17){[\id]}"f1f3"
}$$
\caption{Generic big square \& standard big square.}
\label{fig:bigsquare}
\end{figure}

\begin{figure}[ht]
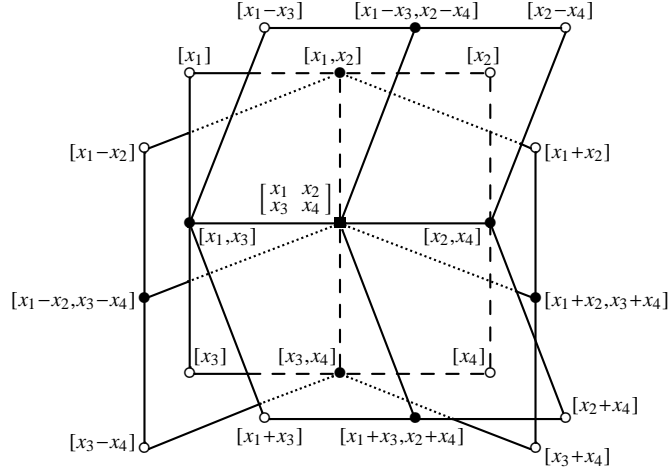

$$\mygraph{
!{<0cm,0cm>;<2cm,0cm>:<0cm,2cm>::}
!~-{@{-}@[|(2)]}
!{(-1,1)}*{\circ}="x1"
!{(-1,.97)}="x1down"
!{(-.968,1)}="x1right"
!{(-.61,1)}="x1fake"
!{(0,1)}*{\bullet}="x1x2"
!{(-1,0)}*-{\bullet}="x1x3"
!{(1,1)}*{\circ}="x2"
!{(1,.97)}="x2down"
!{(1,0)}*-{\bullet}="x2x4"
!{(1,-1)}*{\circ}="x4"
!{(0,-1)}*{\bullet}="x3x4"
!{(-1,-1)}*{\circ}="x3"
!{(-.968,-1)}="x3right"
!{(-.61,-1)}="x3fake"
!{(0,0)}*{\blob}="x1x2x3x4"
!{(-.5,1.3)}*{\circ}="x1-x3"
!{(-.465,1.3)}="x1-x3right"
!{(-.5,1.27)}="x1-x3down"
!{(.5,1.3)}*-{\bullet}="x1-x3x2-x4"
!{(1.5,1.3)}*{\circ}="x2-x4"
!{(1.465,1.3)}="x2-x4left"
!{(1.5,1.27)}="x2-x4down"
!{(-.5,-1.3)}*{\circ}="x1+x3"
!{(-.467,-1.3)}="x1+x3right"
!{(.5,-1.3)}*-{\bullet}="x1+x3x2+x4"
!{(.5,-1.3)}="x1+x3x2+x4up"
!{(1.5,-1.3)}*{\circ}="x2+x4"
!{(1.465,-1.3)}="x2+x4left"
!{(-1.3,.5)}*{\circ}="x1-x2"
!{(-1.3,.47)}="x1-x2down"
!{(-1.269,.5)}="x1-x2right"
!{(-1.01,.6)}="x1-x2fake"
!{(-1.3,-.5)}*-{\bullet}="x1-x2x3-x4"
!{(-1.3,-.5)}="x1-x2x3-x4right"
!{(-1.01,-.387)}="x1-x2x3-x4fake"
!{(-1.3,-1.5)}*{\circ}="x3-x4"
!{(-1.27,-1.5)}="x3-x4right"
!{(-.53,-1.2)}="x3-x4fake"
!{(1.3,.5)}*{\circ}="x1+x2"
!{(1.268,.5)}="x1+x2left"
!{(1.3,.473)}="x1+x2down"
!{(1.2,.53)}="x1+x2fake"
!{(1.3,-.5)}*-{\bullet}="x1+x2x3+x4"
!{(1.3,-.5)}="x1+x2x3+x4left"
!{(1.17,-.45)}="x1+x2x3+x4fake"
!{(1.3,-1.5)}*{\circ}="x3+x4"
!{(1.267,-1.5)}="x3+x4left"
!{(.75,-1.3)}="x3+x4fake"
"x1right"-^<{[x_1]}"x1fake"-@{--}^>{[x_1,x_2]}"x1x2"-@{--}^>{[x_2]}"x2"
"x2down"-@{--}"x2x4"-@{--}_<(.1){[x_2,x_4]}"x4"-@{--}_<(.1){[x_4]}"x3x4"-@{--}_<(0.3){[x_3,x_4]}"x3fake"-_>(.7){[x_3]}"x3right"
"x3"-_>(.9){[x_1,x_3]}"x1x3"
"x1x3"-"x1down"
"x1x2x3x4"-@{--}"x1x2" "x1x2x3x4"-"x2x4" "x1x2x3x4"-@{--}"x3x4"
 "x1x2x3x4"-_<(.27){\sbmat{x_1}{x_2}{x_3}{x_4}}"x1x3"
"x1x3"-"x1-x3down"
"x1-x3right"-^<{[x_1-x_3]}^>{[x_1-x_3,x_2-x_4]}"x1-x3x2-x4"-^>{[x_2-x_4]}"x2-x4left"
"x2-x4down"-"x2x4" "x1x2x3x4"-"x1-x3x2-x4"
"x1x3"-"x1+x3"
"x1+x3right"-_<{[x_1+x_3]}_>(.9){[x_1+x_3,x_2+x_4]}"x1+x3x2+x4"-"x2+x4left"
"x2+x4"-_<{[x_2+x_4]}"x2x4" "x1x2x3x4"-"x1+x3x2+x4up"
"x1x2"-@{.}"x1-x2fake"-"x1-x2right"
"x1-x2down"-_<{[x_1-x_2]}_>{[x_1-x_2,x_3-x_4]}"x1-x2x3-x4"-_>{[x_3-x_4]}"x3-x4"
"x3-x4right"-"x3-x4fake"-@{.}"x3x4" 
"x1x2x3x4"-@{.}"x1-x2x3-x4fake"-"x1-x2x3-x4right"
"x1x2"-@{.}"x1+x2fake"-"x1+x2left"
"x1+x2down"-^<{[x_1+x_2]}^>{[x_1+x_2,x_3+x_4]}"x1+x2x3+x4"-^>(1.1){[x_3+x_4]}"x3+x4"
"x3+x4left"-"x3+x4fake"-@{.}"x3x4" 
"x1x2x3x4"-@{.}"x1+x2x3+x4fake"-"x1+x2x3+x4left"
}$$
\caption{A few other squares...}
\label{fig:afewsquares}
\end{figure}

The group $\TSL$ acts naturally  on the complex $\Comp$. 
The action on the vertices of type 1, of type 2 and of type 3 is given respectively by the following three formulas:
\begin{align*}
g\cdot [f_1] &:= [f_1 \circ g^{-1}];\\
g\cdot [f_1,f_2] &:= [f_1 \circ g^{-1},f_2 \circ g^{-1}];\\
g\cdot [f] &:= [f \circ g^{-1}].
\end{align*}
It is an action by isometries, where $\Comp$ is endowed with the natural metric obtained by identifying each square to an 
euclidean square with edges of length 1.

\subsection{Transitivity and stabilizers}

We show that the action of $\TSL$ is transitive on many natural subsets of $\Comp$, and we also compute some related stabilizers.

\begin{lemma} \label{lem: 2a and 2b}
The action of $\TSL$ is transitive on vertices of type 1, 2 and 3 respectively.
The action of $\STSL$ is transitive on vertices of type 1 and 3 respectively, but admits two distinct orbits of vertices of type 2.

\end{lemma}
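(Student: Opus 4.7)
The statement decomposes into the transitivity of $\TSL$ on all three vertex types, the transitivity of $\STSL$ on types 1 and 3, and the two-orbit structure on type 2 under $\STSL$. The first two pieces will be direct: for any $f \in \TSL$, the element $f^{-1}$ sends $[\id]$, $[x_1]$ and $[x_1,x_2]$ to $[f]$, $[f_1]$ and $[f_1,f_2]$ respectively, and every type 2 vertex associated with the big square of $f$---including the column vertices $[f_1,f_3]$, $[f_2,f_4]$, $[f_3,f_4]$---is realized as the first row of an element of $\V_4 \circ f \subseteq \TSL$. To upgrade to $\STSL$ for types 1 and 3, I note that $\tau \circ f$ and $f$ produce the same vertex $[f_1]$ (since $(\tau \circ f)_1 = f_1$) and the same vertex $[f]$ (since $\tau \in \OO_4$), while exactly one of them lies in $\STSL$.

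For the type 2 statement under $\STSL$, I consider the two candidate orbits
\[
\mathcal{O}_1 := \STSL \cdot [x_1,x_2] = \{[h_1,h_2] : h \in \STSL\}, \quad \mathcal{O}_2 := \STSL \cdot [x_1,x_3] = \{[h_1,h_2] : h \in \TSL \smallsetminus \STSL\},
\]
where the second equality for $\mathcal{O}_2$ uses the identity $\tau \circ \smat{h_1}{h_2}{h_3}{h_4} = \smat{h_1}{h_3}{h_2}{h_4}$ combined with the fact that $\tau \in \OO_4 \smallsetminus \SO_4$ swaps $\STSL$ with its nontrivial coset. Their union is the full $\TSL$-orbit of $[x_1,x_2]$, i.e.\ all type 2 vertices, so only the disjointness $\mathcal{O}_1 \cap \mathcal{O}_2 = \emptyset$ remains. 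I will reduce this to the following \emph{key lemma}: \emph{any $h \in \TSL$ whose first row $(h_1,h_2)$ is a basis of the horizontal plane $\Vect(x_1,x_2)$ lies in $\STSL$}. Indeed, granting this, suppose $V \in \mathcal{O}_1 \cap \mathcal{O}_2$ with $V = [h_1,h_2] = [k_1,k_2]$, $h \in \STSL$ and $k \in \TSL \smallsetminus \STSL$; then $(k_1,k_2) = A(h_1,h_2)$ for some $A \in \GL_2$, whence $(k \circ h^{-1})_i = k_i \circ h^{-1} = A_{i1} x_1 + A_{i2} x_2$ for $i=1,2$, so $k \circ h^{-1}$ has first row in $\Vect(x_1,x_2)$ and the key lemma forces $k \circ h^{-1} \in \STSL$, hence $k \in \STSL$, a contradiction.

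The main obstacle is thus the key lemma. My plan is to first normalize $(h_1,h_2) = (x_1,x_2)$ by composing on the left with a suitable $v \in \OO_4$: such $v$ is produced by Lemma \ref{lem:uniqueauto} (its first two rows can be chosen to span the totally isotropic plane $\Vect(x_1,x_2)$), and Lemma \ref{lem:action of O_4 with respect to horizontality} forces $v \in \SO_4 \subseteq \STSL$ since that plane is horizontal, so $\STSL$-membership of $h$ is preserved. After this reduction, assume for contradiction $h \in \TSL \smallsetminus \STSL$; then $\tau \circ h \in \STSL$ with $(\tau \circ h)_1 = x_1$, and Corollary \ref{cor:linktype1} implies that $\bigl((\tau \circ h)_2,(\tau \circ h)_3\bigr) = (h_3,x_2)$ defines a $\C[x_1]$-automorphism of $\C[x_1][x_2,x_3]$. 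Its Jacobian $-\partial_3 h_3$ must be a unit in $\C^\times$, forcing $h_3 = c x_3 + B(x_1,x_2)$ with $c \in \C^\times$, $B \in \C[x_1,x_2]$. Plugging into the quadric identity $x_1 h_4 - x_2 h_3 = x_1 x_4 - x_2 x_3$ and requiring $h_4$ to be a polynomial yields, by reduction modulo $x_1$, both $c = 1$ and $x_1 \mid B$; writing $B = x_1 B'$ one gets $h = \smat{x_1}{x_2}{x_3 + x_1 B'}{x_4 + x_2 B'} \in \Eb \subseteq \STSL$, the desired contradiction. The delicate ingredient will be combining the structural Corollary \ref{cor:linktype1} with the quadric relation to pin down this rigid $\Eb$-form; everything else is bookkeeping.
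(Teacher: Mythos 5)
Your proof is correct, and for the transitivity statements it coincides with the paper's: apply $f$ itself, and replace $f$ by $\tau\circ f$ to get an element of $\STSL$ for vertices of type 1 and 3. Where you genuinely diverge is the only nontrivial point, the disjointness of the two $\STSL$-orbits of type-2 vertices. The paper disposes of this in two lines: if $g\in\TSL$ sends $[x_1,x_3]$ to $[x_1,x_2]$, then the linear part $h\in\OO_4$ of $g$ still sends $[x_1,x_3]$ to $[x_1,x_2]$, so by Lemma \ref{lem:action of O_4 with respect to horizontality} it exchanges the two rulings, hence $h\in\OO_4\smallsetminus\SO_4$ and $g\notin\STSL$. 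You instead reduce to the key lemma that any $h\in\TSL$ whose first row spans $\Vect(x_1,x_2)$ lies in $\STSL$, and prove it by normalizing via Lemmas \ref{lem:uniqueauto} and \ref{lem:action of O_4 with respect to horizontality} and then combining Corollary \ref{cor:linktype1} (Jacobian argument) with the quadric relation to force the normalized map into $\Eb$. This is valid and non-circular, since Corollary \ref{cor:linktype1} is established in the Preliminaries independently of the complex, but it is much heavier than needed: it imports the elementary-reduction machinery (Theorem \ref{th:main theorem of LV}) for a fact that follows from linear algebra alone; indeed your key lemma is itself an immediate consequence of the paper's linear-part observation, because such an $h$ stabilizes the horizontal vertex $[x_1,x_2]$, so its linear part preserves a horizontal isotropic plane and lies in $\SO_4$. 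What your route buys is a bit of extra structural information (the normalized automorphism is exactly an element of $\Eb$). One small point of rigor: the ``reduction modulo $x_1$'' must be carried out in $\C[\SL_2]$, where $(x_1)$ is prime and $x_2$ is invertible modulo $x_1$ with $x_3\equiv -x_2^{-1}$; the phrase ``requiring $h_4$ to be a polynomial'' should be read as $h_4\in\C[\SL_2]$, i.e. $x_1$ divides $x_2(h_3-x_3)$ in that ring, and with this reading your computation does go through.
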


\begin{proof}
Let $v_1$ (resp. $v_2$, $v_3$) be a vertex of type 1 (resp. 2, 3). 
There exists $f = \smat{f_1}{f_2}{f_3}{f_4} \in \TSL$ such that $v_1 = [f_1]$ (resp. $v_2 = [f_1,f_2]$, $v_3 = [f]$).
Then $[x_1] = [f_1 \circ f^{-1}] = f\cdot [f_1]$ (resp. $[x_1,x_2] = f\cdot [f_1,f_2]$, $[\id] = f\cdot [f]$).
If $f$ is not in $\STSL$ then $g = \tau \circ f = \smat{f_1}{f_3}{f_2}{f_4}$ is in $\STSL$.
We also have $[x_1] = g\cdot [f_1]$ and $[\id] = [\tau] = g \cdot [f]$, but $g \cdot [f_1,f_2] = [x_1,x_3]$.

It remains to prove that $[x_1,x_3]$ and $[x_1,x_2]$ are not in the same orbit under the action of $\STSL$.
Assume that $g \in \TSL$ sends $[x_1,x_3]$ on $[x_1,x_2]$, and let $h \in \OO_4$ be the linear part of $g$.
We still have $h \cdot [x_1,x_3] = [x_1,x_2]$, and by Lemma \ref{lem:action of O_4 with respect to horizontality} we deduce that $h \in \OO_4 \smallsetminus \SO_4$, hence $g \in \TSL \smallsetminus \STSL$. 
\end{proof}

\begin{definition} \label{def:horizontal and vertical edges}
(1) We say that a vertex of type 2 is \textbf{horizontal} (resp. \textbf{vertical}) if it lies in the same orbit as $[x_1, x_2]$ (resp. $[x_1,x_3]$) under the action of $\STSL$.

(2) We  say that an edge is \textbf{horizontal} (resp. \textbf{vertical}) if it lies in the same orbit as the edges between  $[x_1]$ and $[x_1, x_2]$ or between $[x_1, x_3]$ and $[ \id ]$ (resp. between $[x_1]$ and $[x_1,x_3]$ or between $[x_1, x_2]$ and $[ \id ]$) under the action of $\STSL$.
\end{definition}
 
We will study in \S \ref{sec:stab x1} the structure of the stabilizer $\Stab([x_1])$. In particular we will show that it admits a structure of amalgamated product.

Of course by definition the stabilizer of the vertex $[\id]$ of type 3  is the group $\OO_4$.

\begin{lemma} \label{lem:stab x1x3}
The stabilizer in $\TSL$ of the vertex $[x_1, x_3]$ of type 2  is the semi-direct product $\Stab([x_1, x_3]) =\Er \rtimes \GL_2$, where
\begin{multline*} 
\GL_2= \left\lbrace\mat{ax_1 + bx_3}{a'x_2+b'x_4}{cx_1 + dx_3}{c'x_2+d'x_4}; \;a,b,c,d,a',b',c',d' \in \C, \right.\\
\left. \mat{d'}{-b'}{-c'}{a'}\mat{a}{b}{c}{d}=\mat{1}{0}{0}{1} \right\rbrace.
\end{multline*}
\end{lemma}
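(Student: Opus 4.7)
The plan is to verify the four ingredients needed to identify the stabilizer with a semidirect product: (i) both $\Er$ and the displayed $\GL_2$ lie in $\Stab([x_1,x_3])$; (ii) their intersection is trivial; (iii) $\GL_2$ normalizes $\Er$; and (iv) every element of the stabilizer factors as a product of elements of $\Er$ and of $\GL_2$. The key input throughout will be Remark~\ref{rem:justification of Er}, which identifies $\Er$ with the set of elements of $\TSL$ whose first and third components are exactly $x_1$ and $x_3$.

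The first step is to unfold the definition of the action: since $g\cdot[x_1,x_3]=[x_1\circ g^{-1},\,x_3\circ g^{-1}]=[g^{-1}_1,\,g^{-1}_3]$, a map $g\in\TSL$ fixes $[x_1,x_3]$ exactly when $g^{-1}_1$ and $g^{-1}_3$ form a basis of $\Vect(x_1,x_3)$. For $e\in\Er$ this is immediate because $e^{-1}$ still has the form $\smat{x_1}{x_2-x_1P}{x_3}{x_4-x_3P}$. For $u\in\GL_2$ in the statement's form, a short inversion calculation gives $u^{-1}_1=(dx_1-bx_3)/(ad-bc)$ and $u^{-1}_3=(-cx_1+ax_3)/(ad-bc)$, both in $\Vect(x_1,x_3)$. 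This establishes (i). For (ii), an element in $\Er\cap\GL_2$ must simultaneously be linear and have first and third components equal to $x_1,x_3$, which on inspection of the displayed form of $\GL_2$ forces all parameters to collapse to those of the identity.

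For (iii), given $u\in\GL_2$ and $e\in\Er$, I would compute $(u\circ e\circ u^{-1})_1=u_1\circ e\circ u^{-1}=a\,(e\circ u^{-1})_1+b\,(e\circ u^{-1})_3$; but since $e_1=x_1$ and $e_3=x_3$ this reduces to $a\,u^{-1}_1+b\,u^{-1}_3=u_1\circ u^{-1}=x_1$, and symmetrically $(u\circ e\circ u^{-1})_3=x_3$. Remark~\ref{rem:justification of Er} then yields $u\circ e\circ u^{-1}\in\Er$.

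Finally, for (iv), take $f\in\Stab([x_1,x_3])$. By the observation in the second paragraph, there exist $a,b,c,d\in\C$ with $ad-bc\neq 0$ such that $f^{-1}_1=ax_1+bx_3$ and $f^{-1}_3=cx_1+dx_3$. Let $u\in\GL_2$ be the element in the statement's form with these same parameters $(a,b,c,d)$ and with $(a',b',c',d')=(ad-bc)^{-1}(a,b,c,d)$ forced by the compatibility condition. The same computation as in step (i) gives $(u^{-1}\circ f^{-1})_1=x_1$ and $(u^{-1}\circ f^{-1})_3=x_3$, so Remark~\ref{rem:justification of Er} produces $u^{-1}\circ f^{-1}\in\Er$. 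Inverting yields $f\in\GL_2\cdot\Er=\Er\cdot\GL_2$ by (iii), finishing the proof. The argument is essentially bookkeeping; the only real pitfall is keeping track of which conditions apply to $f$ versus $f^{-1}$ when translating the action on vertices into constraints on components.
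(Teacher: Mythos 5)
Your proof is correct and follows essentially the same route as the paper: use the action to see that the first and third components span $\Vect(x_1,x_3)$, compose with the corresponding linear element of the displayed $\GL_2$ to reduce to the case where these components are exactly $x_1,x_3$, and then invoke Remark~\ref{rem:justification of Er}. The only difference is that you spell out the semidirect-product bookkeeping (trivial intersection, $\GL_2$ normalizing $\Er$) that the paper leaves implicit, and you phrase the reduction in terms of $f^{-1}$ rather than $f$, which is equivalent.
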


\begin{proof}
Let $g =\smat{g_1}{g_2}{g_3}{g_4}\in \Stab([x_1,x_3])$.
We have $[g_1, g_3] = g^{-1} \cdot [x_1,x_3] = [x_1,x_3]$.
Hence $g_1, g_3$ are linear polynomials in $x_1, x_3$ that define an automorphism of $\Vect(x_1,x_3)$, in other words we can view $g_1, g_3$ as an element of $\GL_2$.
By composing $g$ with a linear automorphism of the form 
$$\mat{ax_1 + bx_3}{a'x_2+b'x_4}{cx_1 + dx_3}{c'x_2+d'x_4},$$ 
we can assume $g_1 = x_1$, $g_3 = x_3$.
Then, the result follows from Remark \ref{rem:justification of Er}.
\end{proof}

We now turn to the action of $\TSL$ and $\STSL$ on edges.

\begin{lemma}
The action of $\TSL$ is transitive respectively on edges between vertices of type 1 and 2, and on edges between vertices of type 2 and 3.
The action of $\STSL$ on edges admits four orbits, corresponding to the four edges of the standard square.  
\end{lemma}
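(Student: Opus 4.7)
The argument divides into two parts. First, for $\TSL$-transitivity, the two statements are immediate from the definitions: any edge of type $(1,2)$ can be written as $[f_1] - [f_1,f_2]$ for some $f = \smat{f_1}{f_2}{f_3}{f_4} \in \TSL$, and $f$ itself sends it onto $[x_1] - [x_1,x_2]$; similarly, any edge $[f_1,f_2] - [f]$ of type $(2,3)$ is sent by $f$ onto $[x_1,x_2] - [\id]$.

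For the $\STSL$-orbits, the key preliminary observation is that $\tau = \smat{x_1}{x_3}{x_2}{x_4}$ lies in $\TSL \smallsetminus \STSL$ (its linear part has determinant $-1$) and satisfies $\tau \cdot [x_1,x_2] = [x_1,x_3]$. Since $\STSL$ is normal of index $2$ in $\TSL$ and has exactly two orbits on type-$2$ vertices (horizontal and vertical) by Lemma \ref{lem: 2a and 2b}, the non-trivial coset $\TSL \smallsetminus \STSL$ must exchange horizontal and vertical type-$2$ vertices. Consequently, the four edges of the standard square lie in four distinct $\STSL$-orbits, indexed by (edge type) and (horizontality of type-$2$ endpoint); it remains to show exhaustiveness.

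For an edge $e = [h] - [h_1,h_2]$ of type $(1,2)$ with $[h_1,h_2]$ horizontal, the $\STSL$-transitivity on horizontal type-$2$ vertices furnishes $g_1 \in \STSL$ with $g_1 \cdot [h_1,h_2] = [x_1,x_2]$; then $g_1 \cdot [h] = [\alpha x_1 + \beta x_2]$ for some $(\alpha,\beta) \neq (0,0)$, and using the $\SL_2 \times \SL_2$-cover of $\SO_4 \subset \STSL$ I find an element $g_2 \in \SO_4$ stabilizing $\Vect(x_1,x_2)$ setwise that sends $[\alpha x_1 + \beta x_2]$ to $[x_1]$. The vertical case is symmetric and yields the orbit of $[x_1] - [x_1,x_3]$.

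For an edge $e = [h_1,h_2] - [f]$ of type $(2,3)$ with $[h_1,h_2]$ horizontal, writing $f = \smat{h_1}{h_2}{h_3}{h_4} \in \TSL$, the element $f$ sends $e$ onto $[x_1,x_2] - [\id]$; the preliminary observation then forces $f \in \STSL$, since otherwise $f \in \TSL \smallsetminus \STSL$ would have to exchange horizontal and vertical, contradicting $f \cdot [h_1,h_2] = [x_1,x_2]$. The vertical case is symmetric. The only subtle point is to ensure that the reducing element actually lies in $\STSL$, and this is handled in both cases by the horizontality-exchange property; no substantive obstacle is anticipated.
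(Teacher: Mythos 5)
Your proof is correct, and the first half is exactly the paper's argument: the automorphism $f$ whose existence defines the edge already carries it onto a standard edge. For the $\STSL$-statement the paper is more economical: it keeps that same reducing element $f$ and simply replaces it by $\tau \circ f$ whenever $f \notin \STSL$, which sends every edge onto one of the four edges of the standard square in one stroke, for both edge types at once; the pairwise inequivalence of those four edges is left implicit (it follows from Lemma \ref{lem: 2a and 2b} and preservation of vertex types). You instead first establish that every element of $\TSL \smallsetminus \STSL$ swaps horizontal and vertical type-2 vertices (via $\tau$ together with normality of the index-2 subgroup, which is the group-theoretic counterpart of Lemma \ref{lem:action of O_4 with respect to horizontality}), which makes the distinctness of the four orbits explicit, and then prove exhaustiveness by two different mechanisms: for edges of type $(2,3)$ your argument that the canonical reducing element is forced to lie in $\STSL$ is essentially the contrapositive of the paper's ``compose with $\tau$'' trick, while for edges of type $(1,2)$ you use a genuinely different two-step reduction, first moving the type-2 endpoint by transitivity of $\STSL$ on horizontal (resp.\ vertical) type-2 vertices, then moving the type-1 endpoint by an element of $\SO_4$ stabilizing the type-2 vertex (which indeed exists, since the stabilizer in $\SO_4$ of a totally isotropic plane acts on it transitively on nonzero vectors up to scalar). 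Both routes are sound; yours is longer but accounts explicitly for why there are exactly four orbits, whereas the paper's single substitution $f \mapsto \tau\circ f$ is shorter and treats the two edge types uniformly.
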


\begin{proof}
If there is an edge between $v_1$ a vertex of type 1 and $v_2$ a vertex of type 2, then there exists $f = \smat{f_1}{f_2}{f_3}{f_4} \in \TSL$ such that $v_1 = [f_1]$ and $v_2 = [f_1, f_2]$.
Then $f \cdot v_1 = [x_1]$ and $f \cdot v_2 = [x_1,x_2]$. 

Similarly if there is an edge between $v_3$ a vertex of type 3 and $v_2$ a vertex of type 2, then there exists $f = \smat{f_1}{f_2}{f_3}{f_4} \in \TSL$ such that $v_3 = [f]$ and $v_2 = [f_1, f_2]$.
Then $f \cdot v_3 = [\id]$ and $f \cdot v_2 = [x_1,x_2]$. 

In both cases, if $f \not\in \STSL$, we change $f$ by $g = \tau \circ f$ and we obtain $g\cdot v_1 = [x_1]$, $g \cdot v_2 = [x_1, x_3]$, $g \cdot v_3 = [\id]$.
\end{proof}

\begin{lemma} \label{lem:action on edges}
(1) The stabilizer  of the edge between $[x_1]$ and $[x_1, x_3]$ is the semi-direct product
\begin{align*}
\Er \rtimes \left\lbrace\mat{ax_1}{d^{-1}x_2}{dx_3 +cx_1  }{a^{-1}x_4 + ca^{-1}d^{-1}x_2}; \; a,c,d \in \C, \; ad \neq 0 \right\rbrace.
\end{align*}
(2) The stabilizer of the edge between $[x_1, x_2]$ and $[\id]$ is the following subgroup of $\SO_4$:
\[\left\lbrace  A \cdot \mat{x_1}{x_2}{x_3}{x_4} \cdot B^t; \; A,B \in \SL_2, \; A \text{ is lower triangular}  \right\rbrace.\]
\end{lemma}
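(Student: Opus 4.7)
The plan is to handle the two parts separately; in each case, the strategy is to combine the computation of a vertex stabilizer already available from an earlier lemma with the extra condition imposed by the other endpoint of the edge.

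For (1), an element of $\TSL$ fixes the edge if and only if it simultaneously stabilizes $[x_1]$ and $[x_1,x_3]$. Lemma \ref{lem:stab x1x3} describes $\Stab([x_1,x_3])$ as the semi-direct product $\Er \rtimes \GL_2$. Since every element of $\Er$ fixes $x_1$ pointwise, $\Er$ is automatically contained in the edge stabilizer, and it remains to determine which elements of the $\GL_2$ factor also stabilize $[x_1]$. In the explicit parametrization the first row of such an element is $ax_1 + bx_3$, so the condition $g \cdot [x_1] = [x_1]$ forces $b=0$ with $a \in \C^*$. I would then substitute $b=0$ into the relation $\smat{d'}{-b'}{-c'}{a'}\smat{a}{b}{c}{d}=\smat{1}{0}{0}{1}$ and solve the four resulting scalar equations: this is elementary and yields $a'=d^{-1}$, $b'=0$, $d'=a^{-1}$, $c'=ca^{-1}d^{-1}$, giving exactly the claimed form. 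The semi-direct product structure is inherited from Lemma \ref{lem:stab x1x3}.

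For (2), by definition of a type $3$ vertex the condition $g\cdot[\id]=[\id]$ is equivalent to $g\in\OO_4$. To additionally stabilize the horizontal vertex $[x_1,x_2]$, I would first invoke Lemma \ref{lem:action of O_4 with respect to horizontality}: elements of $\OO_4 \smallsetminus \SO_4$ send the horizontal totally isotropic plane $\Vect(x_1,x_2)$ to a vertical one, so any stabilizing element must belong to $\SO_4$. Using the universal cover $\SL_2 \times \SL_2 \to \SO_4$ described in Section \ref{sec:O4}, I would write $g = A \cdot \smat{x_1}{x_2}{x_3}{x_4} \cdot B^t$ with $A = \smat{a}{b}{c}{d}$ and $B \in \SL_2$. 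A one-line computation identifies the first two components $g_1, g_2$ as explicit linear combinations of $x_1,\dots,x_4$, and the condition $g_1, g_2 \in \Vect(x_1, x_2)$ collapses to requiring that the coefficients of $x_3$ and $x_4$ vanish, which after using that no row of $B\in\SL_2$ is zero reduces to the single equation $b=0$. This is precisely the assertion that $A$ is lower triangular, and conversely any such $A$ (together with any $B \in \SL_2$) manifestly yields an element of the edge stabilizer.

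I do not anticipate any serious obstacle: both statements follow by elementary calculation once the preceding lemmas are in place. The only point requiring care is bookkeeping with the convention $g\cdot[f_1,\ldots] = [f_1\circ g^{-1},\ldots]$; this is harmless because $\Stab$ is a subgroup, so the condition on $g^{-1}$ may equivalently be phrased in terms of the components of $g$ itself.
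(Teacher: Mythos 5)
Your proposal is correct and follows essentially the same route as the paper: part (1) is deduced from Lemma \ref{lem:stab x1x3} exactly as in the paper (you merely spell out the matrix equations), and part (2) reduces to $\SO_4$ and then performs the same direct computation via the cover $\SL_2\times\SL_2\to\SO_4$, the only cosmetic difference being that you invoke Lemma \ref{lem:action of O_4 with respect to horizontality} directly rather than the orbit statement of Lemma \ref{lem: 2a and 2b} (which rests on the same fact). Your closing remark about the $g^{-1}$ in the action being harmless because stabilizers are subgroups is exactly the right bookkeeping point.
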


\begin{proof}
(1) This follows from  Lemma \ref{lem:stab x1x3}.

(2) Recall that $\Stab ( [ \id])= \OO_4$. By Lemma \ref{lem: 2a and 2b}, we have $\Stab ( [x_1,x_2]) \subseteq \STSL$. Therefore, the stabilizer of  the edge between $[x_1, x_2]$ and $[\id]$ is included into $\SO_4$. By \ref{sec:definitions about O4}, any element of $\SO_4$ is of the form
\[ f=A \cdot \mat{x_1}{x_2}{x_3}{x_4} \cdot B^t, \quad \text{where }A,B \in \SL_2.\]
A direct computation shows that $f$ belongs to $\Stab([x_1, x_2],[\id])$ if and only if $A$ is lower triangular.
\end{proof}

\begin{lemma} \label{lem:action on v3 next to v2}
Let $v_2 = [f_1, f_2]$ be a vertex of type 2, and $\P$ be the path of length 2 through the vertices $[f_1]$, $[f_1, f_2]$, $[f_2]$. Then :
\begin{enumerate}
\item The pointwise stabilizer $\Stab \P$ is isomorphic to
\[\Er \rtimes \left\lbrace\mat{ax_1}{b^{-1}x_2}{bx_3}{a^{-1}x_4}; \; a,b \in \C^* \right\rbrace.\]
\item The group $\Stab \P$ acts transitively on the set of vertices of type 3 at distance 1 from $v_2$.
\item If $[f]$, $[g]$ are two vertices of type 3 at distance 1 from $v_2$, then there exists a generalized elementary automorphism $h$ such that $[g] = [h \circ f]$.
\end{enumerate}
\end{lemma}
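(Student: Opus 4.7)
The plan is to reduce to the standard path $\P_0 = [x_1] - [x_1, x_3] - [x_3]$, from which the general case will follow by $\TSL$-conjugation. This reduction is legitimate because $\TSL$ acts transitively on type $2$ vertices (Lemma \ref{lem: 2a and 2b}), and the $\GL_2$-factor of $\Stab([x_1,x_3])$ (Lemma \ref{lem:stab x1x3}) acts as $\PGL_2$ on the $\p^1$ of type $1$ vertices adjacent to $[x_1, x_3]$, which is in particular $2$-transitive.

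For part (1), I would first observe that every element of $\Er$ has first and third components equal to $x_1$ and $x_3$ (and the same is true of its inverse), so that $\Er \subseteq \Stab \P_0$ directly from the definition of the action. To identify the rest, I would decompose any $g \in \Stab \P_0 \subseteq \Stab([x_1, x_3])$ as $g = e \cdot L$ with $e \in \Er$ and $L$ in the $\GL_2$-factor of Lemma \ref{lem:stab x1x3}, and translate the condition ``$L$ fixes $[x_1]$'' into ``the pullback $L^*$ preserves the line $\C x_1 \subseteq V^*$''. In the matrix parameterization of Lemma \ref{lem:stab x1x3}, this forces the off-diagonal entries of $A$ to vanish; the compatibility condition $A' = \det(A)^{-1} A$ of the same lemma then produces exactly $D := \{\smat{ax_1}{b^{-1}x_2}{bx_3}{a^{-1}x_4} : a, b \in \C^*\}$. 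A direct conjugation computation will verify that $D$ normalizes $\Er$, yielding $\Stab \P_0 = \Er \rtimes D$.

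For parts (2) and (3), my plan is to parameterize the type $3$ vertices adjacent to $v_2 = [x_1, x_3]$ by $\Er$, and then to compute. Given a representative $f$ of such a vertex, $v_2$ coincides with one of the four type $2$ vertices of $f$; I would apply an element of $\V_4 \subseteq \OO_4$ to move it to the first-column position, and then use the $\GL_2$-freedom of Lemma \ref{lem:uniqueauto} to normalize $(f_1, f_3) = (x_1, x_3)$. The identity $q \circ f = q$ will then force $f_2 = x_2 + x_1 T$ and $f_4 = x_4 + x_3 T$ for some polynomial $T$, and Remark \ref{rem:justification of Er} will upgrade $T$ to a two-variable polynomial $P(x_1, x_3)$, so that $f = e_P \in \Er$. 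Part (2) follows at once from the formula $e_Q \cdot [e_P] = [e_P \circ e_{-Q}] = [e_{P-Q}]$, which shows that $\Er$ alone acts transitively on these vertices; and for part (3), the identity $e_{P' - P} \circ e_P = e_{P'}$ provides $h := e_{P' - P} \in \Er$, an elementary (hence \emph{a fortiori} generalized elementary) automorphism with $[g] = [h \circ f]$. The main delicate step will be the two-stage normalization of representatives together with the use of Remark \ref{rem:justification of Er} to promote the a priori four-variable polynomial $T$ to the two-variable $P$.
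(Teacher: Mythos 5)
Parts (1) and (2) of your proposal follow essentially the paper's own route: reduce to the standard path by transitivity, cut the pointwise stabilizer out of $\Stab([x_1,x_3])=\Er\rtimes\GL_2$ (the paper quotes Lemma \ref{lem:action on edges}, you rederive the same computation from Lemma \ref{lem:stab x1x3}), and use Remark \ref{rem:justification of Er} to identify the type-3 neighbours of $[x_1,x_3]$ with the classes $[e_P]$, $e_P\in\Er$, on which $\Er$ visibly acts transitively. Your intermediate extraction of $f_2=x_2+x_1T$, $f_4=x_4+x_3T$ from the quadratic relation is superfluous (and would need $x_1,x_3$ to be coprime in $\C[\SL_2]$, which the paper never proves); the Remark gives $f\in\Er$ directly once the first column is normalized to $(x_1,x_3)$.

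The one genuine slip is in (3). The conclusion must hold for the representative $f$ you are handed, and this is how the lemma is used later: in the proof of Proposition \ref{pro:1connected} a single representative $f$ of $\gamma(2i)$ has to serve simultaneously for two different type-2 neighbours, so it cannot be assumed normalized with respect to $v_2$. Your $h=e_{P'-P}$ only gives $[g]=[h\circ e_P]$ for the normalized representative $e_P$ of the vertex $[f]$. Writing $e_P=u\circ f$ with $u\in\OO_4$, one gets $[g]=[e_{P'-P}\circ u\circ f]=[(u^{-1}\circ e_{P'-P}\circ u)\circ f]$, so the automorphism that actually works for the given $f$ is the $\OO_4$-conjugate $u^{-1}e_{P'-P}u$: generalized elementary, but in general not elementary. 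This conjugation is precisely why the statement says ``generalized'', and it is the step the paper's proof makes explicit (its $h=beb^{-1}$). The fix is one line, but as written your argument proves (3) only for a special choice of representative.
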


\begin{proof}
Without loss in generality we can assume $f_1 = x_1$, $f_2 = x_3$.
Then (1) follows from Lemma \ref{lem:action on edges}.
By definition of the complex, if $v_3$ is at distance $1$ from $v_2 = [x_1, x_3]$, then $v_3 = [e]$ with $e = \smat{x_1}{e_2}{x_3}{e_4} \in \TSL$.
By Remark \ref{rem:justification of Er} we get $e \in \Er$ and (2) follows.
Now if $[f]$, $[g]$ are two vertices of type 3 at distance 1 from $v_2 = [x_1,x_3]$, we can assume $[f] = [\id]$ and $[g] = [e]$ for some $e \in \Er$.  Thus there exist $a,b \in \OO_4$ such that $g = a e$ and $f = b$. 
Then 
\[[g] = [a e] = [be ] = [beb^{-1} f]\]
and $h = beb^{-1}$ is a generalized elementary automorphism.
\end{proof}

\begin{lemma} \label{lem:action on squares}
The group $\TSL$ acts transitively on squares.
The pointwise stabilizer of the standard square is the following  subgroup of $\SO_4$:
\begin{align*}
S &= \left\lbrace   \mat{a}{0}{b}{a^{-1}} \cdot \mat{\rule{0mm}{2mm}x_1}{x_2}{\rule{0mm}{2mm}x_3}{x_4} \cdot \mat{a'}{b'}{0}{a'^{-1}}; \;a,a'\in \C^*, \; b,b' \in \C   \right\rbrace \\
&= \left\{ \mat{ax_1}{b(x_2 + cx_1)}{b^{-1}(x_3 +d x_1)}{\dots}; \; a,b,c,d \in \C, \; ab \neq 0 \right\}.
\end{align*}
\end{lemma}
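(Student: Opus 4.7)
The plan is to prove transitivity by exhibiting an explicit element of $\TSL$ that carries any square to the standard square, and then to determine the pointwise stabilizer by imposing one vertex condition at a time.

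For transitivity, every square of $\Comp$ is by construction of the form
\[
\bigl\{[f_1],\ [f_1,f_2],\ [f_1,f_3],\ \sbmat{f_1}{f_2}{f_3}{f_4}\bigr\}
\]
for some $f=\smat{f_1}{f_2}{f_3}{f_4}\in\TSL$. Since $f_i\circ f^{-1}=x_i$ for $i=1,\dots,4$, the element $f$ itself sends each of these four vertices to the corresponding vertex of the standard square.

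For the stabilizer, I would take $g\in\TSL$ fixing pointwise the standard square, and extract conditions one by one. The condition $g\cdot[\id]=[\id]$ yields $g\in\OO_4$, and then $g\cdot[x_1,x_2]=[x_1,x_2]$ forces $g\in\SO_4$ by Lemma~\ref{lem:action of O_4 with respect to horizontality}, since an element of $\OO_4\smallsetminus\SO_4$ would exchange the horizontal plane $\Vect(x_1,x_2)$ with a vertical one. Using the parametrization of~\S\ref{sec:definitions about O4} I then write $g=A\smat{x_1}{x_2}{x_3}{x_4}B^t$ with $A=\smat{\alpha}{\beta}{\gamma}{\delta}$, $B=\smat{\alpha'}{\beta'}{\gamma'}{\delta'}$ in $\SL_2$, and expand the matrix product to obtain
\[
g_1 = \alpha\alpha'\,x_1 + \alpha\beta'\,x_2 + \beta\alpha'\,x_3 + \beta\beta'\,x_4.
\]
The remaining condition $g\cdot[x_1]=[x_1]$ amounts to $g_1\in\C^*\,x_1$, so we need $\alpha\alpha'\neq 0$ together with the vanishing of the three other coefficients; this immediately forces $\beta=\beta'=0$. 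Combined with $\det A=\det B=1$, I get $A=\smat{a}{0}{b}{a^{-1}}$ and $B=\smat{a'}{0}{b'}{(a')^{-1}}$ with $a,a'\in\C^*$ and $b,b'\in\C$, as in the first description of $S$. A short check of $g_2$ and $g_3$ then confirms that these automatically lie in $\Vect(x_1,x_2)$ and $\Vect(x_1,x_3)$, so the conditions coming from the two remaining vertices are already satisfied.

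The equivalence of the two descriptions of $S$ is obtained by directly expanding $\smat{a}{0}{b}{a^{-1}}\smat{x_1}{x_2}{x_3}{x_4}\smat{a'}{b'}{0}{(a')^{-1}}$ and identifying the new parameters as $(aa',\,a(a')^{-1},\,a'b',\,ab)$. There is no serious obstacle here: the crucial observation is that fixing $[\id]$ together with a horizontal type-$2$ vertex already pins $g$ down inside $\SO_4$, after which everything reduces to a direct linear-algebra computation.
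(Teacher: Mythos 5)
Your proof is correct and takes essentially the same route as the paper: transitivity is obtained exactly as in the paper's proof, by letting $f$ itself carry the square $[f_1]$, $[f_1,f_2]$, $[f_1,f_3]$, $\sbmat{f_1}{f_2}{f_3}{f_4}$ onto the standard one. The paper leaves the stabilizer computation to the reader, and your explicit verification (fixing $[\id]$ gives $\OO_4$, fixing the horizontal vertex $[x_1,x_2]$ forces $\SO_4$ via Lemma~\ref{lem:action of O_4 with respect to horizontality}, and fixing $[x_1]$ forces $\beta=\beta'=0$, after which the remaining vertex conditions hold automatically) correctly fills in that omitted step.
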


\begin{proof}
By definition, a square corresponds to vertices $v_1 = [f_1]$, $v_2 = [f_1,f_2]$, $v_3 = [f]$ and $v_2' = [f_1,f_3]$ where $f = \smat{f_1}{f_2}{f_3}{f_4} \in \TSL$. Then $f\cdot v_1 = [x_1]$, $f\cdot v_2 = [x_1,x_2]$, $f \cdot v_3 = [\id]$ and $f \cdot v_2' = [x_1,x_3]$. The computation of the stabilizer of the standard square is left to the reader.
\end{proof}

\begin{remark} \label{rem:square containing id}
The squares containing $[\id]$ are naturally parametrized by $\p^1 \times \p^1$, i.e. by points of the quadric $S$ in Remark~\ref{rem:geometry of isotropic cone}: see Figure~\ref{fig:square corresponding to ([alpha:beta],[gamma:delta])}.
In the same vein, one can remark that the set of vertices of type 1 connected to an arbitrary vertex  $v_2 = [f_1, f_2]$ of type 2 is parametrized by $\p^1$; explicitely they are of the form $[af_1+bf_2]$.
\end{remark}

\begin{figure}[ht]
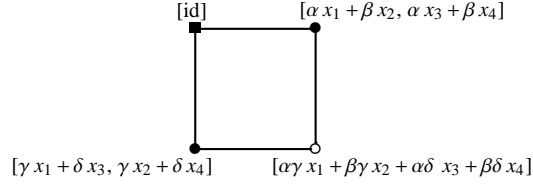

$$
\mygraph{
!{<0cm,0cm>;<1.6cm,0cm>:<0cm,1.6cm>::}
!~-{@{-}@[|(2)]}
!{(-1,1)}*-{\blob}="f1"
!{(0,1)}*-{\bullet}="f1f2"
!{(-1,0)}*-{\bullet}="f1f3"
!{(0,0)}*{\circ}="f1f2f3f4"
!{(-.045,0)}="f1f2f3f4left"
"f1"-^<{[\id]}^>(1.7){[ \alpha \, x_1 \, +\,  \beta \,  x_2, \; \alpha \,  x_3 \, + \,  \beta \,  x_4]}"f1f2"
"f1f3"-"f1"
"f1f2f3f4"-"f1f2" "f1f2f3f4left"-^<(-0.8){[\alpha \gamma \,  x_1 \, + \,  \beta \gamma  \, x_2 \, + \, \alpha \delta \,  \, x_3 \, + \,  \beta \delta  \, x_4]}^>(1.7){[ \gamma \,  x_1 \, + \, \delta \,  x_3, \;  \gamma \, x_2 \, + \, \delta \,  x_4]}"f1f3"
}
$$
\caption{The  square containing $[\id]$ corresponding to \hspace{15cm} $((\alpha:\beta),(\gamma:\delta))\in\p^1\times\p^1$.}
\label{fig:square corresponding to ([alpha:beta],[gamma:delta])}
\end{figure}

We have seen that any element $f$ of $\TSL$ defines a big square centered at $[f]$ (see Figure \ref{fig:bigsquare}). 
We have the following converse result:

\begin{lemma} \label{lem: Any big square is associated with an element of TSL}
Any $2 \times 2$ grid centered at a vertex of type 3 is the big square associated with some element  of $\TSL$. 
\end{lemma}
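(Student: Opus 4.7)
The plan is to exhibit the grid as the big square of an element of $\OO_4 \subseteq \TSL$. Using the transitivity of $\TSL$ on vertices of type 3 (Lemma \ref{lem: 2a and 2b}), I first move the center of the grid $\Gamma$ to $[\id]$. Label cyclically the four type-2 vertices around $[\id]$ as $V_1, V_2, V_3, V_4$ and the four type-1 corners as $C_i = V_i \cap V_{i+1}$ (indices mod $4$); the $V_i$ are totally isotropic planes and the $C_i$ are isotropic lines of $V^*$.

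By Remark \ref{rem:square containing id}, each square at $[\id]$ is parametrized by a point of $\p^1 \times \p^1$ whose two factors correspond respectively to the families of vertical and horizontal totally isotropic planes, so the two type-2 vertices of any such square have opposite horizontality. Hence the $V_i$ alternate horizontal/vertical around $[\id]$; up to relabeling, assume $V_1, V_3$ horizontal and $V_2, V_4$ vertical. Because the $2 \times 2$ grid is embedded, its nine vertices are pairwise distinct, and two distinct horizontal planes are disjoint as subspaces by the $\p^1 \times \p^1$ picture (Remark \ref{rem:geometry of isotropic cone}); so $V^* = V_1 \oplus V_3$. Similarly $C_1, C_4$ are two distinct lines in $V_1$, hence $V_1 = C_4 \oplus C_1$.

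Pick nonzero vectors $f_1 \in C_4$ and $f_2 \in C_1$, so that $(f_1, f_2)$ is a basis of $V_1$. By Lemma \ref{lem:uniqueauto} applied to the decomposition $V^* = V_1 \oplus V_3$, there is a unique basis $(f_3, f_4)$ of $V_3$ for which $f = \smat{f_1}{f_2}{f_3}{f_4}$ belongs to $\OO_4 \subseteq \TSL$. It remains to check that the big square of $f$ coincides with $\Gamma$. The equalities $[f_1, f_2] = V_1$ and $[f_3, f_4] = V_3$ hold by construction; as for $[f_1, f_3]$, it is a totally isotropic plane containing the nonzero vector $f_1 \in C_4$, so by Lemma \ref{lem:isotropicsecantplanes} it is one of the two isotropic planes through $C_4$, namely $V_1$ or $V_4$. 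The option $V_1$ is excluded because $f_3 \in V_3 \setminus \{0\}$ and $V_1 \cap V_3 = \{0\}$, hence $[f_1, f_3] = V_4$; the symmetric argument gives $[f_2, f_4] = V_2$. Thus $f$ shares all four type-2 vertices with $\Gamma$, hence also the four corners, and $\Gamma$ is the big square of $f$. The only delicate point is this final identification of $[f_1, f_3]$ and $[f_2, f_4]$ with the prescribed $V_4$ and $V_2$; it is precisely what lets Lemma \ref{lem:uniqueauto} suffice to produce $f$ without any further linear-algebraic computation.
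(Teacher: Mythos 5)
Your proof is correct, but it follows a genuinely different route from the paper's. The paper first uses transitivity on squares (Lemma \ref{lem:action on squares}) to assume the grid \emph{contains the standard square}, then reads off the two remaining squares via the $\p^1$-parametrization of Remark \ref{rem:square containing id}, normalizes $bb'=1$, and checks by the direct computation $f_1f_4-f_2f_3=q$ that $f=\smat{x_1}{ax_1+bx_2}{a'x_1+b'x_3}{u}$ lies in $\OO_4$; with that explicit $f$ the identification of its big square with the grid is immediate from the labels. You instead only normalize the center to $[\id]$ (transitivity on type-3 vertices) and then argue synthetically in $V^*$: alternation of the two rulings around $[\id]$, complementarity $V^*=V_1\oplus V_3$ of two distinct same-ruling planes, the adapted basis $f_1\in C_4$, $f_2\in C_1$ of $V_1$, Lemma \ref{lem:uniqueauto} to produce $f\in\OO_4$, and Lemma \ref{lem:isotropicsecantplanes} to force $[f_1,f_3]=V_4$ and $[f_2,f_4]=V_2$. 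Your version buys a coordinate-free argument that makes clear why the uniqueness statement of Lemma \ref{lem:uniqueauto} is exactly what is needed; the price is the a posteriori verification that the big square of $f$ equals the given grid, which the paper's explicit construction makes trivial. On that last point, the step ``hence also the four corners'' deserves one more line: each corner of the big square of $f$ (e.g.\ $[f_3]$) is the class of a generator of the intersection of its two neighboring type-2 planes ($V_3\cap V_4$, a line), so it coincides with the corresponding $C_i$ of the grid --- equivalently, apply Lemma \ref{lem:uniquesquare} to each pair of opposite type-2 vertices. Since you have already identified the $C_i$ with the lines $V_i\cap V_{i+1}$, this is immediate, so I regard it as an elision rather than a gap.
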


\begin{proof}
By Lemma \ref{lem:action on squares}, we may reduce to the case where the $2 \times 2$ grid contains the standard square. 
By Remark \ref{rem:square containing id}, there exist elements $(a:b)$ and $(a':b')$ in $\p^1$ such that the grid is as depicted on Figure \ref{fig:A big square containing the standard square}.

\begin{figure}[ht]
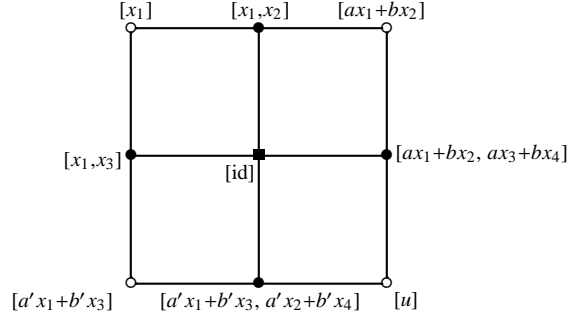

$$
\mygraph{
!{<0cm,0cm>;<1.7cm,0cm>:<0cm,1.7cm>::}
!~-{@{-}@[|(2)]}
!{(-1,1)}*{\circ}="f1"
!{(-.96,1)}="f1right"
!{(-1,.965)}="f1down"
!{(0,1)}*-{\bullet}="f1f2"
!{(-1,0)}*-{\bullet}="f1f3"
!{(1,1)}*{\circ}="f2"
!{(.96,1)}="f2left"
!{(1,.965)}="f2down"
!{(1,0)}*-{\bullet}="f2f4"
!{(1,-1)}*{\circ}="f4"
!{(.96,-1)}="f4left"
!{(0,-1)}*-{\bullet}="f3f4"
!{(-1,-1)}*{\circ}="f3"
!{(-.96,-1)}="f3right"
!{(0,0)}*-{\blob}="f1f2f3f4"
"f1right"-^<{[x_1]}^>{[x_1,x_2]}"f1f2"-^>{[ax_1+bx_2]}"f2left"
"f2down"-"f2f4"-^<{[ax_1+bx_2,\;ax_3+bx_4]}"f4"
"f4left"-^<(-0.2){[u]}"f3f4"-^<{[a'x_1 +b' x_3, \; a'x_2 +b'x_4]}^>(1.6){[a'x_1 +b'x_3]}"f3right"
"f3"-^>(.95){[x_1,x_3]}"f1f3"-"f1down"
"f1f2f3f4"-"f1f2" "f1f2f3f4"-"f2f4" "f1f2f3f4"-"f3f4"
 "f1f2f3f4"-^<(.17){[\id]}"f1f3"
}$$
\caption{A $2 \times 2$ grid containing the standard square.}
\label{fig:A big square containing the standard square}
\end{figure}
Note that $u=a' (ax_1+bx_2)+ b'(ax_3+bx_4) = a (a'x_1+b'x_3)+b(a'x_2 + b'x_4)$. Since the vertices $[ax_1+bx_2]$ and $[a'x_1 +b'x_3]$  are distinct from $[x_1]$, we have $bb' \neq 0$. We may therefore assume that $bb'=1$. If we set $f_1=x_1$, $f_2=ax_1+bx_2$, $f_3= a'x_1+b'x_3$, $f_4=u$, we have $f_1f_4-f_2f_3= bb' (x_1x_4-x_2x_3) = x_1x_4-x_2x_3$, so that $f=\smat{f_1}{f_2}{f_3}{f_4} \in \OO_4$. Finally, our $2 \times 2$ grid is the big square associated with $f$.
\end{proof}

\begin{corollary} \label{lem:transitive on big squares}
The action of $\TSL$ on the set of $2 \times 2$ grids centered at a vertex of type 3 is transitive.
\end{corollary}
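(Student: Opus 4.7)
The statement is an immediate consequence of Lemma \ref{lem: Any big square is associated with an element of TSL}, combined with the transitivity of the action of $\TSL$ on vertices of type 3 (Lemma \ref{lem: 2a and 2b}); the plan is essentially to upgrade the latter transitivity (on centers) to transitivity on the whole $2 \times 2$ grid centered at each such vertex.

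First I would make the following elementary observation: for any $f = \smat{f_1}{f_2}{f_3}{f_4} \in \TSL$, the action of $f$ itself sends the big square of $f$ to the standard big square. Indeed, using the action formula $h \cdot [\cdot] = [\cdot \circ h^{-1}]$, one checks that $f \cdot [f] = [f \circ f^{-1}] = [\id]$, that $f \cdot [f_i] = [f_i \circ f^{-1}] = [x_i]$ since $f_i \circ f^{-1}$ is the $i$-th component of $f \circ f^{-1} = \id$, and that $f \cdot [f_i, f_j] = [f_i \circ f^{-1}, f_j \circ f^{-1}] = [x_i, x_j]$. So all vertices of the big square of $f$ are sent to the corresponding vertices of the standard big square, and by linearity the edges and squares follow.

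Now, given two $2 \times 2$ grids $G_1, G_2$ centered at vertices of type $3$, Lemma \ref{lem: Any big square is associated with an element of TSL} furnishes elements $f, g \in \TSL$ such that $G_1$ and $G_2$ are the big squares associated with $f$ and $g$, respectively. By the observation above, $f$ sends $G_1$ onto the standard big square, and similarly $g$ sends $G_2$ onto it, so $g^{-1}$ sends the standard big square onto $G_2$. Since the action is a left action and satisfies $h_1 \cdot (h_2 \cdot v) = (h_1 h_2) \cdot v$, the composition $g^{-1} f \in \TSL$ sends $G_1$ onto $G_2$, which proves the desired transitivity.

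There is really no obstacle here: the content of the corollary has already been absorbed by the preceding lemma, which does the non-trivial work of recognizing an abstract $2 \times 2$ grid centered at a type-$3$ vertex as a big square of an element of $\TSL$ (this used Remark \ref{rem:square containing id} and the construction $f_1 = x_1$, $f_2 = ax_1 + bx_2$, etc.). Once this is available, transitivity on grids is just the combination of the formula for the action with the already proven transitivity on type-$3$ vertices, so the proof is effectively a one-liner.
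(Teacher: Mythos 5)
Your proof is correct and follows the same route as the paper: both arguments invoke Lemma \ref{lem: Any big square is associated with an element of TSL} to recognize an arbitrary $2\times 2$ grid centered at a type-3 vertex as the big square of some $f \in \TSL$, and then note that $f$ itself maps this big square onto the standard one. The extra verification via the action formula that you spell out is exactly what the paper leaves implicit, so there is no substantive difference.
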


\begin{proof}
By Lemma \ref{lem: Any big square is associated with an element of TSL}, any $2 \times 2$ grid centered at a vertex of type 3 is associated with an element $f$ of $\TSL$. Therefore, by applying $f$ to this big square, we obtain the standard big square.
\end{proof}

The following lemma is obvious.

\begin{lemma}
The (point by point) stabilizer of the standard big square is the group 
$$\left\lbrace \mat{ax_1}{bx_2}{b^{-1}x_3}{a^{-1}x_4};\; a,b \in \C^* \right\rbrace.$$
\end{lemma}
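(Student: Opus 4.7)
The plan is straightforward: combine the fact that fixing the vertex $[\id]$ forces us into $\OO_4$ with the fact that fixing each $[x_i]$ pins down each component $f_i$ up to a scalar.

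First, any element $f = \smat{f_1}{f_2}{f_3}{f_4}$ in the pointwise stabilizer fixes the type-3 vertex $[\id]$, so $f \in \OO_4 = \Stab([\id])$ (see Section~\ref{sec:definitions about O4}). In particular each $f_i$ is a linear form in $x_1,\dots,x_4$. Next, $f$ fixes each of the four type-1 vertices $[x_1]$, $[x_2]$, $[x_3]$, $[x_4]$ of the standard big square. Using the action $f \cdot [x_i] = [x_i \circ f^{-1}]$, and the fact that $f^{-1}$ also lies in $\OO_4$, the condition $[x_i \circ f^{-1}] = [x_i]$ means $x_i \circ f^{-1}$ is a nonzero scalar multiple of $x_i$. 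Equivalently, looking at this in terms of $f$ itself rather than $f^{-1}$ (since $\OO_4$ is a group), this forces $f_i = \alpha_i x_i$ for some $\alpha_i \in \C^*$ and each $i \in \{1,2,3,4\}$.

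So any element in the pointwise stabilizer has the form $\smat{ax_1}{bx_2}{cx_3}{dx_4}$ with $a,b,c,d \in \C^*$. The orthogonality condition (equivalently, the equality $f_1 f_4 - f_2 f_3 = x_1 x_4 - x_2 x_3$ defining membership in $\OO_4 \cap \Autq$) becomes $ad \, x_1 x_4 - bc \, x_2 x_3 = x_1 x_4 - x_2 x_3$, yielding $ad = 1$ and $bc = 1$, i.e.\ $d = a^{-1}$ and $c = b^{-1}$. Conversely every such element obviously lies in $\OO_4$ and fixes $[x_1],[x_2],[x_3],[x_4]$, hence fixes the four type-2 vertices $[x_1,x_2]$, $[x_1,x_3]$, $[x_2,x_4]$, $[x_3,x_4]$ as well (each being a $\GL_2$-orbit spanned by the relevant pair of $x_i$'s) and the type-3 vertex $[\id]$.

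I do not expect any genuine obstacle; the only subtle point is remembering that once we know $f \in \OO_4$, the condition that $f$ fixes the type-1 vertex $[x_i]$ directly reads off as $f_i \in \C^* x_i$ (because $\OO_4$ acts on vertices of type 1 by its ordinary linear action on $V^*$ up to a scalar), which immediately trivializes the remaining computations.
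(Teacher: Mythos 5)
Your proof is correct; the paper gives no argument for this lemma (it is simply declared obvious), and your verification — fixing $[\id]$ forces $f \in \OO_4$, fixing each type-1 vertex $[x_i]$ forces $f_i \in \C^* x_i$, and the condition $q \circ f = q$ then yields $d = a^{-1}$, $c = b^{-1}$ — is exactly the routine computation the authors intend. The only point worth making explicit is in the converse direction: since the action is by isometries, an element fixing all nine vertices of the big square fixes it pointwise, so checking the vertices suffices.
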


\subsection{Isometries} \label{sec:minimizing set}

If $f$ is an isometry of a $\CAT(0)$ space $X$, we define $\Min(f)$ to be the set of points realizing the infimum $\inf d(x,f(x))$.
The set $\Min(f)$ is a closed convex subset of $X$ (see \cite[p. 229]{BH}).
If $X$ is a $\CAT(0)$ cube complex of finite dimension, then for any $f \in \Isom(X)$, the set $\Min(f)$ is non empty (\cite[II.6, 6.6.(2), p. 231]{BH}).

We say that $f$ is \textbf{elliptic} if $\inf d(x,f(x)) = 0$ (there exists a fixed point for $f$), and that $f$ is \textbf{hyperbolic} otherwise. The number $\ell(f) = \inf d(x,f(x))$ is called the \textbf{translation length} of $f$.
Note that in the elliptic case, $\Min(f)$ is the fixed locus of $f$.

In a $\CAT(0)$ space, an isometry is elliptic if and only if one of its orbits is bounded, or equivalently if any of its orbits is bounded (see \cite[Proposition II.6.7]{BH}). Recall also that for any isometry $f$, $\ell (f^k)= |k| \times \ell(f)$ for each integer $k$.

For subgroups, we introduce a similar terminology.
Let $X$ be a $\CAT(0)$ cube complex, and denote by $X(\infty)$ the natural boundary of $X$ (see \cite[Chapter II.8]{BH}).
Let $\Gamma \subseteq \Isom(X)$ be a subgroup of isometries acting without inversion on edges. 
\begin{itemize}
\item $\Gamma$ is \textbf{elliptic} if there exists a vertex $v \in X$ that is fixed by all elements in $\Gamma$;
\item $\Gamma$ is \textbf{parabolic} if all elements of $\Gamma$ are elliptic, there is no global fixed vertex in $X$ and there is a fixed point in $X(\infty)$;
\item $\Gamma$ is \textbf{loxodromic} if $\Gamma$ contains at least one hyperbolic isometry and there is a fixed pair of points in $X(\infty)$.
\end{itemize}
We will also use the following less standard terminology: We say that an isometry $f$ is \textbf{hyperelliptic} if $f$ is elliptic with $\Min(f)$ unbounded.   
Here is a simple criterion to produce hyperelliptic elements.

\begin{lemma}  \label{lem: criterion of hyperellipticity}
Any elliptic isometry of a $\CAT (0)$ space commuting with a hyperbolic isometry is hyperelliptic.
\end{lemma}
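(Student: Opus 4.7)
The plan is short and direct. Since the paper just recalled that in a finite-dimensional $\CAT(0)$ cube complex every isometry has a non-empty minimizing set, and since $f$ is elliptic by hypothesis, $\Min(f)$ coincides with the fixed-point set $\mathrm{Fix}(f)$ and in particular is non-empty. I would therefore begin by fixing some $x\in\mathrm{Fix}(f)$.

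The key observation is that $\mathrm{Fix}(f)$ is $g$-invariant: because $fg=gf$, for any $x$ with $f(x)=x$ one has
\[
f(g(x))=g(f(x))=g(x),
\]
so $g(x)\in\mathrm{Fix}(f)$. Iterating, the entire orbit $\{g^n(x):n\in\Z\}$ is contained in $\mathrm{Fix}(f)=\Min(f)$.

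To conclude I would invoke the standard fact, recalled in the paragraph preceding the lemma, that an isometry of a $\CAT(0)$ space is elliptic if and only if one (equivalently, every) orbit is bounded. Since $g$ is hyperbolic, the orbit $\{g^n(x):n\in\Z\}$ is unbounded in $X$, hence $\Min(f)$ contains an unbounded set and is itself unbounded. By definition this means $f$ is hyperelliptic.

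There is essentially no obstacle: the argument is a one-line commutation combined with the elliptic/bounded-orbit dichotomy. The only point one has to be careful about is that the conclusion requires $\Min(f)\neq\emptyset$, but this is guaranteed by the finite-dimensional $\CAT(0)$ cube complex hypothesis that the paper has just stated, so no additional work is needed.
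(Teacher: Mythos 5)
Your proof is correct and takes essentially the same route as the paper: the paper also observes that the commuting hyperbolic isometry $g$ preserves $\Min(f)$ (citing \cite[II.6.2]{BH} rather than your direct computation on $\mathrm{Fix}(f)$) and concludes that $\Min(f)$ is unbounded because $g$-orbits are. One small remark: the lemma is stated for an arbitrary $\CAT(0)$ space, so the non-emptiness of the fixed set should be attributed to the paper's definition of elliptic (existence of a fixed point) rather than to the finite-dimensional cube complex hypothesis, but this does not affect your argument.
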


\begin{proof}
Assume that $f$ is such an elliptic isometry commuting with an hyperbolic isometry $g$. By \cite[II.6.2]{BH}, the set $\Min(f)$ is globally invariant by $g$. Since $g$ is hyperbolic, this set is unbounded. 
\end{proof}

The following criterion is useful in identifying hyperbolic isometries.

\begin{lemma}\label{lem:hyperbolic_elements}
Let $X$ be a $\CAT(0)$ space, $x \in X$ a point, and $g \in \Isom(X)$. Then $x \in \Min(g)$ if and only if $g(x)$ is the middle point of $x$ and $g^2(x)$.
\end{lemma}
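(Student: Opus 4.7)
The plan is to set $d := d(x, g(x))$ and observe that since $g$ is an isometry, $d(g(x), g^2(x)) = d$; the midpoint condition ``$g(x)$ is the middle point of $x$ and $g^2(x)$'' is then equivalent to the equality $d(x, g^2(x)) = 2d$, because equality in the triangle inequality combined with equidistance characterises the midpoint in any uniquely geodesic space.

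For the direct implication, I assume $x \in \Min(g)$, i.e., $d = \ell(g)$. Since $\Min(g)$ is $g$-invariant, $g(x) \in \Min(g)$ as well, and the convexity of $\Min(g)$ in a $\CAT(0)$ space (a consequence of the convexity of the displacement function $d_g$) implies that the midpoint $m$ of the segment $[x, g(x)]$ also lies in $\Min(g)$, so that $d_g(m) = d$. Applying $g$, we see that $g(m)$ is the midpoint of $[g(x), g^2(x)]$. The standard midpoint-of-sides comparison applied to the triangle $x, g(x), g^2(x)$---obtained by comparing with its Euclidean comparison triangle---then yields $d(m, g(m)) \le \tfrac{1}{2}\, d(x, g^2(x))$, whence $d(x, g^2(x)) \ge 2d$; combined with the ordinary triangle inequality this forces $d(x, g^2(x)) = 2d$.

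For the converse, I assume $d(x, g^2(x)) = 2d$ and prove by induction on $n \ge 0$ that $d(x, g^n(x)) = nd$. The cases $n \le 2$ are immediate. For the inductive step, applying $g^{n-1}$ to the hypothesis shows that $g^n(x)$ is the midpoint of $g^{n-1}(x)$ and $g^{n+1}(x)$, and the Bruhat--Tits (CN) inequality applied with apex $x$ yields
\[
n^2 d^2 + d^2 \,\le\, \tfrac{1}{2}(n-1)^2 d^2 + \tfrac{1}{2}\, d(x, g^{n+1}(x))^2,
\]
which simplifies to $d(x, g^{n+1}(x)) \ge (n+1)d$; the reverse inequality is the triangle inequality, so equality holds. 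Once $d(x, g^n(x)) = nd$ is established, for any $y \in X$ the iterated triangle inequality gives $nd = d(x, g^n(x)) \le 2\, d(x, y) + n\, d(y, g(y))$, so dividing by $n$ and letting $n \to \infty$ yields $d \le d(y, g(y))$, hence $d \le \ell(g)$. The reverse inequality being trivial, we get $d = \ell(g)$ and $x \in \Min(g)$. The technical heart of the argument is this induction: naively concatenating consecutive geodesic segments of the orbit at their common endpoints does not automatically produce a longer geodesic, but the CN inequality (which characterises $\CAT(0)$ among geodesic spaces) provides precisely the quadratic control needed to propagate the alignment of three consecutive orbit points to that of the entire forward orbit.
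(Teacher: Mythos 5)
Your proof is correct, but the converse direction follows a genuinely different route from the paper's. The paper handles it geometrically: after discarding the trivial case $x=g(x)$, it observes that the $g$-orbit of the segment $[x,g(x)]$ concatenates into a $g$-invariant geodesic line on which $g$ acts as a translation by $d(x,g(x))$ (using that local geodesics in a $\CAT(0)$ space are geodesics), and then invokes \cite[II.6.2(4)]{BH} on invariant convex subsets to identify the translation length and conclude $x\in\Min(g)$. You instead avoid constructing the axis altogether: the isometry-invariance of the midpoint condition plus the Bruhat--Tits CN inequality with apex $x$ propagates the alignment to $d(x,g^n(x))=nd$ for all $n$ (your algebra checks out, since $n^2+1-\tfrac{(n-1)^2}{2}=\tfrac{(n+1)^2}{2}$), and the estimate $nd\le 2d(x,y)+n\,d(y,g(y))$ then gives $d\le\ell(g)$ directly. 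Your forward direction, via convexity of $\Min(g)$ and the Euclidean midline comparison in the triangle $x,g(x),g^2(x)$, also fills in what the paper dismisses as ``clear''. The trade-off: the paper's argument is shorter given the cited background and exhibits the axis explicitly, while yours is more self-contained, needing only convexity of the displacement function, the CN inequality, and the triangle inequality, and it handles the degenerate case $x=g(x)$ without a separate remark.
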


\begin{proof}
If $x \in \Min(g)$, it is clear that $g(x)$ is the middle point of $x$ and $g^2(x)$. 
Conversely, assume that $g(x)$ is the middle point of $x$ and $g^2(x)$. 
We may assume furthermore that $x$ is different from $g(x)$. The orbit of the segment $[x,g(x)]$ forms a geodesic invariant under $g$, on which $g$ acts by translation. Then, one can apply \cite[II.6.2(4)]{BH}.
\end{proof}

\subsection{First properties}

Section \ref{sec:O4} on the orthogonal group yields some basic facts on the square complex:
 
\begin{lemma}\label{lem:uniquesquare}
Assume $v$ and $v'$ are opposite vertices of a same square in $\Comp$. 
Then the square containing $v$ and $v'$ is unique.   
\end{lemma}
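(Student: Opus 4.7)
The plan is to split according to the types of the opposite pair $(v,v')$. Because the vertices of any square of $\Comp$ appear cyclically with types $1,2,3,2$, an opposite pair consists either of one vertex of type~$1$ and one of type~$3$ (Case~1), or of two vertices of type~$2$ (Case~2); note that vertices of the same type, or of types $1$ and $3$, cannot be adjacent, so any square \emph{containing} both $v$ and $v'$ must have them as opposite vertices. I treat the two cases separately, in each case reducing to a canonical position by the transitivity results of Lemmas~\ref{lem: 2a and 2b} and \ref{lem:action on squares}.

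In Case~1, I would use transitivity of $\TSL$ on type-$3$ vertices to assume $v' = [\id]$. Any square through $[\id]$ has the form $\{[f_1], [f_1,f_2], [\id], [f_1,f_3]\}$ with $f = \smat{f_1}{f_2}{f_3}{f_4} \in \OO_4$; the condition $v = [f_1]$ fixes a line $\C f_1$ of isotropic vectors in $V^*$, and by Lemma~\ref{lem:isotropicsecantplanes} there are exactly two totally isotropic planes through this line, namely $\Vect(f_1,f_2)$ and $\Vect(f_1,f_3)$. Hence the two type-$2$ vertices of the square are forced, and the square itself is uniquely determined by $v$.

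In Case~2, I would use transitivity of $\TSL$ on squares to normalize one candidate to be the standard square, so $v = [x_1,x_2]$ and $v' = [x_1,x_3]$. Any other square $Q = \{[g_1], [g_1,g_2], [g], [g_1,g_3]\}$ with these as opposite vertices must, after possibly swapping $g_2$ and $g_3$, satisfy $\Vect(g_1,g_2) = \Vect(x_1,x_2)$ and $\Vect(g_1,g_3) = \Vect(x_1,x_3)$ inside $\C[\SL_2]$. Since these two planes intersect in $\Vect(x_1)$, representatives can be chosen as $g_1 = \alpha x_1$, $g_2 = \beta x_1 + \gamma x_2$, $g_3 = \delta x_1 + \epsilon x_3$ with $\alpha,\gamma,\epsilon \in \C^*$. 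I would then exploit the defining relation $g_1 g_4 = 1 + g_2 g_3$ in $\C[\SL_2]$: substituting and using $x_2 x_3 = x_1 x_4 - 1$, the right-hand side becomes $(1-\gamma\epsilon) + x_1(\beta\delta x_1 + \beta\epsilon x_3 + \gamma\delta x_2 + \gamma\epsilon x_4)$, so the constant $1 - \gamma\epsilon$ must be divisible by $x_1$ in $\C[\SL_2]$. Since $\C[\SL_2]/(x_1)$ is non-zero, this forces $\gamma\epsilon = 1$ and $g_4$ to be linear, and a degree count on $q \circ g - q$ shows $g \in \OO_4$. Hence $[g] = [\id]$, and $Q$ coincides with the standard square.

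The main obstacle I anticipate is the algebraic step in Case~2, where one must extract from the relation $g_1 g_4 = 1 + g_2 g_3$ in $\C[\SL_2]$ the fact that the completion $g_4$ is necessarily linear. This is the only place where the defining equation $q - 1 = 0$ of the quadric is essential; the rest of the argument reduces cleanly to the uniqueness of totally isotropic planes through a given isotropic line, as developed in Section~\ref{sec:O4}.
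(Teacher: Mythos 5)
Your proof is correct. Case 1 is essentially the paper's argument: after normalizing $v'=[\id]$, every square through $v$ and $v'$ comes from an element of $\OO_4$, and Lemma \ref{lem:isotropicsecantplanes} pins down the two type-2 vertices (the paper additionally uses Witt's theorem to put $f_1=x_1$, which you can indeed skip since that lemma is stated for an arbitrary isotropic vector). In Case 2, however, you take a genuinely different and more explicit route. The paper only normalizes the type-3 vertex of the given square to $[\id]$, notes that $v$ and $v'$ are then totally isotropic planes of $V^*$ meeting along a line, and asserts rather tersely that $[\id]$ is the unique type-3 vertex adjacent to both, before concluding with Witt's theorem; you instead normalize the whole square to the standard one via Lemma \ref{lem:action on squares} and, for a competing square associated with $g \in \TSL$, exploit the relation $g_1g_4-g_2g_3=1$ in $\C[\SL_2]$ together with $x_2x_3=x_1x_4-1$ and the facts that $x_1$ is neither a unit nor a zero divisor there, forcing $\gamma\epsilon=1$ and $g_4$ linear; one can even skip the final degree count, since with your normalization $g_1g_4-g_2g_3=\gamma\epsilon\, q=q$ holds identically, so $g\in\OO_4$, $[g]=[\id]$, $[g_1]=[x_1]$, and the square is the standard one. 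What your computation buys is precisely the verification the paper leaves implicit (why an automorphism whose relevant row and column consist of linear forms must itself be orthogonal, so that the type-3 vertex adjacent to both planes is unique); what the paper's formulation buys is brevity and a coordinate-free appeal to the geometry of the isotropic cone developed in Section \ref{sec:O4}.
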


\begin{proof}
There are two cases to consider (up to exchanging $v$ and $v'$):
\begin{enumerate}
\item $v$ is of type 1 and $v'$ is of type 3;
\item $v$ and $v'$ are both of type 2.
\end{enumerate}

In Case (1), we can assume $v' = \sbmat{x_1}{x_2}{x_3}{x_4}$.
Then $v = [f_1]$ with $f_1 \in V^*$ an isotropic vector, and by Witt's Theorem we can assume $f_1 = x_1$.
We conclude by Lemma \ref{lem:isotropicsecantplanes} that the unique square containing $v$ and $v'$ is the standard square. 

In Case (2), let $v''$ a vertex of type 3 that is at distance 1 from $v$ and $v'$. We can assume that $v'' =  \sbmat{x_1}{x_2}{x_3}{x_4}$. 
Then, there exist linear forms $l_1$, $l_2$, $l'_1$, $l'_2$  in $V^*$ such that $v=[l_1,l_2]$ and $v'=[ l'_1,l'_2]$. In particular $v''$ is the unique vertex of type 3 that is at distance 1 from $v$ and $v'$. 
Then $v$ and $v'$ correspond to two totally isotropic planes in $V^*$, with a 1-dimensional intersection. 
Let $f_1 \in V^*$ be a generator for this line. By Witt's Theorem we can assume $f_1 = x_1$, and the standard square is the unique square containing both $v$ and $v'$.
\end{proof}

\begin{corollary} \label{cor:intersectionsquares}
The standard square (hence any square) is embedded in the complex $\Comp$, and the intersection of two distinct squares is either:
\begin{enumerate}
\item empty;
\item a single vertex;
\item a single edge (with its two vertices).
\end{enumerate}
\end{corollary}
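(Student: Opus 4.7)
The plan is to deduce both parts of the corollary from Lemma~\ref{lem:uniquesquare} (uniqueness of the square determined by two opposite vertices), combined with the fact that edges of $\Comp$ are determined by their pair of endpoints and the transitivity of the $\TSL$-action on squares.

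First I would show that the standard square is embedded. Its four vertices are $[x_1]$, $[x_1,x_2]$, $[\id]$, $[x_1,x_3]$, of respective types $1,2,3,2$. The type-$1$ and type-$3$ vertices are automatically distinct from each other and from both type-$2$ vertices. The two type-$2$ vertices are distinct because $\Vect(x_1,x_2)\ne\Vect(x_1,x_3)$, so $(x_1,x_2)$ and $(x_1,x_3)$ lie in different $\GL_2$-orbits (one can also invoke Lemma~\ref{lem: 2a and 2b}, which asserts that they even belong to different $\STSL$-orbits). Since every edge of $\Comp$ is determined by its two endpoints, the four edges of the 4-cycle are also pairwise distinct, and the standard square is embedded. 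By Lemma~\ref{lem:action on squares} the group $\TSL$ acts transitively on squares, so \emph{every} square is embedded.

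Next, let $\sigma$ and $\sigma'$ be two distinct squares. Because each is embedded, their intersection $\sigma\cap\sigma'$ is a subcomplex of the $4$-cycle underlying $\sigma$. Let $k$ denote the number of vertices common to $\sigma$ and $\sigma'$. If $k\ge 3$, then among three vertices of a $4$-cycle two are necessarily opposite; by Lemma~\ref{lem:uniquesquare} this forces $\sigma=\sigma'$, contradicting our assumption. If $k=2$, the two shared vertices are either adjacent or opposite in $\sigma$, and the opposite case is again excluded by Lemma~\ref{lem:uniquesquare}; thus they are adjacent, and the unique edge of $\Comp$ joining them belongs to both $\sigma$ and $\sigma'$. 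Since the intersection contains no further vertex (we are in the case $k=2$), it is exactly this edge together with its two endpoints. Finally, if $k\le 1$ the intersection is empty or reduces to a single vertex, producing the remaining two cases.

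The only potentially delicate point is confirming that between any two vertices of $\Comp$ there is at most one edge, so that the shared edge in case $k=2$ is unambiguous; this is however immediate from the definition in Section~\ref{sec:definition of complex}, where edges are specified purely by their endpoints (one vertex of type $1$ or $3$ together with a vertex of type $2$). Beyond this observation, the argument is entirely combinatorial, with Lemma~\ref{lem:uniquesquare} doing all the geometric work.
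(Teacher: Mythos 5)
Your argument is correct and follows essentially the same route as the paper: the embedding of the standard square comes down to $[x_1,x_2]\neq[x_1,x_3]$, and the intersection statement is reduced, via a short case analysis, to the observation that any larger intersection would contain a pair of opposite vertices of a square, which Lemma~\ref{lem:uniquesquare} rules out. The paper's proof is just a terser version of the same reasoning.
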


\begin{proof}
The first assertion is just the obvious remark that $[x_1,x_2] \neq [x_1, x_3]$, hence the corresponding vertices are distinct in $\Comp$.

Assume that two squares have an intersection different from the three stated cases. 
Then the intersection contains two opposite vertices of a square, hence the two squares are the same by Lemma \ref{lem:uniquesquare}.
\end{proof}

\subsection{Tame\texorpdfstring{$\mathbf{(\A_K^n)}$}{An} acting on a simplicial complex}

Let $K$ be a field.
In this section we construct a simplicial complex on which the group of tame automorphisms of $\A_K^n$ acts.
Our motivation here is twofold.
On the one hand we shall need the definition for $n = 2$, $K = \C(x)$ in the study of link of vertices of type 1 in $\Comp$.
On the other hand the construction for $n = 3$, $K = \C$ is very similar in nature to the construction of $\Comp$, and gives rise to interesting questions about the tame group of $\C^3$ (see Section \ref{sec:tameKn}).

\subsubsection{A general construction} \label{sec:general simplicial complex}

For any $1 \le r \le n$, we call \textbf{$r$-tuple of components} a map 
\begin{align*}
K^n &\to K^r \\
x = (x_1, \dots, x_n) &\mapsto \left( f_1(x), \dots, f_r(x) \right)
\end{align*}
that can be extended as a tame automorphism $f = (f_1,\dots, f_n)$ of $\A_K^n$.
One defines $n$ distinct types of vertices, by considering $r$-tuple of components modulo composition by an affine automorphism on the range, $r = 1, \dots, n$:
\begin{equation*}
[f_1, \dots,f_r] := A_r (f_1, \dots, f_r) = \{ a \circ (f_1, \dots, f_r) ; a \in A_r\}
\end{equation*}
where $A_r = \GL_r(K) \ltimes K^r$ is the $r$-dimensional affine group. 

Now for any tame automorphism $(f_1,\dots, f_n)$ we glue a $(n-1)$-simplex on the vertices  $[f_1]$, $[f_1,f_2]$, ..., $[f_1, \dots,f_n]$. 
This definition is independent of a choice of representatives and produces a $(n-1)$-dimensional simplicial complex on which the tame group acts by isometries.

\subsubsection{Dimension 2} \label{sec:serretree}

Let $K$ be a field.
The previous construction yields a graph $\T_K$.
In this section we show that $\T_K$ is isomorphic to the classical  Bass-Serre tree of $\Aut(\A_K^2)$.
We use the affine groups:
\begin{align*}
A_1 &= \{t \mapsto at + b ;\; a \in K^*, b \in K \};\\
A_2 &= \left\lbrace (t_1,t_2) \mapsto (a t_1 + b t_2 + c, a' t_1 + b' t_2 + c') ;\; \mat{a}{b}{a'}{b'} \in \GL_2, c,c' \in K \right\rbrace.
\end{align*}

The vertices of our graph  $\T_K$ are of two types: classes $A_1 f_1$ where $f_1 \colon K^2 \to K$ is a component of an automorphism, and classes $A_2 (f_1,f_2)$ where $(f_1,f_2) \in \Aut(\A^2_K)$. 
For each automorphism $(f_1,f_2) \in \Aut(\A^2_K)$, we attach an edge between $A_1 f_1$ and $A_2 (f_1,f_2)$.
Note that $A_2 (f_1,f_2)  = A_2 (f_2,f_1)$, so there is also an edge between the vertices $A_2 (f_1,f_2)$ and $A_1 f_2$.

Recall that $\Aut(\A^2_K)$ is the amalgamated product of $A_2$ and $E_2$ along their intersection, where $E_2$ is the elementary group defined as:
$$E_2 = \{ (x,y) \mapsto (ax + P(y), by + c);\; a,b \in K^*, c \in K\}.$$
The Bass-Serre tree associated with this structure consists in taking cosets $A_2 (f_1,f_2)$, $E_2 (f_1,f_2)$ as vertices, and cosets $(A_2 \cap E_2) (f_1,f_2)$ as edges (we use right cosets for consistency with the convention for $\T_K$, the classical construction with left cosets is similar). 

\begin{proposition}
The graph $\T_K$ is isomorphic to the Bass-Serre tree associated with the structure of amalgamated product of  $\Aut(\A^2_K)$. 
\end{proposition}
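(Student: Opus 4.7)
The plan is to construct an explicit graph isomorphism $\phi\colon \T_K \to T$, where $T$ denotes the Bass--Serre tree associated with the decomposition $\Aut(\A^2_K) = A_2 *_{A_2\cap E_2} E_2$.

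On vertices, I would define $\phi$ as follows. For a type-2 vertex $A_2(f_1,f_2)$ of $\T_K$, set $\phi(A_2(f_1,f_2)) = A_2(f_1,f_2) \in T$ (the same right coset, now viewed inside the Bass--Serre picture). For a type-1 vertex $A_1 f_1$ of $\T_K$, choose any automorphism $(f_0,f_1) \in \Aut(\A^2_K)$ having $f_1$ \emph{as second coordinate}, and set $\phi(A_1 f_1) = E_2(f_0,f_1)$. The first step is to check that $\phi$ is well-defined on type-1 vertices, independently of the choices. If $(f_0,f_1)$ and $(f_0',f_1)$ are two such completions, then $h := (f_0',f_1)\circ(f_0,f_1)^{-1}$ is an automorphism of $\A^2_K$ fixing the second coordinate; writing $h=(h_1(x,y),y)$ and using invertibility, $h_1$ must be linear in $x$ with invertible leading coefficient, so $h_1 = ax + P(y)$ for some $a\in K^*$, $P\in K[y]$, which means $h \in E_2$. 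Similarly, replacing $f_1$ by $b f_1 + c$ corresponds to composing on the left by $(x, by+c) \in E_2$.

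Next I would verify that $\phi$ is bijective. Injectivity on type-1 vertices: if $E_2(f_0,f_1) = E_2(f_0',f_1')$ then $(f_0',f_1') = e\circ (f_0,f_1)$ for some $e=(ax+P(y),by+c) \in E_2$, which forces $f_1' = bf_1 + c$ and hence $A_1 f_1 = A_1 f_1'$. Surjectivity is immediate: any right coset $E_2 \cdot (g_0,g_1)$ is the image of $A_1 g_1$. Type-2 vertices correspond under the identity map.

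To extend $\phi$ to edges, I would send the edge of $\T_K$ attached to an automorphism $(f_1,f_2)$ (connecting $A_1 f_1$ and $A_2(f_1,f_2)$) to the edge $(A_2\cap E_2)(f_2,f_1)$ of $T$; note the swap that places $f_1$ in second position, so that this Bass--Serre edge connects $A_2(f_2,f_1)=A_2(f_1,f_2)$ and $E_2(f_2,f_1) = \phi(A_1 f_1)$, i.e.\ exactly the images of the two endpoints. Well-definedness amounts to the following observation: two automorphisms $(f_1,f_2)$ and $(f_1',f_2')$ share both endpoints in $\T_K$ iff $(f_1',f_2') = c\circ(f_1,f_2)$ for some $c \in A_2\cap E_2$, because an element of $A_2$ preserves $A_1 f_1$ precisely when its first row has no $y$-term, i.e.\ when it already lies in $A_2\cap E_2$. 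This matches precisely the condition for the two data to give the same edge of $T$. Bijectivity on edges then follows, and since the incidence relation is preserved by construction, $\phi$ is a graph isomorphism; in particular $\T_K$ inherits the tree structure of $T$.

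The main obstacle is the well-definedness of $\phi$ on type-1 vertices, which is the only step requiring a nontrivial algebraic input: the fact that an automorphism of $\A^2_K$ fixing one coordinate is automatically of the triangular form $(ax+P(y),y)$. Everything else is a matter of translating between the language of orbits of component tuples and the language of right cosets of $A_2$, $E_2$, $A_2\cap E_2$.
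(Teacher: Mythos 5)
Your proof is essentially the paper's, run in the opposite direction: the paper defines the map on the Bass--Serre side by $A_2(f_1,f_2)\mapsto A_2(f_1,f_2)$ and $E_2(f_1,f_2)\mapsto A_1f_2$, and both arguments hinge on the same key fact you isolate, namely that two completions of a component into an automorphism (with that component in a fixed position) differ by an element of $E_2$, i.e.\ that an automorphism of $\A^2_K$ fixing one coordinate has the form $(ax+P(y),y)$. So this is the same approach, with the vertex bijection inverted and the edge bookkeeping made explicit.

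One sentence needs repair. The subgroup of $A_2$ fixing both endpoints of the edge attached to $(f_1,f_2)$ consists of the affine maps whose \emph{first} coordinate is $a_{11}x+c_1$ (no $y$-term); this is not $A_2\cap E_2$ (whose defining constraint is that the \emph{second} coordinate has no $x$-term) but its conjugate by the swap $s=(y,x)$. Hence your equivalence ``same edge iff $(f_1',f_2')=c\circ(f_1,f_2)$ with $c\in A_2\cap E_2$'' is false as stated: $c=(x+y,\,y)\in A_2\cap E_2$ moves the vertex $A_1f_1$, while $(x,\,x+y)\notin E_2$ fixes both endpoints. The swap you already built into the edge map is exactly what repairs this: if $(f_1',f_2')=a\circ(f_1,f_2)$ with $a$ of the above form, then $(f_2',f_1')=(s\circ a\circ s^{-1})\circ(f_2,f_1)$ and $s\circ a\circ s^{-1}\in A_2\cap E_2$, so the coset $(A_2\cap E_2)(f_2,f_1)$ is unchanged; with the equivalence restated for the swapped pairs, your well-definedness and injectivity on edges go through.
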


\begin{proof}
We define a map $\phi$ from the set of vertices of the Bass-Serre tree to the graph $\T_K$ by taking
\begin{align*}
A_2(f_1,f_2) &\mapsto A_2(f_1,f_2),\\
E_2(f_1,f_2) &\mapsto A_1 f_2.
\end{align*}
Clearly $\phi$ is a local isometry.
Moreover $\phi$ is bijective, since we can define $\phi^{-1}(A_1 f_2)$ to be $E_2(f_1,f_2)$ where $(f_1,f_2)$ is an automorphism.
Indeed any other way to extend $f_2$ is of the form $(af_1+P(f_2),f_2)$, and so the class $E_2(f_1,f_2)$ does not depend on the extension we choose. 
\end{proof}

\begin{remark}
If two vertices $A_1 f_1$ and $A_1 f_2$ are at distance 2 in $\T_K$, then $(f_1,f_2) \in \Aut(\A^2_K)$. 
Indeed, by transitivity of the action we may assume that the central vertex is $ A_2 (x,y)$. 
Then for $i = 1,2$ we can write $f_i=a_ix+b_iy+c_i$.
Observe that $(f_1,f_2)$ is  invertible if and only if $\det \smat{a_1}{b_1}{a_2}{b_2}\neq 0$. This is equivalent to $A_1f_1 \neq A_1 f_2$.
\end{remark}
 
\section{Geometry of the complex} \label{sec:complexes_bis}

In this section we establish Theorem \ref{thm:mainHyp}, which says that the complex $\Comp$ is $\CAT(0)$ and hyperbolic.
First we study the local curvature of the complex by studying the links of its vertices.

\subsection{Links of vertices}

Let $v$ be a vertex (of any type) in  $\Comp$. 
The link around $v$ is denoted by $\LL(v)$.
By definition this is the graph whose vertices are the vertices in $\Comp$ at distance exactly 1 from $v$, and endowed with the standard angular metric:  $v_1$ and $v_2$ are joined by an edge of length $\pi/2$ if they are opposite vertices of a same square, which necessarily has $v$ as a vertex (see \cite[\S I.7.15, p. 103]{BH} for details).

A \textbf{path} $\P$ in $\LL(v)$ is a simplicial map $[0,n\pi/2] \to \LL(v)$ which is locally injective (``no backtrack'').
We call $n$ the \textbf{length} of $\P$, and we denote $\P = v_0, \dots, v_n$ where $v_k$ is the vertex image of $k\pi/2$.
We say that $\P$ is a \textbf{loop} if $v_0 = v_n$.
By a slight abuse of notation we will often identify $\P$ with its image in $\LL(v)$.

\begin{remark} \label{rem:2loop}
Note that any loop in $\LL(v)$ has length at least 3.
Indeed a loop $v_0,v_1,v_0$ of length 2 in $\LL(v)$ should correspond to two distinct squares sharing $v, v_0$ and $v_1$ as vertices. This would contradict Corollary \ref{cor:intersectionsquares}. Similarly there is no self-loop in $\LL(v)$.
     
\end{remark}

\subsubsection{Vertex of type 1} \label{sec:link type 1}

We study the link of a vertex of type 1, and show that its geometry is closely related to the geometry of a simplicial tree.

Recall that in \S\ref{sec:serretree} we constructed a tree $\T_K$ on which $\Aut(\A_K^2)$ acts. 
We use this construction in the case $K = \C(f_1)$, where $f_1$ is a component.
Without loss in generality we can assume $f_1 = x_1$.
We note $\LL(x_1)$ instead of $\LL([x_1])$.

\begin{lemma}  \label{lem:link1connected}
The graph $\LL(x_1)$ is  connected.
\end{lemma}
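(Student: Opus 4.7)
The plan is to link every vertex $v = [x_1, f_2]$ of $\LL(x_1)$ to the base vertex $[x_1, x_2]$ by an explicit walk constructed from a factorization of the $(2, 3)$-slice $(f_2, f_3)$ provided by Corollary~\ref{cor:linktype1}.

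First I would pick any tame extension $f = \smat{x_1}{f_2}{f_3}{f_4} \in \TSL$ of $(x_1, f_2)$; after replacing $f$ by $\tau \circ f$ if necessary, I may assume $f \in \STSL$. Corollary~\ref{cor:linktype1} then expresses $(f_2, f_3)$ as a product $\phi_1 \circ \cdots \circ \phi_n$ of generators drawn from $\{\sigma\} \cup \{\rho_{a, P}\}$, where $\sigma := (x_3, x_2)$ is the swap and $\rho_{a, P} := (a x_2 + x_1 P(x_1, x_3),\, a^{-1} x_3)$ is a scaled shear. For $k = 1, \dots, n+1$ I set $h_k := \phi_k \circ \cdots \circ \phi_n$ (with $h_{n+1} := \id$). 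Each generator $\phi_i$ has an obvious lift in $\TSL$ fixing $x_1$ (namely $\tau$ lifts $\sigma$, and $\sbmat{x_1}{a x_2 + x_1 P}{a^{-1} x_3}{x_4 + a^{-1} x_3 P}$ lifts $\rho_{a, P}$); since the slice assignment $g \mapsto (g_2, g_3)$ is a group homomorphism on $\{g \in \TSL : g_1 = x_1\}$ (because by Corollary~\ref{cor:linktype1} these slices are independent of $x_4$), composing these lifts yields an element $\hat h_k = \sbmat{x_1}{h_k^{(2)}}{h_k^{(3)}}{\ast} \in \TSL$. This produces a square of $\Comp$ through $[x_1]$, hence an edge $e_k$ in $\LL(x_1)$ joining $v_k := [x_1, h_k^{(2)}]$ to $v'_k := [x_1, h_k^{(3)}]$.

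The crucial point I would then check is that consecutive edges $e_k$ and $e_{k+1}$ always share a vertex. If $\phi_k = \sigma$, then $(h_k^{(2)}, h_k^{(3)}) = (h_{k+1}^{(3)}, h_{k+1}^{(2)})$, so $e_k = e_{k+1}$ as unoriented edges. If $\phi_k = \rho_{a, P}$, then $h_k^{(3)} = a^{-1} h_{k+1}^{(3)}$, and hence $v'_k = v'_{k+1}$ since the vertex $[x_1, g]$ is insensitive to rescaling $g$ by a nonzero scalar. Concatenating, the union $e_{n+1} \cup e_n \cup \cdots \cup e_1$ is a connected subgraph of $\LL(x_1)$ containing both $v_{n+1} = [x_1, x_2]$ and $v_1 = [x_1, f_2]$. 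The only genuinely delicate step is the group-homomorphism property of the slice assignment used above; once that is in hand, everything else is pure bookkeeping.
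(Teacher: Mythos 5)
Your proof is correct and follows essentially the same route as the paper's: both use Corollary~\ref{cor:linktype1} to factor the $(f_2,f_3)$-slice into the swap and the shears $(ax_2+x_1P(x_1,x_3),\,a^{-1}x_3)$, lift these to tame automorphisms fixing $x_1$, and join $[x_1,f_2]$ to the standard square through the edges given by the partial products, observing that at each step one of the two type-2 vertices is unchanged up to a scalar. The only cosmetic point is that after replacing $f$ by $\tau\circ f$ the original vertex may occupy the $f_3$-slot rather than the $f_2$-slot, but it is still an endpoint of the last edge $e_1$, so the conclusion is unaffected.
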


\begin{proof}
Any vertex of  $\LL(x_1)$ is of the form $v = [x_1, f_2]$, where  $f = \smat{x_1}{f_2}{f_3}{f_4} \in \TSL$. Note that the vertices $[x_1,f_2]$ and $[x_1,f_3]$ are joined by one edge in $\LL(x_1)$. By Corollary \ref{cor:linktype1}, $f$ can be written as a composition of elements which are either equal to the transpose $\tau$ or which are of the form 
$$\mat{x_1}{ax_2 + x_1P(x_1,x_3)}{a^{-1}x_3}{\dots}.$$ 
Since we have
\[\tau \mat{x_1}{ax_2 + x_1P(x_1,x_3)}{a^{-1}x_3}{\dots} \tau = \mat{x_1}{a^{-1}x_2 }{ax_3 + x_1P(x_1,x_2)}{\dots},\]
it follows that $f$ or $\tau f$ is a composition of automorphisms of the form
\[\mat{x_1}{ax_2 + x_1P(x_1,x_3)}{a^{-1}x_3}{\dots}\quad \text{or} \quad \mat{x_1}{a^{-1}x_2 }{ax_3 + x_1P(x_1,x_2)}{\dots}.\]
This gives a path in $\LL(x_1)$ from $v$ to either $[x_1,x_2]$ or $[x_1,x_3]$.
\end{proof}

Recall that vertices of type 2 are called horizontal or vertical depending if they lie in the orbit of  $[x_1, x_2]$ or  $[x_1,x_3]$ under the action of $\STSL$.

\begin{lemma} \label{lem:link1loop_are_even}
Any loop in $\LL(x_1)$ has even length.
\end{lemma}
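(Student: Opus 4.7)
The plan is to exhibit $\LL(x_1)$ as a bipartite graph, with the two parts being the horizontal and the vertical type-$2$ vertices adjacent to $[x_1]$. Since every cycle in a bipartite graph has even length, this will immediately yield the statement.

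First, observe that by Lemma \ref{lem: 2a and 2b} the action of $\STSL$ on vertices of type $2$ admits exactly two orbits, namely the horizontal and the vertical vertices of Definition \ref{def:horizontal and vertical edges}. In particular every vertex of $\LL(x_1)$ belongs to exactly one of these two classes, giving a candidate bipartition.

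The crux is the following claim: for every square $S$ of $\Comp$ containing $[x_1]$, the two type-$2$ vertices of $S$ (which are precisely the two neighbors of $[x_1]$ inside $S$) are of opposite type. To prove this, I would invoke Lemma \ref{lem:action on squares}, which says that $\TSL$ acts transitively on squares, to produce $g \in \TSL$ sending the standard square onto $S$. Since the action of $\TSL$ on $\Comp$ preserves the type of vertices and each square contains a unique type-$1$ vertex, we must have $g \cdot [x_1] = [x_1]$, so that the two type-$2$ vertices of $S$ are $g \cdot [x_1, x_2]$ and $g \cdot [x_1, x_3]$. The argument then splits in two cases. If $g \in \STSL$, then by the very definition of horizontal/vertical as $\STSL$-orbits, $g \cdot [x_1, x_2]$ is horizontal and $g \cdot [x_1, x_3]$ is vertical. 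If $g \in \TSL \smallsetminus \STSL$, write $g = g' \tau$ with $g' \in \STSL$ and compute directly that $\tau \cdot [x_1, x_2] = [x_1, x_3]$ and $\tau \cdot [x_1, x_3] = [x_1, x_2]$, so that $\tau$ swaps horizontal and vertical neighbors of $[x_1]$; composing with the type-preserving $g'$ again yields two vertices of opposite type.

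I do not anticipate a serious obstacle: the whole argument is essentially bookkeeping on how the stabilizer of $[x_1]$ permutes the two $\STSL$-orbits of type-$2$ vertices adjacent to it, and the key geometric ingredient, Lemma \ref{lem:action of O_4 with respect to horizontality}, is already available. The main subtlety to keep in mind is that horizontal/vertical are defined as $\STSL$-orbits (not $\TSL$-orbits), which is precisely what forces the case split on whether $g$ lies in $\STSL$ or not; the presence of the transpose $\tau \in \OO_4 \smallsetminus \SO_4$, whose action is completely explicit, then ensures the bipartite property persists in the second case.
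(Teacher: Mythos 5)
Your proof is correct and follows essentially the same route as the paper: the paper's one-line argument is precisely that the vertices of a loop in $\LL(x_1)$ alternate between horizontal and vertical, i.e.\ the bipartiteness you establish. You merely make explicit the verification (via transitivity on squares, the stabilization of the unique type-1 vertex, and the action of $\tau$) that the two type-2 vertices of any square through $[x_1]$ have opposite types, which the paper leaves implicit.
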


\begin{proof}
This follows from the simple remark that the vertices of the loop must be alternatively horizontal and vertical.
\end{proof}

Let $ \LL(x_1)'$ be the first barycentric subdivision of $\LL(x_1)$, that is, the graph obtained from $\LL(x_1)$ by subdividing each edge $v,v'$ in two edges $v,v''$ and $v'',v'$, where $v''$ is the middle point of $v,v'$. 
If $\smat{ax_1}{f_2}{f_3}{\dots} \in \TSL$, it is natural to identify $\sbmat{ax_1}{f_2}{f_3}{\dots}$ with the vertex of $\LL(x_1)'$ that is the middle point in $\LL(x_1)$ of the edge between $[x_1, f_2]$ and $[x_1, f_3]$. Indeed, recall from \cite[\S I.7.15, p. 103]{BH} that the link $\LL( x_1)$ may be seen as the set of directions at $x_1$. 


Now we define a simplicial map 
\[\pi \colon \LL(x_1)' \to \T_{\C(x_1)}.\]
First we send each vertex $[x_1,f_2] \in \LL (x_1)'$ to the vertex $A_1 f_2 \in \T_{\C(x_1)}$.
This makes sense because of Corollary \ref{cor:linktype1}: $f_2$ is a component of a polynomial automorphism in $x_2, x_3$ with coefficients in $\C(x_1)$.
Second we send each vertex  $\sbmat{ax_1}{f_2}{f_3}{f_4} \in \LL (x_1)'$ to the vertex $A_2 (f_2,f_3) \in \T_{\C(x_1)}$.
Observe that since we start from the barycentric subdivision $\LL(x_1)'$ we obtain a map $\pi$ which is simplicial: If $\smat{ax_1}{f_2}{f_3}{\dots} \in \Stab([x_1])$, then $A_1 f_2$ and $A_1 f_3$ are at distance 2 in the image of $\pi$.


\begin{lemma} \label{lem:fundamentaldomain}
\begin{enumerate}
\item The action of $\Stab([x_1])$ on $\LL(x_1)'$ admits the edge between $[x_1, x_2]$ and $[id]$ as a fundamental domain. 
In particular, the action is transitive on vertices of type 2 of $\LL(x_1)'$.
\item  Let $v,v'$ be two vertices of $\LL(x_1)$ and let $h$ be an element of  $\Stab([x_1])$. Then, the equality $\pi(v) = \pi(v')$ implies the equality $\pi(h(v)) = \pi (h(v'))$.
\end{enumerate}
\end{lemma}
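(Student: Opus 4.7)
My plan is to handle part (1) by explicitly transporting an arbitrary edge of $\LL(x_1)'$ onto the standard edge via an element of $\Stab([x_1])$, and part (2) by following the effect of $h \in \Stab([x_1])$ on the second component of a type~2 vertex, exploiting the fact that $h$ acts on $x_1$ only by a scalar.

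For (1), I would begin with an arbitrary edge $\epsilon$ of $\LL(x_1)'$. By definition of the barycentric subdivision, $\epsilon$ joins a vertex of type~2 of $\Comp$ to a midpoint. Under the identification recalled just before the lemma, the midpoint has the form $[f]$ with $f = \smat{x_1}{f_2}{f_3}{f_4} \in \TSL$ (the normalization $a=1$ is available since $[ax_1] = [x_1]$), and the type~2 endpoint of $\epsilon$ is then either $[x_1, f_2]$ or $[x_1, f_3]$. The key point is that $f$ itself lies in $\Stab([x_1])$: from $x_1 \circ f = f_1 = x_1$ one gets $x_1 \circ f^{-1} = x_1$ by post-composing with $f^{-1}$. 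Applying $f$ sends $[f]$ to $[\id]$ and $[x_1, f_i]$ to $[x_1, x_i]$ for $i \in \{2,3\}$, so $f(\epsilon)$ meets $[\id]$. Since $\tau$ lies in $\Stab([x_1]) \cap \Stab([\id])$ and exchanges the two edges $([x_1, x_2], [\id])$ and $([x_1, x_3], [\id])$, a further application of $\tau$ if needed moves $\epsilon$ onto the standard edge. The transitivity on type~2 vertices is the restriction of this statement to endpoints.

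For (2), the vertices $v$ and $v'$ are of type~2; write $v = [x_1, f_2]$ and $v' = [x_1, f_2']$. Via Corollary~\ref{cor:linktype1}, view $f_2$ and $f_2'$ as components of automorphisms of $\C(x_1)[x_2, x_3]$ over $\C(x_1)$. The hypothesis $\pi(v) = \pi(v')$ then reads $f_2' = a f_2 + b$ for some $a \in \C(x_1)^*$ and $b \in \C(x_1)$. For any $h \in \Stab([x_1])$, the identity $x_1 \circ h^{-1} = \mu x_1$ (for some $\mu \in \C^*$) shows that substitution by $h^{-1}$ preserves $\C(x_1)$, sending $c(x_1)$ to $c(\mu x_1)$. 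Composing $f_2' = a f_2 + b$ with $h^{-1}$ on the right yields an analogous affine relation between $f_2 \circ h^{-1}$ and $f_2' \circ h^{-1}$ with coefficients still in $\C(x_1)$. Using the $\GL_2$-equivalence to absorb the factor $\mu$, one has $h \cdot [x_1, g] = [x_1, g \circ h^{-1}]$, whence $\pi(hv) = A_1(f_2 \circ h^{-1})$ and $\pi(hv') = A_1(f_2' \circ h^{-1})$ coincide.

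No genuine obstacle arises; the only care needed is to keep straight the two built-in equivalences (the $\GL_2$-action on type~2 vertices and the $\OO_4$-action on type~3 midpoints), both of which remain compatible with the action of $\Stab([x_1])$ precisely because this stabilizer acts on $x_1$ by a scalar, hence preserves both the normalization choices and the subfield $\C(x_1)$.
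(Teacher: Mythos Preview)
Your proof is correct. For part~(1) you do explicitly what the paper compresses into ``direct consequence of Corollary~\ref{cor:linktype1}'': moving an arbitrary edge to the standard one via the element $f$ itself. One small imprecision: your justification ``since $[ax_1]=[x_1]$'' for normalizing $a=1$ is not quite the point; the normalization works because $[f]$ is an $\OO_4$-class and you may precompose with the diagonal element $\smat{a^{-1}x_1}{x_2}{x_3}{ax_4}\in\OO_4$.

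For part~(2) your route differs slightly from the paper's. The paper uses part~(1) to reduce to $v=[x_1,x_2]$ and then computes with an explicit factorization of $h^{-1}$; you instead argue directly that the $A_1$-relation $f_2'=af_2+b$ over $\C(x_1)$ is preserved under right composition by $h^{-1}$, because $h^{-1}$ sends $x_1$ to $\mu x_1$ and hence stabilizes the coefficient field $\C(x_1)$. Your argument is a bit more conceptual and avoids the normalization step; the paper's is more concrete. Both are short and neither offers a real advantage over the other.
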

 
\begin{proof}
(1)  This is again a direct consequence of Corollary~\ref{cor:linktype1}.

(2) We can assume $v = [x_1, x_2]$, and so $v' = [x_1, x_2 + x_1P(x_1)]$ for some polynomial $P \in \C[x_1]$.
We can write 
$$h^{-1} = \mat{ax_1}{x_2}{x_3}{a^{-1} x_4}\mat{x_1}{f_2}{f_3}{f_4}$$ 
where $(f_2,f_3) \in \Aut (\A^2_{\C(x_1)})$.
Then 
$h(v) = [ax_1, f_2]$ and $h(v') = [ax_1, f_2 + ax_1P(ax_1)]$, so 
\[\pi(h(v)) = A_1 f_2 = A_1 ( f_2 + ax_1P(ax_1)) = \pi(h(v')).  \qedhere\]
\end{proof}

Point (2) of the last lemma means that the natural action of $\Stab([x_1])$ on $\LL(x_1)$ induces an action on  $\pi ( \LL(x_1) )$ such that $\pi \colon  \LL(x_1) \to \pi( \LL(x_1))$ is equivariant.

\begin{lemma} \label{lem:link1toatree}
\begin{enumerate}
\item The set $\pi( \LL(x_1))$ is a subtree of $\T_{\C(x_1)}$.
\item Let $w= A_1 f_2$ and $w' = A_1 f_3$ be two vertices at distance 2 in the image of $\pi$. Then the preimage by $\pi$ of the segment between $w$ and $w'$ is a complete bipartite graph between $\pi^{-1}(w)$ and $\pi^{-1}(w')$.
\end{enumerate}
\end{lemma}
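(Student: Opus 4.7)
For assertion~(1), the graph $\LL(x_1)$ is connected by Lemma~\ref{lem:link1connected}, hence so is its barycentric subdivision $\LL(x_1)'$; since $\pi \colon \LL(x_1)' \to \T_{\C(x_1)}$ is simplicial, the image $\pi(\LL(x_1))$ is a connected subgraph of the tree $\T_{\C(x_1)}$, and any connected subgraph of a tree is a subtree.

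For assertion~(2), I proceed in two steps: a transitivity reduction, followed by an explicit computation in the standard configuration. Each edge of $\LL(x_1)$ corresponds bijectively to a square of $\Comp$ having $[x_1]$ as its (unique) type~$1$ vertex, and Lemma~\ref{lem:action on squares} tells us that $\TSL$ acts transitively on squares, so $\Stab([x_1])$ acts transitively on squares containing $[x_1]$. Combined with the equivariance of $\pi$ (Lemma~\ref{lem:fundamentaldomain}(2)), this reduces the claim to the case where $v = A_2(x_2,x_3)$, $w = A_1 x_2$, and $w' = A_1 x_3$.

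The main step is then to describe $\pi^{-1}(A_1 x_2)$ explicitly. Given $[x_1,h_2]\in\pi^{-1}(A_1 x_2)$, write $h_2 = \alpha(x_1)x_2 + \beta(x_1)$ with $\alpha\in\C(x_1)^{\ast}$ and $\beta\in\C(x_1)$. The $x_1$-adic valuation on $\C[\SL_2]$ (where $v(x_2)=v(x_3)=0$ and $v(x_4)=-1$ because $x_1 x_4 = 1 + x_2 x_3$) immediately forces $\alpha,\beta\in\C[x_1]$. Composing with $\tau$ if necessary, we may extend $(x_1,h_2)$ to some $(x_1,h_2,h_3',h_4')\in\STSL$; Corollary~\ref{cor:linktype1} then places $(h_2,h_3')$ in the subgroup of $\Aut_{\C[x_1]}\C[x_1][x_2,x_3]$ generated by $(x_3,x_2)$ and the elementaries $(a x_2 + x_1 P(x_1,x_3),\,a^{-1} x_3)$. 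The Jacobian determinant of $(h_2,h_3')$ is a unit of $\C[x_1]$, which combined with $\partial h_2/\partial x_3 = 0$ forces $\alpha = a \in \C^{\ast}$; a short induction on the length of a decomposition of $(h_2,h_3')$ into the above generators then shows that the translation part $\beta$ of $h_2$ must lie in $x_1\C[x_1]$. Hence $\pi^{-1}(A_1 x_2) = \{[x_1,\,a x_2 + x_1 P(x_1)] : a\in\C^{\ast},\,P\in\C[x_1]\}$, and an analogous description holds for $\pi^{-1}(A_1 x_3)$.

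Bipartite completeness then follows from a direct computation: for any $P,Q\in\C[x_1]$ the product of the element of $\Er$ corresponding to $P$ with the element of $\Eb$ corresponding to $Q$ equals
\[
\mat{x_1}{x_2 + x_1 P(x_1)}{x_3 + x_1 Q(x_1)}{x_4 + x_2 Q + x_3 P + x_1 P Q},
\]
which lies in $\TSL$ and provides the required edge of $\LL(x_1)$ between $[x_1,x_2+x_1P]$ and $[x_1,x_3+x_1Q]$; using $\GL_2$-equivalence to normalise the leading coefficients $a,b$ to $1$ in every class of the two fibers then covers all pairs in $\pi^{-1}(w)\times\pi^{-1}(w')$. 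Uniqueness of the connecting vertex in $\pi^{-1}(v)$ is automatic from Lemma~\ref{lem:uniquesquare}. The hard part is extracting the precise shape of the $\pi$-fibers from Corollary~\ref{cor:linktype1}; once that description is secured, bipartite completeness is almost immediate from the explicit composition of one $\Er$-element with one $\Eb$-element.
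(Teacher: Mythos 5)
Your proof is correct and follows essentially the same route as the paper: part (1) is the identical connectedness-plus-simplicial-map argument, and for part (2) the paper likewise reduces by transitivity to $w=A_1x_2$, $w'=A_1x_3$, states that the fibers consist of the classes $[x_1,x_2+x_1P(x_1)]$, resp.\ $[x_1,x_3+x_1Q(x_1)]$, and exhibits exactly your composed automorphism $\smat{x_1}{x_2+x_1P(x_1)}{x_3+x_1Q(x_1)}{x_4+x_3P(x_1)+x_2Q(x_1)+x_1P(x_1)Q(x_1)}$ as the midpoint of the required edge. The only difference is that you spell out the fiber description, which the paper asserts without proof; your valuation aside is superfluous and slightly off (along the divisor $x_1=0$ of $\SL_2$ one has $v(x_4)=0$, and under the Gauss extension regular functions need not have nonnegative value), but this is harmless since Corollary~\ref{cor:linktype1} already gives $h_2\in\C[x_1][x_2,x_3]$, whence $\alpha,\beta\in\C[x_1]$ by comparing coefficients.
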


\begin{proof}
(1)  This follows from the fact that $\LL(x_1)$ is connected (see Lemma \ref{lem:link1connected}), and the fact that  $\pi \colon \LL(x_1)' \to \T_{\C(x_1)}$ is a simplicial map. 

(2) By transitivity of the action of $\TSL$ on squares we can assume  $f_2 = x_2$ and $f_3 = x_3$.
Then any vertex in $\pi^{-1}(w)$ has the form $v = [x_1, x_2+x_1P(x_1)]$. Similarly any vertex in $\pi^{-1}(w')$ has the form $v' = [x_1, x_3+x_1Q(x_1)]$. 
But then for any choices of $P,Q$ we remark that 
$$g = \mat{x_1}{x_2+x_1P(x_1)}{x_3+x_1Q(x_1)}{x_4+x_3P(x_1)+x_2Q(x_1)+x_1P(x_1)Q(x_1)}$$ 
is a tame automorphism, hence $v, v'$ are linked by an edge in $\LL(x_1)$, with midpoint $[g]$.
\end{proof}

\subsubsection{Vertex of type 2 or 3}

The link of a vertex of type 1 projects surjectively to an unbounded tree (the fact that $\pi( \LL(x_1))$ is unbounded follows from Proposition \ref{pro:first amalgamated structure of Stabx1} below and its proof), in particular this is an unbounded graph.
This is completely different for the link of a vertex of type 2 or 3: We show that both are complete bipartite graphs.

\begin{proposition} \label{pro:linktype2}
Let $v_2$ be a vertex of type 2. 
Then any vertex of type 1 in $\LL(v_2)$ is linked to any vertex of type 3 in $\LL(v_2)$. 
In other words $\LL(v_2)$ is a complete bipartite graph.
\end{proposition}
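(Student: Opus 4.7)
The plan is to prove this by direct construction of a square containing $v_2$ whose other two vertices are the prescribed type 1 and type 3 neighbors. By Lemma \ref{lem: 2a and 2b} we may first reduce to the case $v_2 = [x_1,x_2]$ using transitivity of $\TSL$ on type 2 vertices. Then, by Remark \ref{rem:square containing id}, any type 1 neighbor has the form $v_1 = [\ell]$ where $\ell = ax_1 + bx_2$ with $(a:b) \in \p^1$, and any type 3 neighbor has the form $v_3 = [g]$ where $g = \smat{g_1}{g_2}{g_3}{g_4} \in \TSL$ with $\Vect(g_1,g_2)=\Vect(x_1,x_2)$.

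The key step is to find $u \in \OO_4$ such that the composition $h := u \circ g \in \TSL$ satisfies $h_1 = \ell$ and $h_2 \in \Vect(x_1,x_2)$, while $[h] = [g]$. Because $(g_1, g_2)$ is a basis of the totally isotropic plane $\Vect(x_1,x_2)$, I can pick constants $c_1, c_2$ with $c_1 g_1 + c_2 g_2 = \ell$ and define $u_1 := c_1 x_1 + c_2 x_2 \in \Vect(x_1,x_2)$; similarly choose $u_2 = c_1' x_1 + c_2' x_2$ so that $(u_1, u_2)$ is a basis of $\Vect(x_1,x_2)$. Since $\Vect(x_1,x_2)$ is totally isotropic and decomposes $V^*$ with the complementary totally isotropic plane $\Vect(x_3,x_4)$, Lemma \ref{lem:uniqueauto} produces a unique basis $(u_3,u_4)$ of $\Vect(x_3,x_4)$ making $u := \smat{u_1}{u_2}{u_3}{u_4}$ an element of $\OO_4$.

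Then $h = u \circ g$ lies in $\TSL$ (as $\OO_4 \subset \TSL$), and by construction $h_1 = u_1(g) = \ell$ while $h_2 = u_2(g) \in \Vect(x_1,x_2)$, so $(h_1, h_2)$ is again a basis of $\Vect(x_1,x_2)$. The square associated to $h$ then has the four vertices $[h_1], [h_1,h_2], [h_1,h_3], [h]$, which equal $v_1$, $v_2$, some type 2 vertex, and $v_3$ respectively. Hence $v_1$ and $v_3$ are opposite corners of a square containing $v_2$, which by definition means they are joined by an edge in $\LL(v_2)$. Since $v_2$ is of type 2, its neighbors in $\Comp$ are exactly of types 1 and 3, and a square through $v_2$ always pairs one vertex of each of these types as opposite corners, so $\LL(v_2)$ is bipartite; the above construction shows the bipartite graph is complete.

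The main obstacle is making sure the element $u$ constructed to shift $g_1$ onto the prescribed linear form $\ell$ is genuinely in $\OO_4$ and does not disturb $v_2$ or $v_3$. This is precisely what Lemma \ref{lem:uniqueauto} handles: totally isotropic planes are exactly the ones that can serve as ``rows'' of orthogonal matrices, and any basis of such a plane extends uniquely once a complementary totally isotropic plane is chosen. Everything else is bookkeeping around the definitions of the vertices as orbits.
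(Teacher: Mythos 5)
Your proof is correct and follows essentially the same route as the paper: reduce to $v_2=[x_1,x_2]$ and use Lemma \ref{lem:uniqueauto} to complete a suitable basis of the totally isotropic plane $\Vect(x_1,x_2)$ to an element of $\OO_4$, whose associated square contains $v_1$, $v_2$, $v_3$. The only cosmetic difference is that the paper first normalizes $v_3=[\id]$ by transitivity on edges, whereas you keep $v_3=[g]$ and absorb that normalization into the left factor $u\in\OO_4$ (which is exactly the ambiguity in the representative of a type 3 vertex), so the two arguments coincide in substance.
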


\begin{proof}
Let $v_1$ (resp. $v_3$) be a vertex of type 1 (resp. 3) in $\LL(v_2)$.
By transitivity on edges, we can assume that $v_2 = [x_1,x_2]$ and $v_3 = \sbmat{x_1}{x_2}{x_3}{x_4}$.
Then if $v_1 = [f_1]$, we complete $f_1$ in a basis $(f_1,f_2)$ of $\Vect(x_1,x_2)$. 
By Lemma \ref{lem:uniqueauto}, there exists a unique basis $(f_3,f_4)$ of $\Vect(x_3,x_4)$ such that $f=\smat{f_1}{f_2}{f_3}{f_4}$ belongs to $\OO_4$. 
It is then clear that $v_1$ and $v_3$ are linked in $\LL(v_2)$: $v_1,v_2,v_3$ belong to a same square, as illustrated in Figure \ref{fig:v1v2v3}.
\end{proof}

\begin{figure}[ht]
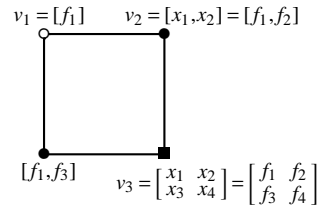

$$
\mygraph{
!{<0cm,0cm>;<1.6cm,0cm>:<0cm,1.6cm>::}
!~-{@{-}@[|(2)]}
!{(-1,1)}*{\circ}="f1"
!{(-.955,1)}="f1right"
!{(-1,.965)}="f1down"
!{(0,1)}*-{\bullet}="f1f2"
!{(-1,0)}*-{\bullet}="f1f3"
!{(0,0)}*-{\blob}="f1f2f3f4"
"f1right"-^<{v_1 \,=\, [f_1]}^>(1.4){v_2 \,=\, [x_1,x_2] \,=\, [f_1,f_2]}"f1f2"
"f1f3"-"f1down"
"f1f2f3f4"-"f1f2" "f1f2f3f4"-^<(-.4){v_3 \,=\, \sbmat{x_1}{x_2}{x_3}{x_4} \,=\, \sbmat{f_1}{f_2}{f_3}{f_4}}^>(.95){[f_1,f_3] }"f1f3"
}
$$
\caption{The square containing $v_1,v_2,v_3$.} \label{fig:square illustrating the link of a vertex of type 2}\label{fig:v1v2v3}
\end{figure}

\begin{proposition} \label{pro:linktype3}
Let $v_3$ be a vertex of type 3, and let $v_2, v_2' \in \LL(v_3)$ be two distinct vertices (necessarily of type 2).
Then  $d(v_2,v_2') = \pi/2$ or $\pi$ in $\LL(v_3)$, and precisely:
\begin{itemize}
\item either $v_2, v_2'$ belong to a same square (which is unique);
\item or for any $v''_2$ in $\LL(v_3)$ such that $d(v_2,v''_2) = \pi/2$ in $\LL(v_3)$, then $v_2, v''_2, v_2'$ is a path in $\LL(v_3)$. 
\end{itemize}
In particular $\LL(v_3)$ is a complete bipartite graph.
\end{proposition}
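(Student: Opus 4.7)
The plan is to reduce to $v_3 = [\id]$ using transitivity of $\TSL$ on vertices of type 3 (Lemma \ref{lem: 2a and 2b}), and then to translate the problem into the language of totally isotropic planes of $V^*$ developed in \S\ref{sec:dual}. A type 2 neighbor of $[\id]$ has the form $v_2 = [f_1,f_2]$ with $\smat{f_1}{f_2}{f_3}{f_4} \in \OO_4$, so by Lemma \ref{lem:description of the orthogonal group} the plane $\Vect(f_1,f_2)$ is totally isotropic; conversely Lemma \ref{lem:uniqueauto} shows that every totally isotropic plane arises this way. Since $[f_1,f_2]$ depends only on $\Vect(f_1,f_2)$, this gives a bijection between type 2 neighbors of $[\id]$ and totally isotropic planes of $V^*$, which identifies via Lemma \ref{lem:action of O_4 with respect to horizontality} the horizontal/vertical splitting of Definition \ref{def:horizontal and vertical edges} with the one for planes. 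Moreover, by Remark \ref{rem:square containing id} a square of $\Comp$ through $[\id]$ has for its two type 2 vertices a pair of one horizontal and one vertical plane, meeting along the line spanned by the opposite type 1 vertex.

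With this dictionary, Lemma \ref{lem:pair of isotropic planes} yields a dichotomy for the two distinct planes $W, W'$ associated with $v_2, v_2'$. If $W \cap W'$ is a line $\Vect(f_1)$ (case (3) of that lemma), then $W$ and $W'$ are of opposite orientations; Lemma \ref{lem:isotropicsecantplanes} together with the above description of squares through $[\id]$ produce a square with vertices $[\id], v_2, [f_1], v_2'$, so $d(v_2,v_2') = \pi/2$ in $\LL(v_3)$, and this square is unique by Lemma \ref{lem:uniquesquare} since $v_2,v_2'$ are opposite vertices of it. If instead $W \cap W' = \{0\}$ (case (2)), then $W$ and $W'$ are of the same orientation, and the orientation constraint from Remark \ref{rem:square containing id} forbids $v_2,v_2'$ from lying in a common square through $[\id]$, so $d(v_2,v_2') \ne \pi/2$.

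To bound the distance by $\pi$ in this second case (and to verify the second alternative in the statement), I would pick any $v_2'' \in \LL(v_3)$ at distance $\pi/2$ from $v_2$. By the orientation constraint just used, $v_2''$ is of the orientation opposite to $v_2$, hence also opposite to $v_2'$, so by the opposite-orientation case treated above $v_2''$ and $v_2'$ lie in a common square of $\Comp$ through $[\id]$ and $d(v_2'',v_2') = \pi/2$. This provides the length-$2$ path $v_2,v_2'',v_2'$ in $\LL(v_3)$ and settles the claim; the complete bipartite structure then follows, the two sides being exactly the horizontal and vertical type 2 vertices adjacent to $v_3$. No single step is deep: the main point demanding care is the careful bookkeeping of the dictionary between type 2 neighbors of $[\id]$, squares through $[\id]$, and orientations of totally isotropic planes, after which the geometry is entirely encoded in Lemmas \ref{lem:pair of isotropic planes}, \ref{lem:isotropicsecantplanes}, and \ref{lem:uniquesquare}.
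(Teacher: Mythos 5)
Your proof is correct and follows essentially the same route as the paper's: reduce to $v_3=[\id]$, identify type 2 neighbours with totally isotropic planes of $V^*$, and split according to whether the planes meet in a line (one application of Lemma \ref{lem:isotropicsecantplanes}, giving the unique common square) or trivially (same ruling, in which case any plane of the other ruling meets both, giving the length-two path). Your ``horizontal/vertical orientation'' bookkeeping is just the rulings of the quadric $S\simeq\p^1\times\p^1$ used in the paper's argument, so the two proofs differ only in the level of explicit detail.
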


\begin{proof}
Without loss in generality we can assume $v_3 = \sbmat{x_1}{x_2}{x_3}{x_4}$.
Then $v_2$ and $v_2'$ correspond to totally isotropic planes $W, W'$ in $V^*$, and by Remark \ref{rem:geometry of isotropic cone} they correspond to lines in a smooth quadric surface in $\p^3$.

There are two possibilities:
\begin{enumerate}[(i)]
\item The two lines intersect in one point, meaning that the corresponding totally isotropic planes intersect along a one dimensional space $\Vect ( f_1 )$, and then by Lemma \ref{lem:isotropicsecantplanes} we can write $v_2 = [f_1, f_2]$, $v_2' = [f_1,f_3]$ with $\smat{f_1}{f_2}{f_3}{\dots} \in \OO_4$.
\item The two lines belong to the same ruling, and taking a third line in the other ruling, which corresponds to a vertex $v_2'' \in \LL(x_1)$, we can apply twice the previous observation: first to $v_2, v_2''$, and then to $v_2', v_2''$. \qedhere
\end{enumerate}  
\end{proof}

In the second case of the proposition, the vertices $v_2, v_2', v_3$ are part of a unique ``big square'' (see Figure \ref{fig:bigsquare}): This follows from Lemma \ref{lem:uniqueauto}.

\subsubsection{Negative curvature}

As a consequence of our study of links we obtain:

\begin{proposition}\label{pro:localcurvature}
Let $v \in \Comp$ be a vertex. Then any (locally injective) loop in the link $\LL(v)$ has length at least 4. 
In particular the square complex $\Comp$ has non positive local curvature.
\end{proposition}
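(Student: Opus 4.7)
The plan is to treat each of the three vertex types separately, in each case invoking the link description we have just established to rule out loops of length less than $4$. By Remark \ref{rem:2loop} there are already no self-loops and no loops of length $2$, so the task reduces to excluding loops of length exactly $3$.

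First I would handle vertices of type $2$ and type $3$ uniformly. Propositions \ref{pro:linktype2} and \ref{pro:linktype3} state that $\LL(v)$ is a complete bipartite graph in both cases (with the bipartition given by ``type $1$ vs type $3$'' for $v$ of type $2$, and by the horizontal/vertical ruling for $v$ of type $3$). Every cycle in a bipartite graph has even length, so any (locally injective) loop in $\LL(v)$ has even length, hence length at least $4$.

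Next, for a vertex $v$ of type $1$, I can assume $v = [x_1]$ by transitivity (Lemma \ref{lem: 2a and 2b}). Then Lemma \ref{lem:link1loop_are_even} states that every loop in $\LL(x_1)$ has even length, based on the fact that consecutive vertices along such a loop alternate between horizontal and vertical vertices of type $2$ (Definition \ref{def:horizontal and vertical edges}). Again this excludes loops of length $3$ and gives the bound of $4$.

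Finally, the conclusion on local curvature follows from Gromov's classical link criterion for cube complexes: a square complex is locally $\CAT(0)$ (i.e.\ has non-positive curvature) if and only if the link of every vertex is a simplicial graph of girth at least $4$ (see \cite[Theorem II.5.20]{BH}). The only mild subtlety — but not a real obstacle — is verifying we are in the right combinatorial setting to apply this criterion, namely that squares are embedded and two squares intersect in at most an edge; this is exactly the content of Corollary \ref{cor:intersectionsquares}. Once that is observed, the girth condition together with Lemmas \ref{lem:link1loop_are_even}, Propositions \ref{pro:linktype2} and \ref{pro:linktype3} yields non-positive curvature of $\Comp$ at once.
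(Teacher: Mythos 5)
Your proof is correct and follows essentially the same route as the paper: reduce to excluding loops of length $3$ via Remark \ref{rem:2loop}, use the complete bipartite structure of the links at vertices of type $2$ and $3$ (Propositions \ref{pro:linktype2} and \ref{pro:linktype3}) and the parity Lemma \ref{lem:link1loop_are_even} at vertices of type $1$, then invoke the link condition from \cite{BH}. Your extra remark that Corollary \ref{cor:intersectionsquares} ensures the links are genuine simplicial graphs is a sensible elaboration of what the paper handles through Remark \ref{rem:2loop}, but it does not change the argument.
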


\begin{proof}
By Remark \ref{rem:2loop} we know that any loop has length at least 3. So we only have to exclude loops of length 3.
Clearly such a loop cannot exist in the link of a vertex of type 2 or 3, since by Propositions \ref{pro:linktype2} and \ref{pro:linktype3} these are complete bipartite graphs: Any loop in $\LL(v)$ has even length for such a vertex.
This leaves the case of a vertex of type 1, and this was covered by Lemma \ref{lem:link1loop_are_even}.

For the last assertion see \cite[II.5.20 and II.5.24]{BH}.
\end{proof}

\subsection{Simple connectedness}

\begin{proposition}\label{pro:1connected}
The complex $\Comp$ is simply connected.
\end{proposition}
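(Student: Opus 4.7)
The plan is to show that every edge-loop in $\Comp$ is null-homotopic modulo the $2$-cells; since $\Comp$ is a $2$-dimensional CW-complex this implies simple connectedness.

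First I would record a structural observation: the $1$-skeleton of $\Comp$ is bipartite, with type-$2$ vertices on one side and type-$1\cup$type-$3$ vertices on the other (edges only exist between a type-$2$ vertex and a type-$1$ or a type-$3$ vertex). In particular every edge-loop has even length and alternates between type-$2$ vertices and (type-$1$ or type-$3$) vertices. Using the transitivity of $\TSL$ on type-$3$ vertices (Lemma \ref{lem: 2a and 2b}), I would place the basepoint of a given loop $\gamma$ at $[\id]$ and list the sequence $[\id]=[g_0],[g_1],\dots,[g_n]=[\id]$ of type-$3$ vertices traversed, with consecutive pairs connected through at most one type-$1$ vertex and the intervening type-$2$ vertices.

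The main step is an induction on the complexity $C(\gamma)=\max_i \deg g_i$, ordered by the graded lexicographic order on $\N^4$ (with ties broken by the number of indices realizing this maximum). For the inductive step, if $C(\gamma)>(2,1,1,0)$ pick $k$ with $[g_k]$ of maximal degree, so $g_k\notin\OO_4$ by Lemma \ref{lem:degmax}. Theorem \ref{th:main theorem of LV} furnishes an elementary $e$ with $\deg(eg_k)<\deg g_k$, and by Lemma \ref{lem:action on v3 next to v2} the vertices $[g_k]$ and $[eg_k]$ share a common type-$2$ neighbor $v_2$. I would homotope $\gamma$ near $[g_k]$ by a ``detour'' trading the visit at $[g_k]$ for a visit at $[eg_k]$; the discrepancy between $\gamma$ and the detoured loop $\gamma'$ consists of subloops of length at most $4$ contained in the union of $\mathrm{star}(v_2)$ and $\mathrm{star}([g_k])$. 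Because the links of $v_2$ and $[g_k]$ are complete bipartite (Propositions \ref{pro:linktype2} and \ref{pro:linktype3}), every $4$-cycle in these links bounds a square of $\Comp$, so each such subloop is null-homotopic; the resulting $\gamma'$ satisfies $C(\gamma')<C(\gamma)$.

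For the base case $C(\gamma)=(2,1,1,0)$ all type-$3$ vertices of $\gamma$ are equal to $[\id]$, and $\gamma$ decomposes into ``excursions'' from $[\id]$ back to itself whose intermediate vertices are of types $1$ and $2$. Each excursion of length $4$ bounds by Propositions \ref{pro:linktype2}--\ref{pro:linktype3} and Remark \ref{rem:square containing id} (which parametrises the squares through $[\id]$ by $\p^1\times\p^1$); excursions of higher length can be shortened by inserting squares of the form produced in Lemma \ref{lem:uniqueauto}, using the transitivity of $\OO_4$ on type-$2$ neighbours of $[\id]$ to bring any intermediate type-$2$ vertex into the link $\LL([\id])$ up to a null-homotopy.

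The main obstacle is to make the detour argument in the inductive step rigorous: when $[g_k]$ occurs flanked by two \emph{distinct} type-$2$ neighbours in $\gamma$, there is no reason for $[eg_k]$ to be adjacent to both. The remedy is to insert, before removing the visit to $[g_k]$, an auxiliary ``round trip'' $[g_k]\to v_2\to[eg_k]\to v_2\to[g_k]$ (which is trivially null-homotopic as a backtrack), and then to repeatedly exchange type-$2$ flanks using the complete bipartite structure of $\LL([g_k])$, each exchange costing a single square; this bookkeeping is the technical heart of the argument.
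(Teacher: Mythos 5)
Your overall strategy (put the loop in the $1$-skeleton, induct on the maximal degree of the type-$3$ vertices visited, kill the maximal vertex by a local homotopy coming from an elementary reduction) is the same as the paper's, but the execution has a genuine gap at exactly the point you flag as "the technical heart". Your detour-and-exchange maneuver, once written out, replaces the passage $v_2^-\to[g_k]\to v_2^+$ by a path through type-$1$ and type-$2$ vertices only (the inserted round trip through $[eg_k]$ cancels as a backtrack, and each "flank exchange" across a square of $\LL([g_k])$ routes through the type-$1$ corner of that square). This does make your complexity $C(\gamma)$ drop, but only because visits to type-$3$ vertices are being traded for arbitrary wandering in the type-$1$/type-$2$ subgraph, which your complexity does not see. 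Consequently the induction terminates at a "base case" of loops whose type-$3$ vertices are all $[\id]$ (or absent) but which may run through type-$2$ vertices arbitrarily far from $[\id]$ — for instance the $8$-cycle $[f_1],[f_1,f_2],[f_2],[f_2,f_4],[f_4],[f_3,f_4],[f_3],[f_1,f_3]$ bounding the big square of an arbitrary tame $f$. Contracting such loops forces you to reintroduce type-$3$ vertices of large degree, so this base case is essentially the original problem; the appeal to Lemma \ref{lem:uniqueauto} and to transitivity of $\OO_4$ on the type-$2$ neighbours of $[\id]$ only controls vertices adjacent to $[\id]$ and does not justify "bringing any intermediate type-$2$ vertex into $\LL([\id])$ up to a null-homotopy" — that claim is circular.

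The paper avoids this trap by doing the opposite of your detour: it first uses the connectedness of links of type-$1$ vertices (Lemma \ref{lem:link1connected}) to remove all type-$1$ vertices from the loop, and then every local homotopy at the maximal vertex $[f]$ must replace $[f]$ by \emph{another type-$3$ vertex of strictly smaller degree}, keeping the loop alternating between types $2$ and $3$ so that the degree induction remains meaningful. Guaranteeing that such a replacement exists is precisely where the single-reduction statement (Theorem \ref{th:main theorem of LV}) is insufficient: one has two elementary moves $e,e'$ at $[f]$ coming from the two flanking type-$2$ vertices, and one needs the two-sided degree analysis of Lemma \ref{lem:12inLV} (via Corollary \ref{cor:pair of elementary automorphisms} and Lemma \ref{lem:both degree drop}) to produce, in each of its four cases, an explicit homotopy across a $2\times2$ grid around the type-$1$ vertex $[f_4]$ whose new type-$3$ vertex has controlled degree. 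Your proposal has no counterpart to this step, and without it (or an argument for the base case you create) the proof does not close.
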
  

\begin{figure}[ht]
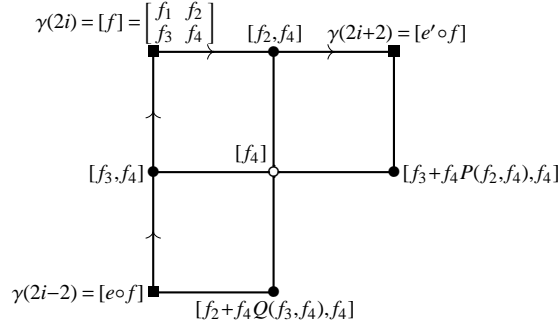

$$\mygraph{
!{<0cm,0cm>;<1.6cm,0cm>:<0cm,1.6cm>::}
!~-{@{-}@[|(2)]}
!{(-1,1)}*-{\blob}="f"
!{(0,1)}*-{\bullet}="f2f4"
!{(1,1)}*-{\blob}="e'f"
!{(-1,0)}*-{\bullet}="f3f4"
!{(0,0)}*{\circ}="f4"
!{(.043,0)}="f4right"
!{(-.045,0)}="f4left"
!{(0,-.04)}="f4down"
!{(1,0)}*-{\bullet}="f4e'f"
!{(-1,-1)}*-{\blob}="ef"
!{(0,-1)}*-{\bullet}="f4ef"
"f"-|@{>}^<(-0.2){\gamma(2i) \,=\, [f] \,=\, \sbmat{f_1}{f_2}{f_3}{f_4}}^>{[f_2,f_4]}"f2f4"-|@{>}^>{\gamma(2i+2) \,=\, [e' \circ f]}"e'f"-^>{[f_3+f_4P(f_2,f_4), f_4]}"f4e'f"-"f4right"
"f"-|@{<}_>{[f_3,f_4]}"f3f4"-^>(.85){[f_4]}"f4left"
"f4"-"f2f4"
"f3f4"-|@{<}_>{\gamma(2i-2) \,=\, [e \circ f]}"ef"-_>{[f_2+f_4Q(f_3,f_4), f_4]}"f4ef"-"f4down"
}$$
\caption{Initial situation around the maximal vertex $[f]$.}\label{fig:starting position}
\end{figure}

\begin{proof}
Let $\gamma$ be a loop in $\Comp$. We want to show that it is homotopic to a trivial loop.
Without loss in generality, we can assume that the image of $\gamma$ is contained in the 1-skeleton of the square complex, that $\gamma$ is locally injective, and that $\gamma(0) = \sbmat{x_1}{x_2}{x_3}{x_4}$ is the vertex of type 3 associated with the identity.

A priori (the image of) $\gamma$ is a sequence of arbitrary edges.
By Lemma \ref{lem:link1connected}, we can perform a homotopy to avoid each vertex of type 1.
So now we assume that vertices in $\gamma$ are alternatively of type 2 and 3: Precisely for each $i$, $\gamma(2i)$ has type 3 and $\gamma(2i+1)$ has type 2.

For each vertex $v = [f]$ of type 3 of $\Comp$, we define $\deg v:= \deg f$. 
This definition is not ambiguous, since by Lemma \ref{lem:degmax} we know that $\deg v$ does not depend on the choice of representative $f$. 
Let $i$ be the greatest integer such that $\deg \gamma (2i) = \max_{j} \deg \gamma (2j)$. 
In particular, we have 
$$\deg \gamma(2i + 2) < \deg \gamma(2i) \quad \text{ and } \quad \deg \gamma(2i - 2) \le \deg \gamma(2i).$$ 
Let $f =\smat{f_1}{f_2}{f_3}{f_4}$ be such that $\gamma(2i) = [f]$.

By Lemma \ref{lem:action on v3 next to v2} there exist generalized elementary automorphisms $e, e'$  such that $\gamma(2i-2) = [e \circ f]$ and $\gamma(2i+2) = [e' \circ f]$. 
Observe that for any element $a \in \OO_4$ we have $[f] = [a \circ f]$, $[e \circ  f] = [a \circ e \circ a^{-1} \circ a \circ f]$ and $[e' \circ f] = [a \circ e' \circ a^{-1} \circ a \circ f]$. 
In consequence, by Corollary \ref{cor:pair of elementary automorphisms} we can assume that  
$$e' = \mat{x_1 + x_2P(x_2,x_4)}{x_2}{x_3+x_4P(x_2,x_4)}{x_4}$$
and $e$ is of one of the three forms given in the corollary.

Observe that 
$$e = \mat{x_1+x_2Q(x_2,x_4)}{x_2}{x_3+x_4Q(x_2,x_4)}{x_4}$$
would contradict that the loop is locally injective, since the vertex of type 2 just after and just before $[f]$ would be $[f_2,f_4]$.
The case 
$$e = \mat{x_1}{x_2+x_1Q(x_1,x_3)}{x_3}{x_4+x_3Q(x_1,x_3)}$$
is also impossible: Since $P$ is not constant, by Lemma \ref{lem:both degree drop} we would get $\deg f_1 > \deg f_2$, $\deg f_3 > \deg f_4$ and finally $\deg e \circ f > \deg f$, a contradiction.
So we are left with the third possibility 
$$ e = \mat{x_1+x_3Q(x_3,x_4)}{x_2+x_4Q(x_3,x_4)}{x_3}{x_4}.$$
In particular the vertices of type 2 before and after $\gamma(2i)$ belong to a same square, as shown on Figure \ref{fig:starting position}; and we are in the setting of Lemma \ref{lem:12inLV}.

\begin{figure}[ht]
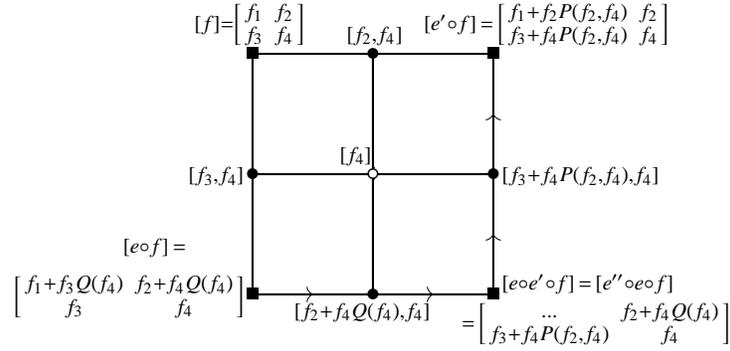

$$\mygraph{
!{<0cm,0cm>;<1.6cm,0cm>:<0cm,1.6cm>::}
!~-{@{-}@[|(2)]}
!{(-1,1)}*-{\blob}="f"
!{(0,1)}*-{\bullet}="f2f4"
!{(1,1)}*-{\blob}="e'f"
!{(-1,0)}*-{\bullet}="f3f4"
!{(0,0)}*{\circ}="f4"
!{(.04,0)}="f4right"
!{(-.05,0)}="f4left"
!{(0,-.04)}="f4down"
!{(1,0)}*-{\bullet}="f4e'f"
!{(-1,-1)}*-{\blob}="ef"
!{(0,-1)}*-{\bullet}="f4ef"
!{(1,-1)}*-{\blob}="tf"
"f"-^<{[f] = \sbmat{f_1}{f_2}{f_3}{f_4}}^>{[f_2,f_4]}"f2f4"-^>(1.4){[e' \circ f] \,=\, \sbmat{f_1+f_2P(f_2,f_4)}{f_2}{f_3+f_4P(f_2,f_4)}{f_4}}"e'f"-|@{<}^>{[f_3+f_4P(f_2,f_4), f_4]}"f4e'f"-"f4right"
"f"-_>{[f_3,f_4]}"f3f4"-^>(.9){[f_4]}"f4left"
"f4"-"f2f4"
"f3f4"-_>(0.6){[e \circ f] \,= \qquad}_>{ \sbmat{f_1+f_3Q(f_4)}{f_2+f_4Q(f_4)}{f_3}{f_4}}"ef"-|@{>}_>(0.9){[f_2+f_4Q(f_4), f_4]}"f4ef"-"f4down"
"f4ef"-|@{>}_>(1.8){ =\, \sbmat{\dots}{f_2+f_4Q(f_4)}{f_3+f_4P(f_2,f_4)}{f_4} }                                                                                                                             "tf"-_<(.1){[e \circ e' \circ f] \,=\, [e'' \circ e \circ f]}|@{>}"f4e'f"
}$$
\caption{Local homotopy in Case (1): $Q \in \C[f_4]$.}\label{fig:case1}
\end{figure}

\begin{figure}[ht]
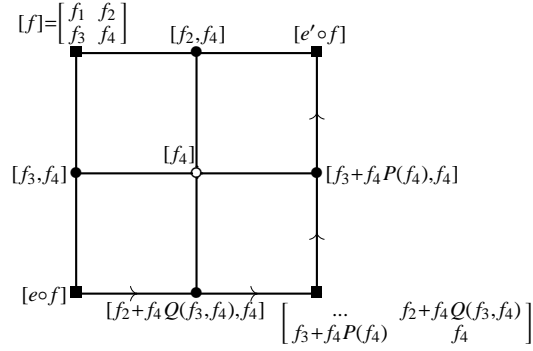

$$\mygraph{
!{<0cm,0cm>;<1.6cm,0cm>:<0cm,1.6cm>::}
!~-{@{-}@[|(2)]}
!{(-1,1)}*-{\blob}="f"
!{(0,1)}*-{\bullet}="f2f4"
!{(1,1)}*-{\blob}="e'f"
!{(-1,0)}*-{\bullet}="f3f4"
!{(0,0)}*{\circ}="f4"
!{(.04,0)}="f4right"
!{(-.046,0)}="f4left"
!{(0,-.038)}="f4down"
!{(1,0)}*-{\bullet}="f4e'f"
!{(-1,-1)}*-{\blob}="ef"
!{(0,-1)}*-{\bullet}="f4ef"
!{(1,-1)}*-{\blob}="tf"
"f"-^<{[f] = \sbmat{f_1}{f_2}{f_3}{f_4}}^>{[f_2,f_4]}"f2f4"-^>{[e' \circ f]}"e'f"-|@{<}^>{[f_3+f_4P(f_4), f_4]}"f4e'f"-"f4right"
"f"-_>{[f_3,f_4]}"f3f4"-^>(.9){[f_4]}"f4left"
"f4"-"f2f4"
"f3f4"-_>{[e \circ f]}"ef"-|@{>}_>(.9){[f_2+f_4Q(f_3,f_4), f_4]}"f4ef"-"f4down"
"f4ef"-|@{>}_>(1.7){\sbmat{\dots}{f_2+f_4Q(f_3,f_4)}{f_3+f_4P(f_4)}{f_4}}"tf"-|@{>}"f4e'f"
}$$
\caption{Local homotopy in Case (2): $P \in \C[f_4]$.}\label{fig:case2}
\end{figure}

\begin{figure}[ht]
$$
\mygraph{
!{<0cm,0cm>;<1.7cm,0cm>:<0cm,1.7cm>::}
!~-{@{-}@[|(2)]}
!{(-1,1)}*-{\blob}="f"
!{(0,1)}*-{\bullet}="f2f4"
!{(1,1)}*-{\blob}="e'f"
!{(-1,0)}*-{\bullet}="f3f4"
!{(0,0)}*{\circ}="f4"
!{(.04,0)}="f4right"
!{(-.045,0)}="f4left"
!{(0,-.04)}="f4down"
!{(0,-.59)}="f4downfake"
!{(.025,-.025)}="f4downright"
!{(1,0)}*-{\bullet}="f4e'f"
!{(-1,-1)}*-{\blob}="ef"
!{(0,-1)}*-{\bullet}="f4ef"
!{(.2,-.7)}*-{\blob}="v1"
!{(1.2,-.7)}*-{\bullet}="v2"
!{(1.2,-.7)}="v2up"
!{(2.2,-.7)}*-{\blob}="v3"
"f"-^<{[f]}^>{[f_2,f_4]}"f2f4"-^>(1.8){[e' \circ f] \,=\, \sbmat{f_1+f_2P(f_2,f_4)}{f_2}{f_3+f_4P(f_2,f_4)}{f_4}}"e'f"-|@{<}^>(0.9){[f_3+f_4P(f_2,f_4), f_4]}"f4e'f"-"f4right"
"f"-_>(.9){[f_3,f_4]}"f3f4"-^>(.9){[f_4]}"f4left" "f4"-"f2f4"
"f3f4"-|@{<}_>(.6){[e \circ f] \,= \qquad}_>(.9){ \sbmat{f_1+f_3Q(f_3,f_4)}{f_2+f_4Q(f_3,f_4)}{f_3}{f_4}}"ef"-_>{[f_2+f_4Q(f_3,f_4), f_4]}"f4ef"-"f4downfake"-@{.}"f4down"
"f4ef"
"f3f4"-|@{>}"v1"-|@{>}^<(0.2){[\tilde e \circ f]}_>(.8){[f_2+f_4R(f_4), f_4]}"v2"-|@{>}_>(1.3){ \sbmat{\dots}{f_2+f_4R(f_4)}{f_3+f_4P(f_2,f_4)}{f_4}}"v3"-_<{[\tilde e \circ e' \circ f] \,=}|@{>}"f4e'f" "v2up"-"f4downright"
}$$
\caption{Local homotopy in Case (3).}\label{fig:case3}
\end{figure}

\begin{figure}[ht]
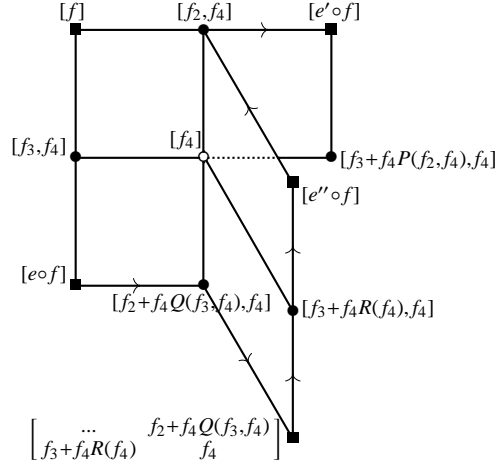

$$\mygraph{
!{<0cm,0cm>;<1.7cm,0cm>:<0cm,1.7cm>::}
!~-{@{-}@[|(2)]}
!{(-1,1)}*-{\blob}="f"
!{(0,1)}*-{\bullet}="f2f4"
!{(1,1)}*-{\blob}="e'f"
!{(-1,0)}*-{\bullet}="f3f4"
!{(0,0)}*{\circ}="f4"
!{(.04,0)}="f4right"
!{(-.041,0)}="f4left"
!{(0,-.035)}="f4down"
!{(0.585,0)}="f4rightfake"
!{(.025,-.025)}="f4downright"
!{(1,0)}*-{\bullet}="f4e'f"
!{(-1,-1)}*-{\blob}="ef"
!{(0,-1)}*-{\bullet}="f4ef"
!{(.7,-.2)}*-{\blob}="v1"
!{(.7,-.2)}="v1up"
!{(.7,-1.2)}*-{\bullet}="v2"
!{(.7,-2.2)}*-{\blob}="v3"
!{(.7,-2.2)}="v3up"
"f"-^<{[f]}^>{[f_2,f_4]}"f2f4"-|@{>}^>{[e' \circ f]}"e'f"-^>{[f_3+f_4P(f_2,f_4), f_4]}"f4e'f"-"f4rightfake"-@{.}"f4right"
"f"-_>(.9){[f_3,f_4]}"f3f4"-^>(.9){[f_4]}"f4left" "f4"-"f2f4"
"f3f4"-_>(.9){[e \circ f]}"ef"-|@{>}_>(0.9){[f_2+f_4Q(f_3,f_4), f_4]}"f4ef"-"f4down"
"f4ef"-|@{>}"v3up"-|@{>}^<{\sbmat{\dots}{f_2+f_4Q(f_3,f_4)}{f_3+f_4R(f_4)}{f_4}\,}_>{[f_3+f_4R(f_4), f_4]}"v2"-|@{>}_>(.9){[e'' \circ f]}"v1up"-|@{>}"f2f4" "v2"-"f4downright"
}$$
\caption{Local homotopy in Case (4).}\label{fig:case4}
\end{figure}

In each one of the four cases of Lemma \ref{lem:12inLV}, we now explain how to perform a local homotopy around $[f_4]$ such that the path avoids the vertex of maximal degree $\gamma(2i)$.

Consider first Case (1), that is to say $Q \in \C[x_4]$ (see Figure \ref{fig:case1}).
Then 
$$e \circ e' = \mat{\dots}{x_2 + x_4Q(x_4)}{x_3+x_4P(x_2,x_4)}{x_4}.$$
Remark that $e \circ e' = e'' \circ e$, where 
$$e'' = \mat{\dots}{x_2}{x_3 + x_4P(x_2 - x_4Q(x_4), x_4)}{x_4}$$ 
is elementary.
Thus we can make a local homotopy in a $2 \times 2$ grid around $[f_4]$ such that the new path goes through $[e \circ e' \circ f]$ instead of $[f]$.
Since $\deg (f_2 + f_4Q(f_4)) \le \deg f_2$, we have $\deg e\circ e'\circ f \le \deg e' \circ f$. 
Recall also that $\deg e' \circ f < \deg f$.
So we get
$$\deg [e \circ e' \circ f] \le \deg [e' \circ f] < \deg [f].$$	

Case (2) is analogous to Case (1) (see Figure \ref{fig:case2}).

Consider Case (3): see Figure \ref{fig:case3}. 
There exists $R(x_4) \in \C [x_4]$ such that $\deg (f_2 + f_4 R(f_4)) < \deg f_2$. 
Set 
$$\tilde e=\mat{x_1+x_3 R(x_4)}{x_2+x_4 R(x_4)}{x_3}{x_4}.$$
We have:
$$\tilde e\circ f =\mat{f_1+f_3 R(f_4)}{f_2+f_4 R(f_4)}{f_3}{f_4}.$$
By Lemma \ref{lem:degree of each component drops},  the inequality $\deg (f_2 + f_4 R(f_4)) < \deg f_2$ is equivalent to any of the following ones:  $\deg (f_1+f_3 R(f_4)) < \deg f_1$ and $\deg  \tilde e \circ f < \deg f$. So we get
$$\deg [\tilde e \circ f] < \deg [f].$$
We conclude by applying Case (1) to the path from $[\tilde e \circ f]$ to $[e' \circ f]$ passing through $[f]$.

Case (4) is analogous to Case (3) (see Figure \ref{fig:case4}).

The result follows by double induction on the maximal degree and on the number of vertices realizing this maximal degree. 
\end{proof}

We obtain the first part of Theorem \ref{thm:mainHyp}:
\begin{corollary}
$\Comp$ is a $\CAT(0)$ square complex.
\end{corollary}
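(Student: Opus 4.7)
The plan is to combine the two propositions established immediately before the corollary with the standard Cartan--Hadamard theorem for metric spaces of non-positive curvature. Specifically, Proposition \ref{pro:localcurvature} shows that every link in $\Comp$ has systole at least $4$, which by Gromov's link condition (see \cite[II.5.20 and II.5.24]{BH}) implies that $\Comp$ is a locally $\CAT(0)$ (i.e.\ non-positively curved) piecewise Euclidean square complex. Proposition \ref{pro:1connected} provides the second needed input: $\Comp$ is simply connected.

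The plan is then simply to invoke the metric Cartan--Hadamard theorem, stated as Theorem II.4.1 in \cite{BH}: a complete, simply connected geodesic metric space of non-positive curvature is globally $\CAT(0)$. Completeness of $\Comp$ follows from the fact that it is an $M_\kappa$-polyhedral complex with only finitely many isometry types of cells (in fact, a single type --- the Euclidean unit square), so by Bridson's theorem (\cite[I.7.19]{BH}) the induced length metric is complete and geodesic. Combined with local non-positive curvature from Proposition \ref{pro:localcurvature} and simple connectedness from Proposition \ref{pro:1connected}, the Cartan--Hadamard theorem yields the $\CAT(0)$ property globally.

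There is essentially no obstacle here: both ingredients have just been proved, and this is a direct citation of a standard theorem. The only minor point to be careful about is to check that the hypotheses of the Cartan--Hadamard theorem are verified --- in particular, the completeness of the piecewise Euclidean metric on $\Comp$ --- but this follows automatically from the finite-shape condition satisfied by any square complex.
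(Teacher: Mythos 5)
Your proposal is correct and follows exactly the paper's argument: it combines Proposition \ref{pro:localcurvature} (non-positive local curvature via the link condition) and Proposition \ref{pro:1connected} (simple connectedness) and invokes the Cartan--Hadamard theorem from \cite{BH}. The extra remark on completeness via the finite-shapes condition is a harmless elaboration of a hypothesis the paper leaves implicit.
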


\begin{proof}
Using Propositions \ref{pro:localcurvature} and  \ref{pro:1connected}, this is a consequence of the Cartan-Hada\-mard Theorem: see \cite[Theorem 5.4(4), p. 206]{BH}.
\end{proof}

As a side remark, we can now show  that the action of $\TSL$ on the square complex $\Comp$ is faithful. In fact, we have the following more precise result:

\begin{lemma} \label{lem:faithfulness of the action on the complex}
The action of $\TSL$ on the set of vertices of type 1 (resp. 2, resp. 3) of $\Comp$ is faithful.
\end{lemma}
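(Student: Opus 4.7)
The plan is to treat the three types separately. Since $\TSL$ is by construction a subgroup of $\Aut(\SL_2)$ and therefore acts faithfully on $\C[\SL_2]$, it will be enough in each case to show that a $g\in\TSL$ fixing all vertices of a given type must act trivially on the generators $x_1,\ldots,x_4$.

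For type~$1$, I would first observe that if $g\cdot[x_i]=[x_i]$ for $i=1,\ldots,4$ then $g\cdot x_i\in\C^*x_i$, and therefore $\deg(g\cdot x_i)=\deg x_i$ for all $i$, so $\deg g=(2,1,1,0)$ and Lemma~\ref{lem:degmax}(2) places $g$ in $\OO_4$. As a linear map, $g^{-1}$ is then diagonal, $g^{-1}(x_i)=\lambda_i x_i$, and $q\circ g^{-1}=q$ yields $\lambda_1\lambda_4=\lambda_2\lambda_3=1$. Testing the hypothesis on the isotropic sums $x_1+x_2$ and $x_1+x_3$ (both are components since, by Witt's theorem, the maps $x_1\mapsto x_1+x_2$ and $x_1\mapsto x_1+x_3$ extend to elements of $\OO_4$) forces $\lambda_1=\lambda_2=\lambda_3=:\lambda$, so $\lambda^2=1$. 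To eliminate $\lambda=-1$ I would use the nonlinear component $x_2+x_1x_3$, which is a component thanks to $\smat{x_1}{x_2+x_1x_3}{x_3}{x_4+x_3^2}\in\Er$; multiplicativity of the pullback then imposes $\lambda_2=\lambda_1\lambda_3$, so $\lambda=\lambda^2$ and $\lambda=1$, hence $g=\id$.

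For type~$2$, I would reduce to the previous case. Given any type-$1$ vertex $[f_1]$, complete $f_1$ into $f=\smat{f_1}{f_2}{f_3}{f_4}\in\TSL$. Since $\SL_2$ is not contained in any linear hyperplane of $\C^4$, the coordinates $f_1,f_2,f_3$ are linearly independent in $\C[\SL_2]$. By Remark~\ref{rem:square containing id} the type-$1$ neighbours of $[f_1,f_2]$ and of $[f_1,f_3]$ are the two copies of $\p^1$ consisting of the classes $[\alpha f_1+\beta f_2]$ and $[\gamma f_1+\delta f_3]$ respectively; the linear independence above forces these two sets to meet only at $[f_1]$. An element $g$ fixing every type-$2$ vertex permutes each neighbour set, so must fix their unique common vertex $[f_1]$. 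Running this over all $f_1$, $g$ lies in the kernel of the type-$1$ action, which has just been shown to be trivial.

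For type~$3$, the hypothesis rewrites as $fg^{-1}f^{-1}\in\OO_4$ for every $f\in\TSL$; taking $f=\id$ already gives $g^{-1}\in\OO_4$, so $g^{-1}=\smat{l_1}{l_2}{l_3}{l_4}$ with linear $l_i$. Since $\TSL=\langle\OO_4,\Er\rangle$, it is enough to test the condition on $\Er$ and on its $\OO_4$-conjugate $\Eb=\tau\Er\tau^{-1}$, where $\tau$ is the transpose. Expanding $e_P\,g^{-1}\,e_{-P}$ and requiring linearity of the result for \emph{every} $P\in\C[x_1,x_3]$ (it suffices to plug $P=x_1$, $x_3$, $x_1^2$) pins $g^{-1}$ down to $\Er\cap\OO_4$; the symmetric computation with $\Eb$ gives $g^{-1}\in\Eb$. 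A direct inspection of the defining formulas yields $\Er\cap\Eb=\{\id\}$, whence $g=\id$. The main technical obstacle is this polynomial bookkeeping in type~$3$: one has to identify by hand which coefficients of the $l_i$ must vanish upon evaluating at monomial choices of $P$, a routine but careful computation in the spirit of Lemma~\ref{lem:12inLV}.
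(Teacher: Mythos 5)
Your proof is correct, but for types 2 and 3 it takes a genuinely different route from the paper's. The paper exploits the $\CAT(0)$ property established just before the lemma: a vertex of type 2 is the unique midpoint of a pair of adjacent type-3 vertices, and a vertex of type 1 the unique midpoint of a pair of type-2 vertices, so triviality on type 3 implies triviality on type 2 implies triviality on type 1; only the type-1 case is then treated directly ($g$ acts by homothety on $\Vect(x_1,x_2)$, $\Vect(x_2,x_3)$, $\Vect(x_3,x_4)$, hence $g=\lambda\,\id$, and the vertex $[x_1+x_2^2]$ forces $\lambda=1$). Your type-1 argument is of the same flavor as the paper's (diagonal form, the isotropic sums $x_1+x_2$, $x_1+x_3$, then one nonlinear component to kill the sign). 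For type 2 you replace the metric reduction by a purely combinatorial one: the two $\p^1$'s of type-1 neighbours of $[f_1,f_2]$ and $[f_1,f_3]$ (Remark~\ref{rem:square containing id}) meet only in $[f_1]$; this is sound, provided you make explicit that a relation $af_1+bf_2+cf_3=0$ in $\C[\SL_2]$ becomes, after precomposing with $f^{-1}$, the vanishing of $ax_1+bx_2+cx_3$ on $\SL_2$, which is impossible. For type 3 you replace it by a direct computation: $fg^{-1}f^{-1}\in\OO_4$ for $f=\id$ gives $g^{-1}=\smat{l_1}{l_2}{l_3}{l_4}\in\OO_4$; testing $f=e_P\in\Er$ with $P=x_1,x_3,x_1^2$ does force $l_1=x_1$, $l_3=x_3$, $l_2\in x_2+\Vect(x_1,x_3)$, $l_4\in x_4+\Vect(x_1,x_3)$, and together with $l_1l_4-l_2l_3=q$ this gives $g^{-1}\in\Er\cap\OO_4$; the symmetric computation with $\Eb$ and the trivial intersection $\Er\cap\Eb=\{\id\}$ then finish — I checked that this bookkeeping closes. (Note that the appeal to $\TSL=\langle\OO_4,\Er\rangle$ is not what legitimizes this step: you are simply evaluating the hypothesis on particular type-3 vertices, which is all that is needed.) The trade-off is clear: the paper's reduction is two lines but relies on the $\CAT(0)$ geometry, whereas your argument is independent of the metric structure — type 2 uses only the combinatorics of the complex and type 3 only linear algebra and the explicit form of $\Er$, $\Eb$ — at the cost of the longer type-3 verification.
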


\begin{proof}
If $g \in \TSL$ acts trivially on vertices of type 3, then by unicity of the middle point of a segment in a $\CAT(0)$ space,  it also acts trivially on vertices of type 2.

Similarly, if $g \in \TSL$ acts trivially on vertices of type 2, then it also acts trivially on vertices of type 1 (which are realized as middle point of vertices of type 2).

So it is sufficient to consider the case of $g \in \TSL$ acting trivially on vertices of type 1. 
Since $[x_1],[x_2]$ and $[x_1 +x_2]$ are such vertices, $g$ must act by homothety on the corresponding lines $\Vect (x_1), \Vect(x_2)$ and $\Vect (x_1+x_2)$.
This implies that $g$ acts by homothety on the plane $\Vect(x_1,x_2)$. 
Similarly, $g$ acts by homothety on $\Vect(x_2,x_3)$ and $\Vect(x_3,x_4)$. 
Therefore, there exists a nonzero complex number $\lambda$ such that $g= \lambda \smat{x_1}{x_2}{x_3}{x_4}$. 
Finally, since $[x_1+x_2^2]$ is a vertex of type 1, $g$ acts by homothety on the line $\Vect(x_1 + x_2^2)$. 
We get $\lambda =1$ and $g=\id$.
\end{proof}

\subsection{Hyperbolicity}

We investigate whether the complex $\Comp$ contains large $n \times n$ grid, that is, large isometrically embedded euclidean squares.
We start with the following result, that shows that $4 \times 4$ grids do exist but are rather constrained.

\begin{lemma} \label{lem:4x4}
If $N, S, E, W$ are polynomials in one variable, then we can construct a $4 \times 4$ grid in $\Comp$ as depicted on Figure \ref{fig:4x4}.
Moreover, up to the action of  $\TSL$, any $4 \times 4$ grid in $\Comp$ centered on a vertex of type 3 is of this form.  
\end{lemma}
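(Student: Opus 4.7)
The plan is to treat the two parts of the statement separately, construction and classification.

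For the construction, after using transitivity of $\TSL$ on big squares (Corollary~\ref{lem:transitive on big squares}) to normalize the central $2\times 2$ sub-grid to be the standard big square, I would exhibit the 25 vertices of the grid explicitly in terms of $N,S,E,W$. The four axial type-3 vertices at graph distance 2 from the center are realized as $[e_N],[e_S],[e_E],[e_W]$, where each $e_*$ is a generalized elementary automorphism fixing the corresponding totally isotropic plane of the standard big square (so one lies in each of $\Eb,\Et,\El,\Er$), whose defining polynomial is a function of a single prescribed variable. The type-2 vertices on the outer rim are then forced: each is the unique type-2 neighbor of the corresponding axial type-3 vertex that also contains the adjacent corner of the central big square. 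Finally, the four outer corner vertices are produced as classes of explicit tame automorphisms obtained by composing two of the $e_*$'s, one for each cardinal direction meeting at the corner; the composition is manifestly tame because each polynomial depends on a single variable (so the relevant product of generalized elementary automorphisms remains of ``triangular'' shape). Verification that each of the 16 unit squares of the grid is indeed a square of $\Comp$ is then a direct check from the definition of squares, and isometric embedding follows from Corollary~\ref{cor:intersectionsquares}, since no two of the 25 constructed vertices can coincide (they have distinct, explicit representatives).

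For the classification, again we normalize the central big square of the given $4\times 4$ grid to be the standard one via Corollary~\ref{lem:transitive on big squares}. The four axial type-3 vertices at graph distance 2 from the center are then classes of tame automorphisms, each fixing a specific totally isotropic plane of the central big square; by Remark~\ref{rem:justification of Er} (applied after conjugating by a suitable element of $\V_4$), each such automorphism lies in one of $\Et,\Eb,\El,\Er$, and hence is determined by a polynomial in two variables. Call these polynomials $\widetilde N,\widetilde S,\widetilde E,\widetilde W$.

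The main obstacle is to upgrade ``two variables'' to ``one variable.'' For this I focus on the four outer corners of the grid. Consider for instance the NW corner: the outer corner square there has as vertices the type-1 vertex $[x_1]$ (the NW corner of the central big square), the two type-2 vertices $[x_1,x_2+x_1\widetilde W]$ and $[x_1,x_3+x_1\widetilde N]$ forced by the preceding analysis (with $\widetilde W\in\C[x_1,x_3]$ and $\widetilde N\in\C[x_1,x_2]$), and an outer type-3 vertex $[h]$. The existence of $h\in\TSL$ with $h_1=x_1$, $h_2=x_2+x_1\widetilde W$, $h_3=x_3+x_1\widetilde N$ implies, via Corollary~\ref{cor:linktype1}, that $(h_2,h_3)$ defines an automorphism of the $\C[x_1]$-algebra $\C[x_1][x_2,x_3]$. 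Computing the Jacobian determinant yields $1-x_1^2\,(\partial\widetilde W/\partial x_3)(\partial\widetilde N/\partial x_2)\in\C^{*}$, which forces $(\partial\widetilde W/\partial x_3)(\partial\widetilde N/\partial x_2)=0$. Running the same argument at each of the four outer corners of the grid produces four analogous vanishing conditions; combining them (and relabelling the polynomials appropriately) shows that each of $\widetilde N,\widetilde S,\widetilde E,\widetilde W$ is in fact a polynomial in a single variable, as required. The main effort will be the bookkeeping to match the Jacobian conditions at all four corners with the single-variable conclusions for each of the four polynomials.
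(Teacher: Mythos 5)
Your proposal is correct and follows essentially the same route as the paper: normalize the central $2\times 2$ subgrid to the standard big square by transitivity, recognize the four axial type-3 vertices as elementary automorphisms with a priori two-variable polynomials, and use the Jacobian condition coming from the four corner squares (via Corollary~\ref{cor:linktype1}) to force each polynomial to depend on a single variable, the precise labelling being fixed up to the transpose symmetry. The only cosmetic difference is that you spell out the construction direction (which the paper leaves to Figure~\ref{fig:4x4}); note there that the one-variable hypothesis is needed not for tameness of the corner compositions (any composition in $\TSL$ is tame) but to ensure they have the displayed components.
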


\begin{figure}[ht]
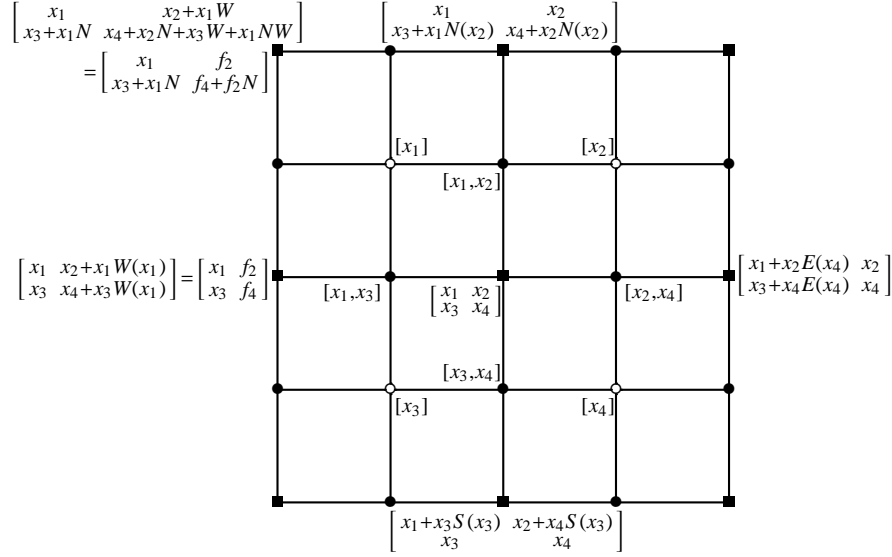

$$\mygraph{
!{<0cm,0cm>;<1.5cm,0cm>:<0cm,1.5cm>::}
!~-{@{-}@[|(2)]}
!{(-1,1)}*{\circ}="x1"
!{(-1.05,1)}="x1left"
!{(-0.955,1)}="x1right"
!{(-1,.962)}="x1down"
!{(0,1)}*-{\bullet}="x1x2"
!{(-1,0)}*-{\bullet}="x1x3"
!{(1,1)}*{\circ}="x2"
!{(.955,1)}="x2left"
!{(1.046,1)}="x2right"
!{(1,.962)}="x2down"
!{(1,0)}*-{\bullet}="x2x4"
!{(1,-1)}*{\circ}="x4"
!{(.955,-1)}="x4left"
!{(1.046,-1)}="x4right"
!{(1,-1.04)}="x4down"
!{(0,-1)}*-{\bullet}="x3x4"
!{(-1,-1)}*{\circ}="x3"
!{(-1.05,-1)}="x3left"
!{(-.955,-1)}="x3right"
!{(-1,-1.04)}="x3down"
!{(0,0)}*-{\blob}="x1x2x3x4"
!{(0,2)}*-{\blob}="N"
!{(0,-2)}*-{\blob}="S"
!{(2,0)}*-{\blob}="E"
!{(-2,0)}*-{\blob}="W"
!{(2,2)}*-{\blob}="NE"
!{(2,-2)}*-{\blob}="SE"
!{(-2,2)}*-{\blob}="NW"
!{(-2,-2)}*-{\blob}="SW"
!{(-1,2)}*-{\bullet}="NNW"
!{(1,2)}*-{\bullet}="NNE"
!{(2,1)}*-{\bullet}="NEE"
!{(2,-1)}*-{\bullet}="SEE"
!{(1,-2)}*-{\bullet}="SSE"
!{(-1,-2)}*-{\bullet}="SSW"
!{(-2,-1)}*-{\bullet}="SWW"
!{(-2,1)}*-{\bullet}="NWW"
"x1right"-^<(.15){[x_1]}_>(.7){[x_1,x_2]}"x1x2"-^>(.88){[x_2]}"x2left"
"x2down"-"x2x4"-^<(.15){[x_2,x_4]}"x4"
"x4left"-^<(.12){[x_4]}"x3x4"-_<(.3){[x_3,x_4]}^>(.85){[x_3]}"x3right"
"x3"-^>(.85){[x_1,x_3]}"x1x3"-"x1down"
"x1x2x3x4"-"x1x2" "x1x2x3x4"-"x2x4" "x1x2x3x4"-"x3x4"
"x1x2x3x4"-^<(.35){\sbmat{x_1}{x_2}{x_3}{x_4}}"x1x3"
"NW"-^<(-1){\sbmat{x_1}{x_2 + x_1W}{x_3+x_1N}{x_4+x_2N+x_3W+x_1NW}}"NNW"-"N"-^<{\sbmat{x_1}{x_2}{x_3+x_1N(x_2)}{x_4+x_2N(x_2)}}"NNE"-"NE"-"NEE"-"E"-^<{\sbmat{x_1+x_2E(x_4)}{x_2}{x_3+x_4E(x_4)}{x_4}}"SEE"-"SE"-"SSE"-"S"-^<{\sbmat{x_1+x_3S(x_3)}{x_2+x_4S(x_3)}{x_3}{x_4}}"SSW"-"SW"-"SWW"-"W"-^<{\sbmat{x_1}{x_2+x_1W(x_1)}{x_3}{x_4+x_3W(x_1)}  \,=\, \sbmat{x_1}{f_2}{x_3}{f_4}}"NWW"-^>(.8){=\, \sbmat{x_1}{f_2}{x_3+x_1N}{f_4+f_2N}}"NW"
"NWW"-"x1left" "x1"-"NNW"
"NEE"-"x2right" "x2"-"NNE"
"SWW"-"x3left" "x3down"-"SSW"
"SEE"-"x4right" "x4down"-"SSE"
"N"-"x1x2" "E"-"x2x4" "S"-"x3x4" "W"-"x1x3"
}$$
\caption{$4 \times 4$ grid associated with polynomials $N, S, W$ and $E$.}
\label{fig:4x4}
\end{figure}

\begin{proof}
Consider a $4 \times 4$ grid centered on a vertex of type 3.
By Corollary \ref{lem:transitive on big squares}, we may assume that the $2 \times 2$  subgrid with same center is the standard big square (Figure \ref{fig:bigsquare}). 
By Lemma \ref{lem:action on v3 next to v2} the upper central vertex of type 3 is of the form $[f]$, where 
$$f = \mat{x_1}{x_2}{x_3+x_1N(x_1,x_2)}{x_4+x_2N(x_1,x_2)} \in \Eb$$
for some polynomial $N$ - for North - in $\C[x_1,x_2]$. Similarly there exist elementary automorphisms of other types associated with polynomials $S,E,W$, which \textit{a priori} are  polynomials in $2$ variables, as depicted on Figure \ref{fig:4x4}.
But now the upper left square in Figure \ref{fig:4x4} exists if and only if
$$\mat{x_1}{x_2 + x_1W(x_1,x_3)}{x_3+x_1N(x_1,x_2)}{\dots} $$
is an automorphism. In particular, the Jacobian determinant must be equal to $1$, so that $\frac{\partial W}{\partial x_3} \frac{\partial N}{\partial x_2} = 0$, i.e. $W$ or $N$ is in $\C[x_1]$.
Up to exchanging $x_2$ and $x_3$ (that is, up to conjugating by the transpose automorphism), we can assume $W \in \C[x_1]$.
Then by using the same argument in the three other corners we obtain $S \in \C[x_3]$, $E\in \C[x_4]$ and $N \in \C[x_2]$. 
\end{proof}

Now we show that arbitrary large grids do not exist. In particular flat disks embedded in $\Comp$ are uniformly bounded. 

\begin{proposition} \label{pro:nogrid}
The complex $\Comp$ does not contain any $6 \times 6$ grid centered on a vertex of type 1.
\end{proposition}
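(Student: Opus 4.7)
The plan is to argue by contradiction. Suppose such a $6\times 6$ grid $G$ centered at a type~1 vertex $v_1$ exists. The first step is to use the transitivity of the $\TSL$-action on type~1 vertices (Lemma~\ref{lem: 2a and 2b}) to normalize $v_1=[x_1]$. Working in grid coordinates with $v_1$ at the origin so that $G$ occupies positions $(i,j)$ with $-3\le i,j\le 3$, the four type~3 vertices at positions $(\pm 1,\pm 1)$ of $G$ are each the center of a $4\times 4$ subgrid $G_{\pm\pm}$ entirely contained in $G$.

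By Lemma~\ref{lem:4x4}, each $G_{\pm\pm}$ can be put into the normal form of Figure~\ref{fig:4x4} via a further element of $\TSL$. I will apply this to normalize $G_{++}$ so that its center is $[\id]$ and its four cardinal vertices are parameterized by polynomials $N\in\C[x_2]$, $S\in\C[x_3]$, $E\in\C[x_4]$, $W\in\C[x_1]$. The three remaining subgrids overlap substantially with $G_{++}$: $G_{-+}$ and $G_{+-}$ share rectangles of size $2\times 4$ and $4\times 2$ with $G_{++}$, and are centered respectively at its west cardinal $[W_{\mathrm{form}}]$ and its south cardinal $[S_{\mathrm{form}}]$, while $G_{--}$ is centered at the southwest corner $[S\circ W]$ of $G_{++}$.

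Next I will apply Lemma~\ref{lem:4x4} to each of $G_{-+}$, $G_{+-}$, $G_{--}$, obtaining one-variable polynomial parameterizations after independent normalizations, and then translate the identifications of shared vertices across the overlaps into a system of $(\OO_4,\OO_4)$-double-coset equations relating the four sets of polynomials. For instance, the east cardinal of $G_{-+}$ in its normalization must correspond to $[\id]$ in the original coordinates, which yields one such equation involving $W$ and the east polynomial of $G_{-+}$. The main obstacle will be to show that this entire system of overlap equations, combined with further Jacobian conditions arising from the outer type~3 corners of $G$ at positions $(\pm 3,\pm 3)$ (which belong to only one of the $G_{\pm\pm}$ and thus impose new compatibility constraints in the spirit of the Jacobian argument used in the proof of Lemma~\ref{lem:4x4}), forces each of $N$, $S$, $E$, $W$ to be a constant polynomial.

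Once this is achieved the conclusion is immediate: a direct check of the defining quadratic form $q=x_1x_4-x_2x_3$ shows that if $N$, $S$, $E$, or $W$ is constant then the corresponding cardinal automorphism of Figure~\ref{fig:4x4} is linear and preserves $q$, so it lies in $\OO_4$ and its class in $\Comp$ equals $[\id]$. This collapses the cardinal vertices of $G_{++}$ onto its center, contradicting the embedding of the $4\times 4$ grid $G_{++}$ in $\Comp$ and completing the proof.
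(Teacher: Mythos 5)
Your setup (center $[x_1]$, the four $4\times 4$ subgrids $G_{\pm\pm}$ centered at the type~3 vertices $(\pm1,\pm1)$, normalization of one of them by Lemma~\ref{lem:4x4}) is correct, but the proof stops exactly where the actual argument has to happen: the assertion that the ``system of overlap equations'' plus extra corner conditions forces $N,S,E,W$ to be constant is only announced as ``the main obstacle'', never established, and no mechanism is identified that would establish it. Note also that the outer corners $(\pm3,\pm3)$ give nothing new: each lies in a single $G_{\pm\pm}$, so the Jacobian conditions there are already part of the conclusion of Lemma~\ref{lem:4x4} applied to that subgrid. Moreover the target itself is misdirected: the contradiction does not come from forcing the polynomials to be constant (constancy is excluded from the start, since a constant cardinal polynomial gives an element of $\OO_4$ and collapses that vertex onto the center, contradicting the embedding), but from a single polynomial being required to be one-variable with respect to two incompatible coordinates.

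Concretely, the missing step is a change of center. With one subgrid normalized as in Figure~\ref{fig:4x4} (so $W\in\C[x_1]$, $N\in\C[x_2]$, etc.), the diagonally opposite subgrid is centered at the corner vertex, which rewrites as $\sbmat{x_1}{f_2}{x_3+x_1N}{f_4+f_2N}$ where $f_2=x_2+x_1W(x_1)$, $f_4=x_4+x_3W(x_1)$; its neighbouring cardinal vertex $\sbmat{x_1}{f_2}{x_3}{f_4}$ is reached from this center by an elementary move whose polynomial, written in the components $x_1$ and $f_2$ of the new center, is $N(x_2)=N\bigl(f_2-x_1W(x_1)\bigr)$. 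Applying Lemma~\ref{lem:4x4} to that second subgrid forces this polynomial to lie in $\C[x_1]$ or in $\C[f_2]$, which is incompatible with $N\in\C[x_2]$ once $N$ and $W$ are non-constant (as embeddedness forces). This uses only two of the four subgrids and is the whole proof in the paper; without this computation your double-coset system is just a restatement of the problem, so as it stands the proposal has a genuine gap.
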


\begin{proof}
Suppose now that we have such a $6 \times 6$ grid.
By Lemma \ref{lem:4x4} we can assume that the lower right $4 \times 4$ subgrid has the form given on Figure \ref{fig:4x4}. 
Then we would have a lower left $4 \times 4$ subgrid centered on the vertex $\sbmat{x_1}{f_2}{x_3}{f_4}$, where we denote $f_2 = x_2+x_1W(x_1)$ and $f_4 = x_4+x_3W(x_1)$.
With the same notation, the center of the upper left $4 \times 4$ subgrid can be rewritten as $\sbmat{x_1}{f_2}{x_3+x_1N}{f_4 + f_2N}$. 
Then again by Lemma \ref{lem:4x4} we should have $N \in \C[x_1]$ or $N \in \C[f_2]$, in contradiction with $N \in \C[x_2]$.
\end{proof}

We obtain the last part of Theorem \ref{thm:mainHyp}:

\begin{corollary}
The complex $\Comp$ is hyperbolic.
\end{corollary}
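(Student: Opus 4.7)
The plan is to deduce hyperbolicity from the combination of the $\CAT(0)$ property (just established) and the bound on the size of flat subcomplexes given by Proposition \ref{pro:nogrid}, via the Flat Plane Theorem.

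First, I would record that the action of $\TSL$ on $\Comp$ is cocompact: by Lemma \ref{lem:action on squares}, there is a single orbit of squares, so the closed standard square is a compact fundamental domain for the action. Combined with the fact that $\Comp$ is a $\CAT(0)$ square complex with finitely many isometry types of cells (there is only one type, the unit Euclidean square), we are in the setting where the Flat Plane Theorem of Bridson--Haefliger (see \cite[Theorem III.H.1.5]{BH}) applies: such a space is Gromov hyperbolic if and only if it contains no isometrically embedded flat plane.

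Second, the heart of the argument is to upgrade Proposition \ref{pro:nogrid} into the statement that $\Comp$ contains no isometric copy of $\R^2$. Suppose for contradiction that $F \colon \R^2 \hookrightarrow \Comp$ is an isometric embedding. Since $\Comp$ is a $\CAT(0)$ square complex with its standard piecewise Euclidean metric, the image of $F$ is a union of squares of $\Comp$, and the vertices of $\Comp$ lying in $F(\R^2)$ form a translate of the integer lattice $\Z^2 \subset \R^2$. In particular, $F(\R^2)$ contains $n \times n$ grids of arbitrarily large size, centered on vertices of every type appearing in $F(\R^2)$. By the transitivity results of Lemma \ref{lem: 2a and 2b} (or equivalently by translating within $F(\R^2)$ and then using the $\TSL$-action), we may assume that $F(\R^2)$ contains a $6 \times 6$ grid centered on a vertex of type $1$, contradicting Proposition \ref{pro:nogrid}.

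The main subtlety I anticipate is precisely the justification that an isometric embedding $\R^2 \hookrightarrow \Comp$ must respect the square tiling, so that its vertices are genuine vertices of $\Comp$ arranged as a sub-grid: for this one appeals to the fact that in a $\CAT(0)$ cube complex a flat is contained in the union of the cubes it meets, and meets them along faces. Once this is in place the conclusion is immediate: no embedded flat plane exists, hence by the Flat Plane Theorem the cocompact $\CAT(0)$ complex $\Comp$ is hyperbolic.
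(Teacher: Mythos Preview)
Your argument has a genuine gap: the Flat Plane Theorem you invoke (\cite[Theorem III.H.1.5]{BH}) is stated for \emph{proper} cocompact $\CAT(0)$ spaces, and $\Comp$ is not proper. Indeed, by Remark~\ref{rem:square containing id} the squares containing $[\id]$ are parametrized by $\p^1\times\p^1$, so every vertex of type~3 has uncountable valence; the complex is not locally finite, and closed balls are not compact. The compactness/ultralimit argument underlying the Flat Plane Theorem genuinely needs properness, so one cannot simply quote that result here. Your reduction ``no large grid $\Rightarrow$ no flat plane'' is fine (a flat in a $\CAT(0)$ cube complex is indeed a subcomplex), but the implication ``no flat plane $\Rightarrow$ hyperbolic'' is exactly what fails to be available.

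The paper sidesteps this issue by working combinatorially on the $1$-skeleton. It uses that the $1$-skeleton is quasi-isometric to $\Comp$, and then the fact (from \cite{AOS}) that in a $\CAT(0)$ cube complex the combinatorial interval $\llbracket x,y\rrbracket$ between two vertices embeds as a subcomplex of $\Z^n$ (here $n=2$). Proposition~\ref{pro:nogrid} then forces these intervals to be uniformly thin, so the $1$-skeleton satisfies the thin bigon criterion for hyperbolicity of graphs (\cite{Papa}, \cite{Wi}). This route requires no properness hypothesis and is why the authors chose it.
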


\begin{proof}
Since the embedding of the 1-skeleton of $\Comp$ into $\Comp$ is a quasi-isometry, it is sufficient to prove that the 1-skeleton is hyperbolic (see \cite[Theorem III.H.1.9]{BH}).
Consider $x,y$ two vertices, and define the interval $\llbracket x,y \rrbracket$ to be the union of all edge-path geodesics from $x$ to $y$.
Then $\llbracket x,y \rrbracket$ embeds as a subcomplex of $\Z^2$ (\cite[Theorem 3.5]{AOS}).
Since there is no large flat grid in the complex $\Comp$, it follows that the 1-skeleton of $\Comp$ satisfies the ``thin bigon criterion'' for hyperbolicity of graphs (see \cite[page 111]{Wi}, \cite{Papa}).  
\end{proof}

\section{Amalgamated product structures}
\label{sec:structures}

There are several trees involved in the geometry of the complex $\Comp$.
We have already encountered in \S \ref{sec:link type 1} the tree associated with the link of a vertex of type 1. 
We will see shortly in \S \ref{sec:product of trees} that there are trees associated with hyperplanes in the complex, and also with the connected components of the complements of two families of hyperplanes.
At the algebraic level this translates into amalgamated product structures for several subgroups of $\TSL$: see Figure \ref{fig:nesting} for a diagrammatic summary of the products studied in this section. 

\subsection{Stabilizer of \texorpdfstring{$\mathbf{[x_1]}$}{x1}} \label{sec:stab x1}

In this section we study in details the structure of $\Stab([x_1])$.
First we show that it admits a structure of amalgamated product.
Then we describe the two factors of the amalgam: the group $H_1$ in Proposition \ref{pro:description of H_1} and $H_2$ in Proposition \ref{pro:description of H_2}. 
We will show in Lemma~\ref{lem:amalgamH_1} that $H_1$ is itself the amalgamated product  of two of its subgroups $K_1$ and $K_2$ (see Definition~\ref{def:K_1 and K_2}) along their intersection.
It turns out that $H_1 \cap H_2= K_2$. 
Therefore, the amalgamated structure of  $\Stab([x_1])$ given in Proposition~\ref{pro:first amalgamated structure of Stabx1} can be ``simplified by $K_2$''. This is Lemma~\ref{pro:second amalgamated structure of Stabx1}.

\subsubsection{A first product} \label{sec:H1 H2}

Recall from \S \ref{sec:link type 1} that there is a map $\pi$ from $\LL(x_1)'$ to a simplicial tree.
In this context it is natural to introduce the following two subgroups of $\Stab([x_1])$: 
\begin{itemize}
\item The stabilizer $H_1$ of the fiber of $\pi$ containing $[\id]$.
\item The stabilizer $H_2$ of the fiber of $\pi$ containing $[x_1, x_3]$.
\end{itemize}

\begin{proposition} \label{pro:first amalgamated structure of Stabx1}
The group $\Stab([x_1])$ is the amalgamated product of $H_1$ and $H_2$ along their intersection:
\[\Stab([x_1]) = H_1 *_{H_1 \cap H_2}H_2.\]
\end{proposition}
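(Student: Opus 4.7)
The plan is to apply Bass--Serre theory to the natural action of $\Stab([x_1])$ on the tree $\T := \pi(\LL(x_1))$, which is a subtree of $\T_{\C(x_1)}$ by Lemma~\ref{lem:link1toatree}(1). First, by Lemma~\ref{lem:fundamentaldomain}(2) the action of $\Stab([x_1])$ on $\LL(x_1)'$ descends through $\pi$ to a well-defined action on $\T$.

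Next I would identify a fundamental domain for the action on $\T$. Since $\tau = \smat{x_1}{x_3}{x_2}{x_4}$ belongs to $\Stab([x_1])$ and sends $[x_1,x_2]$ to $[x_1,x_3]$, the statement of Lemma~\ref{lem:fundamentaldomain}(1) immediately implies that the edge $\epsilon$ between $[\id]$ and $[x_1,x_3]$ in $\LL(x_1)'$ is also a fundamental domain for the action on $\LL(x_1)'$. Applying $\pi$, and noting that $\pi([\id]) = A_2(x_2,x_3)$ and $\pi([x_1,x_3]) = A_1 x_3$, I would deduce that the edge $\pi(\epsilon) \subseteq \T$ is a fundamental domain for the induced action of $\Stab([x_1])$ on $\T$.

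Then I would verify that this action is without inversion: the vertices of $\T$ fall into two classes according to whether they are images of type-2 vertices of $\LL(x_1)$ or of midpoints (equivalently, whether they are of the form $A_1 f_2$ or $A_2(f_2,f_3)$ in $\T_{\C(x_1)}$), and these two classes are clearly preserved by every element of $\Stab([x_1])$; hence no edge of $\T$ can be inverted. With this in hand, the classical Bass--Serre theorem (Serre, \emph{Trees}, I.4.1, Th\'eor\`eme 6) directly yields that $\Stab([x_1])$ is the amalgamated product of the stabilizers of the two endpoints of $\pi(\epsilon)$ along the stabilizer of $\pi(\epsilon)$. By the very definitions of $H_1$ and $H_2$, the stabilizer of $A_2(x_2,x_3)$ is $H_1$, the stabilizer of $A_1 x_3$ is $H_2$, and therefore the edge stabilizer is $H_1 \cap H_2$, giving the claimed decomposition.

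The main point to double-check is that the stabilizer in $\Stab([x_1])$ of a vertex $w \in \T$ really coincides with the setwise stabilizer of the fiber $\pi^{-1}(w) \subseteq \LL(x_1)'$; this is essentially tautological granted the equivariance contained in Lemma~\ref{lem:fundamentaldomain}(2). I do not expect a serious obstacle here: the preceding lemmas have been set up precisely so that the Bass--Serre machine applies with no additional combinatorial work.
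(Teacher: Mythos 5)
Your proof is correct and follows essentially the same route as the paper: act on the tree $\pi(\LL(x_1))$, use Lemma~\ref{lem:fundamentaldomain} to get an edge as fundamental domain, and apply the Bass--Serre theorem (Serre, I.4.1, Th.~6) to identify $\Stab([x_1])$ with the amalgam of the two vertex stabilizers, which are $H_1$ and $H_2$ by definition. The extra verifications you spell out (translating the fundamental domain by $\tau$ so that its endpoints are $\pi([\id])$ and $\pi([x_1,x_3])$, absence of inversions, and the identification of vertex stabilizers with fiber stabilizers via equivariance) are exactly the points the paper leaves implicit.
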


\begin{proof}
Consider the action of $\Stab([x_1])$ on the image of $\pi$, which is a connected tree by Lemma \ref{lem:link1toatree}. 
By Lemma \ref{lem:fundamentaldomain}, the fundamental domain is the edge $A_2 (x_2,x_3)$, $A_1 x_3$.
By a classical result (e.g. \cite[I.4.1, Th. 6, p. 48]{S2}) $\Stab([x_1])$ is the amalgamated product of the stabilizers of these two vertices along their intersection: This is precisely our definition of $H_1$ and $H_2$.  
\end{proof}

\subsubsection{Structure of \texorpdfstring{$H_1$}{H1}} \label{sec:H1}

If $R$ is a commutative ring, we put
$$B(R) = \mat{*}{*}{0}{*} \cap \GL_2(R) =\left\{ \mat{a}{b}{0}{d}; \, a,d \in R^*, \, b \in R \right\}.$$
For example $B( \C [x_1]) = \left\{ \mat{a}{b}{0}{d}; \, a,d \in \C^*, \, b \in \C [x_1] \right\}.$

We also introduce the following three subgroups of $\GL_2 (\C[x_1])$:
\begin{itemize}
\item The group $M_1$  of matrices $\mat{b}{0 }{0}{\rule{2mm}{0mm} b^{-1}}$ and $\mat{0 \rule{2mm}{0mm} }{b}{  b^{-1}}{0}$,  $b \in \C^*$;
\item The group $M_2$ of matrices $\mat{b}{x_1 P(x_1)}{0}{b^{-1}}$, $b \in \C^*$, $P \in \C[x_1]$;
\item The group $M$ generated by $M_1$ and $M_2$.
\end{itemize}

The following result is classical (see \cite[Theorem 6, p. 118]{S2}).

\begin{theorem}[Nagao]
The group $\GL_2 ( \C[x_1])$ is the amalgamated product of the subgroups $\GL_2 (\C)$ and $B( \C [x_1]) $ along their intersection $B(\C)$:
$$\GL_2 ( \C[x_1]) = \GL_2 (\C) *_{B(\C)} B( \C [x_1]).$$
\end{theorem}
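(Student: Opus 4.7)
The plan is to deduce Nagao's theorem by applying Bass-Serre theory to the action of $G := \GL_2(\C[x_1])$ on the Bruhat--Tits tree of $\PGL_2(K)$ for the local field $K = \C((x_1^{-1}))$. Following the classical setup, let $\mathcal{O} = \C[[x_1^{-1}]]$ with uniformizer $\pi = x_1^{-1}$, and let $T$ be the tree whose vertices are homothety classes $[L]$ of rank-$2$ $\mathcal{O}$-lattices $L \subset K^2$, with edges joining $[L]$ to $[L']$ when representatives can be chosen so that $L \supsetneq L'$ and $L/L' \simeq \mathcal{O}/\pi\mathcal{O} \simeq \C$. The group $\GL_2(K)$ acts on $T$ via $g \cdot [L] = [gL]$, and $G$ inherits this action.

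The core step is to show that the ray $R = (v_n)_{n \geq 0}$, where $v_n = [L_n]$ with $L_n := \mathcal{O} e_1 \oplus \pi^n \mathcal{O} e_2$, is a strict fundamental domain for the $G$-action on $T$, with vertex and edge stabilizers as follows: using the equalities $\mathcal{O} \cap \C[x_1] = \C$ and $\pi^{-n}\mathcal{O} \cap \C[x_1] = \C[x_1]_{\leq n}$, a direct computation gives
\[ \Stab_G(v_0) = \GL_2(\C), \qquad \Stab_G(v_n) = B_n \quad (n \geq 1), \]
where $B_n$ is the subgroup of matrices $\smat{a}{b}{0}{d}$ with $a,d \in \C^*$ and $\deg b \leq n$. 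Consequently the edge stabilizer between $v_n$ and $v_{n+1}$ is $B_n$ for $n \geq 1$, and between $v_0$ and $v_1$ it is $\GL_2(\C) \cap B_1 = B(\C) = B_0$. Existence of an orbit representative on $R$ follows from a Euclidean-algorithm style row reduction on a basis of $L$: alternating left multiplication by elements of $\GL_2(\C)$ (to swap rows or rescale when degrees are unbalanced) with unipotent elements $\smat{1}{q}{0}{1} \in B(\C[x_1])$ (to reduce modulo a pivot), one brings any basis of $L$ into the form of a basis of some $\pi^a L_n$. Uniqueness of $n$ follows because the stabilizers above are pairwise non-conjugate in $G$: a direct computation shows that conjugating $\smat{1}{x_1^n}{0}{1}$ within $G$ forces the conjugator to be upper-triangular, and then preserves the degree of the upper-right entry.

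Bass-Serre theory applied to the resulting quotient graph of groups --- the ray $R$ with vertex groups $\GL_2(\C), B_1, B_2, \ldots$ and edge groups $B_0, B_1, B_2, \ldots$ --- then yields
\[ G \;\simeq\; \GL_2(\C) *_{B_0} B_1 *_{B_1} B_2 *_{B_2} \cdots . \]
Each partial amalgamation $B_n *_{B_n} B_{n+1}$ collapses to $B_{n+1}$ (since $B_n \subseteq B_{n+1}$), so the right-hand side is the inductive limit $\GL_2(\C) *_{B(\C)} \bigcup_{n \geq 0} B_n = \GL_2(\C) *_{B(\C)} B(\C[x_1])$, as claimed.

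The main obstacle is the fundamental-domain step. The stabilizer computations and the Bass-Serre assembly are essentially formal, but designing the reduction algorithm so that every $G$-orbit on $T$ meets $R$ in exactly one point requires a careful interplay between Euclidean division in $\C[x_1]$ and the lattice combinatorics. In particular one must verify that the algorithm terminates and that the resulting ``normal form'' $n$ is invariant under the full group $G$ (and not just under the sequence of moves actually performed), which is precisely where one uses that $\C[x_1]$ has a well-behaved degree function.
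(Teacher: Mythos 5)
The paper does not actually prove this statement: it is quoted as classical, with the reference \cite[Theorem 6, p.~118]{S2}, and your argument is essentially the proof given in that source, namely the action of $G=\GL_2(\C[x_1])$ on the lattice tree of $\GL_2(K)$ for $K=\C((x_1^{-1}))$, with the ray $(v_n)_{n\ge 0}$ as fundamental domain. Your stabilizer computations are correct (since $\det g\in\C^*$, stabilizing the class $[L_n]$ forces $gL_n=L_n$, and $\mathcal{O}\cap\C[x_1]=\C$, $\pi^{-n}\mathcal{O}\cap\C[x_1]=\C[x_1]_{\le n}$ give $\GL_2(\C)$ and $B_n$), the uniqueness argument is sound (demanding that $g\smat{1}{x_1^n}{0}{1}g^{-1}$ be upper triangular kills the lower-left entry of $g$, and conjugation by an upper triangular element of $G$ preserves the degree $n$; solvability separates the $B_n$ from $\GL_2(\C)$), and the telescoping of the resulting tree of groups to $\GL_2(\C)*_{B(\C)}B(\C[x_1])$ is standard Bass--Serre theory, the action being without inversions because $\det g$ has valuation $0$. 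The one place where your write-up is only a sketch --- and which you yourself flag --- is the existence half of the fundamental-domain claim, that every vertex of the tree is $G$-equivalent to some $v_n$: this is exactly the Birkhoff factorization $\GL_2(K)=\GL_2(\C[x_1])\cdot\{\mathrm{diag}(\pi^a,\pi^b)\}\cdot\GL_2(\mathcal{O})$, equivalently the splitting of rank-two vector bundles on $\p^1$, and while the Euclidean row/column reduction you describe is the standard elementary route and can be completed (one needs a termination invariant, e.g.\ the minimal valuation/degree of the pivot entries), as written this step is asserted rather than proved. So the proposal follows the same route as the paper's cited source and is correct in outline, with that single step left to be filled in.
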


Since 
$$M_i \cap B (\C) = \left\{ \mat{b}{0}{0}{b^{-1}}; \, b \in \C^* \right\}$$
is independent of $i$, the following result is a consequence of \cite[Proposition 3, p. 14]{S2}:

\begin{corollary} \label{corollary of Nagao}
The group $M$ is the amalgamated product of $M_1$ and $M_2$ along their intersection:
$$M=M_1*_{M_1 \cap M_2}M_2.$$
\end{corollary}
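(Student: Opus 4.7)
The plan is to derive the amalgam structure of $M$ by applying Nagao's theorem together with the cited result \cite[Proposition 3, p. 14]{S2}, which asserts that inside an amalgamated product $G = A *_C B$, any pair of subgroups $A' \subseteq A$, $B' \subseteq B$ satisfying $A' \cap C = B' \cap C$ generates their own amalgamated product $A' *_{A'\cap C} B'$ inside $G$.

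First, I would verify the set-up: by inspection $M_1 \subseteq \GL_2(\C)$ (both generators have constant entries) while $M_2 \subseteq B(\C[x_1])$ (its elements are upper triangular with polynomial entries). Hence $M_1$ and $M_2$ sit respectively in the two factors of the Nagao decomposition $\GL_2(\C[x_1]) = \GL_2(\C) *_{B(\C)} B(\C[x_1])$.

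Second, I would check the compatibility of intersections. For $M_1 \cap B(\C)$, the anti-diagonal generator $\smat{0}{b}{b^{-1}}{0}$ is not upper triangular, so only the diagonal generators remain, giving $M_1 \cap B(\C) = \{\operatorname{diag}(b,b^{-1}) : b \in \C^*\}$. For $M_2 \cap B(\C)$, the condition that $x_1 P(x_1)$ be a scalar forces $P = 0$, yielding the same diagonal torus. This matches the equality already pointed out in the excerpt, so the hypothesis of the cited proposition is satisfied with $C' := \{\operatorname{diag}(b,b^{-1}) : b \in \C^*\}$.

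Finally, applying the cited proposition we obtain that the subgroup of $\GL_2(\C[x_1])$ generated by $M_1$ and $M_2$—which by definition is $M$—is canonically isomorphic to $M_1 *_{C'} M_2$. It only remains to identify $C'$ with $M_1 \cap M_2$: from $M_1 \subseteq \GL_2(\C)$ we get $M_1 \cap M_2 \subseteq M_1 \cap B(\C[x_1]) = M_1 \cap B(\C) = C'$, and conversely $C'$ lies in both $M_1$ and $M_2$. This yields the claim $M = M_1 *_{M_1 \cap M_2} M_2$. There is no real obstacle here—the content is all in Nagao's theorem and the general subgroup lemma for amalgams; the only thing to watch is the careful chain of inclusions pinning down $M_1 \cap M_2$.
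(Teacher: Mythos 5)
Your proof is correct and follows exactly the paper's route: embed $M_1$ and $M_2$ in the two factors of Nagao's decomposition $\GL_2(\C[x_1]) = \GL_2(\C) *_{B(\C)} B(\C[x_1])$, note that $M_1 \cap B(\C) = M_2 \cap B(\C)$ is the diagonal torus (which also equals $M_1 \cap M_2$), and apply \cite[Proposition 3, p.~14]{S2}. The only difference is that you spell out the intersection computations that the paper leaves implicit.
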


\begin{remark}
We did not find any simpler definition of $M$. Let $\ev \colon \GL _ 2 ( \C [x_1] ) \to \GL _ 2 ( \C)$ be the evaluation at the origin. One can check that $M$ is strictly included into $\ev ^{-1} (M_1)$.
\end{remark}

\begin{proposition} \label{pro:description of H_1}
The group $H_1$ is the set of automorphisms $f= \smat{f_1}{f_2}{f_3}{\dots}$ such that there exist $a \in \C^*$,
$A \in M$  and $P_1, P_2 \in \C [x_1]$ satisfying: 
$$f_1 =a x_1, \quad \colvec{f_2}{f_3} = A \colvec{\rule{0mm}{2mm}x_2}{\rule{0mm}{2mm} x_3} + \colvec{x_1P_1(x_1)}{x_1P_2(x_1)}.$$
In particular $H_1$ is generated by the matrices
$$\mat{ax_1}{bx_2+x_1P(x_1) \, x_3 + x_1Q(x_1)}{b^{-1}x_3}{\dots},a,b \in \C^*, \, P,Q \in \C[x_1] \hspace{2mm} {\rm and} \hspace{2mm} \tau =  \mat{x_1}{x_3}{x_2}{x_4}.$$
\end{proposition}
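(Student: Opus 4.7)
The proposition has two parts, an explicit description of $H_1$ and a generation statement; my plan is to prove the characterization by two inclusions and then derive the generators from the decomposition obtained.

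First I will unpack what it means for $f = \smat{f_1}{f_2}{f_3}{\dots}$ to lie in $H_1$. Since $H_1 \subseteq \Stab([x_1])$, Corollary \ref{cor:linktype1} applied to $f$ (if $f \in \STSL$) or to $\tau \circ f$ otherwise—noting that $\tau \in H_1$ since $(x_3, x_2)$ is in the $A_2$-orbit of $(x_2, x_3)$—forces $f_1 = ax_1$ for some $a \in \C^*$, and gives $f_2, f_3 \in \C[x_1, x_2, x_3]$. The defining condition $\pi([f]) = A_2(x_2, x_3)$ in the tree $\T_{\C(x_1)}$ says exactly that $(f_2, f_3)$ lies in the $A_2$-orbit of $(x_2, x_3)$ over the field $\C(x_1)$, so $\colvec{f_2}{f_3} = A \colvec{x_2}{x_3} + \gamma$ for some $A \in \GL_2(\C(x_1))$ and $\gamma \in \C(x_1)^2$. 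Comparing coefficients of $x_2, x_3, 1$ in $f_2, f_3$ refines this to $A \in M_2(\C[x_1])$ and $\gamma \in \C[x_1]^2$.

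Next I will use the $\SL_2$ relation $f_1 f_4 - f_2 f_3 = x_1 x_4 - x_2 x_3$ to control the reduction modulo $x_1$. Evaluating at $x_1 = 0$ (and writing $\bar{\,\cdot\,}$ for this evaluation) yields $\bar f_2 \bar f_3 = x_2 x_3$ in $\C[x_2, x_3]$. Since $\bar f_2, \bar f_3$ are linear forms and $x_2 x_3$ factors uniquely up to scalars, one must have $\{\bar f_2, \bar f_3\} = \{b x_2, b^{-1} x_3\}$ for some $b \in \C^*$, forcing $\bar A \in M_1$ and $\bar\gamma = 0$, i.e.\ $\gamma = (x_1 P_1(x_1), x_1 P_2(x_1))$. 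A direct Jacobian computation (expanding $\det J_f$ first along column $4$, then along row $1$) gives $\det J_f = \det A$; since the unique lift of $f$ to $\TQ \subseteq \Aut(\C^4)$ is a polynomial automorphism, $\det J_f \in \C^*$, so $\det A \in \C^*$ and $A \in \GL_2(\C[x_1])$.

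The decisive step, which I expect to be the main obstacle, is to show $A \in M$. My plan here is to establish the identity
\[M = \{A \in \GL_2(\C[x_1]) : \bar A \in M_1\}.\]
One inclusion is immediate from the shapes of elements in $M_1, M_2$. For the converse, after multiplying by an element of $M_1$ we may assume $\bar A = I$, so $A \in \SL_2(\C[x_1])$ satisfies $A \equiv I \pmod{x_1}$. Since $\C[x_1]$ is Euclidean, a classical elementary-reduction argument (Gaussian elimination over $\C[x_1]$, using Bezout for the off-diagonal entry divisible by $x_1$) factors such an $A$ into matrices of the form $\smat{1}{x_1 R}{0}{1}$ and $\smat{1}{0}{x_1 S}{1}$; the first type lies in $M_2$ and the second is the conjugate of an element of $M_2$ by the swap $\smat{0}{1}{1}{0}\in M_1$, so $A \in M$.

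Finally, for the reverse inclusion and the generation statement, given $(a, A, P_1, P_2)$ of the described form, Corollary \ref{corollary of Nagao} factors $A \in M$ as an alternating product of elements in $M_1$ and $M_2$. Each $M_1$-factor lifts to a linear element of $\OO_4 \cap \Stab([x_1])$ (a diagonal rescaling or the transpose $\tau$), and each $M_2$-factor lifts to the composition of a diagonal rescaling with an $\Er$-elementary fixing $x_1$. Composing further with a global scaling by $a$ and with the translations $\smat{x_1}{x_2 + x_1 P_1(x_1)}{x_3}{\dots} \in \Er$ together with their $\tau$-conjugates realizes $f$ as a product of generators of the listed form; this simultaneously establishes the reverse inclusion, shows that the resulting $f$ is tame (hence in $\TSL$), and proves the ``in particular'' generation claim.
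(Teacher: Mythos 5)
There is a genuine gap, and it sits exactly where you predicted the main obstacle would be: the identity $M = \{A \in \GL_2(\C[x_1]) : \bar A \in M_1\}$ is false. The paper itself warns of this in the Remark following Corollary \ref{corollary of Nagao}, which states that $M$ is \emph{strictly} contained in $\ev^{-1}(M_1)$. Concretely, take $A = \smat{1+x_1}{x_1}{-x_1}{1-x_1} \in \SL_2(\C[x_1])$, which is $\equiv I \bmod x_1$. Running the Euclidean algorithm gives the reduced expression $A = \smat{-1}{1}{1}{0}\cdot\smat{1}{-x_1}{0}{1}\cdot\smat{0}{1}{1}{1}$ in Nagao's amalgam $\GL_2(\C[x_1]) = \GL_2(\C) *_{B(\C)} B(\C[x_1])$; since $M = M_1 *_{M_1\cap M_2} M_2$ sits compatibly inside this amalgam, the normal form theorem would force the first syllable $\smat{-1}{1}{1}{0}$ to lie in $M_1\cdot B(\C)$, whose elements are either upper triangular or have vanishing $(1,1)$ entry --- a contradiction. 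So $A \in \ev^{-1}(M_1)\smallsetminus M$. The underlying reason your Gaussian-elimination step fails is that the congruence subgroup $\{A \in \SL_2(\C[x_1]): A\equiv I \bmod x_1\}$ is \emph{not} generated by the elementary matrices it contains: the Euclidean reductions unavoidably use elementary operations with constant off-diagonal entries (this is the same phenomenon as $\Gamma(N)\subseteq \SL_2(\Z)$ not being generated by $\smat{1}{N}{0}{1}$ and $\smat{1}{0}{N}{1}$ for $N\geq 3$).

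Consequently your forward inclusion only yields $A \in \ev^{-1}(M_1)$, which is strictly weaker than the condition $A \in M$ required by the proposition, and no congruence condition mod $x_1$ can ever detect membership in $M$. This is not a cosmetic issue: the constraint $A \in M$ is precisely where tameness enters. Indeed, from the matrix $A$ above one builds $f = \smat{x_1}{(1+x_1)x_2 + x_1x_3}{-x_1x_2+(1-x_1)x_3}{\dots} \in \Autq$, which satisfies all the conditions you actually establish ($f_1=x_1$, affine in $(x_2,x_3)$ over $\C[x_1]$, reduction mod $x_1$ in $M_1$, translation part divisible by $x_1$) yet is not in $H_1$. The paper's proof instead extracts $A \in M$ directly from Corollary \ref{cor:linktype1}: the pair $(f_2,f_3)$ lies in the specific subgroup of $\Aut_{\C[x_1]}\C[x_1][x_2,x_3]$ generated by $(x_3,x_2)$ and the maps $(ax_2+x_1P(x_1,x_3),a^{-1}x_3)$, and it is the affine elements of \emph{that} subgroup whose linear parts give $M$ --- this is the ingredient your argument would need to replace the false identity. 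Your remaining steps (the form $f_1 = ax_1$, the polynomiality and invertibility of $A$, the translation part $(x_1P_1,x_1P_2)$ via origin-fixing lifts, and the converse inclusion together with the generation statement via Corollary \ref{corollary of Nagao}) are fine and close to the paper's.
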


\begin{proof}
By definition, $H_1$ is the set of elements $f= \smat{f_1}{f_2}{f_3}{f_4}$ of $\Stab([x_1])$ such that $(f_2,f_3)$ induces an affine automorphism of $\A^2_{\C(x_1)}$.
By Corollary~\ref{cor:linktype1}, $(f_2,f_3)$ defines an automorphism of $\A^2_{\C[x_1]}$. 
The linear part of this automorphism corresponds to the matrix $A$. 
The form of the translation part comes from the fact that any element of $\TSL$ is the restriction of an automorphism of $\C^4$ fixing the origin.

Conversely, we must check that any element $f= \smat{f_1}{f_2}{f_3}{f_4}$ of the given form defines an element of $\TSL$. This follows from the definition of $M$.
\end{proof}

The following lemma gives a condition under which the amalgamated structure of a group $G= G_1 *_AG_2$  is extendable to a semi-direct product $G \rtimes_{\varphi} H$.

\begin{lemma}\label{extension of an amalgamated product to a semidirect product}
Let $G=G_1 *_AG_2$ be an amalgamated product, where $G_1,G_2$ and $A$ are subgroups of $G$ such that $A= G_1 \cap G_2$.
Assume that $\phi \colon H \to \Aut \, G$ is an action of a group $H$ on $G$, which globally preserves the subgroups $G_1,G_2$ and $A$, then we have:
$$G \rtimes H= (G_1 \rtimes H) *_{A \rtimes H}(G_2 \rtimes H).$$
\end{lemma}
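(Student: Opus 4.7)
The plan is to invoke the universal properties of both the amalgamated product and the semi-direct product. First, I would define the canonical homomorphism
$$\Phi \colon (G_1 \rtimes H) *_{A \rtimes H} (G_2 \rtimes H) \to G \rtimes H$$
induced by the inclusions $G_i \rtimes H \hookrightarrow G \rtimes H$, which agree on $A \rtimes H$ since $A = G_1 \cap G_2$. The existence of $\Phi$ is then immediate from the universal property of the amalgamated product on the left-hand side.

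For the inverse, I would construct a homomorphism $\Psi \colon G \rtimes H \to P$, where $P$ denotes the amalgamated product above, by specifying it on $G$ and $H$ separately. The universal property of $G = G_1 *_A G_2$ applied to the composed embeddings $G_i \hookrightarrow G_i \rtimes H \hookrightarrow P$, which coincide on $A$ because both factor through $A \rtimes H$, produces $\psi_G \colon G \to P$. For $\psi_H$ I would take the obvious inclusion $H \hookrightarrow G_1 \rtimes H \hookrightarrow P$ (independent of the choice $i=1$ or $i=2$, since $H$ already sits inside $A \rtimes H$). To glue $\psi_G$ and $\psi_H$ into a morphism from the semi-direct product $G \rtimes H$, I have to verify the compatibility relation $\psi_H(h)\, \psi_G(g)\, \psi_H(h)^{-1} = \psi_G(\phi(h)(g))$. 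Since $G$ is generated by $G_1 \cup G_2$, it suffices to check this for $g$ in each $G_i$, and there the equality holds inside the subgroup $G_i \rtimes H \subseteq P$, precisely because the hypothesis says that $\phi(h)$ preserves $G_i$.

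Finally, $\Phi$ and $\Psi$ are mutually inverse since both $\Phi \circ \Psi$ and $\Psi \circ \Phi$ restrict to the identity on the natural generating subgroups $G_1$, $G_2$ and $H$. I do not expect any real obstacle here: the statement is essentially a formal manipulation of universal properties, and the assumption that $H$ globally preserves $G_1$, $G_2$ and $A$ is exactly what makes each of the semi-direct products $G_i \rtimes H$ and $A \rtimes H$ well defined as subgroups of $G \rtimes H$, and the compatibility verification routine.
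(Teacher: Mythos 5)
Your argument is correct, but it is not the route the paper takes. The paper works entirely inside $K = G \rtimes H$ with the subgroups $K_i = G_iH$ and $B = AH$: it checks generation, then takes an alternating word $u_1\cdots u_r$ with $u_i = g_ih_i$, conjugates so as to push all the $H$-factors to the right, and observes that the resulting word $g'_1\cdots g'_r$ is reduced in $G_1 *_A G_2$ because conjugation by elements of $H$ preserves $G_1\smallsetminus A$ and $G_2 \smallsetminus A$; the normal form theorem then gives $w \neq 1$. You instead build the canonical map $\Phi$ from the abstract amalgam $P=(G_1\rtimes H)*_{A\rtimes H}(G_2\rtimes H)$ to $G\rtimes H$ and an inverse $\Psi$ assembled from the universal properties of $G = G_1*_AG_2$ and of the semidirect product; your compatibility check $\psi_H(h)\psi_G(g)\psi_H(h)^{-1}=\psi_G(\phi(h)(g))$ on $g\in G_1\cup G_2$ does suffice, since for fixed $h$ both sides are homomorphisms in $g$ and $G_1\cup G_2$ generates $G$, and the hypothesis that $\phi(h)$ preserves $G_i$ is exactly what makes the identity hold inside $G_i\rtimes H$. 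Your approach is purely formal, needs no normal-form theorem, and generalizes painlessly (the paper itself uses this style of universal-property shortcut in the remark following Proposition \ref{pro:second amalgamated structure of Stabx1} and in Lemma \ref{lem:morphism to amalgamated product}); the paper's reduced-word computation is more hands-on and makes visible where the hypothesis that $H$ normalizes $G_1$, $G_2$, $A$ is used. One cosmetic point: since the lemma asserts that $G\rtimes H$ is the amalgam of its subgroups $G_1\rtimes H$ and $G_2\rtimes H$ along their intersection, you should record the (immediate) identification $(G_1\rtimes H)\cap(G_2\rtimes H)=A\rtimes H$, which follows from $G_1\cap G_2=A$ and uniqueness of the decomposition $k=gh$ in $G\rtimes H$.
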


\begin{proof}
We may assume that all the groups involved in the statement are subgroups of the group $K:= G \rtimes H$ and that $H$ acts on $G$ by conjugation, i.e. $\forall \, h \in H, \, \forall \, g \in G, \, \varphi (h) (g) = hgh^{-1}$. 
Set $K_1=G_1H$, $K_2= G_2H$ and $B=AH$ (since $G_1,G_2$ and $A$ are normalized by $H$, the sets $K_1,K_2$ and $B$ are subgroups of $K$).

We want to prove that $K= K_1 *_BK_2$.

For this, we must first check that $K$ is generated by $K_1$ and $K_2$. This is obvious.

Secondly, we must check that if $w=u_1u_2 \ldots u_r$ is a word  such that $u_1,\ldots,u_r$ belong alternatively to $K_1 \setminus K_2$ and $K_2 \setminus K_1$, then $w \neq 1$.

Assume by contradiction that $w=1$. Write $u_i = g_ih_i$, where $g_i$ belongs to $G_1 \cup G_2$ and $h_i$ belongs to $H$. 
Set $g'_1=g_1$, $g'_i=(h_1 \ldots h_{i-1}) g_i (h_1 \ldots h_{i-1})^{-1}$ for $2 \leq i \leq r$ and $h = h_1\ldots h_r$, then we have
\[w=(g'_1\ldots g'_r) \,  h.\] 
Since $g'_1\ldots g'_r = h^{-1} \in G \cap H =\{ 1\}$, we get $g'_1\ldots g'_r=1$. 
We have obtained a contradiction. 
Indeed $w':= g'_1\ldots g'_r$ is a reduced expression of $G_1 *_AG_2$ (meaning that the $g'_i$ alternatively belong to $G_1 \setminus G_2$ and $G_2 \setminus G_1$), so that we cannot have $w'=1$.
\end{proof}

\begin{definition}\label{def:K_1 and K_2}
We introduce  the following two subgroups of $H_1$:
\begin{itemize}
\item The group $K_1$ of automorphisms of the form
$$\mat{ax_1}{bx_2 + x_1 P(x_1)}{b^{-1}x_3 + x_1 Q(x_1)}{\dots} \; {\rm or} \; \mat{ax_1}{bx_3 + x_1 P(x_1)}{b^{-1}x_2 + x_1 Q(x_1)}{\dots}$$
where $a,b \in \C^*, \;P,Q \in \C[x_1];$
\item The group $K_2$  of automorphisms of the form
$$\mat{ax_1}{bx_2 + x_1 P(x_1)x_3 + x_1Q(x_1)}{b^{-1}x_3 + x_1 R(x_1)}{\dots}$$
where $a,b \in \C^*, \;P,Q,R \in \C[x_1].$
\end{itemize}
\end{definition}

The intersection $K_1 \cap K_2$ is a subgroup of index $2$ in $K_1$, and precisely 
$$K_1= (K_1 \cap K_2) \cup \tau (K_1 \cap K_2).$$

\begin{lemma}\label{lem:amalgamH_1}
The group $H_1$ is the amalgamated product of $K_1$ and $K_2$ along their intersection:
$$H_1= K_1 *_{K_1 \cap K_2}K_2.$$
\end{lemma}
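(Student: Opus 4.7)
The idea is to realize the amalgam $M = M_1 *_{M_1 \cap M_2} M_2$ (Corollary~\ref{corollary of Nagao}) as a quotient of $H_1$ by a subgroup contained in the edge group $K_1 \cap K_2$, and then transfer the amalgam structure back up by an elementary Bass-Serre argument, using Lemma~\ref{extension of an amalgamated product to a semidirect product} as an intermediate step to bring in the scalars on $x_1$.

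First, starting from the description in Proposition~\ref{pro:description of H_1}, I would decompose $H_1 = L \rtimes (\C^* \ltimes M)$, where $L = \bigl\{\smat{x_1}{x_2 + x_1 P_1(x_1)}{x_3 + x_1 P_2(x_1)}{\dots}\bigr\}$ is the normal abelian subgroup of ``pure translations in the $x_1$-direction'', $\C^* = \bigl\{\smat{ax_1}{x_2}{x_3}{a^{-1}x_4}\bigr\}$ is the scalar subgroup, and $\C^*$ acts on $M \subseteq \GL_2(\C[x_1])$ by $\sigma_a \colon A(x_1) \mapsto A(ax_1)$. A short computation of the composition law shows that for $f = (a, A, P_1, P_2)$ and $g = (a', A', P'_1, P'_2)$ the product $f \circ g$ has matrix part $\sigma_{a'}(A)\, A'$, matching the semi-direct product law on $\C^* \ltimes M$. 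Writing $p \colon H_1 \twoheadrightarrow \C^* \ltimes M$ for the induced surjection (whose kernel is $L$), Definition~\ref{def:K_1 and K_2} immediately gives $K_i = p^{-1}(\C^* \ltimes M_i)$ and $K_1 \cap K_2 = p^{-1}(\C^* \ltimes (M_1 \cap M_2))$, so in particular $L \subseteq K_1 \cap K_2$.

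Next, since the action of $\C^*$ on $M$ preserves $M_1$ (trivially, as $M_1 \subseteq \GL_2(\C)$) and $M_2$ (because $\sigma_a(x_1 P(x_1)) = x_1 \cdot a P(ax_1)$), Lemma~\ref{extension of an amalgamated product to a semidirect product} applied to the amalgam of Corollary~\ref{corollary of Nagao} yields
\[
\C^* \ltimes M = (\C^* \ltimes M_1) *_{\C^* \ltimes (M_1 \cap M_2)} (\C^* \ltimes M_2).
\]
The associated Bass-Serre tree $T$ has fundamental domain a single edge whose vertices are stabilized by $\C^* \ltimes M_1$ and $\C^* \ltimes M_2$, and whose edge stabilizer is $\C^* \ltimes (M_1 \cap M_2)$. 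Pulling back the action through $p$ produces an action of $H_1$ on the same tree $T$; it is still without inversion (the bipartite structure on $T$ is already visible in $\C^* \ltimes M$), the fundamental domain is the same edge, and the vertex/edge stabilizers in $H_1$ are exactly $K_1$, $K_2$ and $K_1 \cap K_2$. Serre's theorem (\cite{S2}, I.4.1, Theorem~6) then gives $H_1 = K_1 *_{K_1 \cap K_2} K_2$.

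The main obstacle is the first step: one must verify by hand that the naive assignment $f \mapsto (a, A)$ really is a group homomorphism with the expected semi-direct product target, and in particular that the ``twist'' $\sigma_{a'}$ arising from composition in $H_1$ matches the $\C^*$-action on $M$ used in the lemma, and that the remaining part of $f \circ g$ genuinely lies in $L$. Once these compatibilities are established, the identification of the $K_i$ as preimages under $p$ is immediate from the definitions and the Bass-Serre step is standard.
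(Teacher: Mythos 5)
Your proof is correct and uses exactly the same two ingredients as the paper's argument, namely Corollary~\ref{corollary of Nagao} and Lemma~\ref{extension of an amalgamated product to a semidirect product}, with the reduction modulo the translation subgroup; the only difference is the order of the steps. The paper first splits off the scalar group $\bigl\{\smat{ax_1}{x_2}{x_3}{a^{-1}x_4}\bigr\}$ via Lemma~\ref{extension of an amalgamated product to a semidirect product} and then quotients the remaining group by the translations (leaving the lift of the amalgam along that quotient implicit), whereas you quotient by the translations $L$ first, apply the same lemma inside $\C^*\ltimes M$, and make the lift explicit by pulling back the Bass--Serre tree action through $p$ — a harmless repackaging of the same proof.
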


\begin{proof}
Since $H_1$ is the semi-direct product of 
$$G:= \left\lbrace h= \mat{h_1}{h_2}{h_3}{h_4} \in H_1, \; h_1=x_1 \right\rbrace \;\text{ and }\; H:= \left\lbrace \mat{ax_1}{x_2}{x_3}{a^{-1}x_4}, \, a \in \C^*\right\rbrace,$$
it is enough, by Lemma~\ref{extension of an amalgamated product to a semidirect product}, to show that $G$ is the amalgamated product of $G_1:= K_1 \cap G$ and $G_2:= K_2 \cap G$ along their intersection.

Now consider the normal subgroup of  $G$, whose elements are the ``translations'':
$$T:= \left\{ \mat{x_1}{x_2+x_1P(x_1)}{x_3+x_1Q(x_1)}{\dots}, \, P,Q \in \C[x_1] \right\}.$$
Note that $G_1$ and $G_2$ both contain $T$. It is enough to show that $G/T$ is the amalgamated product of $G_1/T$ and $G_2/T$ along their intersection.

This follows from Corollary~\ref{corollary of Nagao}.
Indeed, the natural isomorphism from $G/T$  to $M$ sends $G_i/T$ to $M_i$.
\end{proof}

\subsubsection{Structure of \texorpdfstring{$H_2$}{H2}} \label{sec:H2}

\begin{proposition}\label{pro:description of H_2}
The group $H_2$ is the set of automorphisms of the form
$$\mat{ax_1}{bx_2 + x_1 P(x_1,x_3)}{b^{-1}x_3 + x_1 Q(x_1)}{\dots}, a,b \in \C^*, \;P \in \C [x_1,x_3], \; Q \in \C[x_1].$$
\end{proposition}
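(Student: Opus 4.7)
The plan is to prove both inclusions.

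For $\supseteq$, I would exhibit any $g$ of the displayed form as a composition
\[
g \;=\; \mat{ax_1}{bx_2}{b^{-1}x_3}{a^{-1}x_4}\, \circ\, e\, \circ\, \mat{x_1}{x_2}{x_3+bx_1Q(x_1)}{x_4+bx_2Q(x_1)},
\]
where the first factor lies in $\OO_4$, the third in $\Eb$, and $e \in \Er$ is given by $\tilde P(x_1,y) := b^{-1}P(x_1,\,y-bx_1Q(x_1))$. A direct expansion recovers the prescribed $g_1, g_2, g_3$ (and $g_4$ is forced by the quadric relation), proving $g \in \TSL$. Stabilization of $[x_1]$ is automatic from $g_1=ax_1$, and since $g^{-1} \cdot [x_1,x_3] = [g_1,g_3] = [x_1,\, b^{-1}x_3+x_1Q(x_1)]$ projects under $\pi$ to $A_1 x_3$, we obtain $g^{-1}\in H_2$, hence $g\in H_2$.

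Conversely, let $g = \smat{g_1}{g_2}{g_3}{g_4} \in H_2$. From $g \in \Stab([x_1])$ we have $g_1 = ax_1$ for some $a\in\C^*$, and the condition $g\in H_2$ is equivalent to $g_3 \in A_1 x_3$ inside $\C(x_1)[x_2,x_3]$, via $g^{-1}\cdot [x_1,x_3] = [x_1, g_3]$. Corollary~\ref{cor:linktype1} (after normalizing $g_1$ to $x_1$ by a linear element) shows that $(g_2,g_3)$ represents a $\C[x_1]$-automorphism of $\C[x_1][x_2,x_3]$, so the coset condition forces $g_3 = c(x_1)x_3 + d(x_1)$ with $c \in \C[x_1]\smallsetminus\{0\}$ and $d\in\C[x_1]$.

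The main step is to upgrade $c$ to a nonzero constant. The Jacobian of the $\C[x_1]$-automorphism $(g_2,g_3)$ is a unit of $\C[x_1][x_2,x_3]$, hence lies in $\C^*$; computing it gives $c(x_1)\,\partial g_2/\partial x_2 \in \C^*$, and a product of polynomials can be a nonzero scalar only if both factors are themselves constants. I conclude $c\in\C^*$ and write $c = b^{-1}$, so $\partial g_2/\partial x_2 = b$ and $g_2 = bx_2 + R(x_1,x_3)$ for some polynomial $R$. The origin-fixing of $\TQ$-elements (as recalled in the proof of Proposition~\ref{pro:description of H_1}) gives $d(0)=0$, hence $d(x_1) = x_1 Q(x_1)$. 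Specializing the quadric relation $g_1g_4-g_2g_3 = x_1x_4-x_2x_3$ at $x_1=0$ then reduces to $x_3\, R(0,x_3) = 0$, forcing $R(0,x_3) = 0$ and $R = x_1P(x_1,x_3)$.

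The principal obstacle is the promotion from $c(x_1) \in \C[x_1]\smallsetminus\{0\}$ to $c \in \C^*$; once this is secured by the Jacobian argument, everything else follows from the quadric relation and the origin-fixing property.
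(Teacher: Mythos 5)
Your argument is correct and follows essentially the same route as the paper's proof: membership in $H_2$ is translated into affineness of $g_3$ over $\C(x_1)$ (the paper says triangularity of $(g_2,g_3)$, which is equivalent), polynomiality comes from Corollary~\ref{cor:linktype1}, the Jacobian of the $\C[x_1]$-automorphism $(g_2,g_3)$ being in $\C^*$ forces the $x_2$- and $x_3$-coefficients to be constants, and the specialization of $g_1g_4-g_2g_3=q$ at $x_1=0$ pins down the remaining terms; your explicit factorization through $\OO_4$, $\Er$ and $\Eb$ for the reverse inclusion is detail the paper leaves implicit. One nitpick: the Jacobian argument only gives $\partial g_2/\partial x_2\in\C^*$, not that it equals $b=c^{-1}$; that normalization (and also $d(0)=0$, without invoking origin-fixing) drops out of the same $x_1=0$ specialization by comparing the coefficient of $x_2x_3$, so the slip is harmless but should be absorbed into that step.
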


\begin{proof}
The proof is analogous to the one of Proposition~\ref{pro:description of H_1}. 
The element $f= \smat{f_1}{f_2}{f_3}{f_4}$ of $\Stab([x_1])$ belongs to $H_2$ if and only if $(f_2,f_3)$ induces a triangular automorphism of $\A^2_{\C(x_1)}$. 
This implies the existence of $a \in \C^*$, $\alpha,\gamma,\delta \in \C[x_1]$ and $\beta \in \C[x_1,x_3]$ such that
$$f_1=ax_1, \; f_2= \alpha x_2 + \beta, \; f_3= \gamma x_3 + \delta.$$
Since $(f_2,f_3)$ defines an automorphism of $\A^2_{\C[x_1]}$, its Jacobian determinant $\alpha \gamma$ is a nonzero complex number. 
This shows that $\alpha$ and $\gamma$ are nonzero complex numbers. Replacing $x_1$ by $0$ in the equation $f_1f_4-f_2f_3 = x_1x_4-x_2x_3$, we get:
$$(\alpha x_2 + \beta (0,x_3))(\gamma x_3 + \delta (0))=x_2x_3.$$
Therefore, there exists $b \in \C^*$ such that  $\alpha =b$, $\gamma =b^{-1}$ and we  have $\beta(0,x_3) = \delta (0) = 0$. The result follows.
\end{proof}

As a direct corollary from Propositions \ref{pro:description of H_1} and \ref{pro:description of H_2} we get:

\begin{corollary} \label{cor:K2}
The group $K_2$ is equal to the intersection $H_1 \cap H_2$.
In particular $K_1 \cap K_2 = K_1 \cap H_2$.
\end{corollary}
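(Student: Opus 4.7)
The plan is to establish $K_2 = H_1 \cap H_2$ by a straightforward double inclusion, using directly the explicit descriptions of $H_1$, $H_2$, and $K_2$ provided by Propositions~\ref{pro:description of H_1} and \ref{pro:description of H_2} and Definition~\ref{def:K_1 and K_2}. The auxiliary statement about $K_1 \cap K_2 = K_1 \cap H_2$ will then follow immediately, using only that $K_1 \subseteq H_1$ (which is clear from the definitions, since the linear parts appearing in $K_1$ land in $M_1 \subseteq M$).

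For the inclusion $K_2 \subseteq H_1 \cap H_2$, a typical element of $K_2$ has the form
\[ f = \mat{ax_1}{bx_2 + x_1 P(x_1) x_3 + x_1 Q(x_1)}{b^{-1}x_3 + x_1 R(x_1)}{\dots}, \]
which fits the $H_2$ description with $\tilde P(x_1,x_3) := P(x_1) x_3 + Q(x_1) \in \C[x_1,x_3]$. To see that $f \in H_1$, I extract the linear part of $(f_2,f_3)$ in $(x_2,x_3)$, which is represented by the matrix $\smat{b}{x_1P(x_1)}{0}{b^{-1}}$. This matrix lies in $M_2$, hence in $M$, and the remaining constant-in-$(x_2,x_3)$ terms $x_1 Q(x_1), x_1 R(x_1)$ match the translation part prescribed by Proposition~\ref{pro:description of H_1}.

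For the reverse inclusion $H_1 \cap H_2 \subseteq K_2$, I start with $f \in H_1 \cap H_2$ written in the $H_2$ form, so $f_2 = bx_2 + x_1 P(x_1,x_3)$ and $f_3 = b^{-1}x_3 + x_1 Q(x_1)$. The $H_1$ condition forces $(f_2,f_3)$ to be affine in $(x_2,x_3)$ with coefficients in $\C[x_1]$ and matrix linear part in $M$. Matching terms in $f_2$, this forces $P(x_1,x_3)$ to be of degree at most one in $x_3$, say $P(x_1,x_3) = \alpha(x_1) x_3 + \beta(x_1)$; the resulting matrix $\smat{b}{x_1\alpha(x_1)}{0}{b^{-1}}$ is then automatically in $M_2 \subseteq M$, so no further constraint arises. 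This places $f$ in the form required by the definition of $K_2$.

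Finally, for the second statement, the inclusion $K_1 \cap K_2 \subseteq K_1 \cap H_2$ follows from $K_2 \subseteq H_2$, and the reverse follows from $K_1 \subseteq H_1$, which gives $K_1 \cap H_2 \subseteq H_1 \cap H_2 = K_2$. I do not anticipate any serious obstacle; the only point requiring a little care is the exclusion of higher-order-in-$x_3$ terms from the $H_2$ description when imposing the $H_1$ condition, which is a direct comparison of polynomial expressions rather than an appeal to the deeper structure of $M$ from Corollary~\ref{corollary of Nagao}.
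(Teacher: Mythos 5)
Your proof is correct and is essentially the paper's own argument: the paper derives the corollary directly by comparing the explicit descriptions in Propositions~\ref{pro:description of H_1} and \ref{pro:description of H_2} with Definition~\ref{def:K_1 and K_2}, which is exactly your double-inclusion check (the only nontrivial point being that the $H_1$ condition forces $P(x_1,x_3)$ in the $H_2$ form to be affine in $x_3$, as you note). The deduction of $K_1 \cap K_2 = K_1 \cap H_2$ from $K_1 \subseteq H_1$ and $K_2 \subseteq H_2$ is likewise the intended one.
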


\subsubsection{A simplified product}

Finally we get the following alternative amalgamated structure for $\Stab([x_1])$:

\begin{proposition} \label{pro:second amalgamated structure of Stabx1}
The group $\Stab([x_1])$ is the amalgamated product of $K_1$ and $H_2$ along their intersection:
\[\Stab([x_1]) = K_1 * _{K_1 \cap H_2}H_2.\]
\end{proposition}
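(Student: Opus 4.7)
The plan is to combine the two amalgamated product structures already established and invoke a general ``collapsing'' principle for nested amalgams. Specifically, I would proceed as follows.

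First, I would restate the two ingredients: by Proposition \ref{pro:first amalgamated structure of Stabx1} and Corollary \ref{cor:K2}, we have
\[\Stab([x_1]) = H_1 *_{K_2} H_2,\]
and by Lemma \ref{lem:amalgamH_1}, we have
\[H_1 = K_1 *_{K_1 \cap K_2} K_2.\]
Since $K_2 \subseteq H_2$ (as $K_2 = H_1 \cap H_2$), the inclusion of $K_2$ into $H_2$ factors the amalgam in a way that allows us to ``eliminate'' $K_2$ as an intermediate factor. The target equality $K_1 \cap H_2 = K_1 \cap K_2$ is already recorded in Corollary \ref{cor:K2}, so the amalgam written in the statement makes sense.

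Next, I would prove the standard categorical fact: if $G = A *_C B$ and $A = A_1 *_D C$ with $D = A_1 \cap C$, then $G = A_1 *_D B$. The cleanest way is via the universal property. Given a group $G'$ and homomorphisms $\phi_1 \colon K_1 \to G'$, $\phi_2 \colon H_2 \to G'$ that agree on $K_1 \cap H_2 = K_1 \cap K_2$, I first restrict $\phi_2$ to $K_2$; this gives a homomorphism $K_2 \to G'$ agreeing with $\phi_1$ on $K_1 \cap K_2$. By the universal property of $H_1 = K_1 *_{K_1 \cap K_2} K_2$, the pair $(\phi_1, \phi_2|_{K_2})$ extends uniquely to $\psi \colon H_1 \to G'$. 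Now $\psi$ and $\phi_2$ agree on $K_2 = H_1 \cap H_2$, so by the universal property of $\Stab([x_1]) = H_1 *_{K_2} H_2$ they extend uniquely to $\Phi \colon \Stab([x_1]) \to G'$, which clearly restricts to $\phi_1$ on $K_1$ and $\phi_2$ on $H_2$. Uniqueness follows by reading the same argument backwards, since $\Stab([x_1])$ is generated by $K_1 \cup K_2 \cup H_2 = K_1 \cup H_2$.

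The argument has no real obstacle: the main work has already been done in establishing the two constituent amalgams and the equality $H_1 \cap H_2 = K_2$, and the remaining step is a formal consequence. If one preferred to avoid the universal property, an alternative would be to verify the normal form directly, showing that any alternating word $w = u_1 v_1 u_2 v_2 \cdots$ with $u_i \in K_1 \smallsetminus H_2$ and $v_i \in H_2 \smallsetminus K_1$ is nontrivial; one would do this by grouping consecutive $H_2$-pieces of type $K_2$ together with neighboring $K_1$-pieces to reconstruct an $H_1$-syllable and then apply the normal form of $\Stab([x_1]) = H_1 *_{K_2} H_2$. Either formulation yields the conclusion.
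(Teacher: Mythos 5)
Your argument is correct, but your primary route is not the one the paper uses for its official proof. The paper proves Proposition \ref{pro:second amalgamated structure of Stabx1} by the normal-form argument you only sketch as an alternative at the end: it takes an alternating word $a_1b_1\cdots a_rb_r$ with $a_i \in K_1 \smallsetminus H_2$, $b_i \in H_2 \smallsetminus K_1$, groups the maximal runs whose $H_2$-letters lie in $K_2 = H_1 \cap H_2$ together with the adjacent $K_1$-letters, uses the normal form of $H_1 = K_1 *_{K_1\cap K_2} K_2$ (Lemma \ref{lem:amalgamH_1}) to see that each regrouped syllable lies in $H_1 \smallsetminus H_2$, and then applies the normal form of $\Stab([x_1]) = H_1 *_{H_1\cap H_2} H_2$ (Proposition \ref{pro:first amalgamated structure of Stabx1}); this is exactly the subtle point your sketch leaves implicit, namely why the reconstructed $H_1$-syllables are not in $H_2$. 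Your main argument, collapsing the nested amalgams via the universal property, is precisely what the paper records as a remark immediately after its proof, in the form $(A*_CB_1)*_{B_1}B_2 \simeq A*_CB_2$; it is complete as you present it, since generation of $\Stab([x_1])$ by $K_1 \cup H_2$ and the identification $K_1 \cap H_2 = K_1 \cap K_2$ (Corollary \ref{cor:K2}) are all that is needed, and it is formally shorter. What the paper's word-level proof buys in exchange is an explicit normal form for elements of $\Stab([x_1])$ relative to $K_1$ and $H_2$, with the role of each earlier amalgam visible in the manipulation of words, whereas your pushout argument hides this combinatorics inside the universal property.
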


\begin{proof}
By Proposition \ref{pro:first amalgamated structure of Stabx1}, Lemma \ref{lem:amalgamH_1} and Corollary \ref{cor:K2}, the groups $K_1$ and $H_2$ clearly generate $\Stab([x_1])$. 
To obtain the amalgamated product structure it is enough (using conjugation) to show that any word
\begin{equation}
w = a_1 b_1 \dots a_r b_r,
\label{initial expression of w}
\end{equation}
with $a_i \in K_1 \smallsetminus H_2$ and $b_i \in H_2 \smallsetminus K_1$ is not the identity. 
Set $I:= \{ i \in \llbracket 1 , r \rrbracket; \; b_i \in H_1 \}$. Write $I$ (which may be empty) as the disjoint union of intervals
\[I = \llbracket i_1,j_1 \rrbracket \cup \ldots \cup \llbracket i_s,j_s \rrbracket,\]
where $j_1 +2 \leq i_2$, \ldots, $j_{s-1} +2 \leq i_s$.
Then, for each interval $\llbracket i_k,j_k \rrbracket$, set
\begin{equation}
a'_k:= a_{i_k}b_{i_k} \ldots  a_{j_k}b_{j_k} a_{j_k +1},
\label{substitution}
\end{equation}
where we possibly take $a_{r+1} =1$ in case  $a_{r+1}$ appears in the formula. 
Since the elements $b_{i_k}, \ldots, b_{j_k}$ belong to $H_1 \cap H_2 = K_2$, they also belong to $K_2 \smallsetminus K_1$. Since the elements $a_{i_k}, \ldots, a_{j_k}$ belong to $ K_1 \smallsetminus H_2 = K_1 \smallsetminus K_2$ and $a_{j_k +1} \in K_1$, we get $a'_k \in H_1 \smallsetminus K_2$ by Lemma \ref{lem:amalgamH_1}. Since $H_1 \smallsetminus K_2= H_1 \smallsetminus H_2$, it follows that $a'_k \in H_1 \smallsetminus H_2$. For $1 \leq k \leq s$, make the substitution given by (\ref{substitution}) in (\ref{initial expression of w}). Then, observe that  all letters appearing in this new expression of $w$ successively belong to $H_1 \smallsetminus H_2$ (the letters $a_i$ or $a'_i$) or to 
$H_2 \smallsetminus H_1$ (the letters $b_i$, $i \notin I$). We obtain $w \neq 1$ by Proposition \ref{pro:first amalgamated structure of Stabx1}.
\end{proof}

Alternatively, Proposition~\ref{pro:second amalgamated structure of Stabx1} follows from the following remark. Let $A,B_1,B_2$ and $C$ be four groups and assume that we are given three morphisms of groups: $C \to A$, $C \to B_1$ and $B_1 \to B_2$.
Then, we have a natural isomorphism
$$(A*_CB_1)*_{B_1}B_2 \simeq A*_CB_2.$$
This isomorphism is a direct consequence of the universal property of the amalgamated product (e.g. \cite[I.1.1]{S2}).\\

\subsection{Product of trees} \label{sec:product of trees}

Following \cite{BS} we construct a product of trees in which  the complex $\Comp$ embeds.

Recall that a \textbf{hyperplane} in a $\CAT(0)$ cube complex is an equivalence class of edges, for the equivalence relation generated by declaring two edges equivalent if they are opposite edges of a same 2-dimensional  cube. 
We identify a hyperplane with its geometric realization as a convex subcomplex of the first barycentric subdivision of the ambient complex: consider geodesic segments between the middle points of any two edges in a given equivalence class. 
See \cite[\S 2.4]{Wi} or \cite[\S 3]{BS} (where hyperplanes are named hyperspaces) for alternative equivalent definitions.  

In the case of the complex $\Comp$, hyperplanes are 1-dimensional $\CAT(0)$ cube complexes, in other words they are trees.
The action of $\STSL$ on the hyperplanes of $\Comp$ has two orbits, whose representatives are the two hyperplanes through the center of the standard square.
We call them horizontal or vertical hyperplanes, in accordance with our convention for edges (see Definition \ref{def:horizontal and vertical edges}).
We define the \textbf{vertical tree} $\T_V$ as follows. 
We call \textbf{vertical region} a connected component of $\Comp$ minus all vertical hyperplanes.
The vertices of $\T_V$ correspond to such vertical regions, and we put an edge when two regions admit a common hyperplane in their closures.
The classical fact that the complement of a hyperplane has exactly 2 connected components translates into the fact that the complement of each edge in the graph $\T_V$ is disconnected, so $\T_V$ is indeed a tree.
The \textbf{horizontal tree} $\T_H$ is defined similarly.

The product $\T_V \times \T_H$ has a structure of square complex, and we put a metric on this complex by identifying each square with a Euclidean square with edges of length one.
Morever there is a natural embedding $\Comp \subseteq \T_V \times \T_H$, which is a quasi-isometry on its image (see \cite[Proposition 3.4]{BS}).
We denote by $\pi_V\colon \Comp \to \T_V$ and $\pi_H\colon \Comp \to \T_H$ the two natural projections.
Any element  $f \in \STSL$ induces an isometry on $\T_V$ and on $\T_H$, which we denote respectively by $\pi_V(f)$ and $\pi_H(f)$. 

\begin{lemma} \label{lem:elliptic on projections}
Let $f$ be an element in $\STSL$. 
Then $f$ is elliptic on $\Comp$ if and only if $f$ is elliptic on both factors $\T_V$ and $\T_H$. 
\end{lemma}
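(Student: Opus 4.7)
The plan is to reduce both implications to the boundedness criterion for ellipticity in $\CAT(0)$ spaces recalled in \S\ref{sec:minimizing set}: an isometry of a $\CAT(0)$ space is elliptic if and only if it has a bounded orbit. Since $\Comp$, the trees $\T_V,\T_H$ and the product $\T_V \times \T_H$ are all $\CAT(0)$ (trees being one-dimensional $\CAT(0)$ cube complexes, products of $\CAT(0)$ spaces being $\CAT(0)$), this criterion applies throughout. The embedding $\Comp \hookrightarrow \T_V \times \T_H$ mentioned just before the lemma is an $\STSL$-equivariant quasi-isometry onto its image, and the coordinate projections $\pi_V$ and $\pi_H$ are $1$-Lipschitz for the $\ell^\infty$ metric on the product (and in any case Lipschitz for the $\ell^2$ metric).

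For the forward direction, I would assume $f$ is elliptic on $\Comp$ and pick any vertex $x \in \Comp$. By the criterion, the orbit $\{f^n(x)\}_{n \in \Z}$ is bounded. Applying the Lipschitz projection $\pi_V$ and using its equivariance, the orbit of $\pi_V(x)$ under $\pi_V(f)$ is bounded in $\T_V$, so $\pi_V(f)$ is elliptic. The argument for $\pi_H(f)$ is identical.

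For the reverse direction, assume both $\pi_V(f)$ and $\pi_H(f)$ are elliptic. Fix a vertex $x \in \Comp$. By the criterion applied in each tree, the orbits $\{\pi_V(f^n(x))\}_{n \in \Z}$ and $\{\pi_H(f^n(x))\}_{n \in \Z}$ are bounded in $\T_V$ and $\T_H$, respectively. Therefore the $f$-orbit of $x$, viewed inside $\T_V \times \T_H$, is contained in the product of two bounded sets, hence is bounded. Because $\Comp \hookrightarrow \T_V \times \T_H$ is a quasi-isometric embedding, a subset of $\Comp$ is bounded if and only if its image is bounded; so $\{f^n(x)\}_{n \in \Z}$ is bounded in $\Comp$, and a final application of the criterion yields that $f$ is elliptic on $\Comp$.

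The only non-formal point to check is that the bounded-orbit criterion really transfers across the embedding, i.e.\ that a quasi-isometric embedding reflects boundedness; this is immediate from the definition of a quasi-isometric embedding (a uniform lower bound on distances in the target controls distances in the source). Since $f$ lies in $\STSL$ it preserves the horizontal/vertical dichotomy of hyperplanes by Lemma~\ref{lem:action of O_4 with respect to horizontality}, so the induced isometries $\pi_V(f),\pi_H(f)$ are well defined and the argument is entirely formal — there is no substantial obstacle.
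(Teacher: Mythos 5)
Your proof is correct and follows essentially the same route as the paper: both directions are reduced to the product of trees via the equivariant quasi-isometric embedding $\Comp \subseteq \T_V \times \T_H$. The only cosmetic difference is in the converse, where the paper takes the circumcenter of the (bounded, $f$-invariant) set of points of $\Comp$ nearest to the fixed point $(x_V,x_H)$ of the product, while you invoke the bounded-orbit characterization of ellipticity together with the fact that a quasi-isometric embedding reflects boundedness; the two mechanisms are interchangeable here.
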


\begin{proof}
If $x \in \Comp$ is fixed, then $\pi_V(x)$ and $\pi_H(x)$ are fixed points for the induced isometries on trees.

Conversely, assume that $x_V \in \T_V$ and $x_H \in \T_H$ are fixed points for the action of $f$.
Then $x =(x_V, x_H) \in \T_V \times \T_H$ is a fixed point in the product of trees. 
Consider $d \ge 0$ the distance from $x$ to $\Comp$, and consider $B \subseteq \Comp$ the set of points realizing this distance. 
This is a bounded set (because the embedding $\Comp \subseteq \T_V \times \T_H$ is a quasi-isometry), hence it admits a circumcenter which must be fixed by $f$.     
\end{proof}

\begin{lemma} \label{lem:hyperelliptic on projections}
Let $f$ be an elliptic element in $\STSL$. 
Then $f$ is hyperelliptic on $\Comp$ if and only if $f$ is hyperelliptic on at least one of the factors $\T_V$ or $\T_H$. 
\end{lemma}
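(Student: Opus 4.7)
The plan is to exploit the $\STSL$-equivariant quasi-isometric embedding $\iota\colon\Comp\hookrightarrow\T_V\times\T_H$ from Section~\ref{sec:product of trees} together with the closest-point projection in $\CAT(0)$ geometry. For the forward direction, assume $\mathrm{Fix}_{\Comp}(f)=\Min(f)$ is unbounded in $\Comp$. Since $\iota$ is a quasi-isometric embedding, $\iota(\mathrm{Fix}_{\Comp}(f))$ is an unbounded subset of $\T_V\times\T_H$. By equivariance, this image lies in $\mathrm{Fix}(\pi_V(f))\times\mathrm{Fix}(\pi_H(f))$, and by Lemma~\ref{lem:elliptic on projections} both factors are non-empty. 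Since a product of non-empty sets is unbounded if and only if at least one factor is unbounded, at least one of the two tree fix sets is unbounded, giving hyperellipticity on the corresponding tree.

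For the converse, assume without loss of generality that $\mathrm{Fix}(\pi_V(f))\subseteq\T_V$ is unbounded, and pick $x_0\in\mathrm{Fix}_{\Comp}(f)$ using ellipticity. The fix set of an isometry of a $\CAT(0)$ space is convex, so $\mathrm{Fix}(\pi_V(f))$ is an unbounded subtree of the simplicial tree $\T_V$; in particular it contains vertices $V_n$ with $d_{\T_V}(\pi_V(x_0),V_n)\to\infty$. For each such $V_n$, the corresponding vertical region $R_n\subseteq\Comp$ is a closed convex subcomplex (complements of hyperplanes are convex in a $\CAT(0)$ cube complex), and it is $f$-invariant since $\pi_V(f)(V_n)=V_n$. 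The key step is to lift $V_n$ to a genuine fixed point of $f$ in $\Comp$: setting $y_n:=\mathrm{proj}_{R_n}(x_0)$, uniqueness and equivariance of the $\CAT(0)$ closest-point projection onto a closed convex $f$-invariant subset yield
\[f(y_n)=\mathrm{proj}_{f(R_n)}(f(x_0))=\mathrm{proj}_{R_n}(x_0)=y_n,\]
so $y_n\in\mathrm{Fix}_{\Comp}(f)\cap R_n$.

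To conclude, observe that $\pi_V(y_n)=V_n$, so the images $\iota(y_n)\in\T_V\times\T_H$ have first coordinate tending to infinity. Since the factor projection $\T_V\times\T_H\to\T_V$ is $1$-Lipschitz and $\iota$ is a quasi-isometric embedding, $d_{\Comp}(y_0,y_n)\to\infty$, exhibiting $\mathrm{Fix}_{\Comp}(f)$ as unbounded. The delicate point is the $\CAT(0)$ projection argument in the converse: one must be sure the region $R_n$ is genuinely convex and $f$-invariant, and that the equivariance of closest-point projection forces $y_n$ to be fixed; once these are in hand, the remainder is a routine application of the quasi-isometric embedding property.
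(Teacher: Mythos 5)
Your proof is correct and follows essentially the same route as the paper: the forward direction uses that the factor projections are (coarsely) Lipschitz, and the converse uses that fixed vertices of $\T_V$ correspond to non-empty, convex, $f$-invariant vertical regions of $\Comp$, each containing a fixed point of $f$ whose distance to the base point goes to infinity. The only (harmless) variation is that you produce the fixed point inside each region by projecting $x_0$ onto the closed convex invariant region and invoking equivariance of the $\CAT(0)$ nearest-point projection, whereas the paper gets it directly from ellipticity of $f$ on an invariant convex set.
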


\begin{proof}
Assume $f$ hyperelliptic, and let $(y_i)_{i \ge 0}$ be a sequence of fixed points of $f$, such that $\lim_{i \to \infty} d(y_0, y_i) = \infty$.
Then one of the sequences $d(\pi_V(y_0), \pi_V(y_i))$ or $d(\pi_H(y_0), \pi_H(y_i))$ must also be unbounded.

Conversely, assume that $f$ is hyperelliptic on one of the factors, say on $\T_V$.
Let $(z_i)_{i \ge 0}$  be an unbounded sequence of fixed points in $\T_V$. 
Then for each $i$, $\pi_V^{-1}(z_i) \cap \Comp$ is a non-empty convex subset invariant under $f$. In particular it contains a fixed point $y_i$ of the elliptic isometry $f$.
The sequence $(y_i)_{i \ge 0}$ is unbounded, hence $f$ is hyperelliptic.   
\end{proof}

\begin{definition} \label{def:EV LV EH LH}
The \textbf{vertical elementary group} $E_V$ is the stabilizer of the vertical region containing $[x_1]$.
The \textbf{vertical linear group} $L_V$ is the stabilizer of the vertical region containing $[\id]$.
We can similarly define horizontal groups $E_H$ and $L_H$, by considering the stabilizers of horizontal regions containing the same vertices. 
\end{definition}

\begin{proposition}
The group $\STSL$ is the amalgamated product of $E_V$ and $L_V$ along their intersection $E_V \cap L_V$.
The same result holds for $E_H$ and $L_H$:
\[\STSL = E_V *_{E_V \cap L_V} L_V = E_H *_{E_H \cap L_H} L_H.\]
\end{proposition}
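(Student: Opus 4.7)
My plan is to apply Bass-Serre theory to the natural action of $\STSL$ on the vertical tree $\T_V$; the horizontal case will then be entirely analogous. The strategy is to verify that $\STSL$ acts on $\T_V$ by isometries without inversions, with fundamental domain a single edge whose two endpoints lie in distinct $\STSL$-orbits. The standard structure theorem (\cite[I.4.1, Th.~6]{S2}, already used in the proof of Proposition \ref{pro:first amalgamated structure of Stabx1}) will then yield the amalgamated product decomposition with the intersection of the vertex stabilizers as the edge stabilizer.

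The first step is to check that the action of $\STSL$ on $\Comp$ descends to a simplicial action on $\T_V$. Since $\STSL$ acts by isometries on $\Comp$, it permutes the hyperplanes of $\Comp$. By Definition \ref{def:horizontal and vertical edges}, the class of vertical edges is globally $\STSL$-invariant, and since a hyperplane is determined by any one of its edges, $\STSL$ sends vertical hyperplanes to vertical hyperplanes; it therefore also permutes the connected components of the complement of all vertical hyperplanes, hence acts on $\T_V$.

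The second step is to analyse the orbits. Transitivity on the edges of $\T_V$ follows directly from the transitivity of $\STSL$ on vertical edges of $\Comp$ (Definition \ref{def:horizontal and vertical edges} again). The vertical hyperplane of the standard square has as its two adjacent regions the vertical regions of $[x_1]$ and of $[\id]$; I claim these lie in distinct $\STSL$-orbits. Indeed, the first contains the type-$1$ vertex $[x_1]$ while the second contains the type-$3$ vertex $[\id]$, and $\STSL$ preserves the type of a vertex of $\Comp$. Combined with edge-transitivity, this shows that $\T_V$ has exactly two $\STSL$-orbits of vertices, and moreover that no element of $\STSL$ inverts an edge of $\T_V$ (such an inversion would swap two orbits that are not interchanged by the action).

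It follows that $\STSL$ acts on the tree $\T_V$ with a segment of length one as strict fundamental domain and without inversions, so Bass-Serre theory yields $\STSL = E_V *_{E_V \cap L_V} L_V$; since the two vertex orbits are distinct, the edge stabilizer coincides with the intersection of the two adjacent vertex stabilizers. Exchanging the roles of horizontal and vertical throughout delivers $\STSL = E_H *_{E_H \cap L_H} L_H$. No serious obstacle is expected: the plan reduces entirely to bookkeeping on the horizontal/vertical partition of edges and on preservation of vertex types, both of which have been set up in the preceding sections.
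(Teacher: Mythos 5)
Your route is the paper's: make $\STSL$ act on the tree $\T_V$, show the action is edge-transitive, without inversions, with a single edge as fundamental domain, and apply Serre's theorem. Two of your supporting claims, however, are not justified as stated. The smaller one: $\STSL$ is \emph{not} transitive on vertical edges of $\Comp$ --- by Definition~\ref{def:horizontal and vertical edges} there are two orbits of them, that of the edge between $[x_1]$ and $[x_1,x_3]$ and that of the edge between $[x_1,x_2]$ and $[\id]$ --- so this cannot be quoted as the source of edge-transitivity on $\T_V$. What you need, and what holds, is transitivity on vertical hyperplanes, stated at the beginning of \S\ref{sec:product of trees}; it follows from transitivity on either one of the two orbits of edges, since inside any square it meets a vertical hyperplane contains one edge of each orbit, and a hyperplane is determined by any one of its edges.

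The more substantive gap is your argument that the two regions adjacent to the vertical hyperplane of the standard square lie in distinct $\STSL$-orbits. Knowing that one region $R_1$ contains the type-1 vertex $[x_1]$ and the other region $R_2$ contains the type-3 vertex $[\id]$, together with preservation of vertex types, does not by itself give a contradiction from $g\cdot R_1=R_2$: the image $g([x_1])$ would simply be some type-1 vertex lying in $R_2$, which is absurd only if you already know that $R_2$ contains no type-1 vertex. The missing premise is exactly the key observation in the paper's proof, namely that every vertical region contains either only vertices of types 1 and 2, or only vertices of types 2 and 3. (One way to see it: two vertices of a vertical region are joined by an edge path avoiding all vertical hyperplanes, hence made of vertical edges, and starting from a type-1 vertex such a path only ever meets type-1 vertices and type-2 vertices in the orbit of $[x_1,x_3]$, never a type-3 vertex, because every edge joining such a type-2 vertex to a type-3 vertex is horizontal.) Once this observation is inserted, the rest of your argument --- no inversions, quotient a segment, edge stabilizer equal to $E_V\cap L_V$, and the symmetric horizontal case --- goes through exactly as in the paper.
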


\begin{proof}
An edge in $\T_V$ corresponds to a vertical hyperplane.
Observe that vertical regions are of two types, depending whether they contain vertices of type $1$ and $2$, or of type $2$ and $3$.
In particular two vertical regions of different type cannot be in the same orbit under the action of $\STSL$.
Since $\STSL$ acts transitively on vertical hyperplanes, we obtain that $\STSL$ acts without inversion, with fundamental domain an edge, on the tree $\T_V$.
Hence $\STSL$ is the amalgamated product of the stabilizers of the vertices of an edge, which is exactly our definition of $E_V$ and $L_V$.  
\end{proof}

We denote by $\SStab([x_1])$ the group $\Stab([x_1]) \cap \STSL$.
Remark that $\Stab([x_1,x_2])$ and $\Stab([x_1,x_3])$ are already subgroups of $\STSL$.

\begin{proposition}
The group $E_V$ is the amalgamated product of $\SStab([x_1])$ and $\Stab([x_1,x_3])$ along their intersection $\Stab([x_1], [x_1,x_3])$.

The group $L_V$  is the amalgamated product of $\Stab([x_1,x_2])$ and $\SO_4$ along their intersection.

Similar structures hold for $E_H$ and $L_H$.
\end{proposition}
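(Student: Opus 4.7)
The plan is to produce, for each of $E_V$ and $L_V$, a tree on which the group acts without inversion with a single edge as fundamental domain, and then to invoke classical Bass--Serre theory as in the proof of Proposition~\ref{pro:first amalgamated structure of Stabx1}. I focus on $E_V$; the proof for $L_V$ will be entirely symmetric, with $R_{[\id]}$ replacing $R_{[x_1]}$, the vertex $[\id]$ replacing $[x_1]$, and the horizontal type~2 vertex $[x_1,x_2]$ replacing $[x_1,x_3]$.

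The first step is to describe the vertical region $R = R_{[x_1]}$. A vertical hyperplane is geometrically vertical and so cuts horizontal edges; only vertical edges survive inside $R$, and since vertical edges link type~1 to vertical type~2 vertices, the vertices of $R$ are exactly those of these two types that are joined to $[x_1]$ by a chain of vertical edges. I then define $T$ to be the subgraph of $\Comp$ on these vertices with these vertical edges; this is a deformation retract of $R$, collapsing the half-horizontal-edges dangling out of $R$ towards the vertical hyperplanes. The main geometric step of the proof is the fact that $T$ is a tree, which I would deduce as follows: $R$ is the intersection of the halfspaces of the vertical hyperplanes lying on the same side as $[x_1]$, hence convex in the $\CAT(0)$ complex $\Comp$ and therefore simply connected; the deformation retract transfers simple connectedness to $T$, which as a graph is then a tree. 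I expect this to be the main technical obstacle, since it requires carefully verifying convexity of $R$ and validity of the retraction.

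Given $T$, the rest is essentially bookkeeping. First, $E_V\subseteq \STSL$: by Lemma~\ref{lem:action of O_4 with respect to horizontality} applied to the linear part, any element of $\TSL\smallsetminus\STSL$ swaps horizontal and vertical type~2 vertices and hence swaps vertical and horizontal regions. Next, $E_V$ acts transitively on type~1 vertices of $T$ and on vertical type~2 vertices of $T$: transitivity of $\STSL$ on each class comes from Lemma~\ref{lem: 2a and 2b}, and any element of $\STSL$ taking one vertex of $R$ to another must preserve $R$ globally. The action preserves types, so it is without inversion, and the edge between $[x_1]$ and $[x_1,x_3]$ is a fundamental domain. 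For the stabilizers, $\Stab_{E_V}([x_1]) = \SStab([x_1])$ (the inclusion $\SStab([x_1])\subseteq E_V$ is because this group fixes $[x_1]\in R$, while the reverse uses $E_V\subseteq \STSL$), $\Stab_{E_V}([x_1,x_3]) = \Stab([x_1,x_3])$ (using Lemma~\ref{lem: 2a and 2b} to see that $\Stab([x_1,x_3])\subseteq \STSL$ and that its elements fix $[x_1,x_3]\in R$), and $\Stab_{E_V}([x_1],[x_1,x_3]) = \Stab([x_1],[x_1,x_3])$. Bass--Serre theory (see for instance \cite[I.4.1]{S2}) then gives $E_V = \SStab([x_1]) *_{\Stab([x_1],[x_1,x_3])} \Stab([x_1,x_3])$. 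The symmetric argument for $L_V$ uses that $\OO_4\cap\STSL = \SO_4$, which gives $\Stab_{L_V}([\id]) = \SO_4$ as the orthogonal factor in the resulting amalgamation.
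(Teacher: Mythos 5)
Your overall strategy is exactly the paper's: make $E_V$ act on the tree formed by the vertical edges inside the vertical region $\mathcal R$ containing $[x_1]$, show the quotient is a single edge, identify the vertex and edge stabilizers with $\SStab([x_1])$, $\Stab([x_1,x_3])$ and $\Stab([x_1],[x_1,x_3])$, and invoke Bass--Serre theory as in Proposition \ref{pro:first amalgamated structure of Stabx1}; the same scheme handles $L_V$, $E_H$, $L_H$. Your stabilizer identifications, the inclusion $E_V\subseteq\STSL$, and the key observation that an element of $\STSL$ sending a vertex (or edge) of $\mathcal R$ into $\mathcal R$ must preserve $\mathcal R$ are precisely the ingredients the paper uses. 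The tree structure of the region (your retraction onto the graph $T$ of vertical edges and the convexity of $\mathcal R$) is left implicit in the paper, so you are, if anything, more explicit there, although you only sketch it.

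The step that does not follow as written is the assertion that the edge between $[x_1]$ and $[x_1,x_3]$ is a fundamental domain. You derive it from transitivity of $E_V$ on the type~1 vertices of $T$ and on its vertical type~2 vertices, but for a bipartite tree, transitivity on both vertex classes does not imply transitivity on edges: the quotient could be a graph with one vertex of each type and several edges, which would give a more complicated graph-of-groups decomposition rather than the stated amalgam. What is actually needed, and what the paper proves, is that $E_V$ acts transitively on the vertical edges contained in $\mathcal R$. This is immediate from the lemma stating that the action of $\STSL$ on edges has exactly four orbits, represented by the four edges of the standard square: two vertical edges of $\mathcal R$ joining a type~1 vertex to a type~2 vertex lie in the orbit of the edge $([x_1],[x_1,x_3])$, so some $g\in\STSL$ maps one to the other, and since $g$ permutes vertical regions and sends a point of $\mathcal R$ into $\mathcal R$, it belongs to $E_V$. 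Replace your appeal to the vertex-transitivity statement of Lemma \ref{lem: 2a and 2b} by this edge-transitivity argument and your proof coincides with the paper's.
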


\begin{proof}
Let $\mathcal R$ be the vertical region containing $[x_1]$.
To prove the assertion for $E_V$, it is sufficient to show that $E_V$ acts transitively on vertical edges contained in $\mathcal R$ (clearly it acts without inversion).
But this is clear, since $\STSL$ acts transitively on vertical edges between vertices of type 1 and 2.

The proofs of the other assertions are similar.
\end{proof}

In turn, the group $E_V \cap L_V$ admits a structure of amalgamated product.

\begin{proposition} \label{pro:product E_V cap L_V}
The group $E_V \cap L_V$ is the amalgamated product of the stabilizers of edges $\Stab([x_1], [x_1,x_2])$ and $\Stab([x_1,x_3], [\id])$ along their intersection $S$.
\end{proposition}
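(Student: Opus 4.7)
The plan is to apply Bass--Serre theory to the natural action of $E_V \cap L_V$ on the vertical hyperplane $h$ passing through the center of the standard square. Since $h$ separates the vertical region of $[x_1]$ from that of $[\id]$, the subgroup $E_V \cap L_V$ is precisely the stabilizer in $\STSL$ of $h$. As a $1$-dimensional $\CAT(0)$ cube complex, $h$ is itself a tree: its vertices are the midpoints of horizontal edges in the equivalence class of $[x_1]-[x_1,x_2]$ (which contains $[x_1,x_3]-[\id]$ as its opposite in the standard square), and its $1$-cells correspond to the squares of $\Comp$ traversed by $h$.

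I would first dispose of three preliminary verifications. The action of $E_V \cap L_V$ on $h$ is without inversion, because in each traversed square the two horizontal edges lie on opposite sides of $h$, and every element of $E_V \cap L_V$ preserves both vertical regions. The two vertices of the would-be fundamental edge, namely the midpoints of $[x_1]-[x_1,x_2]$ and $[x_1,x_3]-[\id]$, lie in different orbits, since their carrier edges have different type patterns ($1$--$2$ versus $2$--$3$) and $\STSL$ preserves the type of a vertex. Finally, the stabilizers in $E_V \cap L_V$ of these two midpoints are $\Stab([x_1],[x_1,x_2])$ and $\Stab([x_1,x_3],[\id])$ respectively, while fixing both amounts to fixing all four vertices of the standard square, which by Lemma~\ref{lem:action on squares} gives the pointwise stabilizer $S$.

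The main step, which I expect to be the only real geometric content, is to show that $E_V \cap L_V$ acts transitively on the squares of $\Comp$ traversed by $h$, so that the standard square provides a fundamental edge. The key observation is that every square of $\Comp$ contains exactly two horizontal and two vertical edges (transport from the standard square via Lemma~\ref{lem:action on squares}), so it determines a unique vertical hyperplane, namely the hyperplane class of its two horizontal edges. For any square $S'$ traversed by $h$, this unique vertical hyperplane is $h$ itself. Choosing $g \in \STSL$ sending $S'$ to the standard square (Lemma~\ref{lem:action on squares}), $g$ preserves horizontality by definition and therefore maps the vertical hyperplane of $S'$ to the vertical hyperplane of the standard square; since both are $h$, we get $g \cdot h = h$, so $g$ lies in $E_V \cap L_V$. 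Applying the standard Bass--Serre theorem (e.g.\ \cite[I.4.1]{S2}) to the resulting edge-transitive action on $h$ then yields the decomposition $E_V \cap L_V = \Stab([x_1],[x_1,x_2]) *_S \Stab([x_1,x_3],[\id])$.
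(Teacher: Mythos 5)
Your proof is correct and follows essentially the same route as the paper: $E_V\cap L_V$ acts on the tree given by the vertical hyperplane through the standard square, transitivity of $\STSL$ on squares makes the standard square a fundamental edge, and Bass--Serre theory yields the amalgam of the stabilizers of the two horizontal edges over the pointwise stabilizer $S$; you merely make explicit the steps the paper leaves implicit. One justification is off: the two horizontal edges of a traversed square are the edges \emph{crossed} by $h$ (their midpoints are the endpoints of the corresponding tree-edge), not edges lying on opposite sides of $h$, so ``no inversion'' does not follow from preservation of the two vertical regions; it follows instead from the type argument you give in the next sentence, since an inversion would have to exchange a type $1$--$2$ edge with a type $2$--$3$ edge.
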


\begin{proof}
The group $E_V \cap L_V$ acts on the vertical hyperplane through the standard square, which is a tree.
Since $\STSL$ acts transitively on squares, the fundamental domain of the action is the standard square, and $E_V \cap L_V$ is the amalgamated product of the stabilizers of the horizontal edges.
\end{proof}

On Figure \ref{fig:nesting} we try to represent all the amalgamated product structures that we found in this section.
By a diagram of the form  
$$\mygraph{
!{<0cm,0cm>;<1cm,0cm>:<0cm,1cm>::}
!~-{@{-}@[|(4)]}
!{(1,1)}*+{G}="G"
!{(0,0)}*+{A}="A"
!{(2,0)}*+{B}="B"
!{(1,-1)}*+{C}="C"
"G"-"A"-"C"-"B"-"G"
}$$
with the four edges of the same color we mean that $G$ is the amalgamated product of its subgroups $A$ and $B$ along their intersection $C = A \cap B$. 

For example, on the left hand side of Figure \ref{fig:nesting}, we see that  $\Stab([x_1])$ admits two structures of amalgamated products:  $H_1 *_{H_1 \cap H_2}H_2$ and $ K_1 * _{K_1 \cap H_2}H_2$ (see Propositions \ref{pro:first amalgamated structure of Stabx1} and \ref{pro:second amalgamated structure of Stabx1}).

We are now in position to prove that the groups $\TQ$ and $\TSL$ are isomorphic. We use the following general lemma.

\begin{lemma} \label{lem:morphism to amalgamated product}
Let $G = A *_{A \cap B} B$ be an amalgamated product and $\phi \colon G' \to G$ be a morphism.
Assume there exist subgroups $A', B'$ in $G'$ such that $G' = \langle A',B' \rangle$ and such that $\phi$ induces isomorphisms $A' \RightArrowIsomorphism A$, $B' \RightArrowIsomorphism B$ and $A' \cap B' \RightArrowIsomorphism A \cap B$.
Then $\phi$ is an isomorphism.
\end{lemma}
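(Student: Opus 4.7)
The plan is to use the universal property of amalgamated products to reduce the claim to verifying that a canonical map is an isomorphism. Form the abstract amalgamated product $H := A' *_{A' \cap B'} B'$, where $A' \cap B'$ embeds in $A'$ and $B'$ via the inclusions. Since $A'$ and $B'$ are subgroups of $G'$ whose inclusions into $G'$ agree on $A' \cap B'$, the universal property supplies a canonical homomorphism $\psi \colon H \to G'$ restricting to the identity on $A'$ and on $B'$.

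First I would observe that $\psi$ is surjective: its image contains $A'$ and $B'$, which generate $G'$ by hypothesis. Next, I would argue that $\phi \circ \psi \colon H \to G$ is an isomorphism. The three given isomorphisms $\phi|_{A'} \colon A' \to A$, $\phi|_{B'} \colon B' \to B$, and $\phi|_{A' \cap B'} \colon A' \cap B' \to A \cap B$ are automatically compatible (they are all restrictions of the single homomorphism $\phi$), so by the universal property they assemble into a homomorphism $H \to A *_{A \cap B} B = G$; by construction this is $\phi \circ \psi$. Applying the same construction to the three inverse isomorphisms produces a two-sided inverse $G \to H$, so $\phi \circ \psi$ is indeed an isomorphism.

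From the fact that $\phi \circ \psi$ is bijective and $\psi$ is surjective, it follows formally that $\psi$ is also injective, hence an isomorphism. Writing $\phi = (\phi \circ \psi) \circ \psi^{-1}$ then shows that $\phi$ is an isomorphism, as claimed.

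There is no serious obstacle: the argument is a diagram chase, and the key conceptual point is simply to recognize that the three hypotheses are exactly tailored to apply the universal property of the amalgamated product in both directions. The only verification deserving any care is that $\phi(A' \cap B') \subseteq A \cap B$ so that the restriction $\phi|_{A' \cap B'}$ does land in $A \cap B$, but this is immediate.
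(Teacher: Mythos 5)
Your argument is correct and takes essentially the same route as the paper: the paper applies the universal property of $G = A *_{A \cap B} B$ directly to the inverse isomorphisms to obtain $\psi \colon G \to G'$ with $\phi \circ \psi = \id_G$, and then uses $G' = \langle A', B' \rangle$ to see that $\psi$ is surjective, hence an isomorphism, so $\phi$ is too. Your detour through the auxiliary amalgam $A' *_{A' \cap B'} B'$ is only a repackaging of this same universal-property-plus-generation argument.
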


\begin{proof}
By the universal property of the amalgamated product, the natural morphisms $\psi_A \colon A \to G'$ and $\psi_B \colon B \to G'$  give us a morphism $\psi \colon G \to G'$ such that $\phi \circ \psi =\id_G$. It is clear that $\psi$ is an isomorphism, so that $\phi$ also.
\end{proof}

Recall that we have a natural morphism of restriction $\rho \colon \Aut _q (\C^4) \to \Aut(\SL_2)$.
We denote by $\pi$ the induced morphism on $\TQ$.

\begin{proposition} \label{pro:TQ=TSL}
The map $\pi \colon \TQ \to \TSL$ is an isomorphism.
\end{proposition}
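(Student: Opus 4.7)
Surjectivity of $\pi$ is immediate from the definition $\TSL = \rho(\TQ)$, so the task is to prove that $\pi$ is injective. Since $\TQ/\STQ \simeq \TSL/\STSL \simeq \Z/2$ via the linear part determinant, with $\tau \in \OO_4 \setminus \SO_4$ as a common coset representative, it suffices to show that $\pi$ induces an isomorphism $\STQ \to \STSL$. The plan is to apply Lemma \ref{lem:morphism to amalgamated product} iteratively to the nested amalgamated product structures developed in Section \ref{sec:structures} (the ``Russian nesting dolls'' suggested by Figure \ref{fig:nesting}).

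The crucial point is that many subgroups of $\TSL$ appearing in Section \ref{sec:structures} come with a completely explicit polynomial description: their elements are $\smat{f_1}{f_2}{f_3}{f_4}$ where the $f_i$ are specific polynomials in $\C[x_1,x_2,x_3,x_4]$ (subject to $f_1f_4 - f_2f_3 = q$), rather than just classes modulo $q-1$. This is the case for $\OO_4$, $\Er$, the stabilizer $\Stab([x_1,x_3]) = \Er \rtimes \GL_2$ of Lemma \ref{lem:stab x1x3}, the groups $K_1, K_2, H_1, H_2$ described in Propositions \ref{pro:description of H_1} and \ref{pro:description of H_2}, $\SO_4$ and $\Stab([x_1,x_2])$, as well as their various intersections. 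For any such $A \subset \TSL$, the obvious lift $A' \subset \TQ$ defined by the identical polynomial formulas satisfies $\pi|_{A'}\colon A' \to A$ bijectively, hence is an isomorphism.

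Starting from the amalgam $\STSL = E_V *_{E_V \cap L_V} L_V$ given in \S\ref{sec:product of trees}, I would decompose each factor further via the subsequent propositions of that subsection, then decompose $\SStab([x_1])$ via Propositions \ref{pro:first amalgamated structure of Stabx1} and \ref{pro:second amalgamated structure of Stabx1}, continuing until every factor admits an explicit polynomial description. The generation hypothesis in Lemma \ref{lem:morphism to amalgamated product} is easy throughout: $\STQ = \langle \SO_4, \Er \rangle$, and one has $\SO_4 \subset L_V'$ and $\Er \subset E_V'$, with analogous inclusions persisting at each inner level. The hard part is the bookkeeping: at every stage of the iteration, one must verify the compatibility $A' \cap B' = (A \cap B)'$, so that the lifted factors intersect precisely along the lifted intersection. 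Once this compatibility is checked throughout the nested structure, Lemma \ref{lem:morphism to amalgamated product} propagates outwards to yield that $\pi\colon \STQ \to \STSL$ is an isomorphism, and extending by $\tau$ then produces the claimed isomorphism $\TQ \to \TSL$.
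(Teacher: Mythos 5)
Your overall route is the same as the paper's: lift the explicitly polynomial subgroups ($\OO_4$, $\Er$, $K_1$, $K_2$, $H_2$, the stabilizers of type~2 vertices, \dots) to $\TQ$ and propagate through the nested amalgams of Section~\ref{sec:structures} via Lemma~\ref{lem:morphism to amalgamated product}. There is, however, a genuine gap at the one place where the generation hypothesis of that lemma is a real statement to verify, namely the outermost step: your justification rests on the claim $\STQ=\langle \SO_4,\Er\rangle$, and this claim is false. Indeed $\SO_4$ fixes the vertex $[\id]$ and $\Er$ fixes $[x_1,x_3]$, and these two vertices lie in the same horizontal region (the edge joining them is horizontal, hence crossed only by a vertical hyperplane); therefore $\langle \SO_4,\Er\rangle$ stabilizes that region, i.e.\ it is contained in $L_H$ (in fact it equals $L_H$, since $\Stab([x_1,x_3])=\Er\rtimes\GL_2$ with that $\GL_2$ inside $\SO_4$). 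Since $\STSL$ contains elements acting hyperbolically on $\T_H$ (for instance the automorphisms $g_1,g_2$ in the proof of Lemma~\ref{lem:elementary hyperelliptic}), $L_H$ is a proper subgroup of $\STSL$, so $\langle\SO_4,\Er\rangle$ cannot equal $\STQ$, whose image under $\rho$ is all of $\STSL$. The conceptual reason is Lemma~\ref{lem:action of O_4 with respect to horizontality}: $\SO_4$ preserves the two rulings, so conjugating $\Er$ by $\SO_4$ only yields $\El$, never $\Eb$ or $\Et$; one genuinely needs $\tau$, and the correct statement (obtained by pushing the $\tau$'s in a word in $\OO_4\cup\Er$ to one side, using $\tau\Er\tau=\Eb$ and $\tau\SO_4\tau=\SO_4$) is $\STQ=\langle\SO_4,\Er,\Eb\rangle$.

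This matters precisely where you invoke it: at the inner levels generation is automatic because you may simply define each lift as the subgroup generated by the lifted factors, but at the top level you must prove that $\langle E_V',L_V'\rangle$ is all of $\STQ$, and knowing only $\SO_4\subseteq L_V'$ and $\Er\subseteq E_V'$ does not suffice. You also cannot place $\Eb$ inside $\langle E_V',L_V'\rangle$ by appealing to the isomorphism $E_V'\simeq E_V$, since identifying the abstract preimage of an element of $\Eb$ with its obvious polynomial lift is essentially the injectivity you are trying to prove. The repair is short: $\Eb\subseteq\Stab([x_1,x_2])=\tau\,\Stab([x_1,x_3])\,\tau$, which has an explicit polynomial description (conjugate Lemma~\ref{lem:stab x1x3} by $\tau$) and is one of the factors of $L_V$ in Figure~\ref{fig:nesting}; choosing its polynomial lift when building $L_V'$ gives $\Eb'\subseteq L_V'$, whence $\STQ=\langle\SO_4,\Er,\Eb\rangle\subseteq\langle E_V',L_V'\rangle$ as required. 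With this correction (and the intersection bookkeeping you defer, which the paper also leaves implicit), your argument coincides with the paper's proof.
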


\begin{proof}
Clearly the group $\TQ$ contains subgroups isomorphic (via the restriction map) to $H_2$, $K_1$, $K_2$, $\Er$ and $\OO_4$.
By Lemma \ref{lem:morphism to amalgamated product} applied to the various amalgamated products showed in Figure \ref{fig:nesting}, we obtain the existence of subgroups in  $\TQ$ isomorphic to $\Stab([x_1])$, $E_V$, $L_V$ and finally $\TQ \simeq \TSL$.
\end{proof}

\begin{landscape}
\begin{figure}[H]
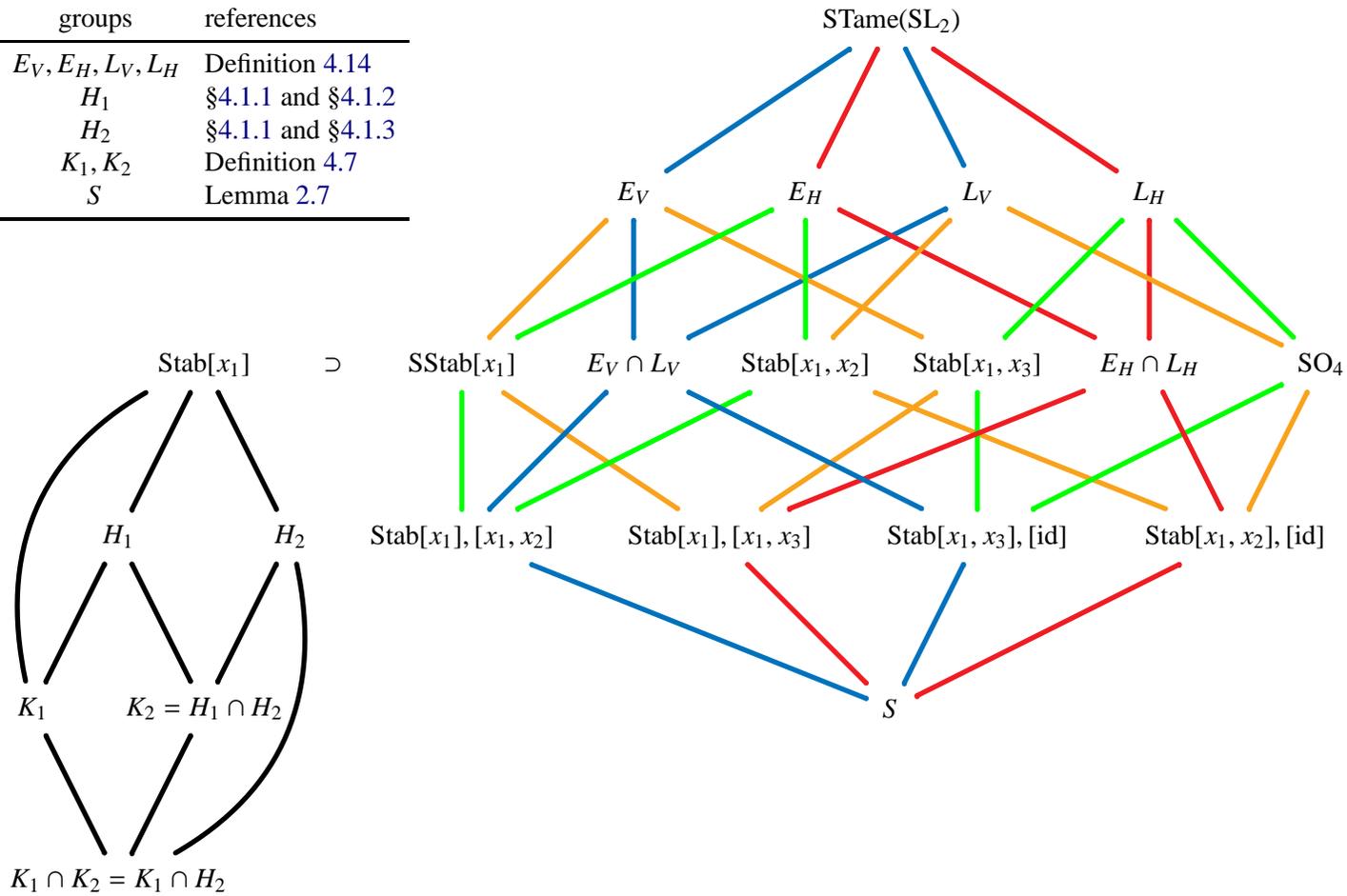

$$
\mygraph{
!{<0cm,0cm>;<2.4cm,0cm>:<0cm,2.4cm>::}
!~-{@{-}@[|(5)]}
!{(-1.5,3.5)}*{
\begin{tabular}{Cl}
\toprule
\text{groups} &\text{references} \\
\midrule
E_V, E_H, L_ V, L_H & Definition \ref{def:EV LV EH LH} \\
H_1 & \S\ref{sec:H1 H2} and \S\ref{sec:H1} \\
H_2 & \S\ref{sec:H1 H2} and \S\ref{sec:H2} \\
K_1, K_2 & Definition \ref{def:K_1 and K_2}\\
S & Lemma \ref{lem:action on squares} \\
\bottomrule
\end{tabular}
}
!{(2.5,4)}*++{\STSL}="stame"
!{(1,3)}*++{E_V}="EV"
!{(2,3)}*++{E_H}="EH"
!{(3,3)}*++{L_V}="LV"
!{(4,3)}*++{L_H}="LH"
!{(0,2)}*++{\SStab [x_1]}="stabx1"
!{(2,2)}*++{\Stab [x_1,x_2]}="stabx1x2"
!{(1,2)}*++{E_V \cap L_V}="EVLV"
!{(4,2)}*++{E_H \cap L_H}="EHLH"
!{(3,2)}*++{\Stab [x_1,x_3]}="stabx1x3"
!{(5,2)}*++{\SO_4}="so4"
!{(0,1)}*++{\Stab [x_1], [x_1,x_2]}="stabx1x1x2"
!{(1.5,1)}*++{\Stab [x_1], [x_1,x_3]}="stabx1x1x3"
!{(3,1)}*++{\Stab [x_1,x_3], [\id]}="stabx1x3id"
!{(4.5,1)}*++{\Stab [x_1,x_2], [\id]}="stabx1x2id"
!{(2.5,0)}*++{S}="s"
!{(-1.5,2)}*++{\Stab [x_1]}="stab"
!{(-0.75,2)}*{\supset}
!{(-2,1)}*++{H_1}="H1"
!{(-1,1)}*++{H_2}="H2"
!{(-2.5,0)}*++{K_1}="K1"
!{(-1.5,0)}*++{K_2 = H_1\cap H_2}="K2"
!{(-2,-1)}*++{K_1 \cap K_2= K_1 \cap H_2}="K1K2"
"stame"-@[RoyalBlue]"EV"-@[RoyalBlue]"EVLV"-@[RoyalBlue]"LV"-@[RoyalBlue]"stame"
"stame"-@[Red]"EH"-@[Red]"EHLH"-@[Red]"LH"-@[Red]"stame"
"EV"-@[YellowOrange]"stabx1"-@[YellowOrange]"stabx1x1x3"-@[YellowOrange]"stabx1x3"-@[YellowOrange]"EV"
"EH"-@[green]"stabx1"-@[green]"stabx1x1x2"-@[green]"stabx1x2"-@[green]"EH"
"LV"-@[YellowOrange]"stabx1x2"-@[YellowOrange]"stabx1x2id"-@[YellowOrange]"so4"-@[YellowOrange]"LV"
"LH"-@[green]"stabx1x3"-@[green]"stabx1x3id"-@[green]"so4"-@[green]"LH"
"EHLH"-@[Red]"stabx1x1x3"-@[Red]"s"-@[Red]"stabx1x2id"-@[Red]"EHLH"
"EVLV"-@[RoyalBlue]"stabx1x1x2"-@[RoyalBlue]"s"-@[RoyalBlue]"stabx1x3id"-@[RoyalBlue]"EVLV"
"stab"-"H1"-"K2"-"H2"-"stab"
"H1"-"K1"-"K1K2"-"K2"
"stab"-@/_12mm/"K1"
"H2"-@/^12mm/"K1K2"
}$$
\caption{Russian nesting amalgamated products.}
\label{fig:nesting}
\end{figure}
\end{landscape}

\begin{remark}
By \cite{LV}, any non-linear element of $\TSL$ admits an elementary reduction (see Theorem \ref{th:main theorem of LV}). However,  even if the groups  $\TSL$ and $\TQ$ are naturally isomorphic, we cannot deduce at once that an analogous result holds for $\TQ$. Such a result is the aim of the Annex (see Theorem \ref{thm:main}).
\end{remark}

We recall that an element $f_1$ of $\C[\SL_2]$ is called a component if it can be completed to an element  $f = \smat{f_1}{f_2}{f_3}{f_4}$ of $\TSL$ (see \S \ref{sec:definition of complex}). In the same way, an element $f_1$ of $\C [x_1,x_2,x_3,x_4]$ will be called a component if it can be completed to an element of $\TQ$. In the same spirit as Proposition \ref{pro:TQ=TSL}, we show the following stronger result.

\begin{proposition} \label{pro: compoents of TQ= components of TSL}
The canonical surjection
\[\C [x_1,x_2,x_3,x_4] \to \C[\SL_2] = \C[x_1,x_2,x_3,x_4]/(q-1)\] induces a bijection between the corresponding subsets of components.
\end{proposition}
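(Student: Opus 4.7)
The plan is to reduce the statement to Proposition~\ref{pro:TQ=TSL}, which says that $\rho \colon \TQ \to \TSL$ is an isomorphism. The map of the statement sends a $\TQ$-component $f_1 \in \C[x_1,x_2,x_3,x_4]$ to its class $\bar f_1 \in \C[\SL_2]$. Surjectivity is immediate: any $\TSL$-component $\bar f_1$ arises as the first component of some $\bar f \in \TSL$, which lifts via Proposition~\ref{pro:TQ=TSL} to a unique $f \in \TQ$ whose first component then maps to $\bar f_1$.

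For injectivity, suppose $f, g \in \TQ$ satisfy $f_1 \equiv g_1 \pmod{q-1}$, and set $h := g \circ f^{-1} \in \TQ$. The composition rule $(a \circ b)_1 = a_1 \circ b$ in $\Aut(\C^4)$ gives $g_1 = h_1 \circ f$ and, passing to $\TSL$, $\rho(h)_1 = \bar g_1 \circ \bar f^{-1} = \bar f_1 \circ \bar f^{-1} = \bar x_1$. The proposition thus reduces to the following key claim: \emph{any $h \in \TQ$ with $h_1 \equiv x_1 \pmod{q-1}$ must already satisfy $h_1 = x_1$}. Indeed, granting this, $g_1 = x_1 \circ f = f_1$.

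To prove the key claim, set $\bar h := \rho(h) \in \TSL$, so that $\bar h_1 = \bar x_1$. After possibly replacing $h$ by $\tau \circ h$ with $\tau = \smat{x_1}{x_3}{x_2}{x_4} \in \OO_4 \subset \TQ$ (note that $\tau_1 = x_1$, so the substitution preserves both the hypothesis and the conclusion), we may assume $\bar h \in \STSL$. Corollary~\ref{cor:linktype1} then writes $\bar h$ as a finite composition $\bar h = \bar e_1 \circ \cdots \circ \bar e_r$ of elementary automorphisms of $\TSL$ each having first component $\bar x_1$; concretely each $\bar e_i$ is of the form $\smat{\bar x_1}{a \bar x_2 + \bar x_1 P(\bar x_1, \bar x_3)}{a^{-1} \bar x_3}{\dots}$ or its $\tau$-transpose. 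Each such $\bar e_i$ admits a tautological lift $e_i \in \TQ$ lying in $\OO_4 \cdot \Er$ or $\OO_4 \cdot \tau \Er \tau$ whose first component in $\C[x_1,x_2,x_3,x_4]$ is exactly $x_1$, the fourth component being uniquely determined by $q \circ e_i = q$. Setting $h_0 := e_1 \circ \cdots \circ e_r \in \TQ$, a short induction using the composition rule gives $(h_0)_1 = x_1$, and $\rho(h_0) = \bar h = \rho(h)$ combined with the uniqueness part of Proposition~\ref{pro:TQ=TSL} forces $h = h_0$, so $h_1 = x_1$.

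The only delicate point is the uniqueness step: one must rule out the a priori possibility that different elementary factorizations of $\bar h$ produce $\TQ$-lifts with different first components. This is precisely guaranteed by Proposition~\ref{pro:TQ=TSL}, which forces any two lifts of $\bar h$ to coincide in $\TQ$; the explicit factorization provided by Corollary~\ref{cor:linktype1} is used merely to exhibit \emph{one} lift whose first component is visibly $x_1$.
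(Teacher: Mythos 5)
Your proof is correct, but it follows a genuinely different route from the paper. The paper never uses Proposition~\ref{pro:TQ=TSL} here: instead it builds the analogous square complex $\tilde{\Comp}$ for $\TQ$, observes that the restriction map induces a covering $\tilde{\Comp}\to\Comp$, and invokes the simple connectedness of $\Comp$ (Proposition~\ref{pro:1connected}) to conclude this covering is a homeomorphism, hence a bijection on type~1 vertices; two components with the same restriction then differ by a scalar $\lambda$, which must be $1$ since they induce the same function on the quadric. Your argument is more algebraic: you take the isomorphism $\pi\colon\TQ\to\TSL$ as a black box, reduce the statement to the claim that an element $h\in\TQ$ with $h_1\equiv x_1 \bmod (q-1)$ has $h_1=x_1$, and prove that claim by lifting the factorization of Corollary~\ref{cor:linktype1} to an explicit preimage $h_0\in\TQ$ with $(h_0)_1=x_1$, then using injectivity of $\pi$ to force $h=h_0$. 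This buys you a proof with no new geometric input (no auxiliary complex, no covering-space verification), at the cost of leaning on the amalgam-based Proposition~\ref{pro:TQ=TSL} and on the elementary-reduction machinery behind Corollary~\ref{cor:linktype1}; the paper's covering argument, by contrast, is independent of Proposition~\ref{pro:TQ=TSL} (and in fact reproves it, since the present statement is stronger). Two small points to tidy: the factors furnished by Corollary~\ref{cor:linktype1} are not literally ``elementary automorphisms'' in the paper's sense, since they carry scalars and the transpose $\tau$ --- but, exactly as the paper itself uses the corollary in the proof of Lemma~\ref{lem:link1connected}, they are of the concrete form $\smat{x_1}{ax_2+x_1P(x_1,x_3)}{a^{-1}x_3}{\dots}$ or its $\tau$-conjugate, and all you need is that each factor has first component $x_1$ and lifts tautologically into $\OO_4\cdot\Er\subseteq\TQ$, which is the case; and your surjectivity step does not actually require Proposition~\ref{pro:TQ=TSL} at all, since $\TSL=\rho(\TQ)$ by definition.
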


\begin{proof}
We can associate a square complex $\tilde{\Comp}$ to the group $\TQ$ in exactly the same way  we associated a complex $\Comp$ to $\TSL$ in \S \ref{sec:definition of complex}. 
The canonical surjection, alias the restriction map, defines a continuous map $p \colon \tilde{\Comp} \to \Comp$. One would easily check that $p$ is a covering (the verification is local), so that the simple connectedness of $\Comp$ (Proposition \ref{pro:1connected}) and the obvious connectedness of $\tilde{\Comp}$ implies that $p$ is a homeomorphism. In particular, $p$ induces a bijection between vertices of type 1 of $\tilde{\Comp}$ and $\Comp$. Assume now that $u,v \in \C[x_1,x_2,x_3,x_4]$ are two components such that $u \equiv v \mod (q-1)$. The vertices $[u \mod (q-1) ]$ and $[v \mod (q-1)]$ of $\Comp$ being equal, the vertices $[u]$ and $[v]$ of $\tilde{\Comp}$ are also equal. This implies  that $v = \lambda u$ for some nonzero complex number $\lambda$. Since $u$ and $v$ induce the same (nonzero)  function on the quadric, we get $\lambda =1$, i.e. $u=v$.
\end{proof}

\section{Applications}
\label{sec:theorems}

In this section we apply the previous machinery to obtain two basic results about the group $\TSL$: the linearizability of finite subgroups and the Tits alternative.

\subsection{Linearizability}

This section is devoted to the proof of Theorem \ref{thm:mainLin} from the introduction, which states that any finite subgroup of $\TSL$ is linearizable. This is a first nice application of the action of $\TSL$ on the CAT(0) square complex $\Comp$.  

The following lemma will be used several times in the proof.
The idea comes from \cite[Proposition 4]{Furushima}.
In the statement and in the proof, we use the natural structure of vector space on the semi-group of self-maps of a vector space $V$, given by $(\lambda f + g)(v) = \lambda f(v) + g(v)$ for any $f,g\colon V \to V$, $\lambda \in \C$, $v \in V$.

\begin{lemma} \label{lem:abstract linearization}
Let $G$ be a group of transformations of a vector space $V$ that admits a semi-direct product structure $G = M \rtimes L$.  
Assume that $M$ is stable by mean (i.e. for any finite sequence $m_1, \dots, m_r$ in $M$, the mean $\frac1r \sum_1^r m_i$ is in $M$) and that $L$ is linear (i.e. $L \subseteq \GL (V)$). Then any finite subgroup in $G$ is conjugate by an element of $M$  to a subgroup of $L$.
\end{lemma}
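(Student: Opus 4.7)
The plan is to use a classical averaging / cocycle-trivialization argument. Let $H \subseteq G$ be a finite subgroup, and for each $h \in H$ write uniquely $h = m_h \ell_h$ with $m_h \in M$ and $\ell_h \in L$. The map $h \mapsto \ell_h$ is then a group homomorphism $H \to L$, and $H$ acts on $M$ via $h \star n := \ell_h n \ell_h^{-1}$ (this makes sense because $M$ is normal in $G$).

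First, I would reformulate the goal algebraically. The condition $m^{-1} H m \subseteq L$ is equivalent to
\[ m_h \;=\; m \cdot (\ell_h m^{-1} \ell_h^{-1}) \qquad \text{for every } h \in H, \]
since $m^{-1} h m = (m^{-1} m_h)(\ell_h m \ell_h^{-1}) \cdot \ell_h$ lies in $L$ precisely when its $M$-part (the product of the first two factors) is trivial. A direct computation gives the $1$-cocycle relation $m_{h_1 h_2} = m_{h_1} \cdot (h_1 \star m_{h_2})$, so what we must do is exhibit $(m_h)$ as a $1$-coboundary.

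Next, I would introduce the candidate
\[ \bar m \;:=\; \frac{1}{|H|} \sum_{h \in H} m_h, \]
computed as a pointwise mean in the ambient vector space of self-maps of $V$. The stability hypothesis on $M$ guarantees $\bar m \in M$. To check that $\bar m$ trivializes the cocycle, i.e.\ that $m_{h_0} = \bar m \cdot (\ell_{h_0} \bar m^{-1} \ell_{h_0}^{-1})$ for every $h_0 \in H$, I would rewrite the identity as $m_{h_0} \cdot (\ell_{h_0} \bar m\, \ell_{h_0}^{-1}) = \bar m$ and compute: the linearity of $\ell_{h_0}$ allows conjugation to be pushed through the mean, yielding $\ell_{h_0} \bar m\, \ell_{h_0}^{-1} = \tfrac{1}{|H|} \sum_h \ell_{h_0} m_h \ell_{h_0}^{-1}$; the cocycle identity $m_{h_0 h} = m_{h_0} \cdot (\ell_{h_0} m_h \ell_{h_0}^{-1})$ then converts $m_{h_0}$ times each summand into $m_{h_0 h}$; and the reindexing $h' = h_0 h$ gives $\tfrac{1}{|H|} \sum_{h'} m_{h'} = \bar m$.

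The main subtle point, which I expect to be the real obstacle, is the very last manipulation: it implicitly uses that left-composition by a fixed element of $M$ distributes over the pointwise mean of elements of $M$. This compatibility is not automatic from the bare definition of mean in $V^V$, but it is exactly what the "stable by mean" hypothesis encodes in the intended settings of application — where $M$ is abelian and its group law matches the pointwise mean via a linear parametrization (as one directly checks, for instance, for the translation-like subgroup $\Er$, in which composition corresponds to addition of polynomials and this precisely agrees with the pointwise mean in $V^V$). Granting this compatibility, the computation yields $\bar m^{-1} H \bar m \subseteq L$, as desired.
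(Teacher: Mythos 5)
Your overall strategy (average a cocycle to trivialize it) is the same as the paper's, but your choice of what to average creates a genuine gap that you yourself flag: in the verification $m_{h_0}\circ\bigl(\tfrac{1}{|H|}\sum_h \ell_{h_0}m_h\ell_{h_0}^{-1}\bigr)=\tfrac{1}{|H|}\sum_h m_{h_0}\circ(\ell_{h_0}m_h\ell_{h_0}^{-1})$ you need left-composition by $m_{h_0}\in M$ to distribute over a pointwise mean. This is not contained in the hypothesis ``$M$ is stable by mean'', which only says that $M$ is closed under taking means and says nothing about how composition interacts with them. Your patch --- that in the intended applications $M$ is abelian with composition matching the pointwise mean --- is not accurate either: the paper applies the lemma (via Corollary \ref{cor:linearization in triangular}) with $M$ the group of unipotent triangular automorphisms of $\C^n$, which is neither abelian nor ``additive''. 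Concretely, for $m_0=(x_1+x_2^2,x_2,x_3)$ and $n_{\pm}=(x_1,x_2\pm x_3,x_3)$ one has $\tfrac12(n_++n_-)=\id$, yet $\tfrac12\bigl(m_0\circ n_+ + m_0\circ n_-\bigr)=(x_1+x_2^2+x_3^2,x_2,x_3)\neq m_0\circ\id$. So the distributivity you need genuinely fails within the scope of the lemma, and your argument as written does not prove it.

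The fix is to average the ``other'' cocycle, which is exactly what the paper does: with $\phi(g)=\ell_g$, set $m=\tfrac{1}{|\Gamma|}\sum_{g\in\Gamma}\phi(g)^{-1}\circ g$ (note $\phi(g)^{-1}\circ g=\ell_g^{-1}\circ m_g\circ\ell_g\in M$ by normality, and $m\in M$ by the mean hypothesis). Then for $f\in\Gamma$,
\begin{equation*}
m\circ f=\frac{1}{|\Gamma|}\sum_{g\in\Gamma}\phi(g)^{-1}\circ g\circ f
=\phi(f)\circ\frac{1}{|\Gamma|}\sum_{g\in\Gamma}\phi(gf)^{-1}\circ(gf)=\phi(f)\circ m,
\end{equation*}
where the first equality uses only that right-composition by $f$ distributes over pointwise means (always true), and the second only that left-composition by the \emph{linear} map $\phi(f)\in\GL(V)$ does. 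Hence $m\Gamma m^{-1}=\phi(\Gamma)\subseteq L$, with no extra compatibility assumption on $M$. In short: keep your averaging idea, but average $\phi(h)^{-1}\circ h$ rather than $m_h=h\circ\phi(h)^{-1}$, so that the only nonlinear map ever pushed through a mean sits on the right of the composition.
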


\begin{proof}
Consider the morphism of groups 
\begin{align*}
\phi\colon G = M \rtimes L &\to L\\
g= m \circ \ell &\mapsto \ell 
\end{align*}
For any $g \in G$ we have $\phi(g)^{-1} \circ g \in M$.
Given a finite group $\Gamma \subseteq G$, define $m =  \frac{1}{|\Gamma|}\sum_{g \in \Gamma} \phi(g)^{-1} \circ g$.
By the mean property, $m \in M$.
Then, for each $f \in \Gamma$, we compute:
\begin{align*}
m \circ f  &=  \frac{1}{|\Gamma| } \sum\limits_{g \in \Gamma} \phi(g)^{-1} \circ g \circ f \\
&= \frac{1}{|\Gamma|} \sum\limits_{g \in \Gamma}\phi(f) \circ [ \phi(f)^{-1} \circ  \phi(g)^{-1}] \circ g \circ f \\
&= \phi(f) \circ m.
\end{align*}
Hence $m \Gamma m^{-1}$ is equal to $\phi(\Gamma)$, which is a subgroup of $L$.
\end{proof}

As a first application, we solve the problem of linearization for finite subgroups in the triangular group of $\Aut(\C^n)$.
Recall that $f = (f_1, \dots, f_n) \in \Aut(\C^n)$ is \textbf{triangular} if for each $i$, $f_i = a_i x_i + P_i$ where $P_i \in \C[x_{i+1}, \dots, x_n]$.  

\begin{corollary} \label{cor:linearization in triangular}
Let $\Gamma \subseteq \Aut(\C^n)$ be a finite group.
Assume that $\Gamma$ lies in the triangular group of $\Aut(\C^n)$.
Then $\Gamma$ is diagonalizable inside the triangular group. 
\end{corollary}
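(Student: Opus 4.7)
The plan is to apply Lemma \ref{lem:abstract linearization} directly, with $V = \C^n$ and $G$ the triangular group $T_n$ of $\Aut(\C^n)$. The key is to find a semi-direct product decomposition $T_n = M \rtimes L$ where $L$ is the diagonal subgroup and $M$ consists of the ``unipotent-plus-translation'' part.

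More precisely, I would set
\[
L = \{(a_1 x_1, \dots, a_n x_n);\, a_i \in \C^*\}, \quad
M = \{(x_1 + P_1, \dots, x_n + P_n);\, P_i \in \C[x_{i+1}, \dots, x_n]\}.
\]
First I would check that $T_n = M \rtimes L$: given $f = (a_1 x_1 + P_1, \dots, a_n x_n + P_n) \in T_n$, set $d = (a_1 x_1, \dots, a_n x_n) \in L$ and $u = f \circ d^{-1}$; a quick computation shows that $u \in M$, and similarly $L$ normalizes $M$ because conjugating by a diagonal element only rescales the polynomials $P_i$ (and crucially preserves the constraint $P_i \in \C[x_{i+1}, \dots, x_n]$).

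Next, I would verify the mean-stability hypothesis. Given $m_1, \dots, m_r \in M$ with $m_j = (x_1 + P_1^{(j)}, \dots, x_n + P_n^{(j)})$, the componentwise mean
\[
\frac{1}{r}\sum_{j=1}^{r} m_j = \Bigl(x_1 + \tfrac{1}{r}\sum_j P_1^{(j)}, \dots, x_n + \tfrac{1}{r}\sum_j P_n^{(j)}\Bigr)
\]
again lies in $M$, since each averaged polynomial still depends only on $x_{i+1}, \dots, x_n$. The linearity of $L$ is obvious.

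With these two ingredients, Lemma \ref{lem:abstract linearization} applied to $\Gamma \subseteq T_n$ gives an element $m \in M \subseteq T_n$ such that $m \Gamma m^{-1} \subseteq L$, which is exactly the statement that $\Gamma$ is diagonalizable inside the triangular group. There is no real obstacle here beyond bookkeeping; the only point that requires a moment of care is the verification that the ``translation part'' $M$ is closed under means, which is precisely why one must allow polynomial (rather than merely constant) entries in the lower components.
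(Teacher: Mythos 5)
Your proof is correct and follows essentially the same route as the paper: the paper's proof is exactly the one-line application of Lemma \ref{lem:abstract linearization} with $G$ the triangular group, $L$ the diagonal subgroup and $M$ the unipotent triangular automorphisms (all $a_i=1$). You simply spell out the verifications (the semi-direct decomposition via the diagonal-coefficients homomorphism and the stability of $M$ under means) that the paper leaves implicit, and these checks are accurate.
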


\begin{proof}
Apply Lemma \ref{lem:abstract linearization} by taking $G$ the triangular group, $L$ the group of diagonal matrices and $M$ the group of unipotent triangular automorphisms, that is to say with all $a_i = 1$.  
\end{proof}

\begin{proof}[Proof of Theorem \ref{thm:mainLin}]
Let $\Gamma$ be a finite subgroup of $\TSL$. 
The circumcenter $x$ of any orbit is a fixed point under the action of $\Gamma$. 
We claim that $\Gamma$ also fixes a vertex of $\Comp$. 
Indeed, let $\Sq$ be the support of $x$, that is, the cell of minimal dimension containing $x$.
If $\Sq$ is a vertex, there is nothing to prove. 
If $\Sq$ is an edge, then since its two vertices are not of the same type, $\Sq$ is fixed pointwise and its vertices also.
If $\Sq$ is a square, its two vertices of type 1 and 3 
are necessarily fixed by $\Gamma$ (but its two vertices of type 2 may be interchanged).

Up to conjugation, we may assume that $\Gamma$ fixes $[ \id]$, $[x_1,x_3]$ or $[x_1]$.

If $\Gamma$ fixes $[\id]$, this means that $\Gamma$ is included into $\OO_4$: There is nothing more to prove.

If $\Gamma$ fixes $[x_1,x_3]$, recall that $\Stab([x_1,x_3]) = \Er \rtimes \GL_2$ (Lemma \ref{lem:stab x1x3}). 
We conclude by Lemma \ref{lem:abstract linearization}, using the natural embedding $\Stab([x_1,x_3]) \to \Aut(\C^4)$.

Finally, assume that $\Gamma$ fixes $[x_1]$.
The group $\Stab([x_1])$ being the amalgamated product of its two subgroups $K_1$ and $H_2$ along their intersection (see Lemma~\ref{pro:second amalgamated structure of Stabx1}), we may assume, up to conjugation in $\Stab([x_1])$, that $\Gamma$ is included into $K_1$ or $H_2$ (e.g. \cite[I.4.3, Th. 8, p. 53]{S2}).

By forgetting the fourth coordinate, the group  $K_1$ may be identified with the  subgroup $\tilde{K_1}$ of $\Aut ( \A^3)$ whose elements are of the form
$$ (ax_1, bx_2 + ax_1 P(x_1), b^{-1} x_3 + ax_1 Q(x_1) ) \hspace{3mm} \text{or} \hspace{3mm}  (ax_1, b^{-1} x_3 + ax_1 Q(x_1),  bx_2 + ax_1 P(x_1)).$$
Then we can apply Lemma  \ref{lem:abstract linearization}, using the  embedding $\tilde{K_1} \to \Aut(\C^3)$ and the semi-direct product $\tilde{K_1} = M \rtimes L$, where
\begin{align*}
M &= \left\lbrace \left( x_1, x_2 + x_1 P(x_1),  x_3 + x_1 Q(x_1) \right) ;\; P,Q \in \C[x_1] \right\rbrace; \\
L &= \left\lbrace \left(ax_1, bx_2 , b^{-1} x_3\right) \text{ or }  \left(ax_1, b^{-1} x_3 ,  bx_2\right);\; a,b \in \C^* \right\rbrace. 
\end{align*}

Similarly, the group  $H_2$ may be identified with the  subgroup of triangular automorphisms of $\Aut(\C^3)$ whose elements are of the form
$$(x_1,x_3,x_2) \mapsto \left(ax_1, b^{-1}x_3 + x_1 Q(x_1), bx_2 + x_1 P(x_1,x_3)\right).$$
Then we can apply Corollary \ref{cor:linearization in triangular}.
\end{proof}

\subsection{Tits alternative}  \label{sec:Tits alternative}

A group satisfies the \textbf{Tits alternative} (resp. the \textbf{weak Tits alternative}) if each of its subgroups (resp. finitely generated subgroups) $H$ satisfies the following alternative: Either $H$ is virtually solvable (i.e. contains a solvable subgroup of finite index), or $H$ contains a free subgroup of rank 2.

It is known that $\Aut(\C^2)$ satisfies the Tits alternative (\cite{Ltits}), and that $\Bir(\p^2)$ satisfies the weak Tits alternative (\cite{Can}). 
One common ingredient to obtain the Tits alternative for $\TSL$ or for $\Bir(\p^2)$ is the following result (see \cite[Lemma 5.5]{Dinh}) asserting that groups satisfying the Tits alternative are stable by extension: 

\begin{lemma}  \label{lem:Tits alternative is stable by extension}
Assume that we have a short exact sequence of groups:
$$1 \to A \to B \to C \to 1,$$
where $A$ and $C$ are virtually solvable (resp. satisfy the Tits alternative), then $B$ is also virtually solvable (resp. also satisfies the Tits alternative).
\end{lemma}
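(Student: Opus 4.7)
The strategy is to reduce everything to the following core fact: the class of virtually solvable groups is closed under extensions. Once this is in hand, the Tits alternative half of the statement will follow by a short dichotomy argument applied to the induced sequence on any subgroup.

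For any subgroup $H \leq B$, write $\phi \colon B \to C$ for the given projection and consider the restricted exact sequence
\[ 1 \to H \cap A \to H \to \phi(H) \to 1. \]
I would first treat the Tits alternative. If $\phi(H)$ contains a non-abelian free subgroup $F$ of rank $2$, choose lifts $\tilde g_1, \tilde g_2 \in H$ of two free generators of $F$: any non-trivial reduced word in $\tilde g_1, \tilde g_2$ projects to a non-trivial reduced word in $F$, hence the subgroup $\langle \tilde g_1, \tilde g_2\rangle$ of $H$ is itself free of rank $2$. Similarly, if $H \cap A$ already contains a free subgroup of rank $2$, then so does $H$. In the remaining case, both $\phi(H) \leq C$ and $H \cap A \leq A$ are virtually solvable, and the core fact applied to the extension above gives that $H$ is itself virtually solvable.

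To establish the core fact I would proceed in two reductions. First, recalling that a virtually solvable group $C$ always contains a \emph{normal} solvable subgroup $C_0$ of finite index (take the kernel of the translation action of $C$ on the finite set of cosets of any solvable finite-index subgroup), replacing $B$ by $\phi^{-1}(C_0)$ -- which is of finite index in $B$ -- one may assume that $C$ is itself solvable. Second, induct on the derived length of $C$: filtering via the derived series of $C$ and pulling back to $B$ reduces the problem to the case where $C = B/A$ is abelian.

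The main obstacle is then the abelian-quotient case: given $A$ virtually solvable and normal in $B$, with $B/A$ abelian, show that $B$ is virtually solvable. I would take $A_0 \trianglelefteq A$ solvable normal of finite index and form the $B$-normal core $A_0^\ast := \bigcap_{b \in B} b A_0 b^{-1}$, which is automatically $B$-normal and solvable. The heart of the argument is to show that $A_0^\ast$ retains finite index in $A$ -- a finiteness statement about the $B$-orbit of $A_0$ in the set of finite-index subgroups of $A$; once this is established, $B/A_0^\ast$ is an extension of the abelian group $B/A$ by the finite group $A/A_0^\ast$, hence virtually abelian, so that $B$ itself is solvable-by-finite, i.e.\ virtually solvable. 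For the details of this last technical step we refer to \cite[Lemma 5.5]{Dinh}.
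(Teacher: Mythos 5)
Your two reductions are fine: the lifting argument showing that a rank~$2$ free subgroup of $\phi(H)$ or of $H\cap A$ yields one in $H$, and the passage to $\phi^{-1}(C_0)$ followed by induction on the derived length of $C$, are correct and standard. (For context: the paper itself gives no proof of this lemma, it simply invokes \cite[Lemma 5.5]{Dinh}, so the whole content of the exercise is the ``core fact'' you isolate.) The problem is that the step you call the heart of the argument is false as stated, and it is also the step you do not prove: the $B$-normal core $A_0^\ast=\bigcap_{b\in B}bA_0b^{-1}$ of an \emph{arbitrarily chosen} solvable finite-index subgroup $A_0$ normal in $A$ need not have finite index in $A$. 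Take the lamplighter situation $A=\bigoplus_{i\in\Z}\Z/2\Z$, $B=A\rtimes\Z$ with $\Z$ acting by the shift (so $B/A\cong\Z$ is abelian, exactly your setting), and $A_0=\{(x_i):x_0=0\}$: this is abelian, normal of index $2$ in $A$, yet its $B$-conjugates are the subgroups $\{(x_i):x_k=0\}$, whose intersection is trivial, of infinite index. Of course this $B$ is still metabelian, so the lemma is not threatened, but it shows your ``finiteness statement about the $B$-orbit of $A_0$'' cannot be established in the generality you need; and referring ``for the details of this last technical step'' to \cite[Lemma 5.5]{Dinh} is circular, since that is precisely the statement being proved.

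The gap is repairable by choosing $A_0$ \emph{characteristic} in $A$, so that it is automatically normal in $B$ and no core is needed. Such a subgroup exists: take a solvable subgroup $A_1$ of finite index, replace it by its normal core in $A$ (a finite intersection of conjugates, still solvable of finite index), and set $S=\langle\,\phi(A_1):\phi\in\Aut(A)\,\rangle$. Then $S$ is characteristic, contains $A_1$ and hence has finite index, and $S$ is solvable because $S/A_1$ is a finite group generated by the solvable normal subgroups $\phi(A_1)A_1/A_1$, hence solvable, and solvable-by-solvable is solvable. With $A_0:=S$ the quotient $B/A_0$ is an extension of $B/A$ by the finite group $A/A_0$; beware that such a finite-by-abelian group need not be virtually \emph{abelian} as you claim (an infinite extraspecial $2$-group is finite-by-abelian with no abelian subgroup of finite index), but the centralizer of $A/A_0$ in $B/A_0$ has finite index and is central-by-abelian, hence nilpotent, hence solvable; its preimage in $B$ is then solvable-by-solvable of finite index, so $B$ is virtually solvable. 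With this modification your outline becomes a complete proof (and the induction on derived length could even be skipped, since the same argument handles $B/A$ solvable directly).
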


We shall also use the following elementary lemma about behavior of solvability under taking Zariski closure. 

\begin{lemma}\label{lem:closure}
Let $A \supseteq B$ be  subgroups of $\SL_2$.
\begin{enumerate}
\item We have $[ \overline{A} : \overline{B} ] \leq [A:B]$;
\item We have $D\left( \overline{A} \right) \subseteq \overline{D(A)},$ where we denote $D(G)$ the derived subgroup of the group $G$;
\item If $A$ is solvable, then $\overline{A}$ is also;
\item If $A$ is virtually solvable, then $\overline{A}$ is also.
\end{enumerate}
\end{lemma}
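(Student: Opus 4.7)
The plan is to rely on the following standard fact about Zariski topology on a linear algebraic group: the closure of a subgroup is a subgroup (because multiplication and inversion are morphisms, hence continuous, so one checks successively $H \cdot \overline{H} \subseteq \overline{H}$, then $\overline{H}\cdot \overline{H} \subseteq \overline{H}$, and $\overline{H}^{-1}=\overline{H}$). With this in hand, the four points are proved in order, each using the previous ones.

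For (1), I would assume $[A:B]=n$ is finite (otherwise there is nothing to prove) and pick coset representatives $g_1,\dots,g_n\in A$. Since each translation $x\mapsto g_ix$ is a homeomorphism, $g_i\overline{B}=\overline{g_iB}$, so $\bigcup_i g_i\overline{B}$ is closed and contains $A=\bigcup_i g_iB$. Hence $\overline{A}\subseteq \bigcup_i g_i\overline{B}\subseteq \overline{A}$, which yields $[\overline{A}:\overline{B}]\leq n$.

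For (2), the commutator map $c\colon \SL_2\times\SL_2\to\SL_2$, $(g,h)\mapsto ghg^{-1}h^{-1}$, is a morphism of varieties, hence continuous. Thus $c(\overline{A}\times\overline{A})\subseteq \overline{c(A\times A)}\subseteq \overline{D(A)}$. Since $\overline{D(A)}$ is a subgroup (by the general fact recalled above), it contains the group generated by these commutators, i.e.\ $D(\overline{A})\subseteq \overline{D(A)}$. For (3), iterate (2) along the derived series: if $A^{(k)}$ denotes the $k$-th derived subgroup, an easy induction gives $\overline{A}^{(k)}\subseteq \overline{A^{(k)}}$, and if $A^{(n)}=\{1\}$ then $\overline{A^{(n)}}=\{1\}$, so $\overline{A}$ is solvable. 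Finally, (4) is immediate from (1) and (3): given a solvable finite-index subgroup $B\subseteq A$, the closure $\overline{B}$ is solvable by (3) and of finite index in $\overline{A}$ by (1).

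None of the steps are really hard — the only point worth double-checking is that the Zariski closure of a subgroup is a subgroup, which is the small topological lemma recalled at the start and which underlies both the well-definedness of $\overline{D(A)}$ as a group in (2) and the coset argument in (1).
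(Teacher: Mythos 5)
Your proof is correct and follows essentially the same route as the paper: the coset argument for (1), continuity of the commutator morphism for (2), iteration along a derived/subnormal series for (3), and the combination of (1) and (3) for (4). The only difference is presentational: you make explicit the auxiliary fact that the Zariski closure of a subgroup is again a subgroup, which the paper uses tacitly.
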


\begin{proof}
(1) If $[A:B] = + \infty$, there is nothing to show. If $[A:B]$ is an integer $n$, there exist elements $a_1,\ldots,a_n$ of $A$ such that $A = \bigcup_i a_iB$. By taking the closure, we obtain  $\overline{A} = \bigcup_i a_i\overline{B}$ and the result follows.

(2) The preimage of the closed subset $\overline{D(A)} \subseteq \SL_2$ under the commutator morphism is closed in $\SL_2 \times \SL_2$.

(3) There exists a sequence of subgroups of $\SL_2$ such that
\[ A=A_0 \supseteq A_1 \supseteq \cdots \supseteq A_n = \{ 1 \} \quad \text{and} \quad D(A_i) \subseteq A_{i+1} \quad \text{for each $i$.} \]
By the last point, we immediately obtain
\[ \overline{A}=\overline{A}_0 \supseteq \overline{A}_1 \supseteq \cdots \supseteq \overline{A}_n = \{ 1 \} \quad \text{and} \quad D\left( \overline{A}_i \right)\subseteq \overline{A}_{i+1} \quad \text{for each $i$.} \]

(4) This is a direct consequence of points (1) and (3).
\end{proof}

We now want to apply a general theorem by Ballmann and {\'S}wi{\polhk{a}}tkowski.
In order to state their result we recall basic notions related to cubical complexes, which are cell complexes in which every cell is combinatorially equivalent to a cube.
We say that $X$ has dimension $d$ if the maximal dimension of cells is $d$.
A $d$-dimensional cubical complex X is \textbf{foldable} if there exists a combinatorial map of $X$ onto an $d$-cube which is injective on each cell of $X$.
We say that $X$ is \textbf{gallery connected} if any two top-dimensional $d$-cells are linked by a sequence of $d$-cells where any two consecutive cells have a $(d-1)$-cell in common.

\begin{theorem}[{\cite[Theorem 2]{BS}}]\label{thm:Ballmann Swiatkowski}
Let $X$ be a $d$-dimensional $\CAT(0)$, foldable, gallery connected cubical
 complex and $\Gamma \subseteq \Aut (X)$ a subgroup. Suppose that $\Gamma$ does not contain a free nonabelian subgroup acting freely on $X$. Then up to passing to a subgroup of finite index, there is a surjective homomorphism $h \colon \Gamma \to \Z^k$ for some $k \in \{0,\ldots,d \}$ such that the kernel $\Delta$ of $h$ consists precisely of the elliptic elements of $\Gamma$ and, furthermore, precisely one of the following three possibilities occurs:
\begin{enumerate}
\item $\Gamma$ fixes a point in $X$ (then $k=0$).
\item $k \geq 1$ and there is a $\Gamma$-invariant convex subset $E \subseteq X$ isometric to $k$-dimensional Euclidean space such that $\Delta$ fixes $E$ pointwise and such that $\Gamma / \Delta$ acts on $E$ as a cocompact lattice of translations. In particular, $\Gamma$ fixes each point of $E( \infty) \subseteq X(\infty)$.
\item $\Gamma$ fixes a point of $X ( \infty)$, but $\Delta$ does not fix a point in $X$. There is a sequence $(x_m)$ in $X$ which converges to a fixed point of $\Gamma$ in $X ( \infty)$ and such that the groups $\Delta_n := \Delta \cap \Stab (x_n)$ form a strictly increasing filtration of $\Delta$, i.e. $\Delta _n \subsetneqq \Delta _{n+1}$ and $\bigcup \Delta_n = \Delta$.
\end{enumerate}
\end{theorem}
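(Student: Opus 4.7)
The plan is to reduce the problem to the classical theory of group actions on trees by exploiting foldability. A $d$-dimensional foldable $\CAT(0)$ cube complex admits $d$ parallel classes of hyperplanes, and each class yields a $\Gamma$-equivariant projection $\pi_i\colon X \to T_i$ onto a simplicial tree (analogous to the trees $\T_V$ and $\T_H$ of Section~\ref{sec:product of trees}); gallery connectedness guarantees that each $T_i$ is connected, and the combined map $X \hookrightarrow T_1 \times \cdots \times T_d$ is a $\Gamma$-equivariant quasi-isometric embedding onto a convex subcomplex. The action of $\Gamma$ on each $T_i$ then falls under the standard tree-action dichotomy: either there exist two hyperbolic elements with disjoint axes, in which case a ping-pong argument produces a nonabelian free subgroup acting freely on $T_i$ (and therefore freely on $X$, since a free action on $T_i$ pulls back to a free action on $X$ via the hyperplane-separation), contradicting our hypothesis; or the action is ``lineal'', i.e.\ it fixes a vertex, an end, or preserves a unique axis of $T_i$.

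After replacing $\Gamma$ by a finite-index subgroup acting on each $T_i$ without inversions and preserving vertex types, every ``axial'' coordinate provides a translation-length homomorphism $\Gamma \to \Z$. The product of these homomorphisms gives the desired surjection $h\colon \Gamma \to \Z^k$ with $0 \le k \le d$, and an argument parallel to Lemma~\ref{lem:elliptic on projections} identifies $\Delta := \ker h$ with the set of elliptic elements of $\Gamma$. One then distinguishes the three cases according to the behaviour of $\Delta$: if $k=0$ and $\Delta = \Gamma$ has a global fixed point in $X$ (obtained as the circumcenter of an orbit), we are in Case (1); if $k \ge 1$ and $\Delta$ still fixes a vertex of $X$, the $k$ axial directions should assemble into a $\Gamma$-invariant flat (Case (2)); otherwise, only a fixed point at infinity survives (Case (3)).

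The main obstacle I expect is Case (2): producing the $\Gamma$-invariant Euclidean $k$-flat $E$ on which $\Delta$ acts trivially. The tool of choice is the flat torus theorem applied to a lift in $\Gamma$ of a basis of $\Z^k$: these $k$ pairwise commuting hyperbolic isometries have mutually invariant and pairwise intersecting minimizing sets, so that $Y := \bigcap_i \Min(g_i)$ is a non-empty closed convex subset of $X$ which, by the product decomposition theorem for $\CAT(0)$ spaces, splits isometrically as $\R^k \times Y_0$; the $\R^k$ factor provides $E$, and $\Delta$ must fix $E$ pointwise because it centralizes the $g_i$ while being elliptic on every coordinate tree, hence has bounded orbits in $Y_0$ and trivial displacement along the translation directions. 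For Case (3), the delicate point is extracting the strictly increasing filtration $\Delta_n = \Delta \cap \Stab(x_n)$: one chooses a sequence $(x_n)$ along a combinatorial ray pointing to a $\Gamma$-fixed boundary point, and uses cocompactness of the cell structure (coming from foldability) to ensure that cell-stabilizers grow strictly without stabilising, since otherwise $\Delta$ would stabilize a cell and thus fix a point of $X$.
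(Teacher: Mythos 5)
A preliminary remark: the paper does not prove this statement at all — it is quoted verbatim as Theorem 2 of Ballmann--\'Swi\polhk{a}tkowski \cite{BS}, and only its specialization (Corollary \ref{cor:ballman}) is argued in the text. So your sketch can only be measured against the original argument of \cite{BS}, whose overall strategy you do reproduce correctly in outline: foldability yields $d$ families of hyperplanes and $\Gamma$-equivariant projections onto $d$ trees, $X$ embeds quasi-isometrically into their product, and on each tree factor one invokes the classification of actions without free subgroups acting freely (fixed vertex, fixed end, or invariant line, as in \cite{PV}), the free case being excluded because a fixed point in $X$ projects to a fixed point in each tree.

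There are, however, genuine gaps in how you assemble the conclusion. The most serious is Case (2): elements $g_1,\dots,g_k\in\Gamma$ lifting a basis of $\Z^k$ need not commute — only their images in $\Z^k$ do — so you do not have a rank-$k$ free abelian group to which the Flat Torus Theorem could apply; and even granting such a subgroup, that theorem (\cite[II.7.1]{BH}) requires a \emph{proper} action, which fails badly here (in the intended application the point stabilizers in $\TSL$ are huge), so the splitting $\bigcap_i\Min(g_i)\simeq\R^k\times Y_0$ is unjustified. In \cite{BS} the invariant flat is produced combinatorially from the invariant lines of the axial tree factors inside the product of trees, not from the Flat Torus Theorem. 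Second, your homomorphism $h$ is built only from translation-length characters of the axial factors; a factor on which $\Gamma$ fixes a single end can still contain hyperbolic elements, and these are detected only by the Busemann character toward that end. Omitting those characters, $\ker h$ need not consist precisely of the elliptic elements of $\Gamma$, which is the crux of the statement. Third, in Case (3) the exhaustion $\bigcup_n\Delta_n=\Delta$ does not follow from ``cocompactness of the cell structure'': foldability gives no cocompactness of the $\Gamma$-action and none is relevant. What is needed is the argument that a group of elliptic isometries fixing a boundary point has fixed-point sets accumulating toward it, so that suitable points $x_n$ tending to that boundary point satisfy $\Delta=\bigcup_n\Delta\cap\Stab(x_n)$ with strict growth; as written this step is asserted rather than proved.
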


In our situation, the result translates as

\begin{corollary} \label{cor:ballman}
Let $\Gamma \subseteq \TSL$ be a subgroup which does not contain a free subgroup of rank 2, and consider the derived group $D(\Gamma)$.
Then one of the following possibilities occurs:
\begin{enumerate}
\item $D(\Gamma)$ is elliptic.
\item There is a morphism $h \colon D(\Gamma) \to \Z$ such that the kernel of $h$ is elliptic or parabolic.
\item $D(\Gamma)$ is parabolic.
\end{enumerate}
\end{corollary}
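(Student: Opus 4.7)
The plan is to apply Theorem~\ref{thm:Ballmann Swiatkowski} to the action of $D(\Gamma)$ on the square complex $\Comp$. The first task is to verify that $\Comp$ satisfies the hypotheses of that theorem. By construction $\Comp$ is $2$-dimensional, and by Theorem~\ref{thm:mainHyp} it is $\CAT(0)$. For foldability, I will use the natural $4$-coloring of the vertices of $\Comp$ by types: type~$1$, type~$3$, horizontal type~$2$, and vertical type~$2$ (see Definition~\ref{def:horizontal and vertical edges} and Lemma~\ref{lem: 2a and 2b}); inspection of the standard square in Figure~\ref{fig:bigsquare} shows that each square carries exactly one vertex of each color, so sending every vertex of $\Comp$ to the appropriately chosen vertex of a reference $2$-cube produces a combinatorial map that is a bijection on each square. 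Gallery-connectedness follows from the connectedness of $\Comp$ (Proposition~\ref{pro:1connected}) together with the connectedness of all links (Lemma~\ref{lem:link1connected}, Propositions~\ref{pro:linktype2} and~\ref{pro:linktype3}): every edge of $\Comp$ lies in at least one square, and two squares sharing only a vertex can be linked by a chain of edge-adjacent squares through the connected link at that common vertex.

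With the hypotheses in place, I apply Theorem~\ref{thm:Ballmann Swiatkowski} to $D(\Gamma)$, which, being a subgroup of $\Gamma$, contains no free subgroup of rank~$2$. Replacing $D(\Gamma)$ by a finite-index subgroup $\Lambda$, the theorem produces a surjection $h_{\Lambda}\colon \Lambda \twoheadrightarrow \Z^{k}$ for some $k \in \{0,1,2\}$ whose kernel $\Delta_{\Lambda}$ consists exactly of the elliptic elements of $\Lambda$, and places $\Lambda$ into one of three possibilities.

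I translate each possibility into the corresponding case of the corollary. In possibility~(1), $\Lambda$ fixes a point in $\Comp$; since $[D(\Gamma):\Lambda]$ is finite, the $D(\Gamma)$-orbit of this point is finite, and its circumcenter is a $D(\Gamma)$-fixed point, so $D(\Gamma)$ is elliptic, giving case~(1) of the corollary. In possibility~(2), $\Lambda$ preserves a $k$-dimensional Euclidean flat $E \subseteq \Comp$ on which $\Lambda/\Delta_{\Lambda} \cong \Z^{k}$ acts cocompactly by translations, with $\Delta_{\Lambda}$ fixing $E$ pointwise; composing $h_{\Lambda}$ with a projection $\Z^{k} \twoheadrightarrow \Z$ yields a morphism $\Lambda \to \Z$ with elliptic kernel, which is then promoted to a morphism $h \colon D(\Gamma) \to \Z$ with elliptic kernel, giving case~(2). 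Finally, in possibility~(3), $\Lambda$ fixes a point of $\Comp(\infty)$ without possessing any fixed point in $\Comp$, so $\Lambda$ is parabolic; the finite-orbit argument at infinity furnishes a $D(\Gamma)$-fixed point of $\Comp(\infty)$, while the circumcenter argument of possibility~(1) rules out any $D(\Gamma)$-fixed point in $\Comp$, so $D(\Gamma)$ is parabolic, which is case~(3).

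The main technical obstacle will be the extension of the character $\Lambda \to \Z$ to a homomorphism $D(\Gamma) \to \Z$ in possibility~(2): it relies on the torsion-freeness of $\Z$, which allows the index $[D(\Gamma):\Lambda]$ to be absorbed, together with the crucial observation that the obstructions to multiplicativity arising from conjugation by coset representatives lie in iterated commutator subgroups and therefore vanish in the abelian target $\Z$; the resulting character's kernel continues to act elliptically on the flat $E$ and hence elliptically on $\Comp$.
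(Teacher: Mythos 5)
Your setup (foldability via the four vertex types, gallery connectedness, applying Theorem~\ref{thm:Ballmann Swiatkowski} with $d=2$) matches the paper, but the way you handle the conclusions has two genuine gaps. First, you allow $k=2$ and claim that composing $h_{\Lambda}\colon\Lambda\twoheadrightarrow\Z^{2}$ with a projection $\Z^{2}\to\Z$ yields a morphism with elliptic kernel. That is false: the kernel of the composition strictly contains $\Delta_{\Lambda}$ and contains elements acting as nontrivial translations of the invariant flat $E$, hence hyperbolic elements, so it is neither elliptic nor parabolic. The paper instead rules out $k=2$ altogether by invoking Proposition~\ref{pro:nogrid}: $\Comp$ contains no large grids, hence no isometrically embedded Euclidean plane, so no $\Gamma$-invariant $2$-flat can exist. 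Without this step case~(2) of the corollary is not reached.

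Second, and more seriously, your promotion of the character from the finite-index subgroup $\Lambda$ to $D(\Gamma)$ in possibility~(2) does not work as argued. A homomorphism to $\Z$ defined on a finite-index subgroup need not extend to the ambient group (the infinite dihedral group, with its index-two infinite cyclic subgroup, is the standard counterexample), and the appeal to ``obstructions lying in iterated commutator subgroups'' is not a proof: the abelian target only kills commutators of elements of $D(\Gamma)$, which is not the obstruction created by the cosets of $\Lambda$ in $D(\Gamma)$. The same problem affects your possibility~(3): a finite orbit in $\Comp(\infty)$ does not automatically produce a $D(\Gamma)$-fixed point at infinity, and you never show that the elements of $D(\Gamma)\smallsetminus\Lambda$ are elliptic, which the paper's definition of a parabolic subgroup requires. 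The paper avoids the finite-index passage entirely: it re-runs the proof of Ballmann--\'Swi\polhk{a}tkowski on $D(\Gamma)$ itself, projecting the action to the two tree factors $\T_V$ and $\T_H$ and using the elliptic/parabolic/loxodromic trichotomy for groups without rank-two free subgroups acting on trees; the only point where a finite-index (index two) subgroup is needed in general is to guarantee that the invariant pair of ends of a line is fixed pointwise, and this obstruction is a homomorphism to $\Z/2$ which vanishes automatically because $D(\Gamma)$ is a derived subgroup (it is normal in $\Gamma$, the invariant pair of ends is canonical, and the resulting swap character on $\Gamma$ kills $[\Gamma,\Gamma]$). Treating the theorem as a black box and trying to undo the finite-index passage afterwards, as you do, is the wrong approach here; the derived-subgroup hypothesis has to enter exactly at this end-swapping step.
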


\begin{proof}
By Lemma \ref{lem: 2a and 2b} the complex $\Comp$ admits four orbits of vertices under the action of $\STSL$, which are represented by the four vertices of the standard square.
This implies that $\Comp$ is foldable.
Thus $\Comp$ satisfies the hypothesis of Theorem~\ref{thm:Ballmann Swiatkowski} with $d=2$. 
Furthermore, since by Proposition~\ref{pro:nogrid} $\Comp$ does not contain a Euclidean plane, we must have $k=1$  in case (2). 
Now we review the proof of the theorem in order to see where it was necessary to pass to a subgroup of finite index.
The argument is to project the action of $D(\Gamma)$ on each factor, and to use the classical fact that a group that does not contain a free group of rank 2 and that acts on a tree is elliptic, parabolic or loxodromic \cite{PV}.
In the loxodromic case, in order to be sure that the pair of ends is pointwise fixed, in general we need to take a subgroup of order 2. 
But in our case $D(\Gamma)$ is a derived subgroup hence this condition is automatically satisfied.  
\end{proof}

Now we are essentially reduced to the study of elliptic and  parabolic subgroups in $\TSL$.
 
\begin{proposition}\label{pro:elliptic subgroups satisfy Tits alternative}
Let $\Delta \subseteq \TSL$ be an elliptic subgroup.
Then $\Delta$ satisfies the Tits alternative.
\end{proposition}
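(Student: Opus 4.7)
The plan is to use that $\Delta$ fixes a vertex $v$ of $\Comp$ and to handle three cases according to the type of $v$. By transitivity (Lemma~\ref{lem: 2a and 2b}), after conjugating by a suitable element of $\TSL$ I may assume $v$ is one of $[\id]$, $[x_1,x_3]$, or $[x_1]$, and I then prove in each case that $\Stab(v)$ itself satisfies the Tits alternative; since the alternative passes to subgroups, this suffices. For $v = [\id]$ one has $\Stab(v) = \OO_4$, a complex linear algebraic group, and the alternative follows from Tits' theorem. For $v = [x_1,x_3]$, Lemma~\ref{lem:stab x1x3} gives $\Stab(v) = \Er \rtimes \GL_2$; a short direct calculation shows that $\Er$ is isomorphic to the additive group of $\C[x_1,x_3]$, hence abelian, and $\GL_2$ is linear, so the extension Lemma~\ref{lem:Tits alternative is stable by extension} combined with Tits' theorem provides the alternative.

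The essential case is $v = [x_1]$. Here I would exploit Corollary~\ref{cor:linktype1}: every $g \in \Stab([x_1])$ satisfies $g_1 = a x_1$ for some $a \in \C^*$, and $(g_2, g_3)$ depends only on $x_1,x_2,x_3$, defining an element of a specific subgroup of $\Aut_{\C[x_1]}\C[x_1][x_2,x_3]$, the last component $g_4$ being then determined by the relation $g_1 g_4 - g_2 g_3 = x_1 x_4 - x_2 x_3$. The map $g \mapsto a$ yields a homomorphism $\Stab([x_1]) \to \C^*$, whose kernel $N$ embeds, via $g \mapsto (g_2,g_3)$ and extension of scalars to the fraction field, into $\Aut(\A^2_{\C(x_1)})$. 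Once the Tits alternative is known for $N$, the abelian quotient $\Stab([x_1])/N \hookrightarrow \C^*$ together with Lemma~\ref{lem:Tits alternative is stable by extension} finishes the proof.

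The main obstacle is therefore to establish the Tits alternative for $\Aut(\A^2_{\C(x_1)})$, the affine plane automorphism group over a non-algebraically-closed field. My plan is to adapt Lamy's argument from \cite{Ltits}, which proves the Tits alternative for $\Aut(\C^2)$. The ingredients used there -- the Jung-van der Kulk amalgamated decomposition $\Aut(\A^2_K) = A_2(K) *_{A_2(K) \cap E_2(K)} E_2(K)$, Tits' theorem applied to the linear affine factor $A_2(K) \subseteq \GL_3(K)$, solvability of the triangular factor $E_2(K)$, and Bass-Serre theory on the associated tree -- are all valid over any field of characteristic zero, and in particular over $K = \C(x_1)$; spelling out this adaptation is the main technical step.
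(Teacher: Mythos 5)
Your proposal is correct and follows essentially the same route as the paper: fix a vertex, split by type, use $\OO_4$ for type 3, the semidirect product $\Er \rtimes \GL_2$ with the extension lemma for type 2, and for type 1 the surjection $\Stab([x_1]) \to \C^*$ whose kernel lands in the plane automorphism group over $\C(x_1)$, where the Tits alternative is obtained by adapting \cite{Ltits} to an arbitrary field of characteristic zero. The paper likewise only asserts (without detail) that Lamy's argument carries over to $\Aut_K K[x_2,x_3]$ for such $K$, so your remaining "technical step" is at the same level of detail as the published proof.
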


\begin{proof}
If the globally fixed vertex $v$ is of type 1, we may assume that $v =[x_1]$. 
The stabilizer $\Stab ( [x_1])$ of $v$ is equal to the set of
automorphisms $f= \smat{f_1}{f_2}{f_3}{f_4}$  such that $f_1=ax_1$ for some $a \in \C^*$. The natural morphism of groups:
$$\Stab ( [x_1]) \to \C^*, \hspace{2mm} \mat{ax_1}{f_2}{f_3}{f_4} \mapsto a$$
is surjective. By Corollary \ref{cor:linktype1}, its kernel is a subgroup of $\Aut_{\C(x_1)} \C(x_1)[x_2,x_3]$. By \cite{Ltits}, $\Aut_{\C} \C[x_2,x_3]$ satisfies the Tits alternative, but the proof would be analogous for $\Aut_{K} K[x_2,x_3]$ for any field $K$ of characteristic zero.
Therefore,  Lemma \ref{lem:Tits alternative is stable by extension} shows us  that  $\Stab ([x_1])$, hence also $\Delta$, satisfies the Tits alternative.

If the vertex $v$ is of type 2, we may assume that $v = [x_1,x_3]$.
The stabilizer $\Stab ( [x_1,x_3])$ of $v$ is equal to the set of
automorphisms $f= \smat{f_1}{f_2}{f_3}{f_4}$  such that $\Vect(f_1,f_3)=\Vect(x_1,x_3)$. 
By Lemma \ref{lem:stab x1x3}, the natural morphism
$$\Stab ( [x_1,x_3]) \to  \Aut( \Vect(x_1,x_3)  ) \simeq \GL_2, \hspace{2mm} \mat{f_1}{f_2}{f_3}{f_4} \mapsto (f_1,f_3)$$
is surjective, and its kernel is the group $\Er$. The  group $\GL_2$  is linear, hence satisfies the Tits alternative (\cite{delaHarpe}) and the group $\Er$ is abelian. Therefore,  by Lemma \ref{lem:Tits alternative is stable by extension} the group $\Stab ( [x_1,x_3])$ satisfies the Tits alternative. 

If the vertex $v$ is of type 3, we may assume that  $v = \sbmat{x_1}{x_2}{x_3}{x_4}$.
The stabilizer of $v$ is the orthogonal group $\OO_4$, which is linear hence satisfies the Tits alternative.
\end{proof}

\begin{proposition}\label{pro:parabolic subgroups}
Let $\Delta \subseteq \TSL$ be a parabolic subgroup.
Then $\Delta$ is virtually solvable.
\end{proposition}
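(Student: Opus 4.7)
The plan is to project $\Delta$ onto the two trees $\T_V$ and $\T_H$ introduced in \S\ref{sec:product of trees}, extract a Busemann character to obtain a homomorphism $h\colon \Delta \to \Z$ with abelian image, and then exploit the rigidity imposed on the kernel by the fixed end at infinity, together with the Zariski-closure Lemma \ref{lem:closure}, to show $\ker h$ is virtually solvable.

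Since $\Delta$ is parabolic, each of its elements is elliptic on $\Comp$ and hence, by Lemma \ref{lem:elliptic on projections}, elliptic on both $\T_V$ and $\T_H$. A classical fact on tree actions says that a group of elliptic isometries of a simplicial tree either has a fixed vertex, or fixes a unique end of the tree and thereby admits a nontrivial Busemann character to $\Z$. Because $\Delta$ has no global fixed vertex in $\Comp$, the circumcenter argument used in Lemma \ref{lem:elliptic on projections} prevents both $\pi_V(\Delta)$ and $\pi_H(\Delta)$ from fixing vertices. Thus, after possibly exchanging the two trees, $\pi_V(\Delta)$ fixes an end $\xi_V$ of $\T_V$, and its Busemann character yields the desired nontrivial $h\colon \Delta \to \Z$.

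Set $K = \ker h$. For $g \in K$, the element $\pi_V(g)$ is elliptic on $\T_V$ with trivial Busemann character at $\xi_V$; an easy tree argument then forces the fixed subtree of $\pi_V(g)$ to contain the entire tail of any chosen ray toward $\xi_V$. Choosing such a ray $(v_n)_{n \ge 0}$ and setting $K_n := K \cap \Stab(v_n)$, we get an ascending chain exhausting $K$, and each $K_n$ fixes the infinite half-ray $v_n, v_{n+1}, \ldots$. By a further circumcenter argument (using a fixed vertex of $\pi_H(\Delta)$ if one exists, or else a second Busemann character in $\T_H$), each $K_n$ is elliptic on $\Comp$, and after conjugation lies in one of the explicit vertex stabilizers $\OO_4$, $\Er \rtimes \GL_2$, or $\Stab([x_1])$ described in \S\ref{sec:stab x1} and Lemma \ref{lem:stab x1x3}.

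The decisive step is to extract virtual solvability of $K$ from this geometric data. Each of the above vertex stabilizers admits a natural linear quotient (the whole of $\OO_4$, the $\GL_2$ factor of $\Er \rtimes \GL_2$, or the $\C^\ast$ quotient of $\Stab([x_1])$), and the stronger constraint that $K_n$ fixes a ray of $\T_V$ (and possibly of $\T_H$) translates, through the amalgamated decompositions displayed in Figure \ref{fig:nesting}, into the existence of a common invariant flag for the images of the $K_n$ in the relevant $\SL_2$ factor of the double cover $\SL_2 \times \SL_2 \to \SO_4$. Lemma \ref{lem:closure} then shows the Zariski closures of these images are solvable of uniformly bounded class, and the abelian (or virtually solvable) nature of the kernels of the linear quotients makes $K$ itself virtually solvable. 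The short exact sequence $1 \to K \to \Delta \to h(\Delta) \to 1$ together with the virtual-solvability analogue of Lemma \ref{lem:Tits alternative is stable by extension} yields the proposition.

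The main obstacle is precisely this algebraic translation: converting the geometric statement ``$K_n$ fixes a ray toward $\xi_V$'' into the algebraic one ``the images of the $K_n$ preserve a common Borel subgroup of $\SL_2$''. This requires reading off the Bass--Serre description of rays in $\T_V$ (associated with $\STSL = E_V *_{E_V \cap L_V} L_V$) in terms of the $\SL_2 \times \SL_2$ double cover of $\SO_4$, so that Lemma \ref{lem:closure} becomes applicable.
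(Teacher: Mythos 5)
Your opening reduction does not work as stated: for a group of \emph{elliptic} isometries of a simplicial tree fixing an end, the Busemann character is identically zero. Indeed, an elliptic isometry $g$ of a tree that fixes an end $\xi$ fixes pointwise a whole ray toward $\xi$ (take the ray issued from any vertex of its fixed subtree), so its translation along $\xi$ vanishes; since all elements of $\pi_V(\Delta)$ are elliptic (Lemma~\ref{lem:elliptic on projections}), the homomorphism $h$ you extract is trivial. So $K=\ker h=\Delta$, and the entire burden of the proof falls on your analysis of the filtration $K_n$, with no help from an ``abelian image'' reduction.

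That analysis is precisely where the argument is missing. First, $K_n=K\cap\Stab(v_n)$ only fixes a vertex of $\T_V$; to conclude that $K_n$ is elliptic on $\Comp$ you would need a fixed vertex in the $\T_H$-factor as well (Lemma~\ref{lem:elliptic on projections} is a statement about single elements, and a group all of whose elements are elliptic need not be elliptic), and your fallback ``second Busemann character in $\T_H$'' again yields only the trivial homomorphism; one would need, say, a doubly indexed filtration by stabilizers of vertices in both trees. Second, even granting ellipticity, landing in a vertex stabilizer of $\Comp$ is far too weak: $\Stab([x_1])$ is \emph{not} virtually solvable (the kernel of its map to $\C^*$ is a group of polynomial automorphisms over $\C(x_1)$ containing nonabelian free groups), so your claim that the kernels of the natural linear quotients are ``abelian (or virtually solvable)'' fails there; and the step you yourself flag as the main obstacle --- converting ``$K_n$ fixes a ray of $\T_V$'' into ``the images of the $K_n$ preserve a common Borel subgroup of $\SL_2$'' --- is asserted, not proved, and is not the correct statement in general (fixing a ray in a single tree factor still allows large, non-solvable stabilizers, e.g.\ whole hyperplane stabilizers). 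The paper gets the needed smallness differently: it builds a combinatorial ray of vertices of $\Comp$ converging to the fixed point at infinity and uses pointwise stabilizers of three consecutive vertices, which are conjugate to the square stabilizer $S$ of Lemma~\ref{lem:action on squares}, to $\widetilde{\Er}$ (both metabelian), or to $\GL_2$; only in the $\GL_2$ case does one invoke the Tits alternative for $\SL_2$ together with Lemma~\ref{lem:closure} and the normalizer classification, obtaining solvability of \emph{uniformly bounded} index so that the increasing union $\Delta=\bigcup\Delta'_n$ is still virtually solvable. Without an argument of this kind (and with the uniformity made explicit), your proposal does not yield the proposition.
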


\begin{proof}
The case of a parabolic subgroup $\Delta$ corresponds to Case (3) in Theorem \ref{thm:Ballmann Swiatkowski}, from which we keep the notations.
We may assume that all points $x_m$ are vertices of $\Comp$ (replace $x_m$ by one of the vertices of the cell containing $x_m$). 
For each $m$, consider the geodesic segment $S_m$ joining $x_m$ to $x_{m+1}$. Let $U_m$ be the union of the cells of $\Comp$ intersecting $S_m$.
Take $S'_m$ an edge-path geodesic segment of $\Comp$ joining $x_m$ to $x_{m+1}$ included into $U_m$, such that $S'_m \subseteq S'_{m+1}$ for all $m$. 
By considering the sequences of vertices on the successive $S'_m$, we obtain a sequence of vertices $y_i$, $i \ge 0$ such that:
\begin{itemize}
\item The sequence $x_m$ is a subsequence of $y_i$;
\item For each $i \ge 0$, $d(y_i, y_{i+1}) = 1$.
\end{itemize}

For each $m \geq 0$ we set
$$\Delta'_m = \Delta \cap \bigcap_{i \,\geq \, m} \Stab(y_i).$$

By construction the $\Delta'_m$ form an increasing filtration of $\Delta$. For $1 \leq j \leq 3$, let $X_j$ be the set of integers $i$ such that $y_i$ is a vertex of type $j$. One of the three following cases is satisfied:
\begin{enumerate}[a)]
\item $X_1$ and $X_3$ are infinite;
\item $X_1$ is infinite and $X_3$ is finite;
\item $X_1$ is finite and $X_3$ is infinite.
\end{enumerate}

In case a), there exists an infinite subset $A$ of $\N$ such that for all $a\in A$, the vertices $y_a, y_{a+1}, y_{a+2}$ are of type $1,2,3$ respectively. Note that the group  $\bigcap\limits_{a \,\leq \, i \, \leq \, a+2} \Stab(y_i)$ is conjugate to the group
$$S = \Stab ([x_1]) \cap \Stab ( [x_1,x_2]) \cap  \Stab ( [\id]),$$
which is the stabilizer of the standard square. 
Recall from Lemma \ref{lem:action on squares} that 
$$S=\left\{ \mat{ax_1}{b(x_2 + cx_1)}{b^{-1}(x_3 +d x_1)}{\dots}, \; a,b,c,d \in \C, \; ab \neq 0 \right\}$$
and so the second derived subgroup of $S$ is trivial: $D_2(S) = \{ 1 \}$. 
Therefore, $D_2 (\Delta'_a) =\{1 \}$ for each $a \in A$ and since $\Delta = \bigcup\limits_{a \in A}\Delta'_a$, we get $D_2 (\Delta) = 1$.\\

In case b), changing the first vertex we may assume that $X_3 = \emptyset$, that the vertices $y_{2i}$ of even indices are of type 2 and that the vertices $y_{2i+1}$ of odd indices are of type 1. Note that the group  $\bigcap\limits_{2a-1 \,\leq \, i \, \leq \, 2a+1} \Stab(y_i)$ is conjugate to the group
$${\widetilde \Er}= \Stab ([x_1]) \cap \Stab ( [x_1,x_3]) \cap  \Stab ([x_3]).$$
By Lemma \ref{lem:action on v3 next to v2} we have
$${\widetilde \Er} = \left\lbrace \mat{ax_1}{b^{-1} x_2+ax_1P(x_1,x_3)}{bx_3}{a^{-1} x_4+bx_3P(x_1,x_3)}; \;a,b \in \C^*, \, P \in \C[x_1,x_3] \right\rbrace$$
and thus $D_2\left( \widetilde \Er \right) = \{ 1 \}$. 
Therefore $\Delta'_{2a-1} =1$ and finally $D_2(\Delta) = 1$.\\

In case c), we may assume that $X_1 = \emptyset$, that the vertices $y_{2i}$ of even indices are of type 2 and that the vertices $y_{2i+1}$ of odd indices are of type 3. Note that the group  $\bigcap\limits_{2a \,\leq \, i \, \leq \, 2a+2} \Stab(y_i)$ is conjugate to the group
$$\Stab ([x_1,x_2]) \cap \Stab ( [\id]) \cap  \Stab ( [x_3,x_4]) \simeq \GL_2.$$
Up to passing again to the derived subgroup, we can assume that all $\Delta_n$ are conjugate to subgroups of $\SL_2$, where $\SL_2$ is identified to a subgroup of $\SO_4$ via the natural injection 
$$\SL_2 \to \SO_4, \;\mat{a}{b}{c}{d} \mapsto \mat{a}{b}{c}{d} \cdot \mat{x_1}{x_2}{x_3}{x_4}.$$
Since $\SL_2$ satisfies the Tits alternative, all $\Delta_n$, which by hypothesis do not contain free subgroups of rank 2, are virtually solvable.
By Lemma \ref{lem:closure}, the Zariski closure $\overline{\Delta}_n$ is again virtually solvable. 

If $\overline{\Delta}_n$ is finite for all $n$, since there is only a finite list of finite subgroups of $\SL_2$ which are not cyclic or binary dihedral, we conclude that all $\Delta_n$ are contained in binary dihedral groups hence solvable of index at most 3.

Now if $\dim \overline{\Delta}_n \ge 1$ for $n$ sufficiently large, then up to conjugacy the identity component $ ( \overline{\Delta}_n )^{\circ}$ of $\overline{\Delta}_n$ is equal to one of the following three groups:
\begin{multline*}
T= \left\{ \mat{\lambda}{0}{0}{\lambda^{-1}}, \; \lambda \in \C^* \right\}, \; A= \left\{ \mat{1}{\mu}{0}{1}, \; \mu \in \C \right\} \hspace{2mm}\\
\mbox{or} \; B= \left\{ \mat{\lambda}{\mu}{0}{\lambda^{-1}}, \; \lambda \in \C^*, \; \mu \in \C \right\}.
\end{multline*}

Therefore, $\overline{\Delta}_n$ is contained in the normalizer in $\SL_2$ of these groups. Since
\[\Nor_{\SL_2}(T)=\left\{  \mat{\lambda}{0}{0}{\lambda^{-1}}, \; \lambda \in \C^* \right\} \cup \left\{  \mat{0}{\lambda^{-1}}{\lambda}{0}, \; \lambda \in \C^* \right\}\]
and $ \Nor_{\SL_2}(A) = \Nor_{\SL_2}(B)=B$ are solvable of index 2, we conclude that $\Delta_n$ is solvable of index at most 2.

Finally in all cases $\Delta = \cup \Delta_n$ is solvable of index at most 3.
\end{proof}

We are now ready to prove Theorem \ref{thm:mainTits} from the introduction, that is, the Tits alternative for $\TSL$.

\begin{proof}[Proof of Theorem \ref{thm:mainTits}]
Let $\Gamma$ be a subgroup of $\TSL$, and assume that $\Gamma$ does not contain a free subgroup of rank $2$.  
We want to prove that $\Gamma$ is virtually solvable.
By Lemma \ref{lem:Tits alternative is stable by extension}, without loss in generality we can replace $\Gamma$ by its derived subgroup.
By Corollary \ref{cor:ballman} we have a short exact sequence
$$1 \to \Delta \to \Gamma \to \Z^k \to 1$$
with $k = 0$ or $1$.
By Lemma \ref{lem:Tits alternative is stable by extension}, it is enough to prove that $\Delta$ is virtually solvable.
When $\Delta$ is elliptic the result follows from
Proposition~\ref{pro:elliptic subgroups satisfy Tits alternative}, and when $\Delta$ is parabolic this is Proposition \ref{pro:parabolic subgroups}.
\end{proof} 

\section{Complements}
\label{sec:complements}

In this section we first provide examples of hyperbolic or hyperelliptic elements in $\TSL$, and also an example of parabolic subgroup.
Then we discuss several questions about the usual tame group of the affine space, the relation between $\Autq$ and $\Aut(\SL_2)$, and finally the property of infinite transitivity.

\subsection{Examples}

\subsubsection{Hyperbolic elements}

The following lemma allows us to produce some hyperbolic elements in $\TSL$, which are very similar to generalized H\'enon mapping on $\C^2$ from an algebraic point of view.

\begin{lemma} \label{lem:hyperbolic elements}
Let $P_1, \dots, P_r \in \C[x_2,x_4]$ be polynomials of degree at least 2, and $a_1, b_1, \dots, a_r,b_r \in \C^*$ be nonzero constants. Set 
$$g_i = \mat{b_i^{-1}x_2}{a_ix_1 + a_ix_2P_i(x_2,x_4)}{-a_i^{-1}x_4}{-b_ix_3-b_ix_4P_i(x_2,x_4)}.$$
Then the composition $g_r \circ \dots \circ g_1$ is a hyperbolic element of $\TSL$.
\end{lemma}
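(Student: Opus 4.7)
My plan is to construct an explicit $g$-invariant bi-infinite geodesic in $\Comp$ on which $g$ acts by translation with positive length, thereby proving $g$ hyperbolic.

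First I decompose each factor as $g_i = h_i \circ e_i$, where
$h_i = \smat{b_i^{-1}x_2}{a_ix_1}{-a_i^{-1}x_4}{-b_ix_3} \in \SO_4$ and $e_i = \smat{x_1+x_2P_i}{x_2}{x_3+x_4P_i}{x_4} \in \El$. A direct verification shows that $e_i$ fixes the type-$2$ vertex $[x_2,x_4]$, while $h_i$ fixes $[\id]$ and interchanges $[x_2,x_4]$ with $[x_1,x_3]$; in particular $g_i\cdot[\id]=[g_i^{-1}]$ lies at $\Comp$-distance $2$ from $[\id]$ via the geodesic $[\id]-[x_1,x_3]-[g_i^{-1}]$. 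In the single-factor case this immediately yields the bi-infinite $g_1$-invariant path
\[\cdots,\ [x_2,x_4],\ [\id],\ [x_1,x_3],\ [g_1^{-1}],\ g_1\cdot[x_1,x_3],\ [g_1^{-2}],\ \cdots\]
of translation length $2$, and Propositions \ref{pro:linktype2} and \ref{pro:linktype3} ensure it is a local (hence by $\CAT(0)$ global) geodesic: at each type-$3$ vertex the two incident type-$2$ neighbors correspond to opposite columns of the big square (distance $\pi$ in the link), and at each type-$2$ vertex the two incident vertices are both of type $3$ (distance $\pi$ in the complete bipartite link). All edges along it are horizontal, so it projects to a translation axis for $g_1$ in the tree $\T_H$.

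For general $r$, I push the orthogonal parts successively to the left in $g^{-1}=e_1^{-1}h_1^{-1}\cdots e_r^{-1}h_r^{-1}$ by means of the identity $h_j e_i^{-1} = (h_je_i^{-1}h_j^{-1})h_j$, obtaining the representation
\[ g^{-1} = H^{-1}\cdot F_1^{-1}F_2^{-1}\cdots F_r^{-1}, \qquad H = h_rh_{r-1}\cdots h_1 \in \OO_4, \]
where $F_i = (h_rh_{r-1}\cdots h_{i+1})\,e_i\,(h_rh_{r-1}\cdots h_{i+1})^{-1}$ is a generalized elementary. Since each $h_j$ interchanges the totally isotropic planes $\Vect(x_2,x_4)$ and $\Vect(x_1,x_3)$, conjugation by $h_j$ exchanges the subgroups $\El$ and $\Er$; therefore $F_i$ lies in $\El$ or $\Er$ according as $r-i$ is even or odd, so the successive $F_i$ strictly alternate between these two types. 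Setting $V_k := [F_1^{-1}\cdots F_k^{-1}]$ for $0\le k\le r$, each transition $V_{k-1}\to V_k = F_k\cdot V_{k-1}$ is a length-$2$ move through the fixed type-$2$ vertex of $F_k$ (either $[x_2,x_4]$ or $[x_1,x_3]$), so the concatenation of these $r$ segments provides, together with its $g$-translates, a candidate axis of total translation length $2r$ made entirely of horizontal edges.

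The remaining---and most delicate---step is to verify that no shortcut occurs at the joints between consecutive $F_k$ and $F_{k+1}$, which is precisely the analogue of cyclic reducedness in Bass--Serre theory. The alternation between $\El$ and $\Er$ already prevents the trivial collapse of two adjacent elementaries into a single one (as would occur in a non-alternating product within a single abelian subgroup). For a deeper shortcut, one would obtain two compatible elementary reductions at some intermediate vertex coming from two distinct types, bringing us exactly into the hypotheses of Lemma \ref{lem:12inLV}; the assumption $\deg P_i\ge 2$ is then used to exclude all four degenerate conclusions of that lemma, ruling out the shortcut. Once local geodesicity is established, Cartan--Hadamard yields that the axis is a global geodesic on which $g$ translates by $2r$, so $g$ is hyperbolic.
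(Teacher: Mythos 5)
Your single-factor argument is fine, but the passage to general $r$ contains a concrete false step. First, the factorization is wrong as stated: for $r=2$ your formula gives $H^{-1}F_1^{-1}F_2^{-1}=e_1^{-1}h_1^{-1}e_2^{-1}h_2^{-1}$ only if $F_1=(h_2h_1)e_1(h_2h_1)^{-1}$ and $F_2=h_2e_2h_2^{-1}$, i.e.\ the conjugator must be $h_r\cdots h_i$, not $h_r\cdots h_{i+1}$ (with your conjugators the product equals $h_1^{-1}e_1^{-1}h_2^{-1}e_2^{-1}\neq g^{-1}$). That slip is repairable, but the real problem is the claim that each transition $V_{k-1}\to V_k=F_k\cdot V_{k-1}$ is a length-$2$ move through the type-$2$ vertex fixed by $F_k$. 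This is true only for $k=1$. For $k\ge 2$ the type-$2$ neighbours of $V_{k-1}=[F_1^{-1}\cdots F_{k-1}^{-1}]$ are the row/column classes of a \emph{non-linear} automorphism, and none of them is $[x_2,x_4]$ or $[x_1,x_3]$: e.g.\ the neighbours of $[F_1^{-1}]$ with $F_1\in\El$ are $[x_2,x_4]$, $[x_1-x_2Q,x_2]$, $[x_3-x_4Q,x_4]$, $[x_1-x_2Q,x_3-x_4Q]$, and the last one differs from $[x_1,x_3]$ as soon as $Q$ is non-constant. The underlying confusion is between left and right composition: Lemma \ref{lem:action on v3 next to v2}(3) says two type-$3$ vertices share a type-$2$ neighbour when they differ by \emph{left} composition with a generalized elementary, whereas your consecutive partial products differ by right composition with $F_k^{-1}$, i.e.\ by left composition with the conjugate $(F_1^{-1}\cdots F_{k-1}^{-1})F_k^{-1}(F_1^{-1}\cdots F_{k-1}^{-1})^{-1}$, which is not generalized elementary; already $d(V_1,V_2)\ge 4$ in general. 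So your candidate axis is not even an edge path, and the elliptic isometry $F_k$ moves $V_{k-1}$ (which is far from $\mathrm{Fix}(F_k)$) by much more than $2$.

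Even after repairing the path (one should use left partial products of $g^{-1}=E_1^{-1}\cdots E_r^{-1}H^{-1}$ with $E_i=(h_{i-1}\cdots h_1)^{-1}e_i(h_{i-1}\cdots h_1)$, whose common neighbours are images of $[x_2,x_4]$ and $[x_1,x_3]$ under the partial compositions), the step you yourself call the most delicate is only asserted: ruling out shortcuts at the joints is exactly a normal-form statement, and excluding the four conclusions of Lemma \ref{lem:12inLV} requires degree control on the intermediate compositions (the parachute/lower-bound machinery of the Annex), not merely the hypothesis $\deg P_i\ge 2$. The paper's proof sidesteps all of this: both $h_i=\smat{b_i^{-1}x_2}{a_ix_1}{-a_i^{-1}x_4}{-b_ix_3}$ and $e_i$ preserve the hyperplane $\mathcal H$ dual to the edges $([x_1,x_2],[\id])$ and $([x_2],[x_2,x_4])$; $\mathcal H$ is a tree, $\Stab(\mathcal H)$ is the amalgam of the two edge stabilizers over the square stabilizer, and $g_r\circ\cdots\circ g_1$ is a cyclically reduced word of length $2r$ in this amalgam, hence acts hyperbolically on $\mathcal H$ with translation length $2r$, which transfers to $\Comp$ by convexity. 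If you want to keep an ``explicit axis'' proof, the cleanest route is to work inside this invariant tree rather than with vertices of $\Comp$.
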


\begin{proof}
We have
$$g_i = \mat{b_i^{-1}x_2}{a_ix_1 + a_ix_2P(x_2,x_4)}{-a_i^{-1}x_4}{-b_ix_3-b_ix_4P(x_2,x_4)} = \mat{b_i^{-1}x_2}{a_ix_1}{-a_i^{-1}x_4}{-b_ix_3} \circ \mat{x_1 + x_2P_i(x_2,x_4)}{x_2}{x_3+x_4P_i(x_2,x_4)}{x_4}.$$
Since 
$$\mat{b_i^{-1}x_2}{a_ix_1}{-a_i^{-1}x_4}{-b_ix_3} \;\text{ and }\; \mat{x_1 + x_2P_i(x_2,x_4)}{x_2}{x_3+x_4P_i(x_2,x_4)}{x_4}$$
preserve respectively the edges between $[x_1,x_2]$ and $[id]$ and between $[x_2]$ and $[x_2,x_4]$, we get that $g_i$ preserves the hyperplane $\mathcal H$ associated with these two edges (see Figure \ref{fig:hyperplane}). 

Recall that $\mathcal H$ is a one-dimensional convex subcomplex of (the first barycentric subdivision of) $\Comp$, in particular $\mathcal H$ is a tree.
By \cite[II.6.2(4)]{BH}, since $\mathcal H$ is invariant under $g_i$, the translation length of $g_i$ on $\Comp$ is equal to the translation length of its restriction $g_i|_{\mathcal H}$, which is 2.
Indeed $\Stab(\mathcal H)$ is the amalgamated product of the stabilizers of  the edges between $[x_1,x_2]$ and $[id]$ and between $[x_2]$ and $[x_2,x_4]$, and $g_i$ is a word of length 2 in this product.
Similarly, $g_r \circ \dots \circ g_1 \in \Stab(\mathcal H)$ has length $2r$ in the amalgamated product, hence is hyperbolic with translation length equal to $2r$. 
\end{proof}

\begin{figure}[ht]
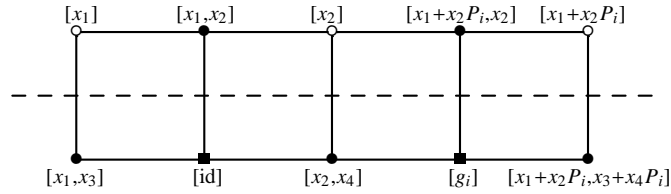

$$\mygraph{
!{<0cm,0cm>;<1.7cm,0cm>:<0cm,1.7cm>::}
!~-{@{-}@[|(2)]}
!{(-1.5,.5)}="deb"
!{(3.5,.5)}="fin"
!{(-1,1)}*{\circ}="x1"
!{(-0.96,1)}="x1right"
!{(-1,0.968)}="x1down"
!{(0,1)}*-{\bullet}="x1x2"
!{(1,1)}*{\circ}="x2"
!{(1.04,1)}="x2right"
!{(0.96,1)}="x2left"
!{(1,0.968)}="x2down"
!{(2,1)}*-{\bullet}="x1+x2"
!{(3,1)}*{\circ}="x1+"
!{(2.96,1)}="x1+left"
!{(3,0.968)}="x1+down"
!{(-1,0)}*-{\bullet}="x1x3"
!{(0,0)}*-{\blob}="id"
!{(1,0)}*-{\bullet}="x2x4"
!{(2,0)}*-{\blob}="g"
!{(3,0)}*-{\bullet}="x1+x3+"
"x1right"-^<{[x_1]}^>{[x_1,x_2]}"x1x2"-^>{[x_2]}"x2left"
"x2right"-^>{[x_1 + x_2P_i, x_2]}"x1+x2"-^>{[x_1 + x_2P_i]}"x1+left"
"x1down"-"x1x3"-_<{[x_1, x_3]}_>{[\id]}"id"-_>{[x_2,x_4]}"x2x4"-_>{[g_i]}"g"-_>{[x_1 + x_2P_i, x_3+x_4P_i]}"x1+x3+"
"x1x2"-"id"
"x2down"-"x2x4"
"x1+x2"-"g"
"x1+down"-"x1+x3+"
"deb"-@{--}"fin"
}$$
\caption{Part of the hyperplane associated with the edge $[x_2], [x_2,x_4]$.}
\label{fig:hyperplane}
\end{figure}

The previous examples induce hyperbolic isometries on the vertical tree $\T_V$, but they project as elliptic isometries on the factor $\T_H$.
Here is an example which is hyperbolic on both factors:

 \begin{figure}[ht]
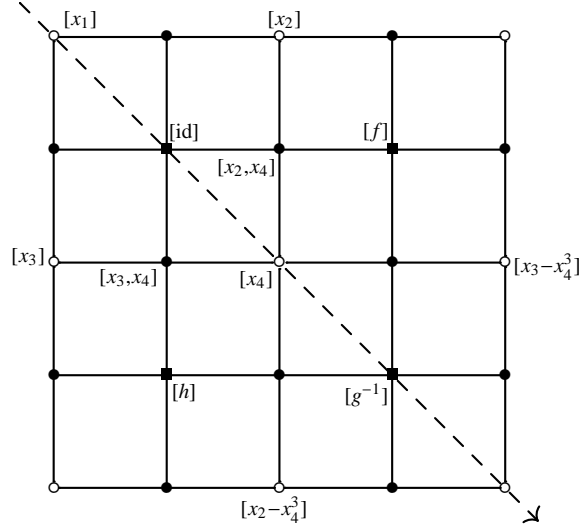

$$\mygraph{
!{<0cm,0cm>;<1.5cm,0cm>:<0cm,1.5cm>::}
!~-{@{-}@[|(2)]}
!{(-2.3,2.3)}="start"
!{(2.29,-2.29)}="end"
!{(-1,1)}*-{\blob}="x1"
!{(0,1)}*-{\bullet}="x1x2"
!{(-1,0)}*-{\bullet}="x1x3"
!{(1,1)}*-{\blob}="x2"
!{(1,0)}*-{\bullet}="x2x4"
!{(1,-1)}*-{\blob}="x4"
!{(0,-1)}*-{\bullet}="x3x4"
!{(-1,-1)}*-{\blob}="x3"
!{(0,0)}*{\circ}="x1x2x3x4"
!{(0,2)}*{\circ}="N"
!{(-.049,2)}="Nleft"
!{(.046,2)}="Nright"
!{(0,1.96)}="Ndown"
!{(0,-2)}*{\circ}="S"
!{(-.048,-2)}="Sleft"
!{(.045,-2)}="Sright"
!{(2,0)}*{\circ}="E"
!{(1.953,0)}="Eleft"
!{(2,-.04)}="Edown"
!{(-2,0)}*{\circ}="W"
!{(-1.953,0)}="Wright"
!{(-2,-.04)}="Wdown"
!{(2,2)}*{\circ}="NE"
!{(1.953,2)}="NEleft"
!{(2,1.96)}="NEdown"
!{(2,-2)}*{\circ}="SE"
!{(1.953,-2)}="SEleft"
!{(-2,2)}*{\circ}="NW"
!{(-1.953,2)}="NWright"
!{(-2,1.96)}="NWdown"
!{(-2,-2)}*{\circ}="SW"
!{(-1.953,-2)}="SWright"
!{(-1,2)}*-{\bullet}="NNW"
!{(1,2)}*-{\bullet}="NNE"
!{(2,1)}*-{\bullet}="NEE"
!{(2,-1)}*-{\bullet}="SEE"
!{(1,-2)}*-{\bullet}="SSE"
!{(-1,-2)}*-{\bullet}="SSW"
!{(-2,-1)}*-{\bullet}="SWW"
!{(-2,1)}*-{\bullet}="NWW"
"x1"-^<(.18){[\id]}_>(.7){[x_2,x_4]}"x1x2"-^>(.83){[f]}"x2"-"x2x4"-"x4"-^<(.25){[g^{-1}]}"x3x4"-^>(.82){[h]}"x3"-^>(.85){[x_3,x_4]}"x1x3"
"x1x3"-"x1"
"x1x2x3x4"-"x1x2" "x1x2x3x4"-"x2x4" "x1x2x3x4"-"x3x4"
"x1x2x3x4"-^<(.15){[x_4]}"x1x3"
"NWright"-^<(0.2){[x_1]}"NNW"-"Nleft"
"Nright"-^<{[x_2]}"NNE"-"NEleft"
"NEdown"-"NEE"-"E"
"Edown"-^<{[x_3-x_4^3]}"SEE"-"SE"
"SEleft"-"SSE"-"Sright"
"Sleft"-^<{[x_2-x_4^3]}"SSW"-"SWright"
"SW"-"SWW"-"Wdown"
"W"-^<{[x_3]}"NWW"-"NWdown"
"NWW"-"x1"-"NNW"
"NEE"-"x2"-"NNE"
"SWW"-"x3"-"SSW"
"SEE"-"x4"-"SSE"
"Ndown"-"x1x2" "Eleft"-"x2x4" "S"-"x3x4" "Wright"-"x1x3"
"start"-@{--}"x1x2x3x4"-@{-->}"end"
}$$
\caption{Geodesic through $[x_1],$ $g \cdot [x_1]=[x_4]$ and $g^2 \cdot [x_1]= g \cdot[x_4]$.}
\label{fig:hyperbolicweaklyreg}
\end{figure}

\begin{example}\label{exweakreg}
Consider the following automorphism $g$ of $\TSL$:
$$g=\mat{x_4+x_3x_1^2+x_2x_1^2+x_1^5}{x_2+x_1^3}{x_3+x_1^3}{x_1}.$$
Its inverse $g^{-1}$ is:
$$g^{-1}=\mat{x_4}{x_2-x_4^3}{x_3-x_4^3}{x_1-x_4^5-x_4^2(x_2-x_4^3)-(x_3-x_4^3)x_4^2}.$$
The automorphism $g$ is hyperbolic, as a consequence of Lemma \ref{lem:hyperbolic_elements}: If we compute the geodesic through $[x_1],$ $g  \cdot [x_1]$ and $g^2 \cdot [x_1]$ we find the segment $[x_1],[x_4],g \cdot [x_4]$ (see Figure \ref{fig:hyperbolicweaklyreg})  on which $g$ acts as a translation of length $2 \sqrt{2}.$ 
\end{example}

\subsubsection{Two classes of examples of hyperelliptic elements}  \label{sec:hyperelliptic}

Recall that an elliptic element of $\TSL$ is said to be hyperelliptic if $\Min(f)$ is unbounded.
In this section we gives some examples of hyperelliptic elements.

\begin{definition} \label{def:resonant}
We say that two numbers $a,b \in \C^*$ are \textbf{resonant} if they satisfy a relation $a^pb^q = 1$ for some $p,q \in \Z \smallsetminus \{0\}$.  
We say that a polynomial $R \in \C[x,y]$ is \textbf{resonant} in $a$ and $b$ if $R$ is not constant and $abR(ax, by) = R(x,y)$. 
\end{definition}

\begin{remark}
\begin{enumerate}
\item A polynomial $R$ is resonant in $a$ and $b$ if and only if it is resonant in $a^{-1}$ and $b^{-1}$.
On the other hand, the condition $R$ resonant in $a$ and $b$ is \textit{not} equivalent to $R$ resonant in $b$ and $a$. 
\item If $R = \sum r_{i,j}x^iy^j$, the condition $abR(ax, by) = R(x,y)$ is equivalent to the implication $r_{i,j} \neq 0 \Rightarrow a^{i+1}b^{j+1} = 1$.  
\item There exist some polynomials that are not resonant in $a$ and $b$ for any $(a,b) \in (\C^*)^2 \smallsetminus \{ (1,1)\}$. For instance $P(x,y) = x^2 + x^3 + y^2 + y^3$ is such a polynomial.
\end{enumerate}
\end{remark}

\begin{lemma} \label{lem:resonnant hyperelliptic}
If $a, b \in \C^*$ are resonant, then $f = \mat{ax_1}{b^{-1}x_2}{bx_3}{a^{-1}x_4}$ is hyperelliptic.
\end{lemma}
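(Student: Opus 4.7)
\emph{Proof plan.} The element $f$ lies in $\OO_4$, so it fixes the vertex $[\id]$ and is in particular elliptic. My plan is to apply Lemma~\ref{lem: criterion of hyperellipticity}: it suffices to exhibit a hyperbolic element of $\TSL$ that commutes with $f$. I will build such an element from two elementary automorphisms whose existence is ensured by the resonance condition.

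A direct computation shows that an elementary $\smat{x_1+x_2P(x_2,x_4)}{x_2}{x_3+x_4P(x_2,x_4)}{x_4} \in \El$ commutes with $f$ precisely when $abP(bx,ay)=P(x,y)$; analogous conditions characterize the commuting elementaries in $\Er$, $\Et$, and $\Eb$. After conjugating $f$ by the transpose $\tau \in \V_4$ (which has the effect $b \leftrightarrow b^{-1}$) and possibly negating both exponents, the resonance can be put in the form $a^p b^q=1$ with $p,q\geq 1$. Then the monomial $P(x,y) = x^{kq-1}y^{kp-1}$, with $k\geq 1$ chosen large enough so that $P$ is non-constant, gives a commuting elementary $e_1 \in \El$; symmetrically, an analogous monomial $P'(x,y) = x^{kp-1}y^{kq-1}$ yields a commuting elementary $e_2 \in \Er$. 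The composition $g := e_2 \circ e_1$ then commutes with $f$.

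The remaining and main step will be to verify that $g$ is hyperbolic on $\Comp$. Geometrically, $e_1$ is elliptic with the type 2 vertex $[x_2,x_4]$ in its fix set, while $e_2$ is elliptic with $[x_1,x_3]$ in its fix set; these two vertices are opposite corners of the standard big square, so in particular they lie in different $\STSL$-orbits and are distinct. I would argue that $g$ translates along a geodesic joining neighborhoods of the two fix sets via a ping-pong argument, or equivalently by projecting the action of $g$ onto one of the factor trees $\T_V$, $\T_H$ in the embedding $\Comp \subseteq \T_V \times \T_H$ and applying Lemma~\ref{lem:hyperelliptic on projections}. Establishing this hyperbolicity is the main technical obstacle: although $e_1$ and $e_2$ are individually elliptic, one must track the interaction of their fix sets across the hyperplanes of $\Comp$ to see that their composition has positive translation length.

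Once $g$ is shown to be a hyperbolic isometry of $\Comp$ lying in $\TSL$ and commuting with $f$, Lemma~\ref{lem: criterion of hyperellipticity} immediately yields that $f$ is hyperelliptic.
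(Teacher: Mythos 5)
Your overall strategy is the same as the paper's: produce a hyperbolic element of $\TSL$ commuting with $f$ and invoke Lemma~\ref{lem: criterion of hyperellipticity}. The commutation computation and the existence of resonant monomials giving commuting elementaries $e_1\in\El$, $e_2\in\Er$ (after the reduction to $a^pb^q=1$ with $p,q\geq 1$ via conjugation by $\tau$) are correct. The genuine gap is exactly the step you flag and then leave open: the hyperbolicity of $g=e_2\circ e_1$ is asserted, and none of the tools you point to delivers it. Lemma~\ref{lem:hyperelliptic on projections} concerns hyperellipticity of an element already known to be elliptic, not hyperbolicity; the relevant statement is Lemma~\ref{lem:elliptic on projections}, and even then the choice of factor matters: since $e_1$ fixes $[x_2,x_4]$ and $e_2$ fixes $[x_1,x_3]$ (two vertices which, incidentally, lie in the \emph{same} $\STSL$-orbit of vertical type~2 vertices, not in different orbits), both stabilize the horizontal region containing $[\id]$, so $\pi_H(g)$ is elliptic and only the vertical tree $\T_V$ can detect hyperbolicity. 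On $\T_V$, knowing one fixed vertex for each of $\pi_V(e_1)$ and $\pi_V(e_2)$, and that these vertices are distinct, is not enough: a product of two elliptic tree isometries is hyperbolic only when their entire fixed subtrees are disjoint. Proving that is the real content here; for instance one needs that a non-linear element of $\Er$ (resp.\ $\El$) does not stabilize the vertical region containing $[\id]$ (this follows from a hyperplane count along the geodesic $[\id]$, $[x_1,x_3]$, $[e_2^{-1}]$, whose two edges are horizontal), and that a linear map carrying $[x_1]$ to $[x_2]$ lies in $L_V\smallsetminus E_V$; one can then exhibit $e_2\circ e_1$ as a cyclically reduced word of length $4$ in $\STSL=E_V*_{E_V\cap L_V}L_V$, hence hyperbolic on $\T_V$ and, by Lemma~\ref{lem:elliptic on projections}, on $\Comp$. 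So your claim is true, but the missing argument is precisely the heart of the lemma.

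By contrast, the paper sidesteps any new hyperbolicity statement by not requiring the commuting element to be a product of elementary automorphisms commuting with $f$: it takes a H\'enon-like map $g$ (a linear swap composed with an element of $\El$ whose polynomial is resonant in $b$ and $a$), establishes the twisted relation $g\circ f=\tilde f\circ g$ with $\tilde f=\sigma f^{-1}\sigma$, deduces that $f$ commutes with $\tilde g\circ g$, and observes that $\tilde g\circ g$ is literally of the form covered by Lemma~\ref{lem:hyperbolic elements}, whose hyperbolicity was already proved via the hyperplane argument. If you wish to keep your construction, you must either carry out the amalgam or hyperplane argument sketched above for $e_2\circ e_1$, or conjugate your element into a form to which Lemma~\ref{lem:hyperbolic elements} applies.
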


\begin{proof}
By Lemma \ref{lem: criterion of hyperellipticity}, to prove that  $f$ is hyperelliptic it is sufficient to show that $f$ commutes with some hyperbolic element.
By assumption there exist $p,q \in \Z \smallsetminus \{0\}$ such that $a^pb^q = 1$.
We can assume that $p,q$ have the same sign, by considering $\tau f \tau$ instead of $f$ if necessary, where $\tau$ is the transpose automorphism.
Moreover, up to replacing $f$ by $f^{-1}$, hence $a$ and $b$ by their inverses, we can assume $p,q \ge 1$.
We set 
$$g= \mat{-x_2}{-x_1 - x_2P(x_2,x_4)}{x_4}{x_3+x_4P(x_2,x_4)},$$ 
where $P \in \C[x_2,x_4]$ is a polynomial of degree at least $2$ that is resonant in $b$ and $a$. 
Denote  
$$\sigma = \mat{-x_3}{x_4}{-x_1}{x_2},  \quad \tilde f = \sigma f^{-1} \sigma = \mat{b^{-1}x_1}{ax_2}{a^{-1}x_3}{bx_4} \quad \text{ and } \tilde g = \sigma g \sigma.$$
We compute 
\begin{multline*}
g \circ f =  \mat{-b^{-1}x_2}{-ax_1 - b^{-1}x_2P\left(b^{-1}x_2,a^{-1}x_4\right)}{a^{-1}x_4}{bx_3+a^{-1}x_4P\left(b^{-1}x_2,a^{-1}x_4\right)} \\Â =  \mat{-b^{-1}x_2}{-ax_1 - ax_2P(x_2,x_4)}{a^{-1}x_4}{bx_3+bx_4P(x_2,x_4)} = \tilde f \circ g .
\end{multline*}
Conjugating this equality by the involution $\sigma$ we get
$\tilde g \circ \tilde{f}^{-1} = f^{-1} \circ \tilde g$, hence $f \circ \tilde g = \tilde g \circ \tilde f$.
Finally  $f$ commutes with  $\tilde g \circ g$ because 
$$(f \circ \tilde{g}) \circ g = (\tilde{g} \circ \tilde{f}) \circ g = \tilde{g} \circ (\tilde{f} \circ g) = \tilde{g} \circ (g  \circ f).$$ 
Then $\tilde{g} \circ g$ is hyperbolic by Lemma \ref{lem:hyperbolic elements} and therefore $f$ is hyperelliptic.
\end{proof}

\begin{lemma} \label{lem:elementary hyperelliptic}
If $a, b$ are roots of unity of the same order, then for any $P(x_1,x_3) \in \C[x_1,x_3]$ the elementary automorphism
$$f = \mat{a^{-1}x_1}{bx_2+bx_1P(x_1,x_3)}{b^{-1}x_3}{ax_4+ax_3P(x_1,x_3)}$$
is hyperelliptic.
\end{lemma}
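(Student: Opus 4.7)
The plan is to apply Lemma \ref{lem: criterion of hyperellipticity}: it suffices to observe that $f$ is elliptic and to exhibit a hyperbolic element commuting with $f$. Ellipticity is immediate: decomposing $f = D \circ e$ with $D = \smat{a^{-1}x_1}{bx_2}{b^{-1}x_3}{ax_4}$ and $e = e_P = \smat{x_1}{x_2+x_1P}{x_3}{x_4+x_3P} \in \Er$, one reads off that the first and third components of $f^{-1} = e_{-P} \circ D^{-1}$ are $ax_1$ and $bx_3$ respectively, so $f$ fixes $[x_1, x_3]$.

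The main step is to find a hyperbolic element commuting with $f$. First I would reduce, by an $\Er$-conjugation, to the case where $P$ is \emph{resonant in $a,b$}, meaning $abP(ax_1, bx_3) = P$. A direct computation gives $e_R \circ f \circ e_R^{-1} = D \circ e_{P + R_{-1} - R}$, where $R_{-1}(x_1,x_3) = (ab)^{-1}R(a^{-1}x_1, b^{-1}x_3)$; for every monomial $c_{ij}x_1^ix_3^j$ of $P$ with $a^{i+1}b^{j+1} \neq 1$, solving $r_{ij}(1 - a^{-(i+1)}b^{-(j+1)}) = c_{ij}$ annihilates that term. Since hyperellipticity is conjugation-invariant, we may assume $P$ is resonant, which is exactly the condition ensuring $D \circ e = e \circ D$, so that $f = De = eD$.

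Since $a, b$ share the common finite order $n$, they are resonant (via $a^n b^{-n} = 1$), and Lemma \ref{lem:resonnant hyperelliptic} supplies a hyperbolic element $h = \tilde g \circ g$ commuting with $D$, where
\[ g = \smat{-x_2}{-x_1 - x_2 Q(x_2,x_4)}{x_4}{x_3 + x_4 Q(x_2,x_4)},\quad \tilde g = \sigma g \sigma,\quad \sigma = \smat{-x_3}{x_4}{-x_1}{x_2}, \]
with $Q \in \C[x_2,x_4]$ of degree $\ge 2$ resonant in $b, a$ (abundant because $a^n = b^n = 1$). Since the resonance conditions on $P$ (in $x_1,x_3$) and on $Q$ (in $x_2,x_4$) are indexed by the same group of $n$-th roots of unity, one can adapt the four-step telescoping argument from the proof of Lemma \ref{lem:resonnant hyperelliptic} to verify that $h$ also commutes with $e$, and hence with $f = De$. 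Lemma \ref{lem: criterion of hyperellipticity} then concludes that $f$ is hyperelliptic.

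The hard part will be the final verification $h \circ e = e \circ h$: the substitutions induced by $g$ (involving $Q(x_2, x_4)$) and by $e$ (involving $P(x_1, x_3)$) live in disjoint sets of variables and mix non-trivially under composition, so the naive $g$ does not literally commute with $e$. Handling this cleanly will require either refining the choice of $Q$ so that it is $\sigma$-compatible with $P$ (exploiting that $\sigma$ exchanges the two pairs of variables and that $D$ normalises $\Er$), or replacing $\tilde g \circ g$ by a perturbed product $e^{k} \cdot \tilde g \cdot g \cdot e^{-k}$ for suitable $k$ so that the obstruction telescopes away using $f^n \in \Er$.
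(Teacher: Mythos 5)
Your opening steps are fine: $f$ fixes $[x_1,x_3]$, and the computation $e_R\circ f\circ e_R^{-1}=D\circ e_{P+R_{-1}-R}$ with $R_{-1}=(ab)^{-1}R(a^{-1}x_1,b^{-1}x_3)$ is correct, so up to conjugation you may indeed assume $P$ is resonant in $a,b$, i.e. $f=D\circ e=e\circ D$. But the proof then stops exactly where the real work begins: you never exhibit a hyperbolic element of $\TSL$ commuting with $f$. You import $h=\tilde g\circ g$ from Lemma \ref{lem:resonnant hyperelliptic}, which (for a suitably resonant $Q$) commutes with the diagonal part $D$, and then assert that "one can adapt the four-step telescoping argument" to get $h\circ e=e\circ h$ — while admitting in the next sentence that the naive $g$ does not commute with $e$ and that some unspecified refinement of $Q$ or a perturbation $e^k\tilde g g e^{-k}$ would be needed. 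That commutation is the entire content of the lemma, and there is no reason offered that it can be arranged: in the proof of Lemma \ref{lem:resonnant hyperelliptic} the factors $g,\tilde g$ do not commute with the diagonal element either; only the twisted intertwinings $g\circ f=\tilde f\circ g$ and $f\circ\tilde g=\tilde g\circ\tilde f$ make the product work, and conjugating the nonlinear $e$ (built from $P(x_1,x_3)$) by $g$ (which substitutes $x_1\mapsto -x_2$, $x_3\mapsto x_4$ plus higher-order terms in $x_2,x_4$) produces genuinely mixed expressions with no analogous cancellation in sight. So as written this is a plan with a gap at its central step, not a proof.

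The paper sidesteps precisely this difficulty by not demanding a commuting hyperbolic element in the group at all. It works on the horizontal tree: by Lemma \ref{lem:hyperbolic elements} the elements $g_1=\smat{x_3}{-x_4}{x_1+x_3^m}{-x_2-x_4x_3^{m-1}}$ and $g_2$ (with exponent $n$, where $a^m=b$, $b^n=a$) are hyperbolic, and their product $g$ projects to a hyperbolic isometry $\pi_H(g)$ of $\T_H$; the point is that $\pi_H(g)$ and $\pi_H(f)$ commute as isometries of $\T_H$ even though $g$ and $f$ do not commute in $\TSL$ — this is checked via the elementary identity in $\Aut(\A^2_\C)$ that $(x_3,x_1+x_3^m)\circ(x_3,x_1+x_3^n)$ commutes with $(a^{-1}x_1,b^{-1}x_3)$. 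Then Lemma \ref{lem: criterion of hyperellipticity} applied to the $\CAT(0)$ space $\T_H$ gives that $\Min\pi_H(f)$ is unbounded, and Lemma \ref{lem:hyperelliptic on projections} transports this back to $\Comp$. If you want to salvage your approach, the most economical fix is to replace your unproved group-level commutation by this weaker statement on the tree factor; proving an actual commutation in $\TSL$ would be a substantially stronger (and unnecessary) claim.
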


\begin{proof}
There exist $m, n \ge 2$ such that $a^m = b$ and $b^n = a$.
We will use the observation that in $\Aut(\A^2_\C)$, with $\A^2_\C = \text{Spec}\, \C[x_1,x_3]$, the automorphisms $(x_3, x_1 + x_3^m) \circ (x_3, x_1 + x_3^n)$ and $(a^{-1}x_1, b^{-1}x_3)$ commute. 

By Lemma \ref{lem:hyperbolic elements}, the following automorphisms are hyperbolic, because their projections on $\T_H$ are hyperbolic:
$$g_1=\mat{x_3}{-x_4}{x_1+x_3^m}{-x_2-x_4x_3^{m-1}}, \quad
g_2=\mat{x_3}{-x_4}{x_1+x_3^n}{-x_2-x_4x_3^{n-1}} \; \text{and}  \; g = g_1 \circ g_2.$$
The projection $\pi_H(g)$ is a hyperbolic isometry, $\pi_H(f)$ is elliptic, and $\pi_H(g)$ and $\pi_H(f)$ commute.
By Lemma \ref{lem: criterion of hyperellipticity}, $\Min \pi_H(f)$ is unbounded.
We conclude by Lemma \ref{lem:hyperelliptic on projections}.  
\end{proof}

\begin{remark}
We believe that any hyperelliptic automorphism in $\TSL$ is conjugate to an automorphism of the form given in Lemmas \ref{lem:resonnant hyperelliptic} or \ref{lem:elementary hyperelliptic}.
However we were not able to get an easy proof of that fact.
\end{remark}

\subsubsection{An example of parabolic subgroup}

We give an example of parabolic subgroup in $\TSL$, where most elements have infinite order.
This is in contrast with the situation of $\Aut(\C^2)$, where a parabolic subgroup is always a torsion group (see \cite[Proposition 3.12]{Ltits}). 
Let
$$ H_n = \left\lbrace \mat{ax_1}{b^{-1}x_2}{bx_3}{a^{-1}x_4};\; a, b \in \C^*, (ab)^{2^n} = 1 \right\rbrace .$$
As in the proof of Lemma \ref{lem:resonnant hyperelliptic}, we set
\begin{multline*} 
\sigma = \mat{-x_3}{x_4}{-x_1}{x_2}, \;
g_n = \mat{-x_2}{-x_1-x_2P_n(x_2,x_4)}{x_4}{x_3+x_4P_n(x_2,x_4)},\\
\text{ and }\;
\tilde{g}_n = \sigma g_n \sigma = \mat{-x_2}{-x_1+x_2P_n(x_4,x_2)}{x_4}{x_3-x_4P_n(x_4,x_2)}
\end{multline*}
where $P_n(x,y) = (xy)^{2^n-1}$. 
Then we observe that for $j < k$, any element $h \in H_j$ commutes with $\tilde g_k \circ g_k$. 
On the other hand for any $k \ge 1$ and any $h \in  \left(\bigcup_{n\ge 0} H_n \right) \smallsetminus H_{k-1}$, ${\tilde g_k}^{-1} h \tilde g_k$ is a non linear elementary automorphism. 
We set
$$
\phi_n = \tilde g_n \circ g_n \circ\dots\circ \tilde g_1 \circ g_1, \;
\Delta_n = \phi_n^{-1} H_n \phi_n \; \text{ and }\; 
\Delta = \cup_{n \ge 0} \Delta_n.
$$
Then $\Delta$ is a parabolic subgroup of $\TSL$.
Indeed by Lemma \ref{lem:elliptic on projections} it is sufficient to prove that the isometry group  $\pi_V(\Delta)$ induced by $\Delta$ on the vertical tree $\T_V$ is parabolic. 
This is the case, since for each $n \ge 1$, $\phi_n^{-1} \cdot \pi_V [\id]$ is a fixed vertex for $\pi_V(\Delta_n)$, but not for $\pi_V(\Delta_{n+1})$, and $d(\pi_V [id],\phi_n^{-1} \cdot \pi_V[\id]) = 4n$ goes to infinity with $n$. 

\subsection{Further comments}

\subsubsection{Tame group of the affine space} \label{sec:tameKn}
 
In Section \ref{sec:general simplicial complex} we defined a simplicial complex associated with the tame group of $K^n$. 
We now make a few comments on this construction.
We make the convention to call \textbf{standard simplex} the simplex associated with the vertices $[x_1]$, $[x_1,x_2], \dots , [x_1, \dots, x_n]$.  

First observe that we could make the same formal construction as in \S \ref{sec:general simplicial complex} using the whole group $\Aut(K^n)$. 
But then it is not clear anymore that we obtain a connected complex.
More precisely, recall that if $X$ is a simplicial complex of dimension $n$, we say that $X$ is \textbf{gallery connected} if given any simplexes $S$, $S'$ of maximal dimension in $X$, there exists a sequence of simplexes of maximal dimension $S_1 = S, \dots, S_n = S'$ such that for any $i = 1, \dots, n-1$, the intersection $S_i \cap S_{i+1}$ is a face of dimension $n-1$ (see \cite[p. 55]{BS}).
Then the gallery connected component of the standard simplex of the complex associated with $\Aut(K^n)$ is precisely the complex associated to $\Tame(K^n)$.
It is probable that the whole complex is not connected, but it seems to be a difficult question.

We now focus on the case $K = \C$, $n = 3$.
In the same vein as the above discussion, observe that the Nagata automorphism 
$$N = (x_1 + 2x_2(x_2^2-x_1x_3) + x_3 (x_2^2-x_1x_3)^2,\, x_2 + x_3(x_2^2-x_1x_3),\, x_3)$$ 
defines a simplex  that shares the vertex $[x_3]$ with the standard simplex, but since $N$ is not tame these two simplexes are not in the same gallery connected component.
The question of the connectedness of the whole complex associated with $\Aut(\C^3)$ is equivalent to the question whether $\Aut(\C^3)$ is generated by the affine group and automorphisms preserving the variable $x_3$.

We denote by  $\Comp'$ the 2-dimensional simplicial complex associated with $\Tame(\C^3)$.
The standard simplex has vertices $[x_1]$, $[x_1,x_2]$ and $[\id]$, and the stabilizers of these vertices are respectively (here we use the notation of \S \ref{sec:general simplicial complex}):
\begin{align*}
\Stab [x_1] &= \left\lbrace (a x_1 + b, f, g);\; (f,g) \in \Tame_{\C[x_1]}(\Spec \C[x_2,x_3]) \right\rbrace \\
\Stab [x_1,x_2] &= \left\lbrace (ax_1 + b x_2 + c, a'x_1 + b'x_2 + c', dx_3 + P(x_1,x_2)) \right\rbrace \\ 
\Stab [x_1,x_2,x_3] &= A_3.
\end{align*}
By construction the group $\Tame(\C^3)$ acts on the complex $\Comp'$ with fundamental domain the standard simplex.
To say that $\Tame(\C^3)$ is the amalgamated product of the three stabilizers above along their pairwise intersection is equivalent to the simple connectedness of the complex. 
This is precisely the content of the main theorem of \cite{wright}, where the subgroups are denoted by $\tilde H_1$, $H_2$ and $H_3$. 
Observe that the proof of Wright relies on the understanding of the relations in the tame group and so ultimately on the Shestakov-Umirbaev theory: This is similar to our proof of Proposition   
\ref{pro:1connected}, which relies on an adaptation of the Shestakov-Umirbaev theory to the case of a quadric 3-fold.

Note that the naive thought according to which $\TSL$ would be the amalgamated product of the four types of elementary groups is false. 
Indeed, if $P,Q$ are non-constant polynomials of $\C [x_1]$, the two following elements belong to different factors and they commute (this is similar to a remark made by J. Alev  a long time ago about $\Tame(\C^3)$, see \cite{Alev}):
\[  \mat{x_1}{x_2 + x_1 P(x_1)}{x_3}{x_4 + x_3 P(x_1)}, \; \mat{x_1}{x_2 }{x_3+x_1 Q(x_1)}{x_4 + x_2 Q(x_1)}. \]
On the other hand, it follows from our study in Section \ref{sec:structures} (see also Figure \ref{fig:nesting}) that $\STSL$ is the amalgamated product of the four stabilizers of each vertex of the standard square along their pairwise intersections: In view of the result of Wright, this is another evidence that the groups $\Tame(\C^3)$ and $\TSL$ are qualitatively quite similar.

As mentioned at the end of \cite{wright}, there are basic open questions about the complex $\Comp'$: The contractibility of $\Comp'$ is not clear, or even whether it is unbounded or not.
In view of what we proved about the complex $\Comp$ associated with $\TSL$, a natural question would be to ask if $\Comp'$ is $\CAT(0)$.
We believe that this is not the case (for any choice of local Euclidean structure on $\Comp'$, that is, for any choice of angles for the standard triangle), but this would have to be carefully checked by computing the link structure for vertices of $\Comp'$. 
On the other hand, it seems possible that the complex $\Comp'$ is hyperbolic.
Of course this last question is relevant  only if $\Comp'$ is unbounded, but we believe this to be true.

  
\subsubsection{The restriction morphism} \label{sec:rho}

Recall that we have natural morphisms of restriction:
\[ \pi \colon \Tame _q (\C^4) \to \Tame(\SL_2)  \quad \text{and} \quad  \rho \colon \Aut _q (\C^4) \to \Aut(\SL_2).\]

We have proved in Proposition \ref{pro:TQ=TSL} that $\pi$ is an isomorphism.
On the other hand, we have
$$\rho \left( \mat{x_1}{x_2 + x_1(q-1)}{x_3}{x_4 + x_3(q-1)} \right) = \id_{\SL_2},$$
so that $\rho$ is not injective.

If follows from the next remark that the automorphism 
$$\mat{x_1}{x_2 + x_1(q-1)}{x_3}{x_4 + x_3(q-1)}$$
of $\Aut _q (\C^4)$ does not belong to $\Tame _q (\C^4)$.

\begin{remark}
Any automorphism $f= \smat{x_1}{f_2}{x_3}{f_4}$ of $\Tame_q(\C^4)$ is of the form 
$$f= \mat{x_1}{x_2+x_1 P(x_1,x_3)}{x_3}{x_4+ x_3 P (x_1,x_3)}.$$
This follows from Theorem \ref{thm:main}, that is, from the existence of elementary reduction. 
Indeed, if a non linear automorphism $f= \smat{x_1}{f_2}{x_3}{f_4}$ belongs to $\Tame_q(\C^4)$, by Lemma \ref{lem:degree of each component drops} it necessarily admits an elementary reduction of the form 
$$\mat{x_1}{f_2+x_1 P_1(x_1,x_3)}{x_3}{f_4 +x_3 P_1(x_1,x_3)},$$
which in turn admits an elementary reduction of the same form. We can  continue until we obtain a linear automorphism and this proves the result.
\end{remark}

Note that any automorphism $f=\smat{f_1}{f_2}{f_3}{f_4}$ in $\Aut_q(\C^4)$ such that $f_1=x_1$ and $f_3=x_3$ is necessarily of the form  $f=\smat{x_1}{x_2+x_1P}{x_3}{x_4 + x_3 P}$, where $P \in \C[x_1,x_3,q]$. Indeed, since $x_1f_4-x_3f_2 = q$,  there exists some polynomial $P$ in $\C[x_1,x_2,x_3,x_4]$ such that $f_2 =x_2 + x_1P$ and $f_4= x_4 + x_3 P$. 
The Jacobian condition $\det (\frac{\partial f_i}{\partial x_j})_{i,j}=1$ is equivalent to $\delta P= 0$, where $\delta$ is the locally nilpotent derivation of $\C[x_1,x_2,x_3,x_4]$ given by $\delta=x_1 \partial_{x_2} + x_3 \partial_{x_4}$. 
One could easily check that $\Ker  \delta = \C[x_1,x_3,q]$. Conversely, for any element $P$ of $\C[x_1,x_3,q]$, it is clear that $f=\smat{x_1}{x_2+x_1P}{x_3}{x_4 + x_3 P}$ is an element of $\Aut_q(\C^4)$ whose inverse is  $f^{-1}=\smat{x_1}{x_2-x_1P}{x_3}{x_4-x_3P}$.

If we take $P(x_1,x_3,q)=q$, we obtain the famous Anick's automorphism. Since $f_3$ actually depends on $x_4$, Corollary \ref{cor:linktype1} above directly implies that this automorphism does not belong to $\TSL$.
However in restriction to $\SL_2$ the Anick's automorphism coincides with the linear (hence tame) automorphism $\smat{x_1}{x_2+x_1}{x_3}{x_4 + x_3}$.
On the other hand there exist  automorphisms in $\Aut_q(\C^4)$ whose restriction to the quadric $q=1$ does not coincide with the restriction of any automorphism in $\TSL$: see \cite[\S 5]{LV} where it is proved that the following automorphism is a concrete example:
$$\mat{x_1}{x_2}{x_3}{x_4} \mapsto \mat{x_1 -x_2(x_1+x_4)}{x_2}{x_3 + (x_1-x_4)(x_1+x_4) -x_2(x_1+x_4)^2}{x_4 + x_2(x_1+x_4)}.$$

Observe that for the Anick's automorphism the degrees of the components are not the same when considered as elements of $\C[x_1,x_2,x_3,x_4]$ or as elements of $\C[\SL_2]$.
On the other hand it seems possible that in the case of an automorphism $f = \smat{f_1}{f_2}{f_3}{f_4} \in \TSL$, equalities $\deg f_i = \deg_{\C^4} f_i$ always hold for each component $f_i$.
This is an interesting question, that we have not been able to solve. 
Let us formulate it precisely.
For any element $\overline{p} \in {\mathcal O} (\SL_2) := \C [x_1,x_2,x_3,x_4] / \langle q-1 \rangle$, set
\[ \deg \overline{p} = \min \{ \deg r, \; r \in \overline{p} \}.\]
Note that $\deg \overline{p} = \deg p$ if and only if $p = 0$ or $q$ does not divide the leading part $\hom{p}$ of $p$ (see \cite[\S 2.5]{LV}).

\begin{question} \label{question of degree}
If $p$ is the component of an element of $\TQ$, do we have $\deg \overline{p} = \deg p$?
\end{question}

Note that a positive answer to Question \ref{question of degree} would immediately imply Proposition \ref{pro:TQ=TSL}. Indeed, if $f=\smat{f_1}{f_2}{f_3}{f_4} \in \Ker \pi$, there exist polynomials $g_i$ such that $f_i = x_i + (q-1) g_i$. But if $\deg f_i = \deg \overline{f_i}$, we get $g_i =0$, so that $f_i =x_i$ and $f = \id$.

Another natural but probably difficult question about the morphism $\rho$ is the following: 

\begin{question} \label{question of surjectivity}
Is the map $\rho \colon \Aut _q (\C^4) \to \Aut(\SL_2)$ surjective? 
\end{question}

\subsubsection{Infinite transitivity}

As a final remark we check that $\STSL$ acts infinitely transitively on the quadric $\SL_2$, as a consequence of the results in \cite{AFKKZ}.

Consider the locally nilpotent derivation $\partial = x_1 \partial_{x_2} + x_3 \partial_{x_4}$ of the coordinate ring $\C[\SL_2] =\C [x_1,x_2,x_3,x_4] /  (q-1)$. We have $\Ker \partial = \C [x_1,x_3]$ and for any element $P$ of $\C [x_1,x_3]$, we have
\[ \exp (P \partial) = \mat{x_1}{x_2 +x_1P}{x_3}{x_4+x_2P} \in \STSL.\]
Therefore, the set ${\mathcal N}$  of locally nilpotent derivations on $\SL_2$ that are conjugate in $\STSL$ to the above derivations is saturated in the sense of \cite[Definition 2.1]{AFKKZ}. Furthermore, one could easily show that $\STSL$ is generated by ${\mathcal N}$. Indeed, it is clear that any elementary automorphism is the exponential of an element of ${\mathcal N}$. We leave as an exercise for the reader to check that $\SO_4$ is included into the group generated by ${\mathcal N}$. Finally, since $\STSL$ contains the group $\SL_2$, it acts transitively on $\SL_2$, and we conclude by  \cite[Theorem 2.2]{AFKKZ}.

\section*{Annex}
\addtocounter{section}{1} 
\renewcommand{\thesection}{A}
\setcounter{subsection}{0}
\setcounter{theorem}{0}

In this annex we prove that on both groups  $\TSL$ and $\TQ$ there exists a good notion of elementary reduction, in the spirit of Shestakov-Umirbaev and Kuroda theories.
In the case of $\TSL$ this was done in \cite{LV}.
The purpose of this annex is twofold: We propose a simplified version of the argument in the case of $\TSL$, and we establish a similar result for the group $\TQ$.

\subsection{Main result}
 
In the sequel $G$ denotes either the group $\TQ$ or the group $\TSL$, since most of the statements hold without any change in both settings.  

Recall that we define the \textbf{degree} of a monomial of $\C [x_1,x_2,x_3,x_4]$ by
\[\deg x_1^ix_2^jx_3^kx_4^l = (i,j,k,l)
\left(
  \begin{matrix}
    2  & 1 & 1 & 0\\ 
    1 & 2 & 0 & 1 \\ 
    1 & 0 & 2 & 1 \\ 
    0 & 1 & 1 & 2
  \end{matrix}
\right) = (2i+j+k,i+ 2j +l,i+2k+l,j+k+2l) \in \N^4.\]
Then, by using the graded lexicographic order on $\N^4$, we define the degree of any nonzero element of $\C [x_1,x_2,x_3,x_4]$: We first compare the sums of the coefficients and, in case of a tie, apply the lexicographic order. For example, we have
\[ \deg (x_1+x_2+x_3+x_4) =(2,1,1,0), \quad \deg (x_1x_2 + x_3^2) = (3,3,1,1), \]
\[\deg x_1x_4=\deg x_2x_3 = \deg q =(2,2,2,2).\] 
By convention, we set $\deg 0 = -\infty$, with $-\infty$ smaller than any element of $\N^4$.
The leading part  of a polynomial
\[p=\sum_{i, j,  k, l}p_{i,j,k,l} \; x_1^ix_2^jx_3^kx_4^l \; \in\C[x_1,x_2,x_3,x_4]\]
is denoted $\hom{p}$. Hence, we have
\[ \hom{p}=\sum_{\deg x_1^ix_2^jx_3^kx_4^l \; = \; \deg p }\hspace{-6mm} p_{i,j,k,l} \; x_1^ix_2^jx_3^kx_4^l.\]
Remark that $\hom{p}$ is not in general a monomial. 
For instance, we have $\hom{q} = q$. 
We define the \textbf{degree of an automorphism} $f= \smat{f_1}{f_2}{f_3}{f_4}$ to be 
\[\deg f = \max_i \deg f_i \in \N^4.\]
We have similar definitions in the case of $\TSL$, where the degree on $\C [\SL_2]$, also noted $\deg$, is defined by considering minimum over all representatives.
 
An \textbf{elementary automorphism} is an element of $G$ of the form
\[e = u \mat{x_1}{x_2 + x_1P(x_1,x_3)}{x_3}{x_4+x_3P(x_1,x_3)} u^{-1}\]
where $u \in \V_4$, $P \in \C[x_1,x_3]$.
We say that $f \in G$ admits an \textbf{elementary reduction} if there exists an elementary automorphism $e$ such that $\deg e\circ f < \deg f$.
We denote by $\a$ the set of elements of $G$ that admit a sequence of elementary reductions to an element of $\OO_4$. The main result of this annex is then:

\begin{theorem} \label{thm:main}
Any non-linear element of $G$ admits an elementary reduction, that is, we have the equality $G = \a$.
\end{theorem}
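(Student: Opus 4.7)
The plan is to argue by induction on $\deg f$, working in the well-ordered set $\N^4 \cup \{-\infty\}$. The base case is handled by Lemma \ref{lem:degmax}: any $f \in G$ with $\deg f = (2,1,1,0)$ lies in $\OO_4$, so there is nothing to prove. For the inductive step, we consider a non-linear $f = \smat{f_1}{f_2}{f_3}{f_4} \in G$ and aim to produce an elementary automorphism $e$ with $\deg(e \circ f) < \deg f$.

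First, I would write $f$ as a product of generators coming from $\OO_4$ and $\Er$, and then use conjugation by elements of $\V_4$ (or more generally $\OO_4$) to place the ``last'' non-linear elementary factor into one of the four prescribed subgroups $\Et, \Eb, \El, \Er$. The core of the argument is to pass to leading parts $\hom{f_i}$ and analyze the algebraic constraint imposed by the quadratic relation: in $\TQ$ we have $f_1 f_4 - f_2 f_3 = q$ as polynomials in $\C[x_1,x_2,x_3,x_4]$, while in $\TSL$ the same identity holds modulo $(q-1)$, but with the minimum-over-representatives degree the leading-part relation is equivalent (provided $q$ does not divide the top-degree part of any $f_i$). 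This lets me treat $\TQ$ and $\TSL$ by a single proof, with the observation that the choice of $\N^4$-weights makes $q$ genuinely homogeneous of degree $(2,2,2,2)$, so that multiplication by $q$ and reduction modulo $q-1$ behave transparently at the leading-part level.

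The key step is then a Shestakov--Umirbaev/Kuroda style dichotomy at the level of leading parts, analogous to Lemma \ref{lem:both degree drop}: either there exists a polynomial $P$ such that substituting $x_2 \mapsto x_2 + x_1 P(x_1,x_3)$, $x_4 \mapsto x_4 + x_3 P(x_1,x_3)$ (or a $\V_4$-conjugate thereof) strictly decreases $\deg f_i$ for at least one, and hence two, of the components, or else the leading parts $(\hom{f_1}, \hom{f_2}, \hom{f_3}, \hom{f_4})$ are so constrained by the Plücker-type relation $\hom{f_1}\hom{f_4} - \hom{f_2}\hom{f_3} = \hom{q}$ that one of the components must have leading part divisible by a totally isotropic linear form, in which case composing with a suitable element of $\OO_4$ reduces to the previous case. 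The $\N^4$-valued weight has the crucial virtue that the componentwise equivalence in Lemma \ref{lem:both degree drop} (decreases for $f_1$ iff for $f_3$) holds ``diagonally,'' avoiding the mixing that the $\N^3$-valued weight of \cite{LV} required to handle case by case.

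The main obstacle is the second horn of the dichotomy: ruling out configurations in which the leading parts satisfy the quadratic relation but no single elementary move decreases $\deg f$. This is where the Shestakov--Umirbaev inequalities enter, bounding the degree drop $\deg f - \deg(e \circ f)$ from below in terms of degrees of certain subresultants; combined with the well-ordering of $\N^4$ one obtains the contradiction needed to finish the inductive step. Finally, the passage from the $\TSL$ statement (where the argument of \cite{LV} already applies, in the reformulated version) to the $\TQ$ statement is a matter of checking that $\deg p = \deg \bar p$ for leading parts not divisible by $q$, so that an elementary reduction in $\TSL$ lifts to one in $\TQ$; this verification is essentially formal given the definitions, and together with Proposition~\ref{pro:TQ=TSL} identifies the two statements.
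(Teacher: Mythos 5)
Your plan does not match the actual mechanism of the proof, and it leaves the genuinely hard parts unaddressed. The paper does not argue by producing an elementary reduction directly for an arbitrary non-linear $f$ by induction on $\deg f$; that scheme does not interface with the factorization into generators, since the word length of $f$ in $\OO_4$ and elementary automorphisms has no monotone relation to $\deg f$. Instead, the key statement is a closure property (Proposition \ref{pro:main}): if $f$ already lies in the set $\a$ of elements reducible to $\OO_4$ by elementary reductions and $e$ is elementary, then $e\circ f\in\a$; this is proved by induction on $\deg f$ for $f\in\a$, and Theorem \ref{thm:main} follows because $G$ is generated by $\OO_4$ and elementary automorphisms (Lemma \ref{lem: easy decomposition of elements of G}). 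The real difficulty is the interaction of two elementary automorphisms: one knows some $e'$ with $\deg e'\circ f<\deg f$ and must handle a second $e$ with $\deg e\circ f\le \deg f$. After normalizing via Corollary \ref{cor:pair of elementary automorphisms}, the hard case ($e\in\Et$, $e'\in\El$) is resolved by the parachute/pseudo-Jacobian Lower bound \ref{mino:TQ}, which cannot apply simultaneously to $P(f_2,f_4)$ and $Q(f_3,f_4)$ (Lemma \ref{lem:minoration does not apply to both}); each of the four resulting degenerate conditions is then treated by explicit auxiliary elementary automorphisms and the induction hypothesis (Lemma \ref{lem:conditions}). Your proposed ``dichotomy'' --- either an elementary move drops the degree, or some leading part is divisible by a totally isotropic linear form and one conjugates by $\OO_4$ --- is not established anywhere, is not how the obstruction manifests, and the appeal to ``Shestakov--Umirbaev inequalities bounding the degree drop in terms of subresultants'' is too vague to substitute for this analysis.

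A second concrete gap: your final step claims that transferring the result from $\TSL$ to $\TQ$ is ``essentially formal'' because $\deg p=\deg\overline{p}$ for components whose leading part is not divisible by $q$. Whether $\deg p = \deg\overline{p}$ holds for components of elements of $\TQ$ is precisely Question \ref{question of degree}, which the paper states it was unable to resolve; one cannot assume it. The paper avoids this by proving the lower bound separately in the polynomial setting (Lower bound \ref{mino:TQ}), whose only non-trivial new ingredient is Lemma \ref{lem: not all zero} (that $q\notin\C[f_1,f_2]$ when $(f_1,f_2)$ is part of an automorphism of $\C^4$), and then running the same formal argument in parallel for $G=\TQ$ and $G=\TSL$. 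As written, your plan both omits the core case analysis and rests its $\TQ$ case on an open problem.
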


\subsection{Lower bounds}\label{sec:para}

The following result is a close analogue of \cite[Lemma 3.3(i)]{Ku:main} and is taken from \cite[\S 3]{LV}.

\begin{minoration}\label{mino:TSL}
Let $f_1,f_2 \in\C[\SL_2]$ be algebraically independent and let $R(f_1,f_2)$ be an element of $\C[f_1,f_2]$. Assume that $R(f_1,f_2) \not\in\C[f_2]$ and $\hom{f_1}\not\in\C[\hom{f_2}]$. Then
\[  \deg (f_2R(f_1,f_2))>\deg f_1.\]
\end{minoration}

In this section we establish the following analogous lower bound in the context of $G = \TQ$. 

\begin{minoration}\label{mino:TQ}
Let $(f_1,f_2) \in\C[x_1,x_2,x_3,x_4]^2$ be part of an automorphism of $\C^4$ and let $R(f_1,f_2)$ be an element of $\C[f_1,f_2]$. Assume that $R(f_1,f_2) \not\in\C[f_2]$ and $\hom{f_1}\not\in\C[\hom{f_2}]$. Then
\[  \deg (f_2R(f_1,f_2))>\deg f_1.\]
\end{minoration}

We say that $(f_1, f_2) \in\C[x_1,x_2,x_3,x_4]^2$ is \textbf{part of an automorphism} of $\C^4$, if there exists $(f_3,f_4) \in\C[x_1,x_2,x_3,x_4]^2$ such that $(f_1,f_2,f_3,f_4)$ is an automorphism of $\C^4$.

We follow the proof of Lower bound \ref{mino:TSL} given in \cite[\S 3]{LV}. The only non-trivial modification lies in Lemma \ref{lem: not all zero} below, but for the convenience of the reader we give the full detail of the arguments.

\subsubsection{Generic degree}

Given $f_1, f_2\in\C[x_1,x_2,x_3,x_4] \smallsetminus \{0\}$, consider 
$$R = \sum R_{i,j} X_1^iX_2^j\in\C[X_1,X_2]$$
 a non-zero polynomial in two variables. 
Generically (on the coefficients $R_{i,j}$ of $R$), $\deg R(f_1,f_2)$ coincides with $\ged R$ where $\ged$ (standing for \textbf{generic degree}) is the weighted degree on  $\C[X_1,X_2]$ defined by 
\[\ged X_i =\deg f_i\in\N^4,\]
again with the graded lexicographic order. 
Namely we have
$$ R(f_1,f_2)  =  \Rgen(f_1,f_2)+LDT(f_1,f_2)$$
where
$$  \Rgen(f_1,f_2) = \sum_{\ged X_1^iX_2^j \: = \: \ged R}R_{i,j}f_1^if_2^j$$
is the leading part of $R$ with respect to the generic degree and $LDT$ represents the Lower (generic) Degree Terms. 
One has 
$$\deg LDT(f_1,f_2)<\deg \Rgen(f_1,f_2)=\ged R =\deg R(f_1,f_2)$$ 
\textit{unless} $\Rgen(\hom{f_1},\hom{f_2})=0$, in which case the degree falls: $\deg R(f_1,f_2)<\ged R$. 

Let us focus on the condition $\Rgen(\hom{f_1},\hom{f_2})=0$. Of course this can happen only if $\hom{f_1}$ and $\hom{f_2}$ are algebraically dependent. Remark that the ideal 
$$I=\{S\in\C[X_1,X_2];\; S(\hom{f_1},\hom{f_2})=0\}$$
must then be principal, prime and generated by a $\ged$-homogeneous polynomial. 
The only possibility is that $I=(X_1^{s_1}-\lambda X_2^{s_2})$ where $\lambda\in\C^*$, $s_1\deg f_1=s_2\deg f_2$ and $s_1,s_2$ are coprime. To sum up, in the case where $\hom{f_1}$ and $\hom{f_2}$ are algebraically dependent one has
\begin{equation}
\deg R(f_1,f_2)<\ged R \;\Leftrightarrow\;  \Rgen(\hom{f_1},\hom{f_2})=0  \;\Leftrightarrow\; \Rgen\in (H) \, \label{rel}
\end{equation}
where $H=X_1^{s_1}-\lambda X_2^{s_2}$.

\subsubsection{Pseudo-Jacobians}

If $f_1,f_2,f_3,f_4$ are polynomials in $\C[x_1,x_2,x_3,x_4]$, we denote by  $\Jac(f_1,f_2,f_3,f_4)$ the Jacobian determinant, i.e. the determinant of the Jacobian $4 \times 4$- matrix $(\frac{\partial f_i}{\partial x_j})$.
Then we define the \textbf{pseudo-Jacobian} of $f_1,f_2,f_3$ by the formula
\[\jj(f_1,f_2,f_3):=\Jac(q,f_1,f_2,f_3).\]

\begin{lemma} \label{lem:pseudojac}
Assume $f_1, f_2, f_3 \in \C[x_1,x_2,x_3,x_4]$. Then
\[\deg\jj(f_1,f_2,f_3) \leq  \deg f_1+\deg f_2+\deg f_3-(2,2,2,2).\]
\end{lemma}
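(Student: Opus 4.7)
The plan is to expand the pseudo-Jacobian as a signed sum over $\sigma \in S_4$ of products of partial derivatives, and bound each term separately. Setting $g_1 = q$, $g_2 = f_1$, $g_3 = f_2$, $g_4 = f_3$, one has
\[\jj(f_1,f_2,f_3) = \det\left(\frac{\partial g_i}{\partial x_j}\right)_{1 \leq i,j \leq 4} = \sum_{\sigma \in S_4}\text{sgn}(\sigma)\prod_{i=1}^4 \frac{\partial g_i}{\partial x_{\sigma(i)}}.\]

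The main observation is that the weighted degree behaves well under differentiation. For a nonzero monomial $m = x_1^{i_1}x_2^{i_2}x_3^{i_3}x_4^{i_4}$ and any index $r$ with $\partial_{x_r}m \neq 0$, one checks directly from the definition of $\deg$ that $\deg \partial_{x_r} m = \deg m - \deg x_r$, because the vector $\sum_k i_k \deg x_k$ loses exactly the summand $\deg x_r$ when $i_r$ drops by $1$. Since the graded lexicographic order on $\N^4$ is compatible with addition, extending by sub-additivity across monomials gives $\deg \partial_{x_r} p \leq \deg p - \deg x_r$ for any polynomial $p$ (with the convention $\deg 0 = -\infty$ covering the case where $p$ has no monomial involving $x_r$). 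Combined with the standard sub-additivity $\deg (pq) \leq \deg p + \deg q$ and $\deg(p+p') \leq \max(\deg p, \deg p')$, each individual term in the permutation expansion above has degree at most
\[\deg q + \deg f_1 + \deg f_2 + \deg f_3 - \sum_{r=1}^4 \deg x_r.\]

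To conclude I just need the two constant contributions: $\deg q = (2,2,2,2)$, since $\deg(x_1x_4) = (2,1,1,0)+(0,1,1,2) = \deg(x_2x_3) = (2,2,2,2)$, and
\[\sum_{r=1}^4 \deg x_r = (2,1,1,0)+(1,2,0,1)+(1,0,2,1)+(0,1,1,2) = (4,4,4,4).\]
Substituting yields the announced bound $\deg f_1 + \deg f_2 + \deg f_3 - (2,2,2,2)$, and taking the max over $\sigma$ extends it to $\jj(f_1,f_2,f_3)$. There is no real obstacle; the only mildly delicate point is that the right-hand side may have negative entries, in which case one reads the conclusion in $\Z^4 \cup \{-\infty\}$ (or equivalently as forcing $\jj(f_1,f_2,f_3) = 0$).
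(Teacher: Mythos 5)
Your argument is correct and is essentially the paper's own proof: the paper dismisses the bound $\deg \Jac(g_1,\dots,g_4) \leq \sum_i \deg g_i - (4,4,4,4)$ as ``an easy computation'' and then specializes $g_1 = q$ with $\deg q = (2,2,2,2)$, exactly as you do. Your permutation expansion together with $\deg \partial_{x_r} p \leq \deg p - \deg x_r$ just makes that easy computation explicit, and your closing remark about negative entries is a harmless clarification.
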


\begin{proof}
An easy computation shows the following inequality:

\[\deg \Jac(f_1,f_2,f_3,f_4)  \leq \sum_i \deg f_i- \sum_{i}\deg x_i=
\sum_i \deg f_i -(4,4,4,4).\]

Recalling the definitions of $\jj$ and $\deg$ we obtain:
\begin{multline*}
\deg\jj(f_1,f_2,f_3) = \deg \Jac (q,f_1,f_2,f_3) \\ 
\leq \deg q+ \sum_i \deg f_i -(4,4,4,4) = \sum_i \deg f_i -(2,2,2,2).\qedhere
\end{multline*}
\end{proof}

We shall essentially use those pseudo-Jacobians with $f_1=x_1$, $x_2$, $x_3$ or $x_4$. Therefore we introduce the notation 
$\jj_k(\cdot,\cdot):=\jj(x_k,\cdot,\cdot)$ for all $k=1,2,3,4$. The inequality from Lemma \ref{lem:pseudojac} gives
\[\deg\jj_k(f_1,f_2) \leq \deg f_1+\deg f_2+\deg x_k-(2,2,2,2)\]
from which we deduce
\begin{equation} \label{ineqjb}
\deg\jj_k(f_1,f_2) <  \deg f_1+\deg f_2,\;\forall \, k=1,2,3,4.
\end{equation}

We shall also need the following observation. 

\begin{lemma}\label{lem: not all zero}
If $(f_1,f_2)$ is part of an automorphism of $\C^4$, then the elements $\jj_k(f_1,f_2)$, $k =1, \dots, 4$, are not simultaneously zero, i.e.
$\max_{k}\deg\jj_k(f_1,f_2)\neq -\infty$ or, equivalently, \[\max_{k}\deg\jj_k(f_1,f_2)\in\N^4.\]
\end{lemma}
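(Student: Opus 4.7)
The strategy is to argue by contradiction: assume $\jj_k(f_1,f_2) = 0$ for every $k \in \{1,2,3,4\}$, and deduce that $q$ must lie in $\C[f_1,f_2]$, which I will then rule out on geometric grounds.

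First, I would translate the vanishing condition into a linear-algebra statement. Expanding the determinant $\jj_k(f_1,f_2) = \Jac(q,x_k,f_1,f_2)$ along the row $\nabla x_k = e_k$, one sees that $\jj_k(f_1,f_2)$ equals, up to sign, the $3 \times 3$ minor obtained by deleting the $k$-th column of
\[ N := \begin{pmatrix} \nabla q \\ \nabla f_1 \\ \nabla f_2 \end{pmatrix}, \]
a $3 \times 4$ matrix with entries in $\C[x_1,x_2,x_3,x_4]$. Hence the simultaneous vanishing of all $\jj_k(f_1,f_2)$ is equivalent to $N$ having rank at most $2$ over the field of fractions $\C(x_1,x_2,x_3,x_4)$.

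Next, since $(f_1,f_2)$ is part of an automorphism $f=(f_1,f_2,f_3,f_4)$ of $\C^4$, the Jacobian $\Jac(f_1,f_2,f_3,f_4)$ is a nonzero constant, so $\nabla f_1, \nabla f_2, \nabla f_3, \nabla f_4$ form a basis of $\C(x)^4$. In particular $\nabla f_1$ and $\nabla f_2$ are linearly independent over $\C(x)$, so the rank-at-most-$2$ condition on $N$ forces a relation $\nabla q = \alpha \, \nabla f_1 + \beta \, \nabla f_2$ with $\alpha, \beta \in \C(x)$. Now write $q = Q(f_1,f_2,f_3,f_4)$ for the unique $Q \in \C[T_1,T_2,T_3,T_4]$ (possible since the $f_i$ generate $\C[x_1,\ldots,x_4]$). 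The chain rule gives $\nabla q = \sum_{i=1}^{4} (\partial_{T_i} Q)(f) \, \nabla f_i$, and comparing with the preceding expression in the basis $\{\nabla f_i\}$ yields $(\partial_{T_3} Q)(f) = (\partial_{T_4} Q)(f) = 0$. The algebraic independence of $f_1,\ldots,f_4$ then upgrades this to $\partial_{T_3} Q = \partial_{T_4} Q = 0$, so $Q \in \C[T_1,T_2]$ and therefore $q \in \C[f_1,f_2]$.

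Finally, I would rule out the possibility $q = Q(f_1,f_2)$ geometrically. The automorphism $\Phi = (f_1,f_2,f_3,f_4)$ of $\C^4$ carries the hypersurface $\{q = 0\}$ isomorphically onto the cylinder $\{Q(y_1,y_2) = 0\} \subseteq \C^4$, whose singular locus is of the form $\Sigma \times \C^2_{y_3,y_4}$, where $\Sigma = \{(y_1,y_2) \in \C^2 : Q = \partial_{y_1} Q = \partial_{y_2} Q = 0\}$. This set is either empty or of dimension at least $2$. On the other hand, the singular locus of $\{q = 0\}$ is defined by $\nabla q = (x_4,-x_3,-x_2,x_1) = 0$, which gives the single point $\{0\}$, of dimension $0$. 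Since an isomorphism of varieties preserves singular loci and their dimensions, we reach a contradiction.

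The only delicate step is the passage from $(\partial_{T_i} Q)(f) = 0$ to $\partial_{T_i} Q = 0$ in the second paragraph, which is precisely where the full automorphism hypothesis on $f$ (and not just the weaker statement that $f_1,f_2$ are algebraically independent) is used; the final contradiction via singular loci is elementary once $q \in \C[f_1,f_2]$ is in hand.
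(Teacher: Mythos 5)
Your proof is correct, and both halves take a genuinely different route from the paper's. To reach $q\in\C[f_1,f_2]$, the paper notes that the vanishing of all $\jj_k(f_1,f_2)$ makes $q,f_1,f_2$ algebraically dependent and then invokes the fact that $\C[f_1,f_2]$ is algebraically closed in $\C[x_1,x_2,x_3,x_4]$ (being the image of $\C[x_1,x_2]$ under an automorphism of the polynomial ring); you instead use that the gradients of $f_1,\dots,f_4$ form a basis over $\C(x_1,\dots,x_4)$ together with the chain rule, which is more self-contained and exploits the full automorphism hypothesis in the same place. For the final contradiction the paper stays algebraic: after removing constant terms it compares quadratic parts in $q=R(f_1,f_2)$, getting a rank-$4$ quadratic form on the left against one of rank at most $2$ on the right, whereas you compare singular loci: $\{q=0\}$ has singular locus the single point $0$, while a cylinder over a plane curve has singular locus either empty or of dimension at least $2$. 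Both are valid; the paper's is more elementary, yours is more geometric. One point you should make explicit: identifying the singular locus of the cylinder with $\Sigma\times\C^2$ presupposes that $Q$ is square-free, since otherwise $\{Q=\partial_{y_1}Q=\partial_{y_2}Q=0\}$ is the non-smooth locus of a non-reduced scheme rather than the singular locus of the underlying variety. This is immediate here, because $q$ is irreducible and $q=Q\circ\Phi$ with $\Phi$ an automorphism, so $Q$ is irreducible; alternatively you can sidestep the issue entirely by observing that the singular locus of the reduced cylinder is invariant under translations in $y_3,y_4$, hence empty or of dimension at least $2$, and in particular never a single point.
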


\begin{proof}
Assume that  $\jj(x_k,f_1,f_2)=0$ for each $k$.
This means that the elements $q,f_1,f_2$ are algebraically dependent. 
But, since $(f_1,f_2)$ is part of an automorphism of $\C^4$, the ring $\C[f_1,f_2]$ is algebraically closed in $\C[x_1,x_2,x_3,x_4]$ (indeed, there exists an automorphism of the algebra $\C[x_1,x_2,x_3,x_4]$ sending $\C[f_1,f_2]$ to $\C [x_1,x_2]$). 
Therefore, there exists a polynomial $R$ such that $q = R(f_1,f_2)$.
We now prove that this is impossible. 
Indeed, we may assume that $f_1$ and $f_2$ do not have constant terms. Let $l_1$ and $l_2$ be their linear parts. Write $R=\sum_{i,j}R_{i,j}X^iY^j$. 
It is clear that $R_{0,0} = 0$ (look at the constant term) and that $R_{1,0} =R_{0,1} =0$ (look at the linear part and use the fact that $l_1$, $l_2$ are linearly independent). 
Therefore, looking at the quadratic part, we get
\[ q=R_{2,0}\, l_1^2+ R_{1,1} \, l_1l_2 + R_{0,2}\, l_2^2.\]
We get a contradiction since the rank of the quadratic form $q$ is $4$ and the rank of the quadratic form on the right is at most $2$. \end{proof}

\subsubsection{The parachute}

In this paragraph $(f_1, f_2) \in \C[x_1,x_2,x_3,x_4]^2$ is part of an automorphism of $\C^4$, and we set $d_i:= \deg f_i \in \N^4$. We define the \textbf{parachute} of $f_1, f_2$ to be  
$$ \nabla(f_1,f_2) = d_1 + d_2 - \max_{k} \deg\jj_k(f_1,f_2).$$ 
By Lemma \ref{lem: not all zero}, we get $ \nabla(f_1,f_2)\leq d_1+d_2 $.

\begin{lemma}\label{lem:ineq1}
Assume $\deg\frac{\partial^n R}{\partial X_2^n}(f_1,f_2)$ coincides with the generic degree $\ged \frac{\partial^n R}{\partial X_2^n}$. 
Then 
$$d_2 \cdot\deg_{X_2}R-n\nabla(f_1,f_2)  < \deg R(f_1,f_2).$$
\end{lemma}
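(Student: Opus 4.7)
The plan is to prove the inequality by induction on $n$, with the Jacobian chain rule as the central tool. The key identity, valid for any $R \in \C[X_1, X_2]$ and any $k \in \{1,2,3,4\}$, is
$$\jj_k(f_1, R(f_1, f_2)) = \frac{\partial R}{\partial X_2}(f_1, f_2) \cdot \jj_k(f_1, f_2).$$
This holds because when one expands the determinant $\Jac(q, x_k, f_1, R(f_1, f_2))$ along the last row by the chain rule, the $\partial R/\partial X_1$-contribution produces a determinant with two equal rows (both equal to the gradient of $f_1$) and thus vanishes.

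Setting $R^{(i)} := \partial^i R/\partial X_2^i$ and applying the identity with $R^{(i)}$ in place of $R$, I would take the $\deg$ on both sides and then the maximum over $k$. Combined with the a priori bound \eqref{ineqjb}, this gives
$$\deg R^{(i+1)}(f_1, f_2) + \max_k \deg\jj_k(f_1, f_2) \;<\; d_1 + \deg R^{(i)}(f_1, f_2).$$
Substituting $\max_k\deg\jj_k(f_1,f_2) = d_1 + d_2 - \nabla(f_1, f_2)$ yields
$$\deg R^{(i+1)}(f_1, f_2) \;<\; \deg R^{(i)}(f_1, f_2) - d_2 + \nabla(f_1, f_2).$$
Telescoping these strict inequalities from $i = 0$ to $n-1$ (legitimate because graded lexicographic order on $\N^4$ is compatible with addition, so strict inequalities add) produces
$$\deg R^{(n)}(f_1, f_2) + n\, d_2 - n\,\nabla(f_1, f_2) \;<\; \deg R(f_1, f_2).$$

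To conclude, I would invoke the hypothesis $\deg R^{(n)}(f_1, f_2) = \ged R^{(n)}$ together with the elementary lower bound
$$\ged R^{(n)} \;\geq\; (\deg_{X_2} R - n)\, d_2,$$
valid because $\partial/\partial X_2$ lowers the $X_2$-degree by exactly one, so any monomial of top $X_2$-degree in $R$ survives (up to a nonzero constant) in $R^{(n)}$, and its generic degree is at least $(\deg_{X_2} R - n) d_2$. Substituting this estimate into the telescoped inequality delivers the desired bound.

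The main subtlety I expect is ensuring the telescoping step is not invalidated by some intermediate $R^{(i+1)}(f_1, f_2)$ vanishing: this is where the hypothesis that $(f_1, f_2)$ is part of an automorphism of $\C^4$ enters decisively, since the resulting algebraic independence of $f_1, f_2$ forces $R^{(i+1)}(f_1, f_2) = 0$ only when $R^{(i+1)} = 0$ as a polynomial, in which case the corresponding step is trivial. A minor check is that the product $R^{(i+1)}(f_1, f_2)\cdot \jj_k(f_1, f_2)$ has the expected degree, which is automatic because $\C[x_1, x_2, x_3, x_4]$ is an integral domain.
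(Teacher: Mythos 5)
Your proof is correct and follows essentially the same route as the paper: the chain-rule identity $\jj_k(f_1,R(f_1,f_2))=\frac{\partial R}{\partial X_2}(f_1,f_2)\,\jj_k(f_1,f_2)$, the a priori bound \eqref{ineqjb}, the definition of the parachute, iteration on the order of the derivative, and finally the estimate $\ged \frac{\partial^n R}{\partial X_2^n}\ge d_2(\deg_{X_2}R-n)$. Your extra remarks on possible vanishing of intermediate derivatives and on degrees of products are harmless refinements of the argument the paper leaves implicit.
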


\begin{proof}
As already remarked $\Jac$, $\jj$ and now $\jj_k$ as well are $\C$-derivations in each of their entries. We may then apply the chain rule on $\jj_k(f_1,\cdot)$ evaluated in $R(f_1,f_2)$:
$$ \frac{\partial R}{\partial X_2}(f_1,f_2)\jj_k(f_1,f_2)
=  \jj_k(f_1,R(f_1,f_2)).$$
Now taking the degree and applying inequality (\ref{ineqjb}) (with $R(f_1,f_2)$ instead of $f_2$), we obtain
$$\deg\frac{\partial R}{\partial X_2}(f_1,f_2)+\deg\jj_k(f_1,f_2)  <  d_1+\deg R(f_1,f_2).$$
We deduce
$$\deg\frac{\partial R}{\partial X_2}(f_1,f_2)+d_2-(\underbrace{d_1+d_2-\max_{k}\deg\jj_k(f_1,f_2)}_{=\nabla(f_1,f_2)})  < \deg R(f_1,f_2).$$
By induction, for any $n \ge 1$ we have
\begin{align*}
\deg\frac{\partial^n R}{\partial X_2^n}(f_1,f_2)+nd_2-n\nabla(f_1,f_2) & < \deg R(f_1,f_2) \; .
\end{align*}
Now if the integer $n$ is as given in the statement one gets:
$$
\deg\frac{\partial^n R}{\partial X_2^n}(f_1,f_2)=\ged \frac{\partial^n R}{\partial X_2^n}\geq d_2\cdot\deg_{X_2}\frac{\partial^n R}{\partial X_2^n} =d_2\cdot(\deg_{X_2}R-n)=d_2\cdot\deg_{X_2}R-d_2n
$$ 
which, together with the previous inequality, gives the result.
\end{proof}

\begin{lemma}\label{lem:goodn}
Let $H$ be the generating relation between $\hom{f_1}$ and $\hom{f_2}$ as in the equivalence (\ref{rel}) and let $n\in\N$ be such that $\Rgen\in(H^n)\setminus(H^{n+1})$. Then $n$ fulfills the assumption of Lemma \ref{lem:ineq1}, i.e.
\[\deg\frac{\partial^n R}{\partial X_2^n}(f_1,f_2)=\ged \frac{\partial^n R}{\partial X_2^n}\; .\]
\end{lemma}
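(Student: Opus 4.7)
Using the equivalence (\ref{rel}) applied to the polynomial $\frac{\partial^n R}{\partial X_2^n}$ in place of $R$, the desired equality $\deg \frac{\partial^n R}{\partial X_2^n}(f_1,f_2) = \ged \frac{\partial^n R}{\partial X_2^n}$ is equivalent to the statement that the generic leading part $\bigl(\frac{\partial^n R}{\partial X_2^n}\bigr)^{\rm gen}$ does not lie in the ideal $(H) \subseteq \C[X_1,X_2]$. So the plan is to identify this leading part and to check by hand that it is not divisible by $H$. The case $n=0$ is immediate since it reduces to the hypothesis $\Rgen \notin (H)$, so I assume $n \ge 1$.

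First I would verify that $\bigl(\frac{\partial^n R}{\partial X_2^n}\bigr)^{\rm gen} = \frac{\partial^n \Rgen}{\partial X_2^n}$, provided this derivative is non-zero. Writing $R = \Rgen + (R-\Rgen)$ and differentiating termwise, each monomial $R_{i,j} X_1^i X_2^j$ with $\ged X_1^i X_2^j = i d_1 + j d_2 < \ged R$ produces under $\partial_{X_2}^n$ a term of generic degree at most $i d_1 + j d_2 - n d_2 < \ged R - n d_2$, while the surviving terms coming from $\Rgen$ all have generic degree exactly $\ged R - n d_2$. Hence the generic leading part is as claimed, and it remains to show that $\frac{\partial^n \Rgen}{\partial X_2^n}$ is non-zero and not divisible by $H$.

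Writing $\Rgen = H^n S$ with $H \nmid S$, Leibniz's formula gives
\[ \frac{\partial^n (H^n S)}{\partial X_2^n} = \sum_{k=0}^{n} \binom{n}{k} \frac{\partial^k (H^n)}{\partial X_2^k} \cdot \frac{\partial^{n-k} S}{\partial X_2^{n-k}},\]
and for $k < n$ each $\frac{\partial^k(H^n)}{\partial X_2^k}$ is divisible by $H^{n-k}$, hence by $H$. Thus modulo $H$ only the term $k=n$ survives, and a simple induction on $n$ shows that
\[ \frac{\partial^n (H^n)}{\partial X_2^n} \equiv n! \, (\partial_{X_2} H)^n \pmod{H}.\]
Now $\partial_{X_2} H = -\lambda s_2 X_2^{s_2 - 1}$, and since $s_1 \ge 1$ and $\gcd(s_1, s_2) = 1$ imply that $H = X_1^{s_1} - \lambda X_2^{s_2}$ is irreducible and distinct (up to scalars) from $X_2$, the element $(\partial_{X_2} H)^n$ is non-zero and coprime to $H$. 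Combined with $S \notin (H)$ and primality of $H$, this yields $\frac{\partial^n \Rgen}{\partial X_2^n} \equiv n!\,(\partial_{X_2} H)^n S \not\equiv 0 \pmod{H}$, concluding the proof.

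The only minor subtlety I anticipate is handling cleanly the coprimality-plus-irreducibility facts about $H = X_1^{s_1} - \lambda X_2^{s_2}$, and ensuring that $s_1, s_2 \ge 1$ (which follows from $\deg f_1, \deg f_2 \ne 0$ in the relation $s_1 d_1 = s_2 d_2$). Everything else is a direct Leibniz-plus-induction computation.
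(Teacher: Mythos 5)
Your proof is correct and follows essentially the same route as the paper: both reduce, via the equivalence (\ref{rel}), to showing that $\frac{\partial^n \Rgen}{\partial X_2^n}$ is the generic leading part of $\frac{\partial^n R}{\partial X_2^n}$ and is not divisible by $H$. The paper obtains this by inducting one derivative at a time (each $\partial_{X_2}$ lowers the $H$-adic valuation by exactly one), whereas you do the same computation in a single Leibniz expansion modulo $H$, spelling out the auxiliary facts ($H$ irreducible, $H\nmid \partial_{X_2}H$, characteristic zero) that the paper leaves implicit — an expository rather than substantive difference.
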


\begin{proof}
It suffices to remark that ${\left( \frac{\partial R}{\partial X_2} \right) }_\text{\tiny \hspace{-1mm} gen}=\frac{\partial \Rgen}{\partial X_2}$ and that $\Rgen\in(H^n)\smallsetminus(H^{n+1})$ implies $\frac{\partial \Rgen}{\partial X_2}\in(H^{n-1})\smallsetminus(H^{n})$. One concludes by induction.
\end{proof}

Remark that, by definition of $n$ in Lemma \ref{lem:goodn} above, we have:
\[\deg_{X_2} R\geq\deg_{X_2}\Rgen\geq ns_2.\]
Together with Lemma \ref{lem:ineq1} and recalling that $s_1d_1=s_2d_2$, this gives:
\begin{equation}\label{last}
d_1ns_1 -n\nabla(f_1,f_2)  < \deg R(f_1,f_2). 
\end{equation}\\

\subsubsection{Proof of Lower bound \ref{mino:TQ}}~\par

Let $n$ be as in Lemma \ref{lem:goodn}. 
If $n=0$, then $\deg R(f_1,f_2)=\ged R\geq \deg f_1$ by the assumption  $R(f_1,f_2) \not\in\C[f_2]$ and then $\deg (f_2R(f_1,f_2))\geq\deg f_2+\deg f_1>\deg f_1$ as wanted. 

If $n\geq 1$ then, by (\ref{last}),
$$ d_1s_1-\nabla(f_1,f_2) < \deg R(f_1,f_2) $$
and, since $\nabla(f_1,f_2) \leq d_1 + d_2$, 
$$  d_1s_1-d_1-d_2 < \deg R(f_1,f_2).$$
We obtain
$$d_1(s_1-1) <  \deg R(f_1,f_2)+d_2=\deg(f_2R(f_1,f_2)).$$
The assumption $\hom{f_1}\not\in\C[\hom{f_2}]$ forbids $s_1$ to be equal to one, hence we get the desired lower bound.\qed


\subsection{Proof of the main result} \label{sec:proof}
In this section, we prove Theorem \ref{thm:main}. We need the two following easy lemmas.

\begin{lemma} \label{lem:degree of each component drops}
Let $f = \smat{f_1}{f_2}{f_3}{f_4} \in G$.
If $e \in \El$ and $e\circ f =  \smat{f'_1}{f_2}{f'_3}{f_4}$, then
$$\deg e \circ f  \sphericalangle \deg f \Longleftrightarrow \deg f'_1 \sphericalangle \deg f_1  \Longleftrightarrow   \deg f'_3 \sphericalangle \deg f_3$$
for any relation $\sphericalangle$ among $<$, $>$, $\le$, $\ge$ and $=$.
\end{lemma}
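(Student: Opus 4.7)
The elementary automorphism $e \in \El$ has the form $e = \smat{x_1 + x_2 Q(x_2,x_4)}{x_2}{x_3 + x_4 Q(x_2,x_4)}{x_4}$, so setting $P := Q(f_2, f_4)$ we get $f'_1 = f_1 + f_2 P$ and $f'_3 = f_3 + f_4 P$, while $f_2$ and $f_4$ are untouched. Writing $A := f_2 P$ and $B := f_4 P$, the trivial identity $f_4 A = f_2 B$ gives $\hom{f_4}\hom{A} = \hom{f_2}\hom{B}$ in the graded algebra. The plan is to first show the inner equivalence $\deg f'_1 \sphericalangle \deg f_1 \Leftrightarrow \deg f'_3 \sphericalangle \deg f_3$, and only then deduce the outer equivalence with $\deg e\circ f \sphericalangle \deg f$.

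For the inner equivalence, the key is the quadric relation $f_1 f_4 - f_2 f_3 = q$ (resp.\ $\equiv 1 \pmod{q-1}$). Whenever $\deg f_1 + \deg f_4 > (2,2,2,2)$, comparing leading parts yields the identity $\hom{f_1}\hom{f_4} = \hom{f_2}\hom{f_3}$. If $\deg f'_1 < \deg f_1$, then by definition $\deg A = \deg f_1$ and $\hom{A} = -\hom{f_1}$; multiplying the leading-part identity $\hom{f_2}\hom{B} = \hom{f_4}\hom{A}$ yields
\[\hom{f_2}\hom{B} = -\hom{f_4}\hom{f_1} = -\hom{f_2}\hom{f_3},\]
and cancelling $\hom{f_2}$ in the integral domain gives $\hom{B} = -\hom{f_3}$, i.e.\ $\deg f'_3 < \deg f_3$. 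The case $\deg f'_1 > \deg f_1$ is handled by the dual identity $\hom{f'_1} = \hom{A}$ forcing $\hom{f'_3} = \hom{B}$, and the case $=$ then follows by exclusion.

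Next, I would deduce $\deg(e \circ f) \sphericalangle \deg f$ from the component-wise behavior, using $\deg(e\circ f) = \max(\deg f'_1, \deg f_2, \deg f'_3, \deg f_4)$. The equality case is immediate. For $>$: if $\deg f'_1 > \deg f_1$ and $\deg f'_3 > \deg f_3$, then necessarily $\deg f'_1 = \deg f_2 + \deg P > \deg f_2$ and symmetrically $\deg f'_3 > \deg f_4$, so the new maximum strictly exceeds the old. For $<$: the cancellation forces $\deg f_1 = \deg f_2 + \deg P$ and $\deg f_3 = \deg f_4 + \deg P$; since $Q$ non-zero constant would make $\hom{f_1}$ proportional to $\hom{f_2}$, contradicting the fact that $\hom{f_1}, \hom{f_2}$ span a non-degenerate isotropic configuration (via Section~\ref{sec:dual}), one gets $\deg P > 0$, hence $\deg f_1 > \deg f_2$ and $\deg f_3 > \deg f_4$, so $\deg f = \max(\deg f_1, \deg f_3)$ strictly dominates every component of $e\circ f$.

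The main obstacle will be the low-degree boundary case where the products $f_1 f_4$ and $f_2 f_3$ have degree exactly $(2,2,2,2)$, so that $\hom{q}$ genuinely contributes and the clean identity $\hom{f_1}\hom{f_4} = \hom{f_2}\hom{f_3}$ may fail. In that regime $f$ is essentially orthogonal, and the verification must be carried out directly using the description of $\OO_4$ from Lemma~\ref{lem:description of the orthogonal group} — specifically, the fact that opposite components $\hom{f_1}, \hom{f_4}$ pair non-trivially while $\hom{f_1}, \hom{f_2}$ are linearly independent isotropic vectors, which is exactly what rules out the pathological proportionality and closes all three cases simultaneously.
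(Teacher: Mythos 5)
Your main line is the paper's own: when $\deg f_1+\deg f_4>(2,2,2,2)$ the relation $f_1f_4-f_2f_3=q$ forces $\hom{f_1}\,\hom{f_4}=\hom{f_2}\,\hom{f_3}$, a cancellation in the first component transfers to the third (you pass through the identity $f_4A=f_2B$, the paper through the degree identity $\deg f_1+\deg f_4=\deg f_2+\deg f_3$ written for both $f$ and $e\circ f$), and the outer equivalence is a comparison of maxima; the paper saves a little work by deducing the relation $>$ from the relation $<$ applied to $e^{-1}$ and $e\circ f$. The genuine gap is the step you invoke twice to dispose of the degenerate situations: the claim that $\hom{f_1}$ cannot be proportional to $\hom{f_2}$ is false. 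Lemma \ref{lem:description of the orthogonal group} gives linear independence and isotropy of the components themselves, and this does not pass to leading parts: for $f=\smat{x_1}{x_1+x_2}{x_3}{x_3+x_4}\in\OO_4$ one has $\hom{f_2}=x_1=\hom{f_1}$. Worse, for this $f$ and the element $e\in\El$ given by the constant $Q=-1$ the statement you are trying to prove actually fails ($\deg f_1'<\deg f_1$ while $\deg e\circ f=\deg f$), so no argument can cover the constant case: like the paper's proof, which explicitly writes ``where $P$ is non-constant'', you must take $e$ non-linear; non-constancy of $Q(f_2,f_4)$, hence $\deg Q(f_2,f_4)>0$, then comes for free from the algebraic independence of $f_2$ and $f_4$, and your comparison of maxima goes through.

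Your treatment of the low-degree boundary case inherits the same defect, and in fact that case is the easy one: the isotropic-plane geometry of \S\ref{sec:dual} is not what closes it. If $\deg f_1+\deg f_4=(2,2,2,2)$ then, comparing coordinate sums (i.e.\ ordinary degrees) in $f_2f_3=f_1f_4-q$, all four components have the degree of a linear form; for non-constant $Q$ each of $f_2Q(f_2,f_4)$ and $f_4Q(f_2,f_4)$ then has degree strictly larger than the degree of any linear form, so $\deg f_1'>\deg f_1$, $\deg f_3'>\deg f_3$ and $\deg e\circ f>\deg f$ all hold, none of the relations $<$ or $=$ holds on either side, and every asserted equivalence is trivially true. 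With these two repairs (assume $Q$ non-constant, treat the all-linear case as just described) your argument is correct and coincides in substance with the paper's proof.
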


\begin{proof}
We have 
$$e=\mat{x_1 +x_2P(x_2,x_4)}{x_2}{x_3+x_4 P(x_2,x_4)}{x_4}$$
where $P$ is non-constant. 
We first prove  the equivalence for $\sphericalangle$ equal to $<$.
One has $f_1f_4-f_2f_3=q$ and the polynomials $f_i$ are not linear hence the leading parts must cancel one another: 
$\hom{f_1}\hom{f_4}-\hom{f_2}\hom{f_3}=0$. It follows: $\deg f_1+\deg f_4=\deg f_2+\deg f_3$.
Similarly
$\deg f_1'+\deg f_4=\deg f_2+\deg f_3'$.
So we obtain
$$\deg f_1 - \deg f_1' = \deg f_3 - \deg f_3'.$$

Assume $\deg e \circ f < \deg f$.
Thus $\deg f = \max(\deg f_1, \deg f_3)$, hence 
$$\max(\deg f_1', \deg f_3') \le \deg e\circ f < \deg f = \max(\deg f_1, \deg f_3),$$ which implies $\deg f_1' < \deg f_1$ and $\deg f_3' < \deg f_3$.

Conversely if one of the inequalities $\deg f_1' < \deg f_1$ or $\deg f_3' < \deg f_3$ is satisfied then both are satisfied, and this implies $\deg f_2 < \deg  f_2 P(f_2,f_4) = \deg f_1$ and similarly $\deg f_4 < \deg f_3$. 
Hence $\deg e \circ f < \deg f$.

We have proved the equivalence for $\sphericalangle$ equal to $<$. 
Since $f = e^{-1} \circ (e \circ f)$, we also obtain the equivalence for $\sphericalangle$ equal to $>$.
The equivalences for the three remaining symbols $=, \le, \ge$ follow.
\end{proof}

\begin{lemma} \label{lem: easy decomposition of elements of G}
Any element of $G$ can be written under the form
\[f= e_{\ell}\circ e_{\ell -1} \circ \cdots \circ e_1 \circ a,\]
where the elements $e_i$ are elementary and $a$ belongs to $\OO_4$.
\end{lemma}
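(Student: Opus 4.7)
The plan is to show that the set $A$ of elements of $G$ admitting a decomposition $e_\ell \circ \cdots \circ e_1 \circ a$ (with the $e_i$ elementary and $a \in \OO_4$) equals $G$. Since $G = \langle \OO_4, \Er \rangle$, it suffices to verify that $A$ contains $\OO_4$ and $\Er$ and is stable under left multiplication by both. The containments $\OO_4 \subseteq A$ (with $\ell = 0$) and $\Er \subseteq A$ (each element of $\Er$ is elementary with $u = \id \in \V_4$) are immediate, and left multiplication by an elementary merely extends the product. So the substantive step is left-stability under $\OO_4$: from
\[u \circ (e_\ell \circ \cdots \circ e_1 \circ a) = (u e_\ell u^{-1}) \circ \cdots \circ (u e_1 u^{-1}) \circ (u a),\]
the problem reduces to showing that every generalized elementary $u e u^{-1}$ (for $u \in \OO_4$ and $e$ elementary) is itself a product of elementary automorphisms.

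The key auxiliary statement I will establish is that every element of $\OO_4$ itself admits a decomposition as a product of elementary automorphisms composed with an element of $\V_4$ on the right. This uses the double cover $\SL_2 \times \SL_2 \to \SO_4$, $(A,B) \mapsto A \cdot M \cdot B^t$ with $M = \smat{x_1}{x_2}{x_3}{x_4}$: left multiplication by the upper (resp.\ lower) unipotent matrix $\smat{1}{c}{0}{1}$ (resp.\ $\smat{1}{0}{c}{1}$) in the first $\SL_2$ factor realizes an elementary of type $\Et$ (resp.\ $\Eb$) with constant polynomial $c$, and symmetrically right multiplication in the second factor yields elements of $\Er$ and $\El$. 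Because composition of automorphisms corresponds to matrix multiplication in each $\SL_2$ factor, and $\SL_2(\C)$ is generated by its upper and lower unipotent subgroups (witness the formula $\smat{a}{b}{c}{d} = \smat{1}{(a-1)/c}{0}{1}\smat{1}{0}{c}{1}\smat{1}{(d-1)/c}{0}{1}$ when $c \neq 0$, with $c = 0$ handled separately), every element of $\SO_4$ is a composition of elementaries. Combined with the decomposition $\OO_4 = \SO_4 \sqcup \tau \cdot \SO_4$ and $\tau \in \V_4$, this yields the claim.

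To conclude, given $u \in \OO_4$ and $e$ elementary, write $u = e'_k \circ \cdots \circ e'_1 \circ v$ and $u^{-1} = v^{-1} \circ {e'_1}^{-1} \circ \cdots \circ {e'_k}^{-1}$ with $v \in \V_4$ and each $e'_i$ elementary; then $u e u^{-1}$ is a product of elementaries, using that $v e v^{-1}$ remains elementary because conjugation by any element of $\V_4$ permutes the four families $\Et, \Eb, \El, \Er$. The main (though hardly insurmountable) obstacle is the decomposition of $\SO_4$ as a product of elementaries through the $\SL_2 \times \SL_2$ cover; once this is in hand, everything else is routine manipulation of words in the generators.
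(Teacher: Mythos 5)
Your proof is correct and rests on the same key point as the paper's: that $\SO_4$ is generated by linear elementary automorphisms (constant polynomials), which together with $\OO_4=\SO_4\sqcup\SO_4\tau$ and the generation of $G$ by $\OO_4$ and $\Er$ gives the decomposition. The paper states this generation of $\SO_4$ as a bare observation and concludes in two lines; your argument via the $\SL_2\times\SL_2$ cover and unipotent generators supplies the justification the paper omits, while your detour through generalized elementary automorphisms is just a slightly longer bookkeeping of the same idea.
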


\begin{proof}
Observe that any element of $\SO_4$ is a composition of (linear) elementary automorphisms. 
Since both $\STSL$ and $\STQ$ are generated by $\SO_4$ and the elementary automorphisms, it follows that any element of these two groups may be written as
\[f=e_{\ell}\circ e_{\ell -1} \circ \cdots \circ e_1,\]
where the automorphisms $e_i$ are elementary. 
The result follows.
\end{proof}

Since the set $\a$ obviously contains $\OO_4$, the following proposition joined to Lemma \ref{lem: easy decomposition of elements of G} directly implies  Theorem \ref{thm:main}.
 
\begin{proposition}\label{pro:main}
If $f \in \a$ and $e$ is an elementary automorphism, then $e\circ f \in \a$. 
\end{proposition}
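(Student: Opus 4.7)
The plan is to prove Proposition \ref{pro:main} by transfinite induction on $\deg f \in \N^4$, well-ordered by the graded lexicographic order. The base case is $f \in \OO_4$, characterized by $\deg f = (2,1,1,0)$ via Lemma \ref{lem:degmax}: if $e \in \OO_4$ then $e \circ f \in \OO_4 \subseteq \a$; otherwise $e$ is non-linear, so $\deg e \circ f > \deg f$ and the trivial identity $e^{-1} \circ (e \circ f) = f$ exhibits an elementary reduction of $e \circ f$ to $f \in \OO_4$, placing $e \circ f$ in $\a$. In the inductive step, the same identity handles the case $\deg e \circ f > \deg f$, so we may henceforth assume $\deg e \circ f \le \deg f$. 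Since $f \in \a$ is non-linear it admits an elementary reduction $e^{*}$, so that $g := e^{*} \circ f \in \a$ with $\deg g < \deg f$; by the inductive hypothesis, $\tilde e \circ g \in \a$ for every elementary $\tilde e$.

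The argument then splits according to the mutual position of $e$ and $e^{*}$, equivalently (Corollary \ref{cor:pair of elementary automorphisms}) according to the pair of totally isotropic planes they fix pointwise. After conjugation by $\V_4$ --- which preserves $\a$, preserves degrees, and permutes the four elementary subgroups --- we may place $e^{*} \in \El$ and treat the three cases $e \in \El$, $e \in \Er$, and $e \in \Et$. When $e \in \El$, the composition $e \circ e^{*-1}$ is again a single element of $\El$, and the factorization $e \circ f = (e \circ e^{*-1}) \circ g$ together with the inductive hypothesis applied to $g$ yields $e \circ f \in \a$ at once. The case $e \in \Er$ (disjoint planes) is handled analogously after a further $\OO_4$-conjugation, the needed closure of $\a$ under left multiplication by $\OO_4$ being a consequence of Lemma \ref{lem:degmax}.

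The main obstacle is the last case $e \in \Et$, i.e.\ intersecting but distinct isotropic planes, which is exactly the setting of Lemma \ref{lem:12inLV} with the degree inequalities $\deg e \circ f \le \deg f$ and $\deg e^{*} \circ f < \deg f$. The lemma places the problem in one of four structural sub-cases. In sub-cases (1) and (2), the polynomial $Q$ or $P$ depends only on $x_4$, which forces the corresponding elementary to lie simultaneously in two elementary subgroups; this allows one to replace $e^{*}$ (or $e$) by a suitable conjugate falling under the ``same subgroup'' analysis already completed. In sub-cases (3) and (4), the lemma provides an auxiliary polynomial $R(x_4)$ together with an explicit strict degree drop of $f_2 + f_4 R(f_4)$ or $f_3 + f_4 R(f_4)$; combining this with Lemma \ref{lem:degree of each component drops} one builds an elementary automorphism $\tilde e \in \El$ for which $\deg \tilde e \circ (e \circ f) < \deg e \circ f$, thereby exhibiting an elementary reduction of $e \circ f$ to an element of $\a$ of strictly smaller degree, where the inductive hypothesis closes the argument. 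Carrying out this four-way case analysis --- each piece of which ultimately relies on the parachute lower bound of Section \ref{sec:para} to control how degrees interact --- is the technical heart of the proof.
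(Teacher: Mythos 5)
Your overall skeleton (induction on $\deg f$, the base case in $\OO_4$, normalizing $e'\in\El$ by $\V_4$-conjugation, then splitting according to the subgroup containing $e$) is the same as the paper's, and the easy case $e\in\El$ is handled identically. But two of your three cases contain genuine gaps. The most serious is the case $e\in\Er$. You claim it is ``handled analogously after a further $\OO_4$-conjugation'', invoking closure of $\a$ under left multiplication by $\OO_4$ as ``a consequence of Lemma \ref{lem:degmax}''. Neither step works: the planes fixed pointwise by $e\in\Er$ and $e'\in\El$ are complementary, so no conjugation can place $e$ and $e'$ in a common elementary subgroup, and $e\circ e'^{-1}$ is not elementary; moreover Lemma \ref{lem:degmax} only gives degree invariance, while conjugating the reducing elementary automorphism by a general element of $\OO_4$ produces only a \emph{generalized} elementary automorphism, which is not allowed in the definition of an elementary reduction — so closure of $\a$ under left composition with $\OO_4$ is essentially as strong as Theorem \ref{thm:main} itself and cannot be assumed here. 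In fact this case requires no construction at all: as in the paper, Lemma \ref{lem:degree of each component drops} applied to $\deg e'\circ f<\deg f$ forces $\deg f_1>\deg f_2$, and then the same lemma (transported to $\Er$) forces $\deg e\circ f>\deg f$, contradicting the standing assumption $\deg e\circ f\le\deg f$; the case is vacuous.

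In the remaining case $e\in\Et$ your use of Lemma \ref{lem:12inLV} is the right idea, but the sub-case arguments are not correct as stated. When $Q\in\C[x_4]$ (resp.\ $P\in\C[x_4]$) the automorphism $e$ (resp.\ $e'$) does \emph{not} lie in a second elementary subgroup, so there is no reduction to the ``same subgroup'' analysis; what the hypothesis actually buys is the conjugation identity $e\circ e'^{-1}\circ e^{-1}\in\El$ (resp.\ $e'\circ e\circ e'^{-1}\in\Et$), which one combines with \emph{two} applications of the induction hypothesis and with the degree estimates of Lemma \ref{lem:degree of each component drops} to check that the intermediate element $e\circ e'\circ f$ (resp.\ $e'\circ e\circ f$) lies in $\a_{<d}$. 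Similarly, in sub-cases (3)--(4), producing $\tilde e$ with $\deg\tilde e\circ(e\circ f)<\deg e\circ f$ is not enough; you must also prove $\tilde e\circ e\circ f\in\a$, which is exactly where the work lies: one first shows $\tilde e\circ f\in\a_{<d}$ via the easy $\El$-case and the degree-drop lemma, and then runs the conjugation argument above with $\tilde e$ in place of $e'$. As written, ``the inductive hypothesis closes the argument'' skips this. (A minor point: the parachute lower bound enters only through the proof of Lemma \ref{lem:12inLV}, not in the case analysis itself.)
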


In the rest of this section we prove the proposition by induction on $d := \deg f \in \N^4$.

If $d= (2,1,1,0)$, that is to say if $f \in \OO_4$, then either $\deg e \circ f = d$ and again $e \circ f \in \OO_4 \subset \a$, or $\deg e\circ f > d$ and $e \circ f$ admits an obvious elementary reduction to an element of $\OO_4$, by composing by $e^{-1}$.

Now we assume $d > (2,1,1,0)$, we set $\a_{<\, d}:= \{g \in\a;\; \deg g<d \}$ and we assume the following:

\begin{IH}\nonumber\label{IH}
If $g\in \a_{< \,d}$ and if $e$ is elementary, then $e \circ g \in \a$.
\end{IH}

We pick $f \in \a$ such that $\deg f = d$, an elementary automorphism $e$, and we must prove that $e\circ f \in \a$.

If $\deg e\circ f > \deg f$, this is clear, so  we now assume that $\deg e\circ f \le \deg f$.

Since $f \in \a$, there exists an elementary automorphism $e'$ such that $\deg e' \circ f <d$ and $e' \circ f \in \a$, i.e. $e' \circ f \in \a_{< \, d}$.

\begin{caselist} \label{list:3cases}

Up to conjugacy by an element of $\V_4$, we may assume that:
\[e' = \mat{x_1+x_2P(x_2,x_4)}{x_2}{x_3+x_4P(x_2,x_4)}{x_4}\]
and that one of the three following assertions is satisfied:
\begin{enumerate}
\item $e \in \El$, i.e. $e=\mat{x_1+x_2Q(x_2,x_4)}{x_2}{x_3+x_4Q(x_2,x_4)}{x_4}$ for some polynomial $Q$;
\item $e \in \Er$, i.e. $e=\mat{x_1}{x_2+x_1Q(x_1,x_3)}{x_3}{x_4+x_3Q(x_1,x_3)}$ for some polynomial $Q$;
\item $e \in \Et$, i.e. $e=\mat{x_1+x_3Q(x_3,x_4)}{x_2+x_4Q(x_3,x_4)}{x_3}{x_4}$ for some polynomial $Q$.
\end{enumerate}
Indeed, the fourth case where $e$ would belong to $\Eb$ is conjugate to the third one.
\end{caselist}

The first two cases are easy to handle.\\

\underline{Case (1).} $e \in \El$.

Since $e' \circ f \in \a_{< \, d}$ and $e \circ e'^{-1} \in \El$, the Induction Hypothesis directly shows us that $ (e \circ e'^{-1}) \circ (e' \circ f) =e \circ f$ belongs to   $\a$.\\

\underline{Case (2).} $e \in \Er$.

\[\mbox{We have  }e' \circ f=\mat{f_1+f_2P(f_2,f_4)}{f_2}{f_3+f_4P(f_2,f_4)}{f_4}\text{ and }e\circ f=\mat{f_1}{f_2+f_1Q(f_1,f_3)}{f_3}{f_4+f_3Q(f_1,f_3)}.\]
By Lemma \ref{lem:degmax} (1), the polynomial $P(f_2,f_4)$ is non-constant, since otherwise we would get $\deg e' \circ f= \deg f$. By Lemma \ref{lem:degree of each component drops}, the inequality $\deg e' \circ f < \deg f$ is equivalent to $\deg (f_1 + f_2 P(f_2,f_4)) < \deg f_1$, so that $\deg f_1 = \deg ( f_2 P(f_2,f_4)) > \deg f_2$. But then, $\deg ( f_2 + f_1 Q(f_1,f_3)) > \deg f_2$, so that Lemma \ref{lem:degree of each component drops} gives us $\deg e \circ f > \deg f$, a contradiction.\\

\underline{Case (3).} $e \in \Et$.

We are in the setting of the following lemma, where Lower bound   \ref{mino:TSL}-\ref{mino:TQ} makes reference either to Lower bound   \ref{mino:TSL} when $G= \TSL$ or to Lower bound   \ref{mino:TQ} when $G= \TQ$.

\begin{lemma} \label{lem:minoration does not apply to both}
Let $f \in G$, and assume that
\[e' \circ f=\mat{f_1+f_2P(f_2,f_4)}{f_2}{f_3+f_4P(f_2,f_4)}{f_4} \quad \text{and} \quad e\circ f=\mat{f_1+f_3Q(f_3,f_4)}{f_2+f_4Q(f_3,f_4)}{f_3}{f_4},\]
with  $\deg e' \circ f < \deg f$ and  $\deg e \circ f \le \deg f$.
Then Lower bound  \ref{mino:TSL}-\ref{mino:TQ} does not apply to either $P(f_2,f_4)$ or $Q(f_3,f_4)$. 
\end{lemma}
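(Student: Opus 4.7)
The approach is proof by contradiction: assume both applications of Lower bound \ref{mino:TSL}-\ref{mino:TQ} go through, and derive a degree incompatibility from the hypotheses $\deg(e'\circ f) < \deg f$ and $\deg(e\circ f) \le \deg f$. The natural pairings take $f_4$ in the role of ``$f_2$'' in Lower bound for both polynomials, giving
\begin{equation*}
\deg(f_4\, P(f_2,f_4)) > \deg f_2 \quad \text{and} \quad \deg(f_4\, Q(f_3,f_4)) > \deg f_3.
\end{equation*}

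First I would exploit the first estimate together with Lemma \ref{lem:degree of each component drops}: writing $e'\circ f=\smat{f_1+f_2P}{f_2}{f_3+f_4P}{f_4}$, the strict inequality $\deg(e'\circ f) < \deg f$ forces $\deg(f_3+f_4P)<\deg f_3$, hence $\deg(f_4P)=\deg f_3$. Combined with the Lower bound, this gives $\deg f_3 > \deg f_2$. The same strict inequality also yields $\deg f_2 < d := \deg f$ and $\deg f_4 < d$, so $d = \max(\deg f_1,\deg f_3)$.

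Next I would look at $e\circ f=\smat{f_1+f_3Q}{f_2+f_4Q}{f_3}{f_4}$. The chain $\deg(f_4Q) > \deg f_3 > \deg f_2$ rules out any cancellation in the second component, hence $\deg(f_2+f_4Q)=\deg(f_4Q)$; the assumption $\deg(e\circ f)\le d$ then forces $\deg(f_4Q)\le d$. Together with $\deg(f_4Q)>\deg f_3$, this excludes $d=\deg f_3$, so necessarily $d=\deg f_1>\deg f_3$.

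The contradiction then comes from the first component. Since $d>(2,1,1,0)$, the component $f_1$ is non-linear, so $\deg(f_1f_4)>\deg q$, and the identity $f_1f_4-f_2f_3=q$ forces the leading-part cancellation $\hom{f_1}\hom{f_4}=\hom{f_2}\hom{f_3}$, giving $\deg f_1+\deg f_4=\deg f_2+\deg f_3$. Writing $\deg(f_3Q)=\deg(f_4Q)+\deg f_3-\deg f_4=\deg(f_4Q)+(d-\deg f_2)$ and using $\deg(f_4Q)>\deg f_2$ yields $\deg(f_3Q)>d=\deg f_1$; since $\deg f_1 < \deg(f_3Q)$, no cancellation with $f_1$ is possible, and $\deg(f_1+f_3Q)=\deg(f_3Q)>d$, contradicting $\deg(e\circ f)\le d$. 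The main obstacle is identifying the right pairing in Lower bound (with $f_4$ uniformly playing the role of ``$f_2$'') and then tracking the determinant-induced relation $\deg f_1+\deg f_4=\deg f_2+\deg f_3$, which is precisely what couples the two estimates into an inconsistency; once this structure is in place, the degree arithmetic is routine.
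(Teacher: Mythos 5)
Your proof is correct, and it shares the paper's overall strategy (assume both applications of the Lower bound succeed and play them off against the two degree hypotheses), but it lands the contradiction differently. The paper closes a strict cycle in four lines: $\deg f_2 < \deg(f_4P(f_2,f_4)) = \deg f_3 < \deg(f_4Q(f_3,f_4)) \le \deg f_2$, where the last inequality comes from converting $\deg(e\circ f)\le \deg f$ into the component-wise statement $\deg(f_2+f_4Q)\le \deg f_2$, i.e.\ from the analogue of Lemma \ref{lem:degree of each component drops} for $e\in\Et$ (the lemma is stated for $\El$ but transfers by the same argument). You avoid invoking that transferred statement: after the common first steps ($\deg(f_4P)=\deg f_3>\deg f_2$, and $\deg(f_4Q)>\deg f_3$), you only use the crude bound $\deg(e\circ f)\le d$ to pin down $d=\deg f_1$, and then re-derive the cancellation mechanism by hand through the relation $\deg f_1+\deg f_4=\deg f_2+\deg f_3$ coming from $f_1f_4-f_2f_3=q$, showing the first component $f_1+f_3Q$ would exceed $d$. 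This is longer but self-contained, and it is at the same level of rigor as the paper, since that leading-part cancellation is exactly the mechanism used in the paper's proof of Lemma \ref{lem:degree of each component drops}. Two small points you should make explicit: the claim $d>(2,1,1,0)$ (hence $f_1$ non-linear, needed to force the cancellation $\hom{f_1}\hom{f_4}=\hom{f_2}\hom{f_3}$) is not a hypothesis of the lemma, but it does follow from $\deg(e'\circ f)<\deg f$ since every element of $G$ has degree at least $(2,1,1,0)$; and in the $\TSL$ case the additivity of degrees and the cancellation argument rely on the properties of the degree on $\C[\SL_2]$ from \cite{LV}, just as the paper's own annex does.
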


\begin{proof}
If Lower bound  \ref{mino:TSL}-\ref{mino:TQ} applies to both $P(f_2,f_4)$ and $Q(f_3,f_4)$, we would obtain the following contradictory sequence of inequalities:
\begin{align*}
\deg f_2 &< \deg (f_4P(f_2,f_4)) && (\text{Lower bound  \ref{mino:TSL}-\ref{mino:TQ} applied to } P); \\
\deg (f_4P(f_2,f_4)) &= \deg f_3 &&  (\deg e' \circ f < \deg f); \\
\deg f_3 &< \deg (f_4Q(f_3,f_4)) && (\text{Lower bound  \ref{mino:TSL}-\ref{mino:TQ} applied to } Q); \\
\deg (f_4Q(f_3,f_4)) &\le \deg f_2 && (\deg e \circ f \le \deg f). \qedhere
\end{align*}
\end{proof}

We conclude the proof of Proposition \ref{pro:main} with the following lemma.

\begin{lemma}
\label{lem:conditions}
If Lower bound  \ref{mino:TSL}-\ref{mino:TQ} does not apply to either $P(f_2,f_4)$ or $Q(f_3,f_4)$, i.e. if one of the four following assertions is satisfied
\[\mbox{(i) } Q(f_3,f_4) \in \C[f_4]; \; \mbox{(ii) } \hom{f_2} \in \C[\hom{f_4}] ;\; \mbox{(iii) } P(f_2,f_4) \in \C[f_4]; \; \mbox{(iv) } \hom{f_3} \in \C[\hom{f_4}],\]
then $e \circ f \in \a$.
\end{lemma}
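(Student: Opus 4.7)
The proof is a case-by-case analysis on the four hypotheses (i)--(iv). In each case I construct an elementary automorphism $h$ for which $h\circ(e\circ f)\in\a$ via the induction hypothesis, and for which $h$ is a strict elementary reduction of $e\circ f$; together these imply $e\circ f\in\a$. The constructions parallel the four local-homotopy cases of the simply-connectedness proof of Proposition \ref{pro:1connected} (Figures \ref{fig:case1}--\ref{fig:case4}).

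For case (i), writing $Q=Q(x_4)\in\C[x_4]$, I define
$$\tilde P(x_2,x_4):=P(x_2-x_4Q(x_4),x_4),\qquad h:=\mat{x_1+x_2\tilde P(x_2,x_4)}{x_2}{x_3+x_4\tilde P(x_2,x_4)}{x_4}\in\El.$$
A direct substitution verifies the key algebraic identity $h\circ e=e\circ e'$, whence
$$h\circ(e\circ f)=e\circ(e'\circ f).$$
Since $e'\circ f\in\a_{<\,d}$ and $e$ is elementary, the Induction Hypothesis yields $e\circ(e'\circ f)\in\a$, so $h\circ(e\circ f)\in\a$. Case (iii) is entirely symmetric: conjugation by the transpose $\tau$ exchanges $\El$ and $\Et$, swapping the roles of $e$ and $e'$, and the analogous construction produces the required $h$.

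For case (ii) (case (iv) is symmetric), the hypothesis $\hom{f_2}\in\C[\hom{f_4}]$ yields a polynomial $R\in\C[x_4]$ such that $\deg(f_2+f_4R(f_4))<\deg f_2$. Setting
$$\tilde e:=\mat{x_1+x_3R(x_4)}{x_2+x_4R(x_4)}{x_3}{x_4}\in\Et,$$
the $\Et$-analog of Lemma \ref{lem:degree of each component drops} (obtained by $\V_4$-conjugation) gives $\deg\tilde e\circ f<d$, so $\tilde e$ is an elementary reduction of $f$. Because $\tilde e$ itself satisfies the hypothesis of case (i) (its governing polynomial $R$ lies in $\C[x_4]$), the case (i) argument applied with $\tilde e$ in place of $e$ produces $\tilde e\circ f\in\a$, hence $\tilde e\circ f\in\a_{<\,d}$. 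Since $\Et$ is an abelian group, $e\circ\tilde e^{-1}\in\Et$ is elementary, and the Induction Hypothesis yields
$$(e\circ\tilde e^{-1})\circ(\tilde e\circ f)=e\circ f\in\a.$$

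The main obstacle is the degree bookkeeping needed to verify that $h$ (respectively the analogous intermediate elementary in cases (ii) and (iv)) strictly reduces $\deg e\circ f$. The combined constraints $\deg e'\circ f<d$ and $\deg e\circ f\le d$, together with the structural equalities $\deg f_1=\deg f_2+\deg P$ and $\deg f_3=\deg f_4+\deg P$ enforced by Lemma \ref{lem:degree of each component drops}, and the balance $\deg f_1+\deg f_4=\deg f_2+\deg f_3$ forced by $\hom{q}=q$, suffice to pin down leading parts component by component and establish the strict decrease. In residual boundary subcases a more careful inspection shows that the hypothesis of another of the four conditions is activated, and the argument concludes by invoking the corresponding construction.
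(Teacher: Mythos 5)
Your cases (i) and (ii) are correct and essentially coincide with the paper's argument: in (i) your $h=e\circ e'\circ e^{-1}$, with governing polynomial $P(x_2-x_4Q(x_4),x_4)$, is exactly the elementary automorphism the paper exploits (the paper uses its inverse together with a second application of the Induction Hypothesis instead of prepending a reduction step, which spares it the check $\deg h\circ e\circ f<\deg e\circ f$ that you assert but never carry out; the check does hold, by Lemma \ref{lem:degree of each component drops} applied to the third components, and there are no ``residual boundary subcases'' of the kind your last paragraph gestures at). The genuine gap is your treatment of cases (iii) and (iv) by ``symmetry''. Conjugation by $\tau$ does exchange $\El$ and $\Et$, but it cannot exchange the roles of $e$ and $e'$: the data are asymmetric in these two elements (one knows $\deg e'\circ f<\deg f$ and $e'\circ f\in\a_{<\,d}$, but only $\deg e\circ f\le\deg f$ for $e$, and the conclusion concerns $e\circ f$), so conjugating the case (iii) configuration by $\tau$ produces a situation where the elementary known to reduce $f$ lies in $\Et$ with polynomial in $\C[x_4]$ while the one to be appended lies in $\El$ with an arbitrary polynomial --- which is not the case (i) configuration. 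Worse, the ``analogous construction'' simply does not exist in case (iii): an elementary $h$ with $h\circ e=e\circ e'$ would have to be $e\circ e'\circ e^{-1}$, and when $Q$ genuinely depends on $x_3$ this automorphism modifies both the second and the third coordinates, hence lies in none of $\Et,\Eb,\El,\Er$.

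The correct argument for (iii) conjugates the other way round: because $P\in\C[x_4]$, one has $e'\circ e\circ e'^{-1}\in\Et$ (its polynomial is $Q(x_3-x_4P(x_4),x_4)$), so the Induction Hypothesis applied to $e'\circ f\in\a_{<\,d}$ gives $e'\circ e\circ f\in\a$; one then checks $\deg e'\circ e\circ f<\deg e\circ f\le d$ by two applications of Lemma \ref{lem:degree of each component drops}, and concludes either by prepending this reduction of $e\circ f$ (your framework, with $h=e'$ itself rather than a conjugate) or, as in the paper, by a second application of the Induction Hypothesis with the elementary $e'^{-1}$. Case (iv) is likewise not the mirror of (ii): from $\hom{f_3}\in\C[\hom{f_4}]$ one builds $\tilde e\in\El$ with $\deg(f_3+f_4\tilde P(f_4))<\deg f_3$, obtains $\tilde e\circ f\in\a_{<\,d}$ from the easy Case (1) of the List of Cases \ref{list:3cases}, and then must replace $e'$ by $\tilde e$ and invoke case (iii); so (iv) stands or falls with (iii), which your proposal does not actually prove.
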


\begin{proof}
(i) Assume $Q(f_3,f_4)=Q(f_4)\in\C[f_4]$.

Since $e' \circ f \in \a_{< \, d}$ and $e$ is elementary, the Induction Hypothesis  gives us $e \circ e'\circ f \in \a$.

Note that $e\circ {e'}^{-1} \circ e^{-1}$ belongs to $\El$. Therefore, it is enough to show that  $e \circ e'\circ f \in \a_{< \, d}$. Indeed, a new implication of the induction hypothesis will then prove that $(e\circ e'^{-1} \circ e^{-1}) \circ (e \circ e'\circ f) = e \circ f$ belongs to $\a$.

However, we have $\deg e \circ f \leq \deg f$, so that by applying two times Lemma \ref{lem:degree of each component drops}, we successively get $\deg (f_2 + f_4 Q(f_4)) \leq \deg f_2$ and then 
$\deg e \circ e' \circ f \leq \deg e'\circ f$. Since $\deg e' \circ f < \deg f$, we are done.\\

(ii) Assume $\hom{f_2} \in \C[\hom{f_4}]$.

Then there exists $\tilde Q(f_4)\in\C[f_4]$ such that $\deg (f_2+f_4\tilde Q(f_4))<\deg f_2$.
We take 
$$\tilde e =\mat{x_1+x_3\tilde Q(x_4)}{x_2+x_4\tilde Q(x_4)}{x_3}{x_4},$$
and we have  $\tilde e\circ f\in\a$ by case (i). 
Thus $\tilde e\circ f\in\a_{< \, d}$.
Since $e\circ \tilde e^{-1}\in \Et$,  the Induction Hypothesis shows us that $(e\circ \tilde e^{-1}) \circ (\tilde e\circ f) =e \circ f$ belongs to $\a$.\\

(iii) Assume $P(f_2,f_4)=P(f_4) \in \C[f_4]$.

Note that $e' \circ e\circ {e'}^{-1}$ belongs to $\Et$.
By the Induction Hypothesis, we get $(e' \circ e\circ {e'}^{-1}) \circ (e' \circ f) =e' \circ e \circ f \in \a$. 
If we can prove $\deg e' \circ e \circ f < \deg f$ then we can use the Induction Hypothesis again to obtain that  ${e'}^{-1} \circ (e' \circ e \circ f) =e \circ f \in \a$.

We argue as in case (i). We have $\deg e' \circ f < \deg f$, so that by applying two times Lemma \ref{lem:degree of each component drops}, we successively get $\deg (f_3 + f_4 P(f_4)) < \deg f_3$ and then $\deg e' \circ e \circ f <\deg e\circ f$. Since $\deg e \circ f \leq \deg f$, we are done.\\

(iv) Finally assume $\hom{f_3} \in \C[\hom{f_4}]$.

There exists $\tilde P(f_4)\in\C[f_4]$ such that $\deg (f_3+f_4\tilde P(f_4)) <\deg f_3$.
We take 
$$\tilde e =\mat{x_1+x_2\tilde P(x_4)}{x_2}{x_3+x_4\tilde P(x_4)}{x_4},$$
and we have  $\tilde e\circ f\in\a$ by the easy first case of List of Cases \ref{list:3cases}. 
Thus $\tilde e\circ f\in\a_{< \, d}$.
Therefore, we may replace $e'$ by $\tilde e$ and then we conclude by case (iii). 
\end{proof}

\bibliographystyle{myalpha.bst} 
\bibliography{biblio}

\end{document}